\documentclass[12pt, twoside]{article}
\usepackage{lipsum}
\usepackage{tocloft}

\usepackage[all, cmtip]{xy}

\usepackage[colorlinks=true]{hyperref}

\usepackage{amsmath,amsthm,amssymb}
\usepackage{times}

\usepackage{times,upref,eucal, mathrsfs, xcolor, dsfont}
\usepackage{amsfonts,amsmath,amstext,amsbsy,amssymb, amsopn,amsthm}

\usepackage{enumerate}

\usepackage{url}
\usepackage{relsize}

\usepackage{mathtools}
\usepackage{bookmark}

\theoremstyle{plain}
\newtheorem{thm}{Theorem}[section]
\newtheorem{lem}[thm]{Lemma}
\newtheorem{prop}[thm]{Proposition}
\newtheorem{cor}[thm]{Corollary}

\newtheorem*{problem}{Problem}

\theoremstyle{definition}
\newtheorem{defn}{Definition}[section]

\newtheorem{con}{Condition}

\theoremstyle{remark}
\newtheorem{rem}{Remark}[section]

\newtheorem*{notation}{Notation}

\newcommand{\thistheoremname}{}
\newtheorem{genericthm}[thm]{\thistheoremname}

  \newtheorem*{genericthm*}{\thistheoremname}
\newenvironment{namedthm*}[1]
  {\renewcommand{\thistheoremname}{#1}%
   \begin{genericthm*}}
  {\end{genericthm*}}

\newcommand{\N}{\mathbb{N}}
\newcommand{\R}{\mathbb{R}}
\newcommand{\C}{\mathbb{C}}
\newcommand{\Z}{\mathbb{Z}}

\newcommand{\D}{\mathbb{D}}

\newcommand{\E}{\mathbb{E}}
\newcommand{\PP}{\mathbb{P}}

\newcommand{\Prob}{\mathbb{P}}

\newcommand{\supp}{\mathrm{supp}}
\newcommand{\Var}{\mathrm{Var}}

\newcommand{\sgn}{\mathrm{sgn}}
\newcommand{\Ran}{\mathrm{Ran}}

\newcommand \tr {{\mathrm{tr}}}

\newcommand \Conf {{\mathrm {Conf}}}

\newcommand \an { \text{\,\,and\,\,}}

\newcommand \new {\mathrm{new}}

\input xy
\xyoption{all}

\pagestyle{myheadings}
\def\titlerunning#1{\gdef\titrun{#1}}
\makeatletter
\def\author#1{\gdef\autrun{\def\and{\unskip, }#1}\gdef\@author{#1}}
\def\address#1{{\def\and{\\\hspace*{18pt}}\renewcommand{\thefootnote}{}%
\footnote {#1}}%
\markboth{\autrun}{\titrun}}
\makeatother
\def\email#1{e-mail: #1}
\def\subjclass#1{{\renewcommand{\thefootnote}{}%
\footnote{\emph{Mathematics Subject Classification (2010):} #1}}}
\def\keywords#1{\par\medskip
\noindent\textbf{Keywords.} #1}

\frenchspacing

\textwidth=15cm
\textheight=23cm
\parindent=16pt
\oddsidemargin=-0.5cm
\evensidemargin=-0.5cm
\topmargin=-0.5cm

\begin{document}

\baselineskip=17pt

\titlerunning{\texorpdfstring{$J$}{a}-Hermitian determinantal point processes}

\title{\texorpdfstring{$J$}{a}-Hermitian determinantal point processes: balanced rigidity and balanced Palm equivalence}

\author{Alexander I. Bufetov, Yanqi Qiu}

\date{}

\maketitle

\address{Alexander I. Bufetov: Aix-Marseille Universit{\'e}, Centrale Marseille, CNRS, I2M, UMR7373,  39 Rue F. Juliot Curie 13453, Marseille; Steklov Institute of Mathematics, Moscow; Institute for Information Transmission Problems, Moscow; National Research University Higher School of Economics, Moscow; \email{bufetov@mi.ras.ru}}

\address{Yanqi Qiu: CNRS, Institut de Math{\'e}matiques de Toulouse, Universit{\'e} Paul Sabatier, 118, Route de Narbonne, F-31062 Toulouse Cedex 9; \email{yqi.qiu@gmail.com}}

\subjclass{Primary 60G55; Secondary 30H20, 30B20}

\begin{abstract}
We study  Palm measures of determinantal point processes with $J$-Hermitian correlation kernels. A point process $\Prob$  on the punctured real line $\R^* = \R_{+} \sqcup \R_{-}$ is said to be {\it balanced rigid} if for any precompact subset $B\subset \R^*$, the {\it difference} between the numbers of particles of a configuration inside $B\cap \R_+$ and $B\cap \R_-$ is almost surely determined by 
the configuration outside  $B$. The point process $\Prob$ is said to have the {\it balanced Palm equivalence property} if any reduced Palm measure conditioned at  $2n$ distinct points, $n$ in $\R_+$ and $n$ in $\R_-$,  is equivalent to the $\Prob$. 

We formulate general criteria for  determinantal point processes  with $J$-Hermitian correlation kernels to be balanced rigid  and to have the balanced Palm equivalence property and prove, in particular, that the determinantal point processes with Whittaker kernels of Borodin and Olshanski are balanced rigid and have the balanced Palm equivalence property. 

\keywords{ Determinantal point processes; $J$-Hermitian kernel; Whittaker kernels; $L$-processes; Palm measures; balanced rigidity;  balanced Palm equivalence property.}
\end{abstract}

\input xy
\xyoption{all}

\tableofcontents

\section{Introduction}
\subsection{Palm measures of determinantal point processes}

The present paper is the first one devoted to the equivalence and mutual singularity relations between reduced Palm measures of determinantal point processes with $J$-Hermitian correlation kernels.

A a concrete model, we consider the family of determinantal point processes  on the punctured real line $\R^* = \R \setminus \{0\}$ with Whittaker kernels of Borodin and Olshanski \cite{PII, PV}, scaling limits of the so-called $z$-measures of partitions \cite{BO-hyper, BO-R}.  For these determinantal point processes, we observe a new effect: the reduced Palm measure conditioned at $2n$ points, $n$ on the positive, $n$ on the negative semi-axis, is equivalent to the initial determinantal measure; while if $n \ne k$, then the initial measure and the reduced Palm measure conditioned at $n+k$ points, $n$ on the positive, $k$ on the negative semi-axis,  are mutually singular. In the former case, the Radon-Nikodym derivatives between the  reduced Palm measures and the initial determinantal measure are found explicitly as {\it regularized multiplicative functionals}.

In the case of determinantal measures with kernels given by Hermitian projection operators, the statement that two such measures differ by a multiplicative functional can be checked on the level of the corresponding subspaces, the ranges of our projections: in fact, it suffices to verify that these subspaces differ by multiplication by a function, see \cite{Buf-multi, Buf-inf} for precise statements.

Although $J$-Hermitian operators considered in this paper are  closely related to certain Hermitian projection operators, it does not seem possible to work with their ranges. Instead, we use the fact that the determinantal point processes with the Whittaker kernels admit so-called $L$-kernels. Following Borodin and Olshanski, such processes will be called $L$-processes. Two $L$-processes differ by a multiplicative functional once corresponding $L$-kernels themselves differ by multiplication by a function on the left and on the right. 

The realization of this scheme requires some effort. First, Palm measures of an $L$-process, generally speaking, do not admit an $L$-kernel (this can be seen already on the level of discrete phase spaces: indeed, Borodin and Rains \cite{Borodin-Rains} shown that any determinantal point process can be obtained from an $L$-process by conditioning).  Second, in developing the formalism of the regularized multiplicative functionals, we are not able to use the standard linear statistics as in \cite{BQS, QB3}.  We  use the {\it twisted} ones instead (see  \eqref{defn-linear-statistics} and \eqref{defn-twist-statistics} below for the definitions); in particular, an extended version of Fredholm determinants is used.

\subsection{Main results for Whittaker kernels}
We start by formulating our main results for a concrete model: the family of  the  determinantal point processes with Whittaker kernels of Borodin and Olshanski. The reader is referred to \cite{BO-hyper, PII, PV} for the origin of these point processes in the problem of harmonic analysis on the infinite symmetric group and to \S \ref{section-pre} below for a reminder of  the main definitions related to determinantal point processes. 

Let $\R^* = \R \setminus \{0\}$ be the punctured real line.  By a {\it configuration} on $\R^*$, we mean  a {\it locally finite} subset  $ X \subset \R^*$, that is, $X$ is a subset of $\R^*$ such that for any compact subset $B\subset \R^*$, the cardinality $\#(X \cap B)$ of the intersection of the subsets $X$ and $B$ is finite. Define the {\it space of configurations} on $\R^*$ by
$$
\Conf(\R^*): = \{X \subset \R^*: \text{\, for any compact subset $B\subset \R^*$, $\#(X \cap B)<\infty$}\}.
$$
The space of configurations $\Conf(\R^*)$ is naturally equipped with a Borel structure, see \S \ref{section-pre}. A point process on $\R^*$ is  by definition a Borel probability on $\Conf(\R^*)$.

 The family of  the  determinantal point processes with Whittaker kernels of Borodin and Olshanski is a 2-parameter family $\mathscr{P}_{z,z'}  \footnote{It was denoted as $\widetilde{\mathcal{P}}_{z, z'}$ in \cite{BO-hyper}.}$ of determinantal point processes on $\R^*$. The two parameters $z, z' \in \C$ satisfy one of the following conditions:
\begin{itemize}
\item either $z' = \bar{z}$ and $z \in \C\setminus \Z$,
\item or $z, z'\in \R$ and their exists $m \in \Z$ such that $m < z, z' < m +1$.
\end{itemize}
Following \cite[formula (5.6)]{BO-hyper}, we now write the correlation kernel of the determinantal point process $\mathscr{P}_{z, z'}$ explicitly.  Fix two parameters $z, z'\in \C$ such that one of the two conditions as above is satisfied. Set
\begin{align}\label{PQ}
\begin{split}
\mathcal{P}_{\pm} (x)  &=  \frac{(zz')^{1/4}}{ (\Gamma(1 \pm z ) \Gamma(1 \pm z') x)^{1/2}} W_{\frac{\pm (z+z') + 1}{2}, \frac{z-z'}{2}} (x),
\\
\mathcal{Q}_{\pm} (x) &=  \frac{(zz')^{3/4}}{ (\Gamma(1 \pm z ) \Gamma(1 \pm z') x)^{1/2}} W_{\frac{\pm (z+z') - 1}{2}, \frac{z-z'}{2}} (x),
\end{split}
\end{align}
where $\Gamma(\cdot)$ is the Euler Gamma-function and $W_{a, b}(\cdot)$ is the Whittaker function with parameter $a, b \in \C$, see \cite[6.9]{Erdelyi-vol1} for the definition of Whittaker functions.
The correlation kernel of  $\mathscr{P}_{z, z'}$ is given by 
\begin{align}\label{whittaker}
\begin{split}
 \mathcal{K}_{z,z'}(x,y) = 
\left\{ \begin{array}{ll}   
\mathlarger{\frac{\mathcal{P}_{+}(x) \mathcal{Q}_{+}(y) - \mathcal{Q}_{+}(x) \mathcal{P}_{+}(y) }{x-y}}  , &\text{\, for $x> 0, y > 0$; } \vspace{3mm}
\\
 \mathlarger{ \frac{\mathcal{P}_{+}(x) \mathcal{P}_{-}(-y) + \mathcal{Q}_{+}(x) \mathcal{Q}_{-}(-y) }{x-y} }, &\text{\, for $x> 0, y < 0$; }\vspace{3mm}
\\
  \mathlarger{ \frac{\mathcal{P}_{-}(-x) \mathcal{P}_{+}(y) + \mathcal{Q}_{-}(-x) \mathcal{Q}_{+}(y) }{x-y} }, &\text{\, for $x< 0, y > 0$; }\vspace{3mm}
\\
  \mathlarger{ \frac{\mathcal{P}_{-}(-x) \mathcal{Q}_{-}(-y) - \mathcal{Q}_{-}(-x) \mathcal{P}_{-}(-y) }{y-x}}, &\text{\, for $x< 0, y < 0$.}
\end{array}
 \right.
\end{split}
\end{align}
These kernels $\mathcal{K}_{z,z'}$ are called Whittaker kernels.

Recall that given  a finite set  $S$, we denote its cardinality by $\#(S)$.  Denote by $\R_{+}$ the positive semi-axis and $\R_{-}$ the negative semi-axis. Our first main result, in case of Whittaker kernel model, is

\begin{namedthm*}{Theorem A}
Assume that the parameters $(z,z')$ are such that  $z' = \bar{z}$ and $z \in \C \setminus \R$.  Then for any subset $B\subset\R^*$ having a positive distance from the origin,   the difference 
$$
\#(B \cap X \cap \R_{+}) -  \#(B \cap X \cap \R_{-})
$$ 
 is  $\mathscr{P}_{z,z'}$-almost surely determined by $X \cap (\R^* \setminus B)$, the configuration outside $B$. That is, there exists a measurable function $N_B^{out}: \Conf(\R^*) \rightarrow \Z$, such that for $\mathscr{P}_{z,z'}$-almost every configuration $X\in \Conf(\R*)$, we have
$$
\#(B \cap X \cap \R_{+}) -  \#(B \cap X \cap \R_{-})  = N^{out}_B(X\setminus B).
$$

 In particular, if $B \subset \R_{+}$ is a subset in the positive semi-axis with a positive distance from the origin, then $\#(B \cap X)$ is $\mathscr{P}_{z,z'}$-almost surely determined by $X \cap (\R^* \setminus B)$. If $B$ is in the negative semi-axis, the same result holds. 
 \end{namedthm*}

\begin{rem}
When the subset $B$ is either in positive semi-axis or in negative semi-axis, we recover the usual {\it number rigidity property}  of Ghosh \cite{Ghosh-sine}, Ghosh and Peres \cite{Ghosh-rigid}.
\end{rem}

If $\PP$ is a point process on $\R^*$ and if $\mathfrak{p} = (p_1, \dots, p_m) \in (\R^*)^m$ is an $m$-tuple of distinct points in $\R^*$, then we denote $\PP^{\mathfrak{p}}$ the {\it reduced} Palm measure of $\PP$ conditioned at the points $p_1, \dots, p_m$. See \S \ref{sec-Palm} for the formal definition of the reduced Palm measures. 

Using a variant of Proposition 8. 1  in \cite{QB3}, we derive from Theorem A  the following corollary.
\begin{cor}
Assume that the parameters $(z,z')$ are such that  $z' = \bar{z}$ and $z \in \C \setminus \R$.  Let $n, k$ be two non-negative integers such that $n\ne k$. Then for Lebesgue-almost every $\mathfrak{p} = (p_1^{+}, \dots, p_n^{+}; p_1^{-}, \dots, p_k^{-}) \in \R_{+}^n \times \R_{-}^k $ of distinct points,  the reduced Palm measure 
$\mathscr{P}_{z,z'}^{\mathfrak{p}}$ and the initial determinantal measure $\mathscr{P}_{z,z'}$ are mutually singular.
\end{cor}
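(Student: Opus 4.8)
The plan is to derive the Corollary from Theorem A by exploiting the incompatibility between balanced rigidity and a Palm measure that ``unbalances'' the configuration. Suppose, for contradiction, that on a positive-measure set of $\mathfrak{p} = (p_1^{+}, \dots, p_n^{+}; p_1^{-}, \dots, p_k^{-})$ the reduced Palm measure $\mathscr{P}_{z,z'}^{\mathfrak{p}}$ is \emph{not} mutually singular with $\mathscr{P}_{z,z'}$; then by the general structure of Palm measures it is in fact absolutely continuous with respect to $\mathscr{P}_{z,z'}$ on that set (this is the content of the ``variant of Proposition 8.1 in \cite{QB3}'' invoked just above). First I would fix such a $\mathfrak{p}$ and choose a precompact $B \subset \R^*$ with positive distance from the origin that contains all the $n+k$ conditioning points in its interior. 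By Theorem A applied to $\mathscr{P}_{z,z'}$, the balanced count
$$
D_B(X) := \#(B \cap X \cap \R_{+}) - \#(B \cap X \cap \R_{-})
$$
is $\mathscr{P}_{z,z'}$-almost surely equal to $N_B^{out}(X \setminus B)$, a function of the configuration outside $B$.

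Next I would transport this identity to the Palm measure. Since $\mathscr{P}_{z,z'}^{\mathfrak{p}} \ll \mathscr{P}_{z,z'}$, the same almost-sure identity $D_B(X) = N_B^{out}(X \setminus B)$ holds $\mathscr{P}_{z,z'}^{\mathfrak{p}}$-almost surely. Now I exploit the defining property of the reduced Palm measure: a sample $X$ from $\mathscr{P}_{z,z'}^{\mathfrak{p}}$ is, in distribution, a sample $Y$ from $\mathscr{P}_{z,z'}$ conditioned to contain $p_1^+, \dots, p_n^+, p_1^-, \dots, p_k^-$, with those points then removed — equivalently, $X \cup \{p_1^+,\dots,p_n^+,p_1^-,\dots,p_k^-\}$ is distributed as the (non-reduced) Palm measure. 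The key point is the behavior outside versus inside $B$: adding the conditioning points changes the configuration only \emph{inside} $B$, so the distribution of $X \setminus B$ under $\mathscr{P}_{z,z'}^{\mathfrak{p}}$ is absolutely continuous with respect to the distribution of $Y \setminus B$ under $\mathscr{P}_{z,z'}$ (both are governed by the correlation structure away from the finitely many points, and the local absolute continuity of Palm measures of determinantal processes outside a neighborhood of the conditioning points is standard). Meanwhile, inside $B$ we have deterministically shifted the balanced count: for $\mathscr{P}_{z,z'}^{\mathfrak{p}}$-almost every $X$,
$$
D_B(X \cup \{p_\bullet\}) = D_B(X) + (n - k).
$$

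Putting these together yields a contradiction. On one hand, applying Theorem A to the non-reduced sample $X \cup \{p_\bullet\} \sim$ Palm measure (which is still absolutely continuous with respect to $\mathscr{P}_{z,z'}$ after the finite modification — or, more cleanly, apply Theorem A directly and use that $N_B^{out}$ depends only on the configuration outside $B$, which is unaffected by inserting points into $B$), we get $D_B(X \cup \{p_\bullet\}) = N_B^{out}((X \cup \{p_\bullet\}) \setminus B) = N_B^{out}(X \setminus B)$ almost surely. On the other hand $D_B(X) = N_B^{out}(X \setminus B)$ almost surely as well. Subtracting gives $n - k = 0$ almost surely, contradicting $n \ne k$. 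Hence no such positive-measure set of $\mathfrak{p}$ exists, so $\mathscr{P}_{z,z'}^{\mathfrak{p}} \perp \mathscr{P}_{z,z'}$ for Lebesgue-almost every $\mathfrak{p}$, as claimed.

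The main obstacle is the passage ``non-mutual-singularity implies absolute continuity of the reduced Palm measure,'' together with making rigorous the claim that inserting the $n+k$ conditioning points inside $B$ does not destroy the almost-sure validity of Theorem A's identity for the modified measure; this is exactly where the cited variant of \cite[Proposition 8.1]{QB3} does the work, packaging the dichotomy for Palm measures of determinantal point processes and the fact that rigidity-type identities, being functions of the configuration outside $B$, are stable under finite modifications supported in $B$. Everything else is a routine measure-theoretic manipulation of the balanced count $D_B$ and the definition of reduced Palm measures.
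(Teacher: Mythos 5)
Your overall strategy — contrast Theorem A's balanced-rigidity identity under $\mathscr{P}_{z,z'}$ with the balanced count after inserting $n$ points on the right and $k$ on the left inside $B$, and read off the integer shift $n-k$ — is the right one, and it is exactly the content of the variant of Proposition 8.1 of \cite{QB3} that the paper cites without proof. However, your derivation of the Palm-side identity
$$
D_B(X\cup\mathfrak{p}) = N_B^{out}\bigl((X\cup\mathfrak{p})\setminus B\bigr)
\qquad\text{for $\mathscr{P}_{z,z'}^{\mathfrak{p}}$-a.e.\ } X
$$
has a genuine gap, and both justifications you offer fail. The non-reduced Palm measure (the law of $X\cup\mathfrak{p}$) is concentrated on the event $\{\mathfrak{p}\subset Y\}$, which has $\mathscr{P}_{z,z'}$-probability zero since the correlation measures of $\mathscr{P}_{z,z'}$ are non-atomic; this law is therefore \emph{singular}, not absolutely continuous, with respect to $\mathscr{P}_{z,z'}$, so Theorem A cannot be carried over to $X\cup\mathfrak{p}$ by absolute continuity. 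Your alternative remark that ``$N_B^{out}$ depends only on the configuration outside $B$'' only establishes the trivial $N_B^{out}\bigl((X\cup\mathfrak{p})\setminus B\bigr)=N_B^{out}(X\setminus B)$; it says nothing about whether $D_B(X\cup\mathfrak{p})$ equals $N_B^{out}$ of anything, which is precisely the equality you need.

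The correct route to the Palm-side identity is the disintegration formula \eqref{def-Palm} defining reduced Palm measures, and it requires no non-singularity hypothesis at all. Set $E:=\{Y\in\Conf(\R^*): D_B(Y)=N_B^{out}(Y\setminus B)\}$, so $\mathscr{P}_{z,z'}(E^c)=0$ by Theorem A. Applying \eqref{def-Palm} with $u(X;\mathfrak{q})=\mathbf{1}_{E^c}(X)\cdot\mathbf{1}_{\{\mathfrak{q}\subset B\}}$ shows that for $\rho_{n+k}$-a.e.\ $\mathfrak{p}$ with all coordinates in $B$ one has $X\cup\mathfrak{p}\in E$ for $\mathscr{P}_{z,z'}^{\mathfrak{p}}$-a.e.\ $X$; combined with the elementary shift $D_B(X\cup\mathfrak{p})=D_B(X)+(n-k)$, this yields $D_B(X)=N_B^{out}(X\setminus B)-(n-k)$ $\mathscr{P}_{z,z'}^{\mathfrak{p}}$-almost surely. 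The non-singularity hypothesis enters only afterward: by the Lebesgue decomposition, a nonzero absolutely continuous part of $\mathscr{P}_{z,z'}^{\mathfrak{p}}$ supplies a set of positive $\mathscr{P}_{z,z'}^{\mathfrak{p}}$-measure on which also $D_B(X)=N_B^{out}(X\setminus B)$, giving $n-k=0$. Finally, a small point of attribution: Proposition 8.1 of \cite{QB3} is the statement you are reconstructing (rigidity forces singularity of Palm measures of different orders), not a dichotomy ``non-singularity implies absolute continuity''; the latter is just the Lebesgue decomposition.
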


We now proceed to formulating our second main result which gives equivalence of the reduced Palm measures $\mathscr{P}_{z,z'}^{\mathfrak{p}}$ and  $\mathscr{P}_{z,z'}$, under the conditions that the parameters $z,z'\in \C$ are such that $| z + z'| <1$ and $\mathfrak{p} = (p_1^{+}, \dots, p_n^{+}; p_1^{-}, \dots, p_n^{-}) \in \R_{+}^n \times \R_{-}^n$ is a $2n$-tuple of distinct points in $\R^*$ with equal numbers of points from positive and  negative semi-axis. The Radon-Nikodym derivative $d\mathscr{P}_{z,z'}^{\mathfrak{p}}/ d\mathscr{P}_{z,z'}$ is computed explicitly. 

We start with an auxiliary proposition

\begin{prop}\label{intro-prop-1}
Assume that the two parameters $z,z'\in \C$ are such that $| z + z'| <1$.  Then  the following limit
\begin{align*}
\overline{S}_{\mathfrak{p}}(X) : & =  \lim_{\delta\to 0^{+}}    \bigg\{   \sum_{i = 1}^n   \Big(\sum_{x \in X \cap (\delta, \infty)}\log \left| \frac{x/p_i^{+}  - 1 }{x/p_i^{-} - 1} \right|^2 - \sum_{x \in X\cap (-\infty, - \delta) } \log \left| \frac{x/p_i^{-}  - 1 }{x/p_i^{+} - 1} \right|^2\Big)  
\\
& - \E_{ \mathscr{P}_{z,z'}}  \sum_{i = 1}^n   \Big(\sum_{x \in X \cap (\delta, \infty)}\log \left| \frac{x/p_i^{+}  - 1 }{x/p_i^{-} - 1} \right|^2 - \sum_{x \in X\cap (-\infty, - \delta) } \log \left| \frac{x/p_i^{-}  - 1 }{x/p_i^{+} - 1} \right|^2\Big)
\bigg\}
\end{align*}
exists for $ \mathscr{P}_{z,z'}$-almost every configuration $X\in \Conf(\R^*)$.  Moreover, we have  
$$
 \exp ( \overline{S}_{\mathfrak{p}}) \in L^1(\Conf(\R^*),  \mathscr{P}_{z,z'}).
 $$
\end{prop}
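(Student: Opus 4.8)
\medskip
\noindent\textbf{Proof proposal.} The plan is to separate the two assertions --- almost sure convergence of the defining limit, and integrability of its exponential --- and to treat them by different means; the integrability claim is the hard one.

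\emph{Reformulation.} First I would collapse the bracketed quantity. Putting $g_i(x)=\log|x/p_i^{+}-1|^{2}-\log|x/p_i^{-}-1|^{2}$ and using $\log|(x/p_i^{-}-1)/(x/p_i^{+}-1)|^{2}=-g_i(x)$, the difference of the two sums over $X\cap(\delta,\infty)$ and $X\cap(-\infty,-\delta)$ becomes the truncated linear statistic $A_\delta(X):=\sum_{x\in X,\,|x|>\delta}f(x)$ with $f:=\sum_{i=1}^{n}g_i$, so that $\overline{S}_{\mathfrak p}=\lim_{\delta\to0^{+}}\big(A_\delta-\E_{\mathscr{P}_{z,z'}}A_\delta\big)$ is the regularized twisted linear statistic attached to $f$ in the sense of \eqref{defn-linear-statistics}--\eqref{defn-twist-statistics}. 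The features of $f$ that matter are: $f$ vanishes to first order at the origin (so $|f(x)|\le C|x|$ near $0$), it has an integrable logarithmic singularity at each $p_i^{\pm}$, and it tends to a (generically nonzero) constant at $\pm\infty$. From the Whittaker-function asymptotics \cite{Erdelyi-vol1} I would record that $\mathcal{K}_{z,z'}(x,x)$ decays exponentially as $|x|\to\infty$ --- so a configuration has only finitely many particles away from the origin --- while near $0$ the product $f(x)\,\mathcal{K}_{z,z'}(x,x)$ is at worst mildly singular, with the odd part of the singularity cancelling between the two half-lines.

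\emph{Existence of the limit.} I would show that $\{A_\delta-\E_{\mathscr{P}_{z,z'}}A_\delta\}$ is Cauchy as $\delta\to0^{+}$ by estimating the variance of the increment $\sum_{\delta'<|x|\le\delta}f(x)$ through the covariance formula for a linear statistic of the determinantal process with $J$-Hermitian kernel, written in terms of $\mathcal{K}_{z,z'}$ itself as in \cite{QB3}: near the origin the bound $|f(x)|\le C|x|$ makes the diagonal contribution $\int_{|x|\le\delta}f(x)^{2}\,\mathcal{K}_{z,z'}(x,x)\,dx$ small, and the off-diagonal contribution is controlled by the decay of $|\mathcal{K}_{z,z'}(x,y)|$ off the diagonal; the logarithmic singularities at $p_i^{\pm}$ and the behaviour at $\pm\infty$ are harmless (square-integrable against the locally bounded density, respectively exponentially small). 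Borel--Cantelli along $\delta=1/k$ then gives a.s.\ convergence, and monotonicity in $\delta$ removes the subsequence. When the density at $0$ is mild enough one even has $\E_{\mathscr{P}_{z,z'}}\sum_{x\in X}|f(x)|<\infty$, giving absolute convergence directly; the mean-subtraction is there to cover the general case.

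\emph{Integrability --- the crux.} For the $L^{1}$ bound I would use that $\mathscr{P}_{z,z'}$ is an $L$-process: under $|z+z'|<1$ it has an $L$-kernel $\mathsf{L}_{z,z'}$ (Borodin--Olshanski). For an $L$-process the expectation of a multiplicative functional $\prod_{x\in X}\varphi(x)$, hence --- after the appropriate regularization --- $\E_{\mathscr{P}_{z,z'}}[\exp(\overline{S}_{\mathfrak p})]$, is a ratio of (extended) Fredholm determinants built from $M_{\sqrt{\varphi}}\,\mathsf{L}_{z,z'}\,M_{\sqrt{\varphi}}$ and $\mathsf{L}_{z,z'}$, where $\varphi(x)=\prod_{i=1}^{n}|(x/p_i^{+}-1)/(x/p_i^{-}-1)|^{2}$ is the multiplicative weight carried by $\overline{S}_{\mathfrak p}$. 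One then has to show this extended determinant is finite and positive. The obstacle --- and the reason one needs both the $L$-kernel formalism and an extended notion of Fredholm determinant --- is that $\sqrt{\varphi}$ has simple zeros at the points $p_i^{+}$ and simple poles at the points $p_i^{-}$, so $M_{\sqrt{\varphi}}\,\mathsf{L}_{z,z'}\,M_{\sqrt{\varphi}}-\mathsf{L}_{z,z'}$ is not trace class, nor even Hilbert--Schmidt, near the $p_i^{-}$; the classical Fredholm determinant, and even the Carleman--$\det_2$ regularization, fail. It is precisely the \emph{balanced} choice of $\mathfrak p$ --- equally many positive and negative points --- that makes $\varphi$ bounded at $\pm\infty$ and lets the poles at the $p_i^{-}$ be absorbed against the zeros at the $p_i^{+}$ in the regularized determinant, so that it converges. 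I expect this step, the construction of the extended Fredholm determinant together with the trace-class estimate making it finite, to be the main difficulty; positivity of the determinant and the identification of the normalizing constant $\exp\big(-\lim_{\delta\to0^{+}}\E_{\mathscr{P}_{z,z'}}A_\delta\big)$ are then routine.
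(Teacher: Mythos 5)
Your two-pronged plan --- a.s.\ convergence via variance control and $L^{1}$ integrability via the $L$-kernel and extended Fredholm determinants --- is the same global strategy as the paper's, and the reformulation to $\lim_\delta(A_\delta-\E A_\delta)$ with $A_\delta$ a truncated linear statistic of $f=\sum_i g_i$ is essentially how the paper proceeds (via the Hilbert space $\mathscr{N}(L)$ and the embedding $\overline T_L$ of Proposition~\ref{prop-em}). Your treatment of the a.s.\ limit is in the right spirit; the one thing you do not make explicit, and which the paper needs, is that the variance of a twisted linear statistic of a $J$-Hermitian process with $\widehat K$ a projection takes the exact form \eqref{variance} (Lemma~\ref{lem-var}), so that the estimate reduces to $\int |f|^2 K_L(x,x)\,dx$ and $\iint |f(x)-f(y)|^2|K_L(x,y)|^2$ and is closed by Condition~\ref{con5}.

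The genuine gap is in the integrability step, which you flag as the crux but leave open and whose obstacle you partly misdiagnose. You assert that $\sqrt{\varphi}$ ``has simple poles at the points $p_i^{-}$'' so that $M_{\sqrt\varphi}\mathsf L M_{\sqrt\varphi}-\mathsf L$ is not even Hilbert--Schmidt near the $p_i^{-}$, and that the poles must be ``absorbed against the zeros.'' This is not what is going on: from \eqref{fp}, on $\R_{+}$ one uses the factor $(x/p_i^{+}-1)/(x/p_i^{-}-1)$ whose denominator never vanishes for $x>0$, and on $\R_{-}$ one uses $(x/p_i^{-}-1)/(x/p_i^{+}-1)$ whose denominator never vanishes for $x<0$, so $\varphi=|f_{\mathfrak p}|^{2}$ has only zeros on $\R^{*}$ (at the $p_i^{\pm}$, giving integrable log singularities) and no poles at all. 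The real obstruction is the accumulation of infinitely many particles at the origin, where $\varphi-1=O(|x|)$ is nonzero on every punctured neighbourhood, so $\prod_{x\in X}\varphi(x)$ does not converge absolutely. You also claim that ``even the Carleman--$\det_{2}$ regularization fails,'' but the paper's mechanism is precisely a $\det_{2}$ bound in disguise: the regularization $\widetilde\Psi_L[g]=\exp(\overline T_L[\log(g^\vee)])$ subtracts the mean, which corresponds exactly to the $e^{-\tr A}$ factor, and Lemma~\ref{lem-tilde-2-norm} applies Simon's inequality $|\det(1+A)e^{-\tr A}|\le e^{\|A\|_2^2/2}$ with $A=\sqrt{g-1}\,K_L\sqrt{g-1}$, whose Hilbert--Schmidt norm is controlled by $\int|g-1|^{2}K_L(x,x)\,dx$ (Lemma~\ref{fKf-norm}). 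Finiteness of this last integral is exactly what Condition~\ref{con5} (equivalently, $\int_{|x|<\varepsilon}x^{2}K_L(x,x)\,dx<\infty$ near the origin, finiteness elsewhere) guarantees once $|g-1|\le C|x|$, and the Whittaker $L$-kernel satisfies Condition~\ref{con5} by Lemma~\ref{4-5}. The bound is established first for $g$ with $\supp(g-1)$ away from the origin and then passed to the limit by Fatou. None of these ingredients appear in your proposal, so as it stands it does not establish the $L^1$ assertion.
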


\begin{namedthm*}{Theorem B}
Assume that the two parameters $z,z'\in \C$ are such that $| z + z'| <1$.  Then  the determinantal point process $\mathscr{P}_{z,z'}$  possesses the following property: for Lebesgue almost every $\mathfrak{p} = (p_1^{+}, \dots, p_n^{+}; p_1^{-}, \dots, p_n^{-}) \in \R_{+}^n \times \R_{-}^n$ of distinct points,  the reduced Palm measure 
$\mathscr{P}_{z,z'}^{\mathfrak{p}}$ is equivalent to the initial determinantal measure $\mathscr{P}_{z,z'}$. For the Radon-Nikodym derivative, we have the  $ \mathscr{P}_{z,z'}$-almost sure equality 
$$
\frac{d \mathscr{P}_{z,z'}^{ \mathfrak{p}}}{  d \mathscr{P}_{z,z'}} (X)=  \frac{\exp ( \overline{S}_{\mathfrak{p}}(X) )
}{\E_{\mathscr{P}_{z,z'}} \Big[\exp ( \overline{S}_{\mathfrak{p}} )
 \Big]}.
$$
\end{namedthm*}

\begin{rem}
In \cite{OQS},  the determinantal point processes $\mathscr{P}_{z,z'}$ are obtained as scaling limits of determinantal point processes on $\Conf(\Z')$ with hypergeometric kernel measures, where $\Z' = \frac{1}{2}+ \Z$ is the set of half-integers. Being point processes all concentrate on the set of {\it balanced configurations} with a finite number of particles:
$$
\{X \in \Conf(\Z'): \#(X\cap \Z'_{+}) = \#(X\cap \Z'_{-})< \infty\},
$$
the determinantal point processes with hypergeometric kernels are of course  {\it balanced rigid} in the sense of Theorem A.  However, as already shown in Hermitian kernel case, the rigidity property is not stable under taking limits. Indeed, orthogonal polynomial ensembles,  having a fixed number of particles, are  rigid in the sense of Ghosh \cite{Ghosh-sine} and Ghosh-Peres \cite{Ghosh-rigid}, while in general this is not the case for their scaling limits. For example, as Holroyd and Soo \cite{Soo} showed,  the  determinantal point process on the unit disk  $\D$ with Bergman kernel: 
$$
K_\mathrm{Berg}(z,w)= \frac{1}{\pi (1 -z \bar{w})^2}, \quad z, w \in \D,
$$
is not rigid (the Radon-Nikodym derivatives between this measure and its Palm measures are computed in \cite{QB3}), but  is nonetheless the limit of the following sequence of rigid determinantal point processes whose kernels are given by finite rank orthogonal projections:
$$
K_\mathrm{Berg}^{(n)}(z,w)= \frac{1}{\pi} \sum_{k = 0}^{n-1} (k + 1) (z\bar{w})^k, \quad z, w \in \D.
$$
\end{rem}

\subsection{Main results for general \texorpdfstring{$J$}{a}-Hermitian kernels}\label{intro-sec-J}
Our proofs of Theorem A and Theorem B do not proceed by limit transition from processes with finitely many particles and work for more general $J$-Hermitian kernels. 

We briefly recall the necessary definitions.
Let $P_{+}, P_{-}$ denote the orthogonal projections on $L^2(\R) = L^2(\R, dx)$ whose ranges are the subspaces $L^2(\R_{+})$ and $L^2(\R_{-})$ respectively.  Define a bounded linear operator $J$ on $L^2(\R)$ by $$J : = P_{+}- P_{-}.$$ Introduce  an indefinite $J$-scalar product $[\cdot, \cdot]$ on $L^2(\R)$ by the formula
$$
[ f, g] : = (Jf, g) = (P_{+}f, P_{+}g) - (P_{-}f, P_{-}g), \quad f, g \in L^2(\R),
$$
where $(\cdot, \cdot)$ denotes the usual scalar product in $L^2(\R)$. 
A bounded linear operator $K$ on $L^2(\R)$ is called $J$-self-adjoint if $[ Kf, g] = [ f, Kg]$ for any pair $f, g \in L^2(\R)$. By slightly abusing the notation, the  kernel of an integral operator $K$ will denote again by $K$. A kernel $K$ is called a $J$-Hermitian kernel, if the corresponding operator $K$ is $J$-self-adjoint. More precisely, $K(x,y)$ is $J$-Hermitian if it induces a bounded linear operator and if 
\begin{align}\label{J-herm}
 K(x,y) = \sgn(x) \sgn(y) \overline{K(y,x)}, \quad x, y \in \R^*,
\end{align}
where $\sgn(x)$ is the sign of the real number $x\in \R^*$.

By convention, a bounded measurable function $f: \R^* \rightarrow \C$ will be identified with the bounded linear operator $M_f$ on $L^2(\R)$ defined by 
$$
M_f (g) = fg, \text{ for any }g \in L^2(\R).
$$
The notation $fK$ (or $f \cdot K)$ and $Kf$ (or $K\cdot f$) stands for the composition operators $M_f \circ K$ and $K \circ M_f$ respectively.

Given a bounded linear operator $K$ on $L^2(\R)$, we set
\begin{align}\label{hat-o}
\widehat{K} : = \sgn \cdot  K + \chi_{\R_{-}} = P_{+} K + P_{-}(1 - K).
\end{align}
An operator $K$ is $J$-self-adjoint if and only if the operator $\widehat{K}$ is self-adjoint in the usual sense.

The following Theorem of Lytvynov gives a necessary and sufficient condition for the existence of a determinantal point process with a given $J$-Hermitian kernel.
\begin{thm}[E. Lytvynov \cite{Lytvynov-J}]\label{thm-0}
Let $K$ be a $J$-self-adjoint bounded linear operator on $L^2(\R)$. Assume that the operators $P_{+}KP_{+}$ and $P_{-}KP_{-}$ are non-negative. Assume also that, for any  bounded subsets $\Delta_1, \Delta_2$ of $\R$ such that $\Delta_1\subset \R_{+}$ and $\Delta_2 \subset \R_{-}$, the operators $\chi_{\Delta_i} K \chi_{\Delta_i} (i = 1, 2)$ are in trace-class, while $\chi_{\Delta_2}K\chi_{\Delta_1}$ is Hilbert-Schmidt. Then the integral kernel $K(x,y)$ of the operator $K$ is the correlation kernel of a determinantal point process on $\R$ if and only if $0\le \widehat{K}\le 1$.
\end{thm}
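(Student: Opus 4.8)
\medskip
\noindent\textbf{Proof strategy.}

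The plan is to deduce the criterion from the classical Macchi--Soshnikov existence theorem (via Lenard's criterion), using the correspondence $K\leftrightarrow\widehat K$ and the localization principle for point processes. The key algebraic fact --- equivalent to the observation, already recorded above, that $\widehat K$ is self-adjoint precisely when $K$ is $J$-self-adjoint --- is the identity
\[
K \;=\; P_{-} \,+\, J\,\widehat K ,
\]
with its localized form: for a bounded set $B=\Delta_{+}\sqcup\Delta_{-}$, $\Delta_{\pm}\subset\R_{\pm}$, setting $K_{B}=\chi_{B}K\chi_{B}$ and $\widehat K_{B}=\chi_{B}\widehat K\chi_{B}$, one has $K_{B}=\chi_{\Delta_{-}}+J\,\widehat K_{B}$; by the trace-class/Hilbert--Schmidt hypotheses $\widehat K_{B}$ is a self-adjoint, compact perturbation of $\chi_{\Delta_{-}}$, and $0\le\widehat K_{B}\le1$ for every such $B$ if and only if $0\le\widehat K\le1$ (the nontrivial direction holds because $\chi_{B_{n}}\widehat K\chi_{B_{n}}\to\widehat K$ strongly along an exhaustion). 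Since a point process on $\R$ with locally integrable first correlation is determined by, and can be reconstructed from, its restrictions to bounded boxes, and since the candidate correlation functions $\rho^{B}_{n}(x_{1},\dots,x_{n})=\det[K(x_{i},x_{j})]$ are automatically consistent across nested boxes and automatically non-negative (the $J$-Hermitian symmetry together with $P_{\pm}KP_{\pm}\ge0$ writes every such determinant as a positive block determinant times a Schur-complement factor of the form $D+B^{*}A^{-1}B$), the whole matter reduces to the finite-box assertion: a point process on $B$ with correlation functions $\rho^{B}_{n}$ exists if and only if $0\le\widehat K_{B}\le1$.

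On the bounded box I would invoke Lenard's criterion: the $\rho^{B}_{n}$ are the correlation functions of a (necessarily finite) point process on $B$ if and only if the alternating Janossy functionals $\sum_{n\ge0}\frac{(-1)^{n}}{n!}\int_{B^{n}}\rho^{B}_{m+n}(x_{1},\dots,x_{m},\cdot)$ are all non-negative, the growth clause being readily checked on a bounded set. These functionals --- and the generating functional $g\mapsto\E\prod_{x\in X}(1+g(x))$ --- are Fredholm-type determinants built from $K_{B}$; since the off-diagonal blocks of $K_{B}$ need not be trace class, one must use the extended (regularized) Fredholm determinant flagged in the Introduction. Substituting $K_{B}=\chi_{\Delta_{-}}+J\,\widehat K_{B}$ converts these quantities into ones governed by $\widehat K_{B}$: for instance an avoidance probability $\det(I-\chi_{E}K_{B})$ equals $\det(I-\chi_{E}\widehat K_{B}\chi_{E})$ when $E\subset\Delta_{+}$, and equals $\det(I-\chi_{E}K_{--}\chi_{E})$ when $E\subset\Delta_{-}$ --- the latter being exactly the Fredholm determinant of the compression $\chi_{E}\widehat K_{B}\chi_{E}$, hence non-negative because $0\le\widehat K_{B}\le1$ forces $0\le K_{\pm\pm}\le1$. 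The mixed and higher-order Janossy functionals demand a more elaborate regularized computation: it amounts to the particle--hole rearrangement on $\Delta_{-}$ that trades the infinitely many ``holes'' of the non-locally-trace-class operator $\widehat K_{B}$ for the finitely many genuine particles, but it is controlled by the same input, namely that both $\widehat K_{B}$ and $I-\widehat K_{B}$ lie between $0$ and $1$. I expect this verification to be the main obstacle: carrying out the particle--hole bookkeeping and justifying the convergence and multiplicativity of the extended Fredholm determinant in the \emph{absence} of trace-class-ness --- which is precisely why one cannot simply quote the Hermitian existence theorem for $\widehat K$ (it is not locally trace class) nor realize the sought process as a literal particle--hole image of a genuine determinantal point process.

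For the converse, run the localization in reverse: if a point process $\Prob$ with correlation kernel $K$ exists, its restriction to each bounded box $B$ is a finite point process whose Janossy functionals are genuine probabilities and hence non-negative; inserting this into the same determinantal identities and letting the test set $E\subset B$ vary forces $0\le\chi_{E}\widehat K_{B}\chi_{E}\le1$ for all $E$, hence $0\le\widehat K_{B}\le1$, and letting $B$ exhaust $\R$ gives $0\le\widehat K\le1$. A conceptually cleaner alternative, which I would keep in reserve and which is likely close to how the theorem is actually proved, avoids the Janossy calculus altogether: an operator $\widehat K$ with $0\le\widehat K\le1$ defines a gauge-invariant quasi-free state on the CAR algebra over $L^{2}(\R)$, and the Bogoliubov (particle--hole) automorphism supported on $L^{2}(\R_{-})$ carries it to a quasi-free state whose local particle distributions are exactly the determinantal point process with $J$-Hermitian correlation kernel $K$; this repackages the positivity into the representation theory of the CAR algebra, at the price of that formalism.
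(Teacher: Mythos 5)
The paper does not prove this theorem at all: it is quoted verbatim as Lytvynov's result \cite{Lytvynov-J}, so there is no in-paper argument to compare against; what follows assesses your sketch on its own terms.

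The preparatory reductions are sound --- the identity $K = P_{-} + J\widehat{K}$ and its boxed form, the equivalence of $0\le\widehat{K}\le 1$ with $0\le\widehat{K}_{B}\le 1$ for all boxes by a strong-operator exhaustion, and the Schur-complement factorization $\det M=\det A\cdot\det(D+B^{*}A^{-1}B)$ showing $\det[K(x_{i},x_{j})]\ge 0$ from $J$-Hermitianity and $P_{\pm}KP_{\pm}\ge 0$ alone. But you have not produced a proof; you have located the hard step and declined to carry it out, and you say so explicitly. The genuine gap is exactly the one you name: verifying Lenard positivity of all orders for the regularized Janossy-type functionals on a mixed box $B=\Delta_{+}\sqcup\Delta_{-}$, which is the only place $\widehat{K}\le 1$ can enter and which is the entire content of Lytvynov's theorem. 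The one-sided avoidance probabilities you check are the easiest special case; the general iterated alternating sums must be read off an extended Fredholm determinant whose negative-negative ``hole'' block $1-K_{--}$ is not locally trace class and whose off-diagonal blocks are only Hilbert--Schmidt --- precisely the obstacle you flag and leave unresolved, and also the reason (as the paper itself observes in \S\ref{sec-Olshanski-program}) that the naive particle-hole image breaks down on a continuous phase space. Absent a concrete regularization scheme and a positivity argument for those quantities, this is an outline rather than a proof. Your reserve route via gauge-invariant quasi-free states on the CAR algebra over $L^{2}(\R)$, with the Bogoliubov (particle-hole) automorphism supported on $L^{2}(\R_{-})$, is indeed close to how such results are actually established: it transfers the Janossy combinatorics into the positivity of the quasi-free state, which is exactly $0\le\widehat{K}\le 1$. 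If you want to complete the proof, that is the branch to develop; do not fight the extended Fredholm determinant directly.
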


The determinantal point process induced by a correlation kernel $K$ as in Theorem \ref{thm-0} will be denoted by $\PP_K$.

\subsubsection{Theorem A for general \texorpdfstring{$J$}{a}-Hermitian kernels}
We now formulate a general variant of Theorem A, namely, a sufficient condition on the $J$-Hermitian kernel $K$ for the determinantal point process $\PP_K$ to be balanced rigid.  For the purpose of our later use of Fourier analysis, we state our result in this part only for determinantal point processes with a singularity at infinity. The case with a single singularity at the origin, such as the determinantal point processes with Whittaker kernels, can be easily transformed to this case by the change of variables $x \mapsto 1/x$. 

We need the following two conditions on the kernel $K$ : Condition \ref{con1} guarantees that the $J$-Hermitian kernel $K$ is indeed a correlation kernel of a certain determinantal point process and the variance of the linear statistics 
$$
\sum_{x \in X} \sgn(x) \varphi(x), \quad (\text{where $X \in \Conf(\R)$}),
$$
can be expressed by a simple formula, see Lemma \ref{lem-var} below. Condition \ref{con2} guarantees that the diagonal coefficient $K(x,x)$ is locally integrable on $\R$ and controls the rate of decay of off-diagonal coefficients $K(x,y)$ when $|x-y|$ is large. The former condition on $K(x,x)$ implies in particular that the associated determinantal point process has no accumulation point at any point of the real line. 

\begin{con}\label{con1}
Assume that $K$ is the integral kernel of a bounded linear operator on $L^2(\R)$ such that
\begin{itemize}
\item  the operators $P_{+}KP_{+}$ and $P_{-}KP_{-}$ are non-negative. Moreover, for any bounded subsets $\Delta_1, \Delta_2$  of the real line such that $\Delta_1\subset \R_{+} $ and $\Delta_2 \subset \R_{-}$, the operators $\chi_{\Delta_i} K \chi_{\Delta_i} (i = 1, 2)$ are in trace-class, and the operator $\chi_{\Delta_2}K\chi_{\Delta_1}$ is Hilbert-Schmidt. 
\item the following operator 
$$
\widehat{K} := \sgn \cdot K +  \chi_{\R_{-}}
$$
 defines an {\it orthogonal projection} on $L^2(\R, dx)$.  
 \end{itemize}
 \end{con}

\begin{con}\label{con2} 
 Fix $M>0$. Assume  that the kernel $K$ satisfies the following conditions: 
\begin{itemize}
\item Let $K(x,x)$ be the diagonal value of the kernel $K$, then for any $R>0$, we have
\begin{align}\label{no-singularity}
\int_{| x| \le R} K(x,x)dx<\infty;
\end{align}
\item There exists a non-negative integrable function $\Phi \in L^1(\R, dt) $ satisfying
\begin{align}\label{tail-condition}
\int_{| t | \ge R } \Phi(t) dt = O (R^{-1}) \text{\, as \,} R \to \infty,
\end{align}
such that if $| x| \ge M, | y| \ge M$, then
\begin{align}\label{pt-es}
 | K(x,y)|^2 \le \Phi(x-y);
\end{align}
\end{itemize}
\end{con}

The general variant of Theorem A is
\begin{thm}[A sufficient condition for balanced rigidity]\label{intro-thm-rigid}
Let $K$ be a $J$-Hermitian kernel satisfying Conditions \ref{con1} and \ref{con2}.  Then the determinantal point process $\PP_K$ possesses the following rigid property: for any bounded Borel subset $B \subset \R$, there exists a measurable function $N_B^{out}: \Conf(\R) \rightarrow \Z$, such that for $\PP_K$-almost every $X\in \Conf(\R)$, we have
$$
\#(B \cap X \cap \R_{+}) -  \#(B \cap X \cap \R_{-})  = N^{out}_B(X\setminus B).
$$
\end{thm}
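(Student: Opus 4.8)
\emph{The plan} is to deduce balanced rigidity from the Ghosh--Ghosh--Peres variance criterion \cite{Ghosh-sine, Ghosh-rigid}, but applied to the \emph{twisted} linear statistics $\sum_{x\in X}\sgn(x)\varphi(x)$ rather than to ordinary ones. Set $N(X):=\#(B\cap X\cap\R_{+})-\#(B\cap X\cap\R_{-})$. It suffices to construct, for every $\varepsilon>0$, a bounded Borel function $\varphi\colon\R\to[0,1]$ with compact support and $\varphi\equiv 1$ on $B$ such that $\Var_{\PP_K}\bigl(\sum_{x\in X}\sgn(x)\varphi(x)\bigr)<\varepsilon$. Indeed, since $\varphi\equiv 1$ on $B$ we have $\sum_{x\in X}\sgn(x)\varphi(x)=N(X)+\sum_{x\in X\setminus B}\sgn(x)\varphi(x)$, and the second sum is an almost surely finite function of $X\setminus B$; a variance bound $\varepsilon$ then places $N$ within $\sqrt\varepsilon$ in $L^{2}(\PP_K)$ of a $\sigma(X\setminus B)$-measurable function, and passing to a subsequence with almost sure convergence as $\varepsilon\to 0$ yields the required integer-valued $N_{B}^{out}$. (That each such sum is a genuine $L^{2}(\PP_K)$ random variable follows again from \eqref{no-singularity} and the compact support of $\varphi$.)

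The next step is the variance formula. By Lemma \ref{lem-var}, together with the identity $\int_{\R}|K(x,y)|^{2}\,dy=K(x,x)$ for a.e.\ $x$ — a consequence of Condition \ref{con1}, since $\widehat K=\sgn\cdot K+\chi_{\R_{-}}$ being an orthogonal projection forces the integral operator $\sgn\cdot K$ to satisfy $(\sgn\cdot K)^{2}(x,x)=K(x,x)$, while $(\sgn\cdot K)^{2}(x,x)=\int_{\R}|K(x,y)|^{2}\,dy$ by $J$-Hermiticity — the variance of the twisted statistic collapses to the Dirichlet-type energy
\[
\Var_{\PP_K}\Bigl(\sum_{x\in X}\sgn(x)\varphi(x)\Bigr)=\tfrac12\iint_{\R\times\R}\bigl(\varphi(x)-\varphi(y)\bigr)^{2}\,|K(x,y)|^{2}\,dx\,dy ,
\]
exactly as for a Hermitian projection kernel. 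The task is thereby reduced to making an $\dot H^{1/2}$-type energy of $\varphi$ small subject to $\varphi\equiv 1$ on the bounded set $B$.

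For the construction, fix $T\ge M$ with $B\subseteq[-T,T]$ and, for a large parameter $\ell$, let $\varphi_{\ell}$ be the even function equal to $1$ on $[-T,T]$, to $0$ on $\{|x|\ge Te^{\ell}\}$, and to $1-\ell^{-1}\log(|x|/T)$ for $T\le|x|\le Te^{\ell}$ — a logarithmic plateau. Split the energy according to $\min(|x|,|y|)<T$ or $\ge T$. On $\{\min(|x|,|y|)<T\}$ one of the two values of $\varphi_{\ell}$ equals $1$, so by symmetry the contribution is $\iint_{|x|<T,\;|y|\ge T}(1-\varphi_{\ell}(y))^{2}|K(x,y)|^{2}\,dx\,dy$; bounding the $y$-integral by $\int_{\R}|K(x,y)|^{2}\,dy=K(x,x)$ and noting $(1-\varphi_{\ell}(y))^{2}\downarrow 0$ pointwise, dominated convergence (majorant $K(x,x)\mathbf 1_{\{|x|<T\}}$, integrable by \eqref{no-singularity}) makes this contribution tend to $0$. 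On $\{\min(|x|,|y|)\ge T\}$ we have $|x|,|y|\ge M$, so $|K(x,y)|^{2}\le\Phi(x-y)$ by \eqref{pt-es}, and since the integrand is non-negative the contribution is at most $\tfrac12\iint_{\R\times\R}(\varphi_{\ell}(x)-\varphi_{\ell}(y))^{2}\Phi(x-y)\,dx\,dy=\int_{\R}|\widehat{\varphi_{\ell}}(\xi)|^{2}\bigl(\widehat\Phi(0)-\mathrm{Re}\,\widehat\Phi(\xi)\bigr)\,d\xi$. The tail bound \eqref{tail-condition} gives $\widehat\Phi(0)-\mathrm{Re}\,\widehat\Phi(\xi)=\int_{\R}\Phi(t)(1-\cos\xi t)\,dt=O(\min(|\xi|,1))$ (via the elementary consequence $\int_{|t|\le s}t^{2}\Phi(t)\,dt=O(s)$ of \eqref{tail-condition}), so this is $\lesssim\int_{\R}|\widehat{\varphi_{\ell}}(\xi)|^{2}\min(|\xi|,1)\,d\xi$, which a direct computation on the explicit $\varphi_{\ell}$ (whose Fourier transform is of order $(\ell|\xi|)^{-1}$ across the scaling band $T^{-1}e^{-\ell}\lesssim|\xi|\lesssim T^{-1}$, as in Ghosh--Peres) bounds by $O(1/\ell)$. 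Hence $\Var_{\PP_K}(\sum_{x}\sgn(x)\varphi_{\ell}(x))\to 0$ as $\ell\to\infty$, completing the proof of Theorem \ref{intro-thm-rigid}.

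The hard part is the far-field estimate $\iint(\varphi_{\ell}(x)-\varphi_{\ell}(y))^{2}\Phi(x-y)\,dx\,dy=O(1/\ell)$. Because $K$ is not translation invariant there is no literal structure factor to exploit, and a crude pointwise bound on $(\varphi_{\ell}(x)-\varphi_{\ell}(y))^{2}$ — or the use of a fixed bump dilated to scale $e^{\ell}$ in place of $\varphi_{\ell}$ — only yields $O(1)$; one genuinely needs the interplay between the $O(1/R)$ decay in \eqref{tail-condition} and the self-similarity of $\varphi_{\ell}$ on the logarithmic scale, i.e.\ to reprove (through the Fourier reduction above, or directly via the substitution $x=e^{s},\,y=e^{t}$, which turns the relevant weight into the exponentially decaying $\sigma^{2}/\sinh^{2}(\sigma/2)$ and so keeps $\int_{T+v}^{Te^{\ell}}(\log(x/(x-v)))^{2}\,dx=O(|v|)$) the Ghosh--Peres fact that a logarithmic plateau has vanishingly small $\dot H^{1/2}$-energy. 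A secondary subtlety is that Condition \ref{con2} gives no pointwise control of $K$ near the singularity at the origin, only \eqref{no-singularity}; that region must therefore be isolated and treated purely through the identity $\int_{\R}|K(x,y)|^{2}\,dy=K(x,x)$ and dominated convergence, as above.
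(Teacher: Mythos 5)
Your proof is correct, and the overall strategy is the same as the paper's: apply the Ghosh--Peres variance criterion to the twisted statistics $T[\varphi]=\sum_{x\in X}\sgn(x)\varphi(x)$, and use Lemma \ref{lem-repro} together with Lemma \ref{lem-var} to collapse the variance to the energy $\tfrac12\iint|\varphi(x)-\varphi(y)|^{2}|K(x,y)|^{2}\,dx\,dy$, then make this small subject to $\varphi\approx 1$ on $B$. Where you diverge is precisely at the key construction in the paper's Lemma \ref{lem-app}: the paper builds $\varphi_{n}=\check{\psi}_{n}$ from a Schwartz approximate identity $\psi_{n}$ squeezed near $\xi=0$ (exploiting, as they remark, the non-integrability of $1/|\xi|$ to make $\int|\xi|\psi_{n}(\xi)^{2}\,d\xi$ small while $\int\psi_{n}=1$), whereas you take the classical Ghosh--Peres logarithmic plateau $\varphi_{\ell}$ directly in position space and show $\int|\xi||\widehat{\varphi_{\ell}}(\xi)|^{2}\,d\xi=O(1/\ell)$. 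These are dual incarnations of the same capacity degeneracy, but they are genuinely distinct constructions. Two further differences: you derive the needed Lipschitz bound $\widehat{\Phi}(0)-\mathrm{Re}\,\widehat{\Phi}(\xi)=O(\min(|\xi|,1))$ directly from \eqref{tail-condition} (via $\int_{|t|\le s}t^{2}\Phi\,dt=O(s)$), while the paper invokes Boas's equivalence theorem \cite{Boas}; and, because $\varphi_{\ell}\equiv 1$ on $[-T,T]\supset B$, your decomposition $T[\varphi_{\ell}]=N+\sum_{x\in X\setminus B}\sgn(x)\varphi_{\ell}(x)$ is exact, so the passage from the variance bound to $\mathcal F^{\PP_K}_{\R\setminus B}$-measurability of $N$ is slightly cleaner than the paper's, which has to track the error $\|1-\varphi_{n}\|_{L^{\infty}([-R,R])}$. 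Your two-piece split of the energy (near-field $\min(|x|,|y|)<T$ killed by dominated convergence using the reproducing identity and \eqref{no-singularity}; far-field reduced to the $\widehat{\Phi}$ estimate via \eqref{pt-es}) reorganizes the paper's four-piece split $I_{1}+\dots+I_{4}$ but contains the same content. In short: correct, same scheme, different and arguably more transparent test function.
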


 In particular, if $B \subset \R_{+}$ is a bounded subset in the positive semi-axis, then $\#(B \cap X)$ is $\PP_K$-almost surely determined by $X \cap (\R\setminus B)$. If $B\subset\R_{-}$ is a bounded subset in the negative semi-axis, the same result holds.

Using  a variant of Proposition 8. 1  in \cite{QB3}, we derive from Theorem \ref{intro-thm-rigid}  the following corollary.
\begin{cor}
Let $K$ be a $J$-Hermitian kernel satisfying Conditions \ref{con1} and \ref{con2}.  Let $n, k$ be two non-negative integers such that $n\ne k$. Then for Lebesgue-almost every $\mathfrak{p} = (p_1^{+}, \dots, p_n^{+}; p_1^{-}, \dots, p_k^{-}) \in \R_{+}^n \times \R_{-}^k $ of distinct points,  the reduced Palm measure 
$\PP_{K}^{\mathfrak{p}}$ and the initial determinantal measure $\PP_K$ are mutually singular.
\end{cor}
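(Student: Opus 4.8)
The plan is to obtain the mutual singularity as a soft consequence of the balanced rigidity of Theorem~\ref{intro-thm-rigid}, combined with the Palm (Campbell) disintegration of $\PP_K$. Since $\PP_K$ is a determinantal point process it has correlation functions $\rho_{n+k}$, so for Lebesgue-almost every $\mathfrak{p}=(p_1^{+},\dots,p_n^{+};p_1^{-},\dots,p_k^{-})$ (with $\rho_{n+k}(\mathfrak{p})>0$) the reduced Palm measure $\PP_K^{\mathfrak{p}}$ is defined, and for every non-negative Borel function $g$ on $(\R^*)^{n+k}\times\Conf(\R)$ one has the Campbell identity
$$
\E_{\PP_K}\!\!\sum_{\substack{q_1^{+},\dots,q_n^{+}\in X\cap\R_{+}\ \mathrm{distinct}\\ q_1^{-},\dots,q_k^{-}\in X\cap\R_{-}\ \mathrm{distinct}}}\!\! g\big(\mathfrak{q};X\setminus\mathfrak{q}\big)
=\int_{\R_{+}^{n}\times\R_{-}^{k}}\rho_{n+k}(\mathfrak{q})\,\E_{\PP_K^{\mathfrak{q}}}\!\big[g(\mathfrak{q};\,\cdot\,)\big]\,d\mathfrak{q}.
$$

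First I would fix a bounded open ball $B\subset\R$; by Theorem~\ref{intro-thm-rigid} there is a measurable $N_B^{out}$ with $\PP_K(D_B)=1$, where
$$
D_B:=\Big\{X\in\Conf(\R):\ \#(B\cap X\cap\R_{+})-\#(B\cap X\cap\R_{-})=N_B^{out}(X\setminus B)\Big\}.
$$
Applying the Campbell identity to $g(\mathfrak{q};Y)=\mathbf{1}[\mathfrak{q}\subset B]\cdot\mathbf{1}[Y\cup\mathfrak{q}\notin D_B]$ kills the left-hand side, so for Lebesgue-almost every $\mathfrak{q}\in(B\cap\R_{+})^{n}\times(B\cap\R_{-})^{k}$ (with $\rho_{n+k}(\mathfrak{q})>0$) we get $\PP_K^{\mathfrak{q}}\big(Y:\,Y\cup\mathfrak{q}\notin D_B\big)=0$, i.e.\ $\PP_K^{\mathfrak{q}}$-almost surely $Y\cup\mathfrak{q}\in D_B$. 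Exhausting $\R$ by the nested balls $B_j=(-j,j)$ and applying this for each $j$, we conclude: for Lebesgue-almost every $\mathfrak{q}\in\R_{+}^{n}\times\R_{-}^{k}$ there is a bounded ball $B=B(\mathfrak{q})$ containing all coordinates of $\mathfrak{q}$ such that $\PP_K^{\mathfrak{q}}$-almost surely $Y\cup\mathfrak{q}\in D_B$.

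Next I would expand that identity. The tuple $\mathfrak{q}$ contributes exactly $n$ points to $B\cap\R_{+}$, exactly $k$ points to $B\cap\R_{-}$, and no points to $\R\setminus B$, whence $\PP_K^{\mathfrak{q}}$-almost surely
$$
\#(B\cap Y\cap\R_{+})-\#(B\cap Y\cap\R_{-})+(n-k)=N_B^{out}(Y\setminus B),
$$
while $\PP_K$-almost surely $\#(B\cap Y\cap\R_{+})-\#(B\cap Y\cap\R_{-})=N_B^{out}(Y\setminus B)$. Since $n\neq k$, the Borel set $D_B$ therefore satisfies $\PP_K(D_B)=1$ and $\PP_K^{\mathfrak{q}}(D_B)=0$; thus $\PP_K$ and $\PP_K^{\mathfrak{q}}$ live on the disjoint Borel sets $D_B$ and $\Conf(\R)\setminus D_B$, which is exactly the claimed mutual singularity.

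The only genuinely delicate point is the Palm/Campbell step: one must verify that the event $\{(\mathfrak{q},Y):Y\cup\mathfrak{q}\notin D_B\}$ is jointly measurable in $(\mathfrak{q},Y)$ and organize the exhaustion so that the exceptional Lebesgue-null set of $\mathfrak{q}$ is independent of the ball chosen — this is precisely the content of the ``variant of Proposition~8.1 of \cite{QB3}'' invoked in the statement. Everything else reduces to the elementary particle count above, and the hypothesis $n\neq k$ is used solely through the non-vanishing of the shift $n-k$.
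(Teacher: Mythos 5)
Your proposal is correct, and it is essentially the paper's approach: the paper derives the corollary from Theorem~\ref{intro-thm-rigid} by invoking a "variant of Proposition~8.1 in \cite{QB3}," and what you have written out — transferring $\PP_K(D_B)=1$ through the Campbell/Palm disintegration to obtain $\PP_K^{\mathfrak q}$-a.s.\ membership of $Y\cup\mathfrak q$ in $D_B$, then reading off the shift by $n-k\neq 0$ in the balanced particle count — is exactly what that variant amounts to. The exhaustion over nested balls to make the Lebesgue-null exceptional set independent of $B$ is handled correctly.
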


\subsubsection{Theorem B for general \texorpdfstring{$J$}{a}-Hermitian kernels}
We now formulate a general variant of Theorem B, namely, a sufficient condition for the determinantal point process to have {\it balanced Palm equivalence property} in the sense of Theorem B, see also Definition \ref{defn-Palm-equivalence} below.  In this part, let us state the result in the case where there is a single singularity at the origin (rather than  a singularity at infinity).

We first need the definition of $L$-processes of Borodin and Olshanski.

\begin{defn}[$L$-kernel]\label{defn-L-kernel}
Given a bounded linear operator $K$ on $L^2(\R)$, if $1 - K$ is invertible, then we define the $L$-operator of $K$ by 
\begin{align*}
L = K(1-K)^{-1}. 
\end{align*}
 In order to emphasize that the operator $K$ depends on $L$, we will sometimes write $K_L$ instead of $K$, thus having 
$$
K_L = L(1+L)^{-1}.
$$
\end{defn}

\begin{con}\label{con35}
Assume that $L$ is a bounded operator on $L^2(\R^*,dx)$ having the following block form: 
\begin{align}\label{L-integrable}
 L(x,y) = \left[\begin{array}{cc} 0  &    V\\  -V^*& 0   \end{array}\right]= \left[\begin{array}{cc} 0  &    \frac{  A^{+}(x) A^{-}(y)}{x-y}\\   \frac{  A^{-}(x) A^{+}(y)}{x-y} & 0   \end{array}\right], 
 \end{align}
where $A \in C^1(\R^*) \cap L^2(\R^*, dx)$ is a real-valued function and 
$$
A^{+} := A \chi_{\R_{+}} \an A^{-}: = A \chi_{\R_{-}}.
$$ 
In other words, the operator $L$ admits an integral kernel given by 
\begin{align}\label{L-kernel-int}
L(x,y) =  \frac{A^{+}(x) A^{-} (y) + A^{-} (x) A^{+}(y)}{x-y} ,\, x, y \in \R^*.
\end{align}  
 We will also assume that the support of the funcition $A$ is the whole punctured line $\R^*$.
\end{con}

\begin{lem}\label{lem-V-intro}
Let $L$ be an operator satisfying Condition \ref{con35}.  Then the operator $K_L$ is a $J$-self-adjoint operator satisfying all the conditions of  the Lytvynov's Theorem \ref{thm-0}. In particular, $K_L$ is the correlation kernel of a determinantal point process on $\R^*$.
\end{lem}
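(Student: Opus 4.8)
\textbf{Proof plan for Lemma \ref{lem-V-intro}.}

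The plan is to verify the three hypotheses of Lytvynov's Theorem \ref{thm-0} in turn: (i) $J$-self-adjointness of $K_L$; (ii) non-negativity of $P_{+}K_LP_{+}$ and $P_{-}K_LP_{-}$, together with the trace-class and Hilbert--Schmidt requirements on the corners; and (iii) the spectral bound $0 \le \widehat{K_L} \le 1$. The key structural observation is that the block anti-diagonal form \eqref{L-integrable} says precisely that $L$ anticommutes with $J$, i.e. $JL = -LJ$, equivalently $JLJ = -L$. Since $L$ is bounded and self-adjoint in the ordinary sense (the kernel \eqref{L-kernel-int} satisfies $L(x,y) = \overline{L(y,x)}$ because $A$ is real-valued and the kernel is manifestly symmetric), and $1 + L$ is invertible with $(1+L)^{-1}$ also a function of $L$, the relation $JLJ = -L$ forces $J(1+L)^{-1}J = (1 - L)^{-1}$, hence
\begin{align*}
J K_L J = J L (1+L)^{-1} J = (JLJ)(J(1+L)^{-1}J) = (-L)(1-L)^{-1} = -L(1-L)^{-1}.
\end{align*}
From this one computes $\widehat{K_L} = P_+ K_L + P_-(1 - K_L)$; I would show directly that $\widehat{K_L}$ is self-adjoint, which by the stated equivalence in the excerpt (``$K$ is $J$-self-adjoint iff $\widehat K$ is self-adjoint'') gives (i). Concretely, writing $K_L$ in $2\times 2$ block form against the decomposition $L^2(\R) = L^2(\R_+)\oplus L^2(\R_-)$, the anti-diagonality of $L$ makes the diagonal blocks of $K_L$ equal to even functions of $L$ (self-adjoint) and the off-diagonal blocks adjoint to one another, from which the self-adjointness of $\widehat{K_L}$ is a short block computation.

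For (iii), the cleanest route is to exhibit $\widehat{K_L}$ as a concrete orthogonal projection or at least to pin down its spectrum. Since $\widehat{K_L}$ is self-adjoint, it suffices to show $\widehat{K_L}^2 = \widehat{K_L}$, or alternatively $0 \le \widehat{K_L} \le 1$ via a functional-calculus identity. I expect that using $K_L = L(1+L)^{-1}$ and the relation $JLJ = -L$, one gets $\widehat{K_L} = (1 + \widehat{L})^{-1}\widehat{L}$-type expression is \emph{not} quite right because $\widehat{\cdot}$ is not multiplicative; instead I would compute $\widehat{K_L}$ block-by-block and recognize it. Writing $V$ for the upper-right corner as in \eqref{L-integrable}, a direct inversion of $1 + L = \begin{bmatrix} 1 & V \\ -V^* & 1\end{bmatrix}$ gives $K_L = L(1+L)^{-1}$ with explicit blocks in terms of $(1 + VV^*)^{-1}$ and $(1 + V^*V)^{-1}$; substituting into $\widehat{K_L} = P_+K_L + P_-(1-K_L)$ should produce an operator whose square is itself. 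The positivity statements $P_\pm K_L P_\pm \ge 0$ will follow because these diagonal corners turn out to equal $VV^*(1+VV^*)^{-1}$ and $V^*V(1+V^*V)^{-1}$ respectively (up to the $P_-$ bookkeeping), which are manifestly non-negative as functions of the non-negative operators $VV^*$, $V^*V$.

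For the remaining trace-class/Hilbert--Schmidt conditions in (ii), I would argue locally: for a bounded $\Delta_1 \subset \R_+$ and $\Delta_2 \subset \R_-$ at positive distance from $0$ and from $\infty$, the kernels of the corners of $K_L$ are built from $V$ and $(1+VV^*)^{-1}$, and since $A \in C^1(\R^*)\cap L^2$, the kernel $V(x,y) = A^+(x)A^-(y)/(x-y)$ is smooth and bounded on $\Delta_1 \times \Delta_2$ (the denominator is bounded away from zero), so $\chi_{\Delta_2} V \chi_{\Delta_1}$ is Hilbert--Schmidt; the diagonal corners $\chi_{\Delta_i} K_L \chi_{\Delta_i}$ have continuous kernels on a compact set and the rank-one-type building blocks make them trace-class after checking the resolvent factors preserve this. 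The main obstacle I anticipate is \textbf{the spectral identity $0 \le \widehat{K_L} \le 1$}: because the hat operation is not an algebra homomorphism, one cannot simply quote $0 \le L/(1+L) \le 1$; the argument must go through the explicit $2\times 2$ block inversion and a genuine (if short) computation showing the resulting self-adjoint operator is a projection — equivalently, identifying $\Ran \widehat{K_L}$ as the graph-type subspace $\{(f, V^*(1+VV^*)^{-1}f \oplus \cdots)\}$ associated with $V$. Everything else is bookkeeping with the block decomposition and the hypotheses on $A$.
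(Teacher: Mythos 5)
Your proposal is broadly in the right direction — block decomposition of $K_L = L(1+L)^{-1}$ using the explicit inversion of $1+L = \begin{bmatrix}1 & V\\ -V^* & 1\end{bmatrix}$, recognition of the diagonal blocks as $VV^*(1+VV^*)^{-1}$ and $V^*V(1+V^*V)^{-1}$ for positivity, and a projection computation for $\widehat{K_L}$. That is close in spirit to what the paper does: the paper passes from Condition~\ref{con35} to Condition~\ref{con-four} via Lemma~\ref{4-5}, then in Lemma~\ref{lem-V} uses the block form of Remark~\ref{rem-K-block} and cites Proposition~\ref{prop-proj} (from \cite{BO-R}) for the fact that $\widehat{K_L}$ is the orthogonal projection onto $\{Vh\oplus h : h\in L^2(\R_-)\}$. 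Your plan to verify $\widehat{K_L}^2=\widehat{K_L}$ directly by block algebra is a legitimate substitute for that citation.

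However, there is a genuine error in your justification of $J$-self-adjointness. You write that ``$L$ is bounded and self-adjoint in the ordinary sense'' because the kernel \eqref{L-kernel-int} ``is manifestly symmetric.'' It is not: the numerator $A^+(x)A^-(y)+A^-(x)A^+(y)$ is symmetric but the denominator $x-y$ changes sign, so $L(y,x) = -L(x,y)$ and hence $L^* = -L$; $L$ is \emph{skew}-adjoint, not self-adjoint. (The block form $\bigl[\begin{smallmatrix}0 & V\\ -V^* & 0\end{smallmatrix}\bigr]$ makes this visible immediately.) This is not a cosmetic slip: if $L$ really were self-adjoint, then $K_L = L(1+L)^{-1}$ would be self-adjoint in the ordinary sense, which would contradict the $J$-Hermitian (and generically non-Hermitian) structure you are trying to establish. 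The correct observation is that the antisymmetry gives $K_L^* = (-L)(1-L)^{-1}$, which coincides with the $JK_LJ = -L(1-L)^{-1}$ you computed from $JLJ = -L$; together these two facts yield $K_L^* = JK_LJ$, i.e.\ $J$-self-adjointness, without needing the erroneous claim.

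Your treatment of the trace-class/Hilbert--Schmidt hypotheses is also underspecified. Asserting that ``continuous kernels on a compact set'' together with ``rank-one-type building blocks'' give trace-class does not, as stated, constitute a proof (continuity of a kernel on a compact rectangle gives Hilbert--Schmidt, not trace-class, and Mercer-type conclusions require positivity to be invoked explicitly). The paper's argument is cleaner and you should emulate it: for the diagonal corners, use the operator inequality $0 \le \chi_{\Delta_1}K_L\chi_{\Delta_1} = \chi_{\Delta_1}VV^*(1+VV^*)^{-1}\chi_{\Delta_1} \le \chi_{\Delta_1}VV^*\chi_{\Delta_1}$ and the fact that $\chi_{\Delta_1}V$ is Hilbert--Schmidt (a direct estimate on the kernel, as in the proof of Lemma~\ref{4-5}); then a positive operator dominated by a trace-class one is trace-class. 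For the off-diagonal corner, $\chi_{\Delta_2}K_L\chi_{\Delta_1} = -\chi_{\Delta_2}(1+V^*V)^{-1}V^*\chi_{\Delta_1}$ is Hilbert--Schmidt by the ideal property. Note also that the appropriate version of Theorem~\ref{thm-0} here (singularity at the origin, as in Remark~\ref{two-kind-s}) requires the conditions for sets at positive distance from $0$ which may be \emph{unbounded}, not merely compact sets at positive distance from both $0$ and $\infty$; the estimate $\|\chi_{\Delta_1}V\|_2^2 \le \|A^+\|_2^2\|A^-\|_2^2/\mathrm{dist}(\Delta_1,\Delta_2)^2$ handles that without any compactness.
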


\begin{defn}[$L$-processes]
We denote by $\mu_L$ the determinantal point process on $\R^*$ whose correlation kernel is $K_L = L(1+L)^{-1}$, that is 
\begin{align}\label{muL}
\mu_L : = \PP_{K_L}.
\end{align}
Following Borodin and Olshanski, such processes will be called $L$-processes.
\end{defn}

We need the following auxiliary propositions.

Let $\mathfrak{p} =  (p^{+}_1, \dots p^{+}_n;  p^{-}_1, \dots, p^{-}_n)$ be a $2n$-tuple of real numbers  such that $p_i^{+} > 0$ and $p^{-}_i <0$ for $i = 1, \dots, n$. Moreover, assume that $p_i^{+} \ne p_j^{+}, p_i^{-} \ne p_j^{-}$ when $i \ne j$.  Define 
\begin{align}\label{fp}
f_{\mathfrak{p}} (x) =    \prod_{i = 1}^n \left(\frac{x/p_i^{+}  - 1 }{x/p_i^{-} - 1} \chi_{\R_{+}}(x)    + \frac{x/p_i^{-}  - 1 }{x/p_i^{+} - 1} \chi_{\R_{-}}(x) \right).
\end{align}

\begin{prop}\label{intro-Palm-prop}
Let $L$ be an operator satisfying Condition \ref{con35}.  Let $\mu_L$ be the determinantal point process on $\R^*$ whose correlation kernel is $K_L = L(1+L)^{-1}$. Then the reduced Palm measure $\mu_L^{\mathfrak{p}}$ conditioned at a $2n$-tuple of distinct points 
\begin{align}\label{intro-p}
\mathfrak{p} = (p_1^{+}, \dots, p_n^{+}; p_1^{-}, \dots, p_n^{-}) \in \R_{+}^n \times \R_{-}^n, 
\end{align}
is again an $L$-process and is given by 
\begin{align}\label{intro-palm-rel}
\mu_L^{\mathfrak{p}} = \mu_{f_{\mathfrak{p}} L f_{\mathfrak{p}}}. 
\end{align}
\end{prop}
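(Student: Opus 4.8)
The plan is to reduce the statement about the reduced Palm measure $\mu_L^{\mathfrak{p}}$ to a sequence of single-point Palm conditionings, and at each stage identify how the $L$-kernel transforms. Recall that for a determinantal point process with correlation kernel $K$, the reduced Palm measure conditioned at a single point $p$ is again determinantal, with correlation kernel $K^{p}(x,y) = K(x,y) - K(x,p)K(p,y)/K(p,p)$. The key algebraic fact I would exploit is the well-known correspondence (going back to Borodin--Olshanski and Shirai--Takahashi) between conditioning on a point and a rank-one modification on the level of the $L$-operator: if $L$ is the $L$-operator of $K$, then the $L$-operator of the Palm kernel $K^{p}$ is obtained from $L$ by a simple rank-one update. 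First I would make this precise: writing $K_L = L(1+L)^{-1}$ and using the resolvent identity, one checks that conditioning at $p$ corresponds to passing from $L$ to the operator $L$ acting on the subspace $\{\delta_p\}^{\perp}$, suitably interpreted, which after conjugation by a multiplier turns into multiplication of $L$ on the left and right by the function $x \mapsto (x - p)/(\text{normalization})$ — up to the precise normalization this is exactly the factor appearing in $f_{\mathfrak{p}}$.

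Concretely, I would proceed inductively on $n$. For $n = 0$ there is nothing to prove. For the inductive step, I would first condition at the single positive point $p_1^{+}$ and then at the single negative point $p_1^{-}$ (or handle one Palm point at a time, alternating signs so that the $J$-Hermitian block structure of Condition \ref{con35} is preserved at each stage). The crucial computation is that conditioning the $L$-process $\mu_L$ at the point $p_1^{+} \in \R_{+}$ produces the $L$-process $\mu_{g L g}$ where $g(x) = (x/p_1^{+} - 1)$ on $\R_{+}$ and $g(x) = (x/p_1^{+} - 1)^{-1}$ on $\R_{-}$; then conditioning further at $p_1^{-} \in \R_{-}$ multiplies by the analogous factor with roles of $\R_{+}, \R_{-}$ reversed. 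Multiplying these single-step factors over $i = 1, \dots, n$ yields exactly $f_{\mathfrak{p}}$ as defined in \eqref{fp}, giving $\mu_L^{\mathfrak{p}} = \mu_{f_{\mathfrak{p}} L f_{\mathfrak{p}}}$. One should check that $f_{\mathfrak{p}} L f_{\mathfrak{p}}$ still satisfies Condition \ref{con35}: the kernel $f_{\mathfrak{p}}(x) L(x,y) f_{\mathfrak{p}}(y)$ retains the off-diagonal block form with $A^{\pm}$ replaced by $f_{\mathfrak{p}} A^{\pm}$ (since $f_{\mathfrak{p}}$ is diagonal and real-valued, and the $\chi_{\R_{\pm}}$ structure is respected), and $f_{\mathfrak{p}} A \in C^1(\R^*) \cap L^2(\R^*, dx)$ with full support because $f_{\mathfrak{p}}$ is a ratio of polynomials that is smooth, nonvanishing, and bounded away from $0$ and $\infty$ on compact subsets of $\R^*$, and because $f_{\mathfrak{p}}(x) \to 1$ as $x \to \pm\infty$ so square-integrability near infinity is preserved; near the origin $f_{\mathfrak{p}}$ is bounded, so $L^2$ is preserved there too. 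Then Lemma \ref{lem-V-intro} applies to $f_{\mathfrak{p}} L f_{\mathfrak{p}}$ and the right-hand side $\mu_{f_{\mathfrak{p}} L f_{\mathfrak{p}}}$ genuinely makes sense.

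The main obstacle I anticipate is the single-point step itself, i.e. proving rigorously that conditioning an $L$-process at one point $p$ multiplies the $L$-operator by the stated rational factor on both sides. The naive manipulation $K^{p} = K - \frac{K(\cdot,p) K(p,\cdot)}{K(p,p)}$ followed by $L^{p} = K^{p}(1 - K^{p})^{-1}$ requires care: one must verify that $1 - K^{p}$ remains invertible, and the rank-one perturbation identity for resolvents must be combined with the specific form \eqref{L-kernel-int} to see that the rank-one correction, after passing to $L$, becomes a multiplication operator rather than something of infinite rank. I would organize this by working with the kernels directly: write $L^{p}(x,y) = \frac{A^{+}_{p}(x) A^{-}_{p}(y) + A^{-}_{p}(x) A^{+}_{p}(y)}{x-y}$ as an ansatz with $A_{p} = g \cdot A$, compute $K_{L^{p}} = L^{p}(1 + L^{p})^{-1}$, and check it agrees with the Palm kernel $K_L^{p}$ by comparing the two as integral kernels — the divisor $x - y$ in the denominator and the interlacing pole at $x = p$ or $y = p$ conspire so that the rank-one term $K_L(x,p)K_L(p,y)/K_L(p,p)$ is precisely what is needed to pass from $\frac{A^{+}(x)A^{-}(y) + \cdots}{x-y}$ to $\frac{g(x)A^{+}(x) g(y) A^{-}(y) + \cdots}{x-y}$. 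An alternative, possibly cleaner route is to invoke the discrete-to-continuous analogue from Borodin--Olshanski or the general Shirai--Takahashi formalism for Palm measures of $L$-ensembles, but since the paper prefers self-contained arguments adapted to the $J$-Hermitian setting, I would carry out the kernel computation in detail and defer it to the body of the paper.
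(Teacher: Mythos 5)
Your proposal runs into a genuine structural obstruction that the paper itself flags in the introduction: ``Palm measures of an $L$-process, generally speaking, do not admit an $L$-kernel.'' Concretely, your proposed one-point-at-a-time induction is not viable, because conditioning at a \emph{single} point $p^{+}\in\R_{+}$ does not produce a determinantal process whose $L$-operator has the off-diagonal block form of Condition \ref{con35} (nor even an unbounded operator of that form). The obstruction is visible at the level of the range of $\widehat{K_L}$. By Proposition \ref{prop-proj}, $\Ran(\widehat{K_L}) = \{ Vh \oplus h : h \in L^2(\R_{-}) \}$ is the graph of $V$, so that the projection of $\Ran(\widehat{K_L})$ onto $L^2(\R_{-})$ is \emph{all} of $L^2(\R_{-})$. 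After conditioning at $p^{+}$, Lemma \ref{lem-in-out} gives $\Ran(\widehat{K^{p^{+}}}) = \Ran(\widehat{K_L}) \ominus \C\,\sgn(\cdot)K_L(\cdot,p^{+})$, which projects onto the codimension-one subspace $\{ h : (Vh)(p^{+})=0 \} \subset L^2(\R_{-})$. Such a subspace cannot be written as $\{V'h \oplus h : h \in L^2(\R_{-})\}$ for any $V'$, so $K^{p^{+}}$ is simply not of $L$-process form, and the formula ``$\mu_L^{p^{+}} = \mu_{gLg}$'' you postulate is false — no bounded (or even unbounded) multiplier $g$ can realize it. Only after adding back the one-dimensional term from a \emph{negative} Palm point $p^{-}$ does the range become a graph again, which is exactly why the paper treats the balanced pair $(p^{+},p^{-})$ as the atomic conditioning step.

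There is also a secondary problem with your single-point multiplier $g(x) = (x/p_1^{+}-1)$ on $\R_{+}$: it is unbounded at infinity, so even formally $gA$ need not lie in $L^2(\R^{*})$ and $gLg$ need not be bounded, breaking Condition \ref{con35}. The paper avoids this because the pair-multiplier $g_{\mathfrak{p}}(x) = \frac{x-p^{+}}{x-p^{-}}\chi_{\R_{+}} + \frac{x-p^{-}}{x-p^{+}}\chi_{\R_{-}}$ is bounded (it tends to $1$ at $\pm\infty$ and has no poles on $\R^{*}$). The correct route, which the paper follows, is: prove Proposition \ref{prop-palm-kernel} for a single balanced pair $(p^{+},p^{-})$ by establishing the coincidence $\Ran(\widehat{K_L^{\mathfrak{p}}}) = \Ran(\widehat{K_{g_{\mathfrak{p}}Lg_{\mathfrak{p}}}})$ directly (using Proposition \ref{prop-proj}, the commutator computation $[x,V]$, and the IIKS integrable structure of $K_L$); iterate over pairs to obtain Corollary \ref{cor-palm-kernel}; and finally pass from $g_{\mathfrak{p}}$ to the normalized $f_{\mathfrak{p}}$ via Lemma \ref{lem-conjugation}, which shows $c_{\lambda}Lc_{\lambda}=L$ for the constant-ratio multiplier. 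Your overall strategy (identify the Palm measure as an $L$-process by tracking the multiplier) is in the right spirit, but the induction must be on \emph{pairs}, and the heart of the proof — the range comparison for a pair — is missing from your sketch.
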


\begin{prop}
Let $L$ be an operator satisfying Condition \ref{con35} and let $\mathfrak{p}$ be a $2n$-tuple of distinct points in $\R^*$ given as in \eqref{intro-p}, the function $f_{\mathfrak{p}}$ is defined by formula \eqref{fp}. Then the following limit 
\begin{align*}
\overline{S}_{\mathfrak{p}}(X) : & =  \lim_{\delta\to 0^{+}}    \bigg\{   \sum_{i = 1}^n   \Big(\sum_{x \in X \cap (\delta, \infty)}\log \left| \frac{x/p_i^{+}  - 1 }{x/p_i^{-} - 1} \right|^2 - \sum_{x \in X\cap (-\infty, - \delta) } \log \left| \frac{x/p_i^{-}  - 1 }{x/p_i^{+} - 1} \right|^2\Big)  
\\
& - \E_{ \mu_L}  \sum_{i = 1}^n   \Big(\sum_{x \in X \cap (\delta, \infty)}\log \left| \frac{x/p_i^{+}  - 1 }{x/p_i^{-} - 1} \right|^2 - \sum_{x \in X\cap (-\infty, - \delta) } \log \left| \frac{x/p_i^{-}  - 1 }{x/p_i^{+} - 1} \right|^2\Big)
\bigg\}
\end{align*}
exists for $ \mu_L$-almost every configuration $X\in \Conf(\R^*)$.  Moreover, we have
$$
 \exp ( \overline{S}_{\mathfrak{p}} ) \in L^1(\Conf(\R^*),  \mu_L).
$$
\end{prop}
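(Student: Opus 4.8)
## Proof proposal for the final proposition

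The plan is to recognize this as a statement about convergence of a ``twisted'' or ``balanced'' linear statistic against the $L$-process $\mu_L$, and to reduce it to the general machinery of regularized additive functionals. First I would rewrite the quantity whose limit is asserted. For each $i$, set $\psi_i(x) = \log\bigl|\tfrac{x/p_i^{+}-1}{x/p_i^{-}-1}\bigr|^2 \chi_{\R_+}(x) - \log\bigl|\tfrac{x/p_i^{-}-1}{x/p_i^{+}-1}\bigr|^2 \chi_{\R_-}(x)$, so that $\psi_i = \sgn(\cdot)\log|f_{\mathfrak p_i}|^2$ up to the obvious sign bookkeeping, where $f_{\mathfrak p_i}$ is the single-factor analogue of \eqref{fp}. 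Then $\overline S_{\mathfrak p}(X) = \lim_{\delta\to 0^+}\sum_{i=1}^n\bigl(\sum_{x\in X,\,|x|>\delta}\psi_i(x) - \E_{\mu_L}\sum_{x\in X,\,|x|>\delta}\psi_i(x)\bigr)$, which is a centered linear statistic with a logarithmic singularity at the origin (and mild decay issues at infinity from $\psi_i(x)\to 0$ like $1/x$). The key analytic point is that, near $0$, $\psi_i(x)$ behaves like $-\log|x|\cdot(\text{bounded})$ multiplied by $\sgn(x)$, so it is the \emph{cancellation} between the $\R_+$ and $\R_-$ contributions — exactly the balanced structure — that makes the centered sum converge even though each one-sided sum diverges.

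The main steps, in order, would be: (1) Express $\psi_i$ as a difference of a function with a genuine logarithmic singularity and a function that is bounded and $O(1/x)$ at infinity; the bounded $O(1/x)$ part gives an honest additive functional $\sum_x \psi_i^{bd}(x)$ whose variance is finite by the standard determinantal variance formula applied to $\mu_L$, so the centered version is already in $L^2(\mu_L)$ and no regularization is needed there. (2) For the singular part, show that the $\delta\to 0^+$ limit exists $\mu_L$-a.s. by estimating the variance of the truncated centered sums $\sum_{\delta<|x|<\delta'}\sgn(x)(-\log|x|)(\cdots)$ and showing it is $o(1)$ as $\delta,\delta'\to 0$; here I would use the variance formula $\Var_{\mu_L}\bigl(\sum_x \sgn(x)\varphi(x)\bigr)$ — the analogue of Lemma~\ref{lem-var} referenced in the text — which for a $J$-Hermitian $K$ with $\widehat K$ a projection collapses to a manageable expression in terms of $\widehat K$ and $\varphi$, and crucially the sign twist turns $\varphi = -\log|x|$ into a function paired through $P_+\widehat K P_+ \oplus \ldots$ in a way that absorbs the logarithmic blow-up. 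Combined with a Borel–Cantelli argument along a geometric sequence $\delta_k = 2^{-k}$ and a monotonicity/positivity estimate to control the oscillation between consecutive scales, this yields the a.s.\ existence of $\overline S_{\mathfrak p}$. (3) For integrability of $\exp(\overline S_{\mathfrak p})$: use Proposition~\ref{intro-Palm-prop}, which identifies $\mu_L^{\mathfrak p} = \mu_{f_{\mathfrak p} L f_{\mathfrak p}}$ as again an $L$-process (hence a genuine probability measure); then $\exp(\overline S_{\mathfrak p})$ should be, up to the normalizing constant, the Radon–Nikodym density $d\mu_L^{\mathfrak p}/d\mu_L$ read off from the multiplicative-functional relation $K_{f_{\mathfrak p} L f_{\mathfrak p}}$ versus $K_L$, so $\E_{\mu_L}[\exp(\overline S_{\mathfrak p})]$ is finite because $\mu_L^{\mathfrak p}$ is a probability measure. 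Concretely I would first establish the finite-$\delta$ identity $\exp(S_{\mathfrak p,\delta})$ relates the determinant of $f_{\mathfrak p}^{(\delta)} L f_{\mathfrak p}^{(\delta)}$ (a $\delta$-truncated version of the multiplier) to that of $L$, and pass to the limit.

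The hard part will be Step (2): proving the $\mu_L$-a.s.\ convergence of the centered truncated sums as $\delta\to 0^+$, because the relevant linear statistic is \emph{not} square-integrable against the naive $L^2$ norm — the singularity $\log|x|$ at the origin is precisely the boundary case, and this is exactly why the authors introduce regularized (twisted) multiplicative functionals and an extended Fredholm determinant rather than working with ordinary linear statistics as in \cite{BQS, QB3}. The estimate must exploit the $J$-Hermitian (balanced) structure: the off-diagonal block $V(x,y) = A^+(x)A^-(y)/(x-y)$ of $L$, and correspondingly the off-diagonal structure of $K_L$, means that the sign-twisted kernel $\widehat{K_L} = P_+ K_L + P_-(1-K_L)$ is a projection whose restriction governs $\Var_{\mu_L}(\sum_x \sgn(x)\varphi(x))$, and one needs a careful bound showing this variance grows only like $O(\log^2 \delta^{-1}) \cdot o(1)$ or is even bounded, so that the tail sums telescope. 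I would model this on the regularization estimates in \cite{Buf-multi, Buf-inf} (for the Hermitian projection case) adapted via the $\widehat{\cdot}$ correspondence, and I expect the $C^1$ and $L^2$ hypotheses on $A$ in Condition~\ref{con35}, together with $\supp A = \R^*$, to be exactly what is needed to control $A^{\pm}$ near $0$ and at $\infty$ in these bounds.
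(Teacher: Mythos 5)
Your high-level outline (reduce to a centered twisted linear statistic, use a variance formula, establish $L^2$ convergence, then control $\exp$ via a truncated Fredholm-determinant identity and pass to the limit) is essentially the paper's scheme. But the central analytic premise of your Step~(2) is wrong, and it inverts the actual difficulty. You claim that near the origin the function $\psi_i(x)=\sgn(x)\log|f_{\mathfrak p_i}(x)|^2$ ``behaves like $-\log|x|\cdot(\text{bounded})$'' and that the linear statistic is therefore ``not square-integrable against the naive $L^2$ norm --- the singularity $\log|x|$ at the origin is precisely the boundary case.'' Neither is true. Since $f_{\mathfrak p}(0^\pm)=1$ (each factor $\tfrac{x/p_i^{\pm}-1}{x/p_i^{\mp}-1}\to 1$), $\log|f_{\mathfrak p}(x)|^2$ \emph{vanishes} at the origin to first order: $|h_{\mathfrak p}(x)-1|\le C|x|$, hence $|\log h_{\mathfrak p}(x)^2|=O(|x|)$ near $0$. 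The obstruction to defining the additive functional directly is not a singularity of the test function but the fact that $\mu_L$-a.e.\ configuration has infinitely many particles accumulating at $0$, and Condition~\ref{con5} only supplies $\int_{|x|<\varepsilon}x^2K_L(x,x)\,dx<\infty$, not a first-moment bound $\int_{|x|<\varepsilon}|x|K_L(x,x)\,dx<\infty$. Precisely because of the $O(|x|)$ behavior, the variance $\mathscr V_L(\log h_{\mathfrak p}^2)\le\int|\log h_{\mathfrak p}(x)^2|^2K_L(x,x)\,dx<\infty$ \emph{is} finite, so the linear statistic \emph{is} in $L^2(\mu_L)$: this is exactly what Lemma~\ref{good-log} establishes and is the entire point of Condition~\ref{con5} and of the class $\mathscr N(L)$. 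You are not in any ``boundary case,'' and the delicate variance bound you set out to prove (with a Borel--Cantelli argument over dyadic scales absorbing a $\log^2\delta^{-1}$ blow-up) targets a problem that does not exist here. Also, the claim that $\psi_i(x)\to 0$ like $1/x$ at infinity is incorrect: $\log|f_{\mathfrak p}(x)|^2\to 2\log\prod_i|p_i^-|/p_i^+$, a generically nonzero constant; it is merely bounded there, and integrability at infinity comes from $\int_{|x|>\varepsilon}K_L(x,x)\,dx<\infty$, not from decay of $\psi_i$.

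On Step~(3), your idea of establishing the finite-$\delta$ determinant identity and passing to the limit is in the right spirit, but as stated it risks circularity: you cannot use ``$\exp(\overline S_{\mathfrak p})$ is the Radon--Nikodym density of the probability measure $\mu_L^{\mathfrak p}$'' to deduce integrability, because identifying it as that density is exactly what needs the integrability. The paper instead proves a direct second-moment bound (Lemma~\ref{lem-tilde-2-norm}) by reducing via Fatou to $\supp(g-1)$ bounded away from $0$ and then applying Simon's inequality $|\det(1+A)e^{-\tr A}|\le e^{\|A\|_2^2/2}$ together with the Hilbert--Schmidt bound $\|\sqrt{g-1}\,K_L\sqrt{g-1}\|_2^2\le\int|g-1|^2K_L(x,x)\,dx$; the Radon--Nikodym identity then follows from this $L^2$ continuity combined with weak convergence of $\mu_{g_nLg_n}\to\mu_{gLg}$ (Proposition~\ref{prop-rn}). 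You should identify that determinant estimate as the key tool, and you should discard the $\log|x|$ singularity narrative entirely --- the proof hinges on there being no singularity at the origin, only particle accumulation, and on the $O(|x|)$ smallness of the exponent matching the $x^2$ weight in Condition~\ref{con5}.
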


\begin{thm}[A sufficient condition to have balanced Palm equivalence property]\label{intro-thm-equi}
Let $L$ be an operator satisfying Condition \ref{con35} and let $\mathfrak{p}$ be a $2n$-tuple of distinct points in $\R^*$ given as in \eqref{intro-p}, let  $f_{\mathfrak{p}}$ denote the function defined by formula \eqref{fp}. 
Then  the reduced Palm measure $\mu_{L}^{\mathfrak{p}}$  is equivalent to $\mu_L$. For the Radon-Nikodym derivative, we have the $\mu_L$-almost sure equality:
$$
\frac{d \mu_L^{\mathfrak{p}}}{d  \mu_L } (X)=  \frac{\exp ( \overline{S}_{\mathfrak{p}}(X) )
}{\E_{\mu_L} \Big[\exp ( \overline{S}_{\mathfrak{p}} )
 \Big]}.
$$
\end{thm}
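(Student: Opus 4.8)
The plan is to derive the theorem from Proposition~\ref{intro-Palm-prop}, which already identifies the reduced Palm measure as the $L$-process $\mu_L^{\mathfrak{p}}=\mu_{f_{\mathfrak{p}}Lf_{\mathfrak{p}}}$, so that it remains only to compute $d\mu_{f_{\mathfrak{p}}Lf_{\mathfrak{p}}}/d\mu_L$. Passing from $L$ to $f_{\mathfrak{p}}Lf_{\mathfrak{p}}$ multiplies the $L$-kernel by a function on the left and on the right, and in the usual trace-class, Hermitian situation this would change the process by the multiplicative functional $X\mapsto\prod_{x\in X}f_{\mathfrak{p}}(x)^{2}$; here, however, the anti-self-adjoint block form \eqref{L-kernel-int} of $L$ --- equivalently, the fact that it is $\widehat{K_L}$, and not $K_L$, that is an orthogonal projection (Condition~\ref{con1}) --- forces a sign twist, so that the relevant functional is instead $X\mapsto\prod_{x\in X}|f_{\mathfrak{p}}(x)|^{2\,\sgn(x)}$. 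Since $f_{\mathfrak{p}}(x)\to1$ only at rate $O(1/x)$ near the singularity and $L$ is not of trace class, even this twisted product fails to converge ($\mu_L$-almost surely it is an indeterminate $0\cdot\infty$, the $\R_{+}$ and $\R_{-}$ contributions diverging with opposite signs); one therefore replaces it by the regularized object $\exp(\overline{S}_{\mathfrak{p}})$, the regularized twisted linear statistic attached to $\log|f_{\mathfrak{p}}|^{2}$ studied in the Proposition immediately preceding the theorem. That Proposition supplies exactly the two facts we need: the limit defining $\overline{S}_{\mathfrak{p}}$ exists $\mu_L$-almost surely, and $\exp(\overline{S}_{\mathfrak{p}})\in L^{1}(\mu_L)$, so that $\nu:=\exp(\overline{S}_{\mathfrak{p}})\,\mu_L\big/\E_{\mu_L}[\exp(\overline{S}_{\mathfrak{p}})]$ is a probability measure mutually absolutely continuous with $\mu_L$.

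It then suffices to prove $\nu=\mu_{f_{\mathfrak{p}}Lf_{\mathfrak{p}}}$, which, since a point process is determined by its correlation functions (equivalently, by its Laplace functional against bounded compactly supported test functions), follows once $\E_{\nu}[\prod_{x\in X}(1+\varphi(x))]$ and $\E_{\mu_{f_{\mathfrak{p}}Lf_{\mathfrak{p}}}}[\prod_{x\in X}(1+\varphi(x))]$ agree for all bounded nonnegative $\varphi$ compactly supported in $\R^{*}$ (hence automatically away from the singularity). For such $\varphi$ the operator $\varphi K_{f_{\mathfrak{p}}Lf_{\mathfrak{p}}}$ is trace class and the second functional is the honest Fredholm determinant $\det(1+\varphi K_{f_{\mathfrak{p}}Lf_{\mathfrak{p}}})$, which, using $K_{f_{\mathfrak{p}}Lf_{\mathfrak{p}}}=f_{\mathfrak{p}}Lf_{\mathfrak{p}}(1+f_{\mathfrak{p}}Lf_{\mathfrak{p}})^{-1}$ and truncating the phase space, becomes $\det(1+(1+\varphi)f_{\mathfrak{p}}Lf_{\mathfrak{p}})\big/\det(1+f_{\mathfrak{p}}Lf_{\mathfrak{p}})$. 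On the other side, inserting the cutoff $\delta>0$ from the definition of $\overline{S}_{\mathfrak{p}}$ turns the integrand of $\E_{\mu_L}[\exp(\overline{S}_{\mathfrak{p}})\prod(1+\varphi)]$ into an ordinary multiplicative functional, whose $\mu_L$-expectation is a finite Fredholm determinant of $1+\bigl(|f_{\mathfrak{p}}|^{2\sgn}(1+\varphi)-1\bigr)K_L$ over $\{|x|\ge\delta\}$; a cyclic rearrangement of this determinant, together with the commutation of the multiplication operators $f_{\mathfrak{p}}$ and $1+\varphi$, transforms it into $\det(1+(1+\varphi)f_{\mathfrak{p}}Lf_{\mathfrak{p}})$ times a factor equal to the cutoff version of $\E_{\mu_L}[\exp(\overline{S}_{\mathfrak{p}})]$. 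The (divergent) normalizations cancel in the quotient defining $\nu$, and passing to the limit $\delta\to0^{+}$ --- legitimate by the almost sure and $L^{1}$ convergence of the cutoff statistics supplied by the preceding Proposition, combined with dominated convergence --- yields $\E_{\nu}[\prod(1+\varphi)]=\E_{\mu_{f_{\mathfrak{p}}Lf_{\mathfrak{p}}}}[\prod(1+\varphi)]$, as required.

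The main obstacle is that none of the operators $1+L$, $1+f_{\mathfrak{p}}Lf_{\mathfrak{p}}$, $1+\bigl(|f_{\mathfrak{p}}|^{2\sgn}(1+\varphi)-1\bigr)K_L$ is a trace-class perturbation of the identity, so every determinant manipulation above is a priori only formal. Making it rigorous requires the extended notion of Fredholm determinant announced in the introduction, and keeping the regularization $\delta$ (together with an auxiliary truncation at the opposite end of $\R^{*}$) in force until the very last step. The delicate point is to verify that the correction terms produced when one replaces ordinary determinants by extended ones, or permutes factors under them, reduce --- in the limit --- exactly to the centering subtracted in the definition of $\overline{S}_{\mathfrak{p}}$, so that everything cancels; here one uses the regularity and square-integrability of $A$ from Condition~\ref{con35}, the explicit structure of the $L$-kernel \eqref{L-kernel-int}, and the variance estimate for twisted linear statistics from Lemma~\ref{lem-var} both to bound the error terms and to justify the interchange of $\lim_{\delta\to0^{+}}$ with $\E_{\mu_L}$.
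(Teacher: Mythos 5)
Your reduction is the right one and matches the paper's scheme at the top level: use Proposition~\ref{intro-Palm-prop} to rewrite $\mu_L^{\mathfrak{p}}=\mu_{f_{\mathfrak{p}}Lf_{\mathfrak{p}}}$, observe that the naive multiplicative functional diverges near the singularity and must be replaced by the regularized twisted statistic $\exp(\overline{S}_{\mathfrak{p}})$, and then prove the normalized density identity. The problem is the very last step, which you acknowledge is ``delicate'' but do not actually close. You justify the passage $\delta\to0^{+}$ in your cutoff Fredholm-determinant identity by ``dominated convergence'' together with ``the almost sure and $L^{1}$ convergence of the cutoff statistics supplied by the preceding Proposition''. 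That Proposition, however, supplies only two facts: that the cutoff exponents $\overline{S}_{\mathfrak{p}}^{(\delta)}$ converge $\mu_L$-almost surely, and that the limit $\exp(\overline{S}_{\mathfrak{p}})$ lies in $L^{1}(\mu_L)$. Neither yields $L^{1}$-convergence of $\exp(\overline{S}_{\mathfrak{p}}^{(\delta)})$: almost sure convergence plus integrability of the limit gives (via Fatou) only $\liminf_\delta\E_{\mu_L}\exp(\overline{S}_{\mathfrak{p}}^{(\delta)})\ge\E_{\mu_L}\exp(\overline{S}_{\mathfrak{p}})$, and the variance formula of Lemma~\ref{lem-var}, which you invoke, controls the additive functional in $L^{2}$ but says nothing about exponential moments of the exponentiated functional. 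Without a uniform exponential-moment estimate, the determinants on the two sides of your tentative identity need not converge to the claimed limits, and the cancellation you rely on is unproved.

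This is exactly where the paper invests its effort, via an approximation framework that your sketch omits. The paper introduces the spaces $\mathscr{N}(L)$, $\mathscr{M}_{2}(L)$, proves an exponential moment bound $\log\E_{\mu_L}|\widetilde{\Psi}_L[g]|^{2}\le C\int|g-1|^{2}K_L(x,x)\,dx$ (Lemma~\ref{lem-tilde-2-norm}, using the estimate $|\det(1+A)e^{-\tr A}|\le e^{\|A\|_2^2/2}$ from~\cite[Thm.~6.4]{Simon-det}, which is a determinant bound and not a consequence of the variance formula), and deduces the $L^{1}$-continuity of $g\mapsto\overline{\Psi}_L[g]$ on $\mathscr{M}_{2}(L)$ (Lemma~\ref{lem-l1}, Proposition~\ref{prop-con}). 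It then proves the density identity first for $g$ with $\supp(g^{2}-1)$ away from the origin (Proposition~\ref{prop-pre} --- this is the honest Fredholm-determinant computation you outlined), extends it to general $g\in\mathscr{M}_{2}(L)$ by the $L^{1}$-continuity combined with a convergence-of-kernels argument via Gr\"umm's theorem (Proposition~\ref{prop-rn}), and finally handles the given function by the factorization $g=g_{1}g_{2}$ with $g_{1}\in\mathscr{M}_{2}(L)$ and $\supp(g_{2}-1)$ away from the origin (Theorem~\ref{thm-sub-main}); the theorem is then read off after conjugating $g_{\mathfrak{p}}$ to $h_{\mathfrak{p}}=|f_{\mathfrak{p}}|$ via Lemma~\ref{lem-conjugation}. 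Until you supply a uniform integrability or exponential-moment argument of the Lemma~\ref{lem-tilde-2-norm} kind, your $\delta\to0^{+}$ limit is unjustified, and the proof has a genuine gap at its center.
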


\subsection{Olshanski's Problem}\label{sec-Olshanski-program}

Olshanski \cite{OQS} posed the following

\begin{problem}
Let $P_1$ and $P_2$ be two determinantal point processes on a common phase space with correlation kernels $K_1(x,y)$ and $K_2(x,y)$ respectively.  Decide the equivalence and the mutual singularity relations between $P_1$ and $P_2$ by inspection of their correlation kernels $K_1(x,y)$ and $K_2(x,y)$.  When $P_1$ and $P_2$ are equivalent, calculate the Radon-Nikodym derivative between them.
\end{problem}

We now briefly mention the previous works on this problem for projection kernels and note the particle-hole duality relation, in the case of discrete phase spaces, of these results to the results of the present paper.

\begin{itemize}
\item {\bf The Gamma-kernel.}

 Olshanski \cite{OQS} obtained the quasi-invariance of the so-called Gamma kernel determinantal point processes on  the space $\Z'$ of half-integers under the action of the group $S(\infty)$ of  finite permutations of arbitrary size.  The group  $S(\infty)$ acts naturally on $\Z'$ and hence on the space of configurations over $\Z'$.  Let  $P_1$ be the Gamma kernel determinantal point process on $\Z'$ with the correlation kernel denoted by $K_1$, see \cite{BO-R} for the precise definition. Take an element $\sigma\in S(\infty)$, denote $P_2 = \sigma_{*}(P_1)$ the determinantal point process obtained by the transformation $\sigma$ on the space of configurations $\Conf(\Z')= 2^{\Z'}$ . Then $P_2$ has a correlation kernel given by 
$$
K_2(x,y) = K_1(\sigma^{-1}(x), \sigma^{-1}(y)) \quad x, y \in \Z'.
$$ 
By {\it limit transition} from finite particle systems, Olshanski proved the equivalence of $P_1$ and $P_2$ and calculated the Radon-Nikodym derivative between them as a multiplicative functional.   

\item {\bf Determinantal point processes  with integrable kernels.} 

In \cite{BQS}, it has been proved that all determinantal point processes on $\R$ (or $\Z$) whose correlation kernels $K$ are Hermitian orthogonal projections and have an integrable form as follows:
\begin{align}\label{integrable-form}
K(x,y)  = \frac{A(x) B(y) - A(y) B(x)}{x-y}, \quad x, y \in \R (\text{\,or\,} x, y \in \Z),
\end{align}
are quasi-invariant, under the action of the group of compactly supported diffeomorphisms on $\R$ (or under the action of $S(\infty)$ on $\Z$).  The equivalence between {\it reduced Palm measures of the same order} plays a central r\^ole in the proof, which proceeds by the method, further developed in this paper, of {\it regularized multiplicative functionals}.

\item {\bf Determinantal point processes associated with Hilbert spaces of holomorphic functions.}

Holroyd and Soo \cite{Soo} have shown that the determinantal point process with the Bergman kernel on the unit disk has the property of {\it insertion tolerance}: its Palm measures are equivalent to itself. 
For the Ginibre point process on the complex plane, using its finite-dimensional approximations by orthogonal polynomial ensembles, Osada and Shirai \cite{OS14} have shown that Palm measures of different orders are singular, while  Palm measures of the same orders are equivalent and  the Radon-Nikodym derivative is a regularized multiplicative functional.
In \cite{QB3}, the method of  {\it regularized multiplicative functionals} has been further elaborated for obtaining in a unified way, on one hand, the equivalence of reduced Palm measures {\it of the same order} of the determinantal point processes on the complex plane $\C$ with correlation kernels given by the reproducing kernels of  generalized Fock spaces on $\C$, and on the other hand, the equivalence of reduced Palm measures {\it of all orders} of the determinantal point processes on the open unit disk $\D\subset \C$ with correlation kernels given by  the reproducing kernels of generalized Bergman spaces on $\D$. Specifically, the Radon-Nikodym derivative between the determinantal point process with the Bergman kernel on the unit disk and its Palm measures is computed explicitly as a regularized Blaschke product.

As a consequence, we also obtained the quasi-invariance property of these determinantal point processes, under the action of the group of compactly supported diffeomorphisms on the complex plane $\C$ and on the open unit disk $\D$ respectively.  

\item {\bf Relations with rigidity of determinantal point processes.} 

 Recall that a  point process on a Euclidean space $\R^d$ is said to be rigid in the sense of Ghosh \cite{Ghosh-sine} and Ghosh-Peres\cite{Ghosh-rigid}, if for any bounded open subset $ B \subset \R^d$, such that the topological boundary $\partial B$ is negligible with respect to the $d$-dimensional Lebesgue measure, the number of particles of this point process inside the subset $B$ is almost surely determined  by the configuration outside of the subset $B$.     Olshanski's problem is closely related to this rigidity property of determinantal point processes. In particular,  if a determinantal point process is rigid in the above sense, then its reduced Palm measures {\it of different orders} are almost surely singular,  see \cite[Prop. 8.1]{QB3}.  Note that for processes with $J$-Hermitian kernels we encounter a rather different notion of rigidity. In the case of discrete phase spaces, however, the new notion can be reduced to the old one, as we shall now demonstrate.

\item {\bf Discrete phase spaces and the particle-hole duality}

Analogues of our main results, Theorem \ref{intro-thm-rigid} and  Theorem \ref{intro-thm-equi}, can be formulated and proved in similar way when the phase space $\R^*$ is replaced by $\Z$ or $\Z' = \frac{1}{2}+ \Z$ or any other discrete subsets in $\R$.  In particular, in the case where the phase space is a discrete subsets of $\R$, our results are related to previous works \cite{OQS, BQS}  by doing {\it particle-hole duality} as follows: Let $\PP$ be a determinantal point process, say on $ \Z'$. Assume that $\PP$ has a Hermitian correlation kernel  $K$. Define the particle-hole duality on $\Z'_{-} = \Z' \cap \R_{-}$ as a map $dual: \Conf(\Z') \rightarrow \Conf(\Z')$ given by 
$$
dual(X) :=  (X  \cap \Z_{+}') \sqcup  (\Z_{-}'\setminus X).
$$
Then this particle-hole duality transform $\PP$ to  a new point process $ dual_{*}(\PP)$, which is again a determinantal point process. A correlation kernel of this new  point process can be provided by 
$$
K^{\circ} : = \sgn \cdot K +  \chi_{\Z'_{-}}.
$$
Note that $K^\circ$ is $J$-Hermitian with respect to the partition $\Z' = \Z'_{+} \sqcup \Z'_{-}$ and the orthogonal decomposition $\ell^2(\Z') = \ell^2(\Z'_{+}) \oplus \ell^2(\Z'_{-})$. In general, the particle-hole duality transforms a rigid  point process (see Definition \ref{rigid-GP}) to a {\it balanced rigid} one and vice-versa. It transforms a quasi-invariant point process to a point process having balanced Palm equivalence property and vice-versa. In terms of correlation kernels, the particle-hole duality transforms Hermitian kernels to $J$-Hermitian ones and vice-versa. 

At the same time, we would like to note that  the particle-hole duality argument only works in the case where the phase spaces are discrete.  This can be already seen on the level of correlation kernels, indeed, the kernel $\widehat{K}$ defined in \eqref{hat-o} corresponds to $K^\circ$ as above. Observe that $\widehat{K}$ can not be used to define (extended) Fredholm determinants,  and it  is not the correlation kernel of any determinantal point process. Thus when the phase space is $\R^*$,  processes with $J$-Hermitian kernels can not be transformed to processes with Hermitian kernels. 

\end{itemize}


\subsection{Organization of the paper and schemes of proofs}
The paper is organized as follows. 

In the preliminary part, \S \ref{section-pre}, we briefly recall the definition of determinantal point processes and theory of reduced Palm measures. In particular, we  collect the necessary results from \cite{Lytvynov-J} on the general determinantal point processes with $J$-Hermitian correlation kernels.  Some standard properties of extended Fredholm determinants are also collected in \S \ref{section-pre}. The proofs for these properties are postponed to the appendix in the end of the paper. 

 The main body of the paper is separated into two parts.  The first part,  \S \ref{Balanced-rigidity}, is devoted to the proofs of Theorem \ref{intro-thm-rigid} and Theorem A; the second part,  \S \ref{tolerance-sec}, is devoted to the proofs of Theorem \ref{intro-thm-equi} and Theorem B. These two parts are essentially independent from each other.  

$\emph{First part \S \ref{Balanced-rigidity}: proofs of Theorem \ref{intro-thm-rigid} and Theorem A.}$ Here we follow the scheme of Ghosh and Peres \cite{Ghosh-rigid}.  Let $\PP_K$ be the determinantal point process as in Theorem \ref{intro-thm-rigid}.  Our main task is to construct, after fixing an arbitrarily large interval $U_R = [-R, R]$,  a sequence of {\it compactly supported} continuous functions $(\varphi_n)_{n\ge 1}$ defined on $\R$, such that $\varphi_n(x)$ tends to $1$ uniformly on $U_R$ when $n$ tends to infinity. Moreover, the following limit relation holds:
$$
\Var_{\PP_K}\left(\sum_{x \in  X} \sgn(x) \varphi_n(x)\right) \xrightarrow{n\to \infty} 0.
$$
See also \cite{QB4, QB3, Buf-rigid} for the use of the same method  in the Hermitian case.

$\emph{Second part \S \ref{tolerance-sec}: proofs of Theorem \ref{intro-thm-equi} and Theorem B.}$ 
There are  three main ingredients in the proofs of our main results in this part:
\begin{itemize}
\item[(i)] The $J$-Hermitian kernels for the determinantal point processes under consideration have $L$-kernels, that is, the determinantal point processes are $L$-processes; 
\item[(ii)] Under certain assumptions on the $L$-kernel of the initial determinantal point process $\mu_L$, all the reduced Palm measures conditionned at an equal number of positions at both sides of $\R^*= \R_{+}\sqcup \R_{-}$ are again $L$-processes, and  the $L$-kernels for these reduced Palm measures have the form $f L f$, where $f$ is certain bounded measurable function defined on $\R^*$.
\item[(iii)] Under suitable assumptions on the kernel $L$ and $f$, the two determinantal point processes $\mu_{fLf}$ and $\mu_L$ are proved to be equivalent and the Radon-Nikodym derivative $d\mu_{fLf} /d\mu_L$ can be  computed explicitly as a {\it regularized multiplicative functional}.
\end{itemize}

The verification of part (ii) will be given in \S \ref{sec-L-rel}.  The proof relies heavily on the algebraic structures of the $L$-kernels, see Condition \ref{con-four} in \S \ref{sec-L-rel}. 

Let us now explain part (iii). We will first prove a preliminary and abstract version in Proposition \ref{prop-pre}:  under a certain condition on the $L$-kernel,  if $\supp(f-1) \subset \R^*$ has a positive distance from the origin , then $\mu_{fLf}$ and $\mu_L$ are equivalent and the Radon-Nikodym derivative is given by a normalized multiplicative functional: 
\begin{align}\label{simple-rn}
\frac{d\mu_{fLf}}{d\mu_L}(X) =  \frac{\prod\limits_{x \in X}|f(x)|^2}{ \E_{\mu_L}  \prod\limits_{x \in X}|f(x)|^2 }.
\end{align}
But for $f_{\mathfrak{p}}$ defined by \eqref{fp},  $\supp(f_{\mathfrak{p}}-1) = \R^*$, it does not have positive distance from the origin. Moreover, the usual multiplicative functional $\prod\limits_{x \in X}|f_{\mathfrak{p}}(x)|^2$ on the right-hand side of \eqref{simple-rn} does not converge absolutely.  For overcoming this difficulty, we are led to use a new version of {\it regularized multiplicative functionals}. One ingredient in the formalism of this new version of regularized multiplicative functionals is the use of the twisted linear statistics:
$$
\sum_{x\in X} \sgn(x) \varphi(x),  \quad (\text{where $X \in \Conf(\R^*)$}).
$$
Extra efforts are also required in dealing with the extended version of Fredholm determinants. 
 The reader is referred to \cite{BQS, QB3} for the use of another version of regularized multiplicative functionals in computing Radon-Nikodym derivatives between determinantal point processes whose correlation kernels are Hermitian. 


\section{Preliminaries}\label{section-pre}

Let $\mathscr{E}$ be a locally compact complete metrizable separable space. Assume that on $\mathscr{E}$ is equipped with a positive $\sigma$-finite Borel measure $\mu$.  A {\it configuration} on $\mathscr{E}$ is defined to be an $\N\cup\{0\}$-valued Radon measure on $\mathscr{E}$; in other words, a configuration on $\mathscr{E}$ is a  collection of {\it particles}, possibly with multiplicity,  that  admits no accumulation points in $\mathscr{E}$.   Let $\Conf(\mathscr{E})$ denote the space of all  configurations on $\mathscr{E}$. With respect to the topology induced by the vague topology on the space of Radon measures on $\mathscr{E}$, the space $\Conf(\mathscr{E})$ is itself a complete metrizable separable space.
A {\it point process} on $\mathscr{E}$ is by definition a Borel probability measure on $\Conf(\mathscr{E})$. For further background on point processes, see, e.g., \cite{Daley-Vere}.

We now briefly recall the definition of {\it determinantal point processes}, see, e.g., \cite{DPP-M, DPP-S, DPP-L}. Fix a Radon measure $\mu$ on $\mathscr{E}$. A determinantal point process on $\mathscr{E}$ is  determined by a correlation kernel $K$, that is, a certain two-variable complex-valued function $K(x,y)$ on $\mathscr{E} \times \mathscr{E}$. More precisely, if we denote the determinantal point process with a correlation kernel $K$ by $\PP_K$, then this measure $\PP_K$ is completely determined by the following: for any positive integer $k \ge 1$ and any disjoint bounded subsets $D_1, 
\cdots, D_k$ of $\mathscr{E}$, we have 
\begin{align}\label{def-DPP}
 \!\!\! \int\limits_{\Conf(\mathscr{E})} \!\!\!  \prod_{i =1}^k \#(X \cap D_i) \PP_K(d X)= \!\! \!\!\!\!  \int\limits_{ D_1 
\times \cdots \times D_k}  \!\!\!  \!\! \!\! \det(K(x_i, x_j))_{1 \le i, j \le k}  d\mu^{\otimes k}(x_1 \cdots x_k).
\end{align}
The equality \eqref{def-DPP} implies that  if $D_1, \cdots, D_r$ are disjoint bounded subsets of $\mathscr{E}$ and $k_i$ are positive integers, $k = k_1 + \cdots  + k_r$, then
\begin{align*}
& \int\limits_{\Conf(\mathscr{E})} \prod_{i =1}^r \#(X \cap D_i)  (\#(X \cap D_i) -1) \cdots (\#(X \cap D_i) - k_i +1) \PP_K(d X)
\\
 & = \int\limits_{ D_1^{k_1} \times \cdots \times D_r^{k_r}}    \det(K(x_i, x_j))_{1 \le i, j \le k}  d\mu(x_1) \cdots d\mu(x_k).
\end{align*}
See, e.g., \cite[Remark 1.2.3 ]{HKPV}.   By definition,  determinantal measures  are always supported on the subset of  {\it simple} configurations, that is, configurations all of whose particles have multiplicity one.

In this paper, we are particularly interested in the determinantal point processes with $J$-Hermitian correlation kernels, see \S \ref{intro-sec-J} and \S \ref{section-J} for a brief introduction on $J$-Hermitian kernels. The reader is referred to \cite{Lytvynov-J} for the general theory of such point processes.

\subsection{Palm measures of determinantal point processes}\label{sec-Palm}
In what follows, by Palm measures of a point process, we always mean its {\it reduced} Palm measures. Let us briefly recall the definition of Palm measures of determinantal point processes.  For further details on Palm measures of general point processes,  the reader is referred to \cite{Kallenberg, Daley-Vere}

Let $\PP$ be a point process on $\mathscr{E}$ (later, we will focus on the case $\mathscr{E}$= $\R$ or $\R^*$). Assume that for any positive integer $k$, the point process $\PP$ admits the $k$-th correlation measure $\rho_k$ on $\mathscr{E}^k$, that is, $\rho_k$ is a positive measure on $\mathscr{E}^k$ such that for any disjoint bounded subsets $D_1, \cdots, D_k$ of $\mathscr{E}$, the following identity 
\begin{align}\label{cor-measure}
\int\limits_{\Conf(\mathscr{E})} \prod_{i =1}^k \#(X \cap D_i) \PP(d X)=   \int\limits_{D_1 
\times \cdots \times D_k}  \rho_k(dx_1 \cdots dx_k)
\end{align}
holds.  Then for $\rho_k$-almost every $k$-tuple $\mathfrak{q} = (q_1, \dots, q_k) \in \mathscr{E}^k$ of {\it distinct} points in $\mathscr{E}$, one can define a point process   $\PP^{\mathfrak{q}}$ on $\mathscr{E}$ by the following disintegration formula: for any non-negative Borel test function $u: \Conf(\mathscr{E}) \times \mathscr{E}^k\rightarrow \R $, 
\begin{align}\label{def-Palm}
\int\limits_{\Conf(\mathscr{E})}  \sum_{q_1, \dots, q_k \in X}^{*} u(X; \mathfrak{q}) \PP(d X)  =    \int\limits_{\mathscr{E}^k} \rho_k(d\mathfrak{q}) \!\int\limits_{\Conf(\mathscr{E})} \!  u (X \cup \{q_1, \dots, q_k\};  \mathfrak{q})  \PP^{\mathfrak{q}}(dX),
\end{align}
where $\sum\limits^{*}$ denotes the sum over all distinct points $q_1, \dots, q_k \in X$. The point process $\PP^{\mathfrak{q}}$ is called the Palm measure of $\PP$ conditioned at $q_1, \dots, q_k$.

In the above situation,  if the $k$-th correlation measure $\rho_k$ for the point process $\PP$ is absolutely continuous with respect to the product measure $\mu^{\otimes k}$ on $\mathscr{E}^k$, then  the Radon-Nikodym derivative 
$$
f_k(x_1, \cdots, x_k): = \frac{d \rho_k}{d\mu^{\otimes k}}(x_1, \cdots, x_k)
$$
is called the $k$-th correlation function of $\PP$.  In terms of correlation functions, the Palm measure $\PP^{\mathfrak{q}}$ can be described as follows: it is a point process on $\mathscr{E}$ such that its $n$-th correlation function is given by 
 $$
 f_n^{\mathfrak{q}} (x_1, \cdots, x_n)= f_{n+k} (q_1, \cdots, q_k, x_1, \cdots, x_n).
 $$

Informally, if $\mathcal{X}$ is a random configuration on $\mathscr{E}$ whose probability distribution is given by the point process $\PP$, then $\PP^{\mathfrak{q}}$ is the conditional distribution of the random configuration $\mathcal{X} \setminus \{q_1, \dots, q_k\}$ conditioned to the event that all particles $q_1, \dots, q_k $ are in the configuration $\mathcal{X}$.

A Theorem of Shirai and Takahashi  \cite{ST-palm} states that the Palm measures of a determinantal measure are again determinantal measures. Let us formulate this result more precisely. Assume now $\PP$ is  a determinantal point process on $\mathscr{E}$ induced by a correlation kernel $K$, that is, $\PP= \PP_K$.  Let $q \in \mathscr{E}$ and assume that $K(q,q)>0$. Set  
\begin{align}\label{1-rank}
K^q(x,y) = K(x,y) - \frac{K(x,q) K(q,y)}{K(q,q)}.
\end{align}
If  $K(q,q)= 0$, we set $K^q = K$. More generally, if $\mathfrak{q} = (q_1, \dots, q_k) \in \mathscr{E}^k$ is a $k$-tuple of  distinct points in $\mathscr{E}$, then we define by iteration
\begin{align}\label{Kq-iteration}
K^{\mathfrak{q}} = (\cdots (K^{q_1})^{q_2}\cdots)^{q_k}.
\end{align}
Observe that the order of the points $q_1, q_2, \cdots q_k$ has no effect in the above iteration.

\begin{thm}[Shirai and Takahashi \cite{ST-palm}]\label{ST}
Let $\PP= \PP_K$ be a determinantal point process on $\mathscr{E}$ induced by a correlation kernel $K$.  Let $k \in \N$ be a positive integer.  Then for $\rho_k$-almost every $k$-tuple $\mathfrak{q}  \in \mathscr{E}^k$ of distinct points in $\mathscr{E}$, the Palm measure $\PP_K^{\mathfrak{q}}$ of $\PP_K$ conditioned at $\mathfrak{q}$ is again a determinantal point process on $\mathscr{E}$. Moreover, $\PP_K^{\mathfrak{q}}$ is induced by the kernel $K^{\mathfrak{q}}$ defined in \eqref{Kq-iteration}, that is, we have
$$
\PP_K^{\mathfrak{q}} = \PP_{K^{\mathfrak{q}}}.
$$
\end{thm}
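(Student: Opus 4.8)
The plan is to follow the standard two-step argument: first reduce to conditioning at a single point, then identify correlation functions. Both operations in play are compatible with iteration. On one hand, the disintegration formula \eqref{def-Palm} yields, for $\rho_k$-almost every $\mathfrak{q} = (q_1, \dots, q_k)$, the relation $\PP^{\mathfrak{q}} = (\cdots (\PP^{q_1})^{q_2} \cdots)^{q_k}$; on the other hand $K^{\mathfrak{q}} = (\cdots (K^{q_1})^{q_2} \cdots)^{q_k}$ holds by the very definition \eqref{Kq-iteration}. Hence, once we know that $\PP_K^{q} = \PP_{K^{q}}$ for every admissible correlation kernel $K$ and $\rho_1$-almost every single point $q$, the general statement follows by induction on $k$, the only point needing attention being that the exceptional null sets arising at the successive stages assemble — by Fubini applied to the correlation measures — into a single $\rho_k$-null subset of $\mathscr{E}^k$. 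We may also assume $K(q,q) > 0$ throughout, since the points with $K(q,q) = 0$ form a $\rho_1$-null set (as $\rho_1 = K(x,x)\,d\mu$) and the convention $K^q = K$ there plays no role.

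For the case $k=1$, fix $q$ with $K(q,q) > 0$ and compute the correlation functions of $\PP_K^q$. Feeding \eqref{def-Palm} a test function of the form $u(X;q) = g(q) \prod_{i=1}^n \#(X \cap D_i)$ with disjoint bounded sets $D_i$ and $g$ supported away from the $D_i$, and using \eqref{cor-measure} for $\PP_K$, one reads off that the $n$-th correlation function of $\PP_K^q$ is
\[
\rho_n^{q}(x_1, \dots, x_n) = \frac{\rho_{n+1}^{K}(q, x_1, \dots, x_n)}{\rho_1^{K}(q)} = \frac{1}{K(q,q)} \det\bigl( K(y_i, y_j)\bigr)_{0 \le i, j \le n},
\]
where we set $y_0 := q$ and $y_i := x_i$ for $i \ge 1$. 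Now the decisive step is a Schur-complement identity: regarding $(K(y_i,y_j))_{0\le i,j\le n}$ as a block matrix with scalar $(0,0)$-entry $K(q,q)$, its determinant equals $K(q,q)$ times the determinant of the Schur complement, that is,
\[
\det\bigl(K(y_i,y_j)\bigr)_{0\le i,j\le n} = K(q,q)\,\det\!\left( K(x_i,x_j) - \frac{K(x_i,q)\,K(q,x_j)}{K(q,q)}\right)_{1\le i,j\le n} = K(q,q)\,\det\bigl(K^q(x_i,x_j)\bigr)_{1\le i,j\le n},
\]
using \eqref{1-rank}. Consequently $\rho_n^q(x_1,\dots,x_n) = \det\bigl(K^q(x_i,x_j)\bigr)_{1\le i,j\le n}$ for every $n$.

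It remains to conclude by a uniqueness argument. The computation just performed shows in particular that $K^q$ is a legitimate correlation kernel — the numbers $\det(K^q(x_i,x_j))$ are the correlation functions of the genuine probability measure $\PP_K^q$ — so $\PP_{K^q}$ is well defined, and it has the same $n$-point correlation functions as $\PP_K^q$ for all $n$. Since a point process is determined by its correlation functions under the regularity conditions in force (Lenard's criterion: the correlation functions here obey a bound making the corresponding moment problem determinate; alternatively, in the locally trace-class Hermitian case, one compares the generating functionals $\E\prod_{x\in X}(1-g(x))$ as Fredholm determinants), we obtain $\PP_K^q = \PP_{K^q}$, completing the induction. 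I expect this final uniqueness step to be the genuine obstacle: the reduction to $k=1$ is bookkeeping and the key factorization is an elementary determinant identity, whereas reconstructing the process from its correlations really uses the structure of the admissible kernels — for the $J$-Hermitian kernels of this paper the needed estimates follow from Condition \ref{con1} and the hypotheses of Lytvynov's Theorem \ref{thm-0}.
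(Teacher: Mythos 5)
The paper does not supply a proof of this statement: it is quoted from Shirai and Takahashi \cite{ST-palm}, and the remark immediately following it merely observes that the cited Hermitian-case argument ``can be generalized word by word'' to non-Hermitian kernels. So there is no in-paper proof to compare against; I can only assess your argument on its own merits and against the cited reference.

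Your route is, in outline, exactly the one in \cite{ST-palm}: reduce to $k=1$ by iterating single-point Palm measures (noting via Fubini that the exceptional $\rho_k$-null sets assemble properly), compute $\rho_n^{q}(x_1,\dots,x_n)=\rho_{n+1}(q,x_1,\dots,x_n)/\rho_1(q)$ directly from the disintegration formula \eqref{def-Palm} applied with disjoint test sets, rewrite the bordered $(n+1)\times(n+1)$ determinant by a Schur complement on the $(0,0)$ entry to identify the kernel as $K^{q}$ of \eqref{1-rank}, and conclude by uniqueness. The Schur-complement identity is correct, and the reduction-by-iteration of Palm measures is standard. The one step you rightly flag as the genuine one is the closing uniqueness argument. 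The computation produces a family of correlation functions that coincide with $\det(K^q(x_i,x_j))$, so the measure $\PP_K^q$ \emph{is} determinantal with kernel $K^q$; what remains is that $\PP_{K^q}$ is unambiguous as a determinantal process, i.e.\ that these correlation functions satisfy a Lenard-type bound guaranteeing determinacy. In the Hermitian locally trace-class setting this is Hadamard's inequality; in the $J$-Hermitian setting of the paper, it is exactly what the framework of Lytvynov's Theorem~\ref{thm-0} provides (and, in the paper's specific application, Lemma~\ref{lem-in-out} checks that $\widehat{K^q}$ remains a projection, so $K^q$ stays in the admissible class). Your sketch is correct in content; if you wanted to make it airtight you would spell out the growth bound on $\det(K^q(x_i,x_j))$ justifying determinacy, rather than asserting it.
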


\begin{rem}
Theorem \ref{ST} was proved by Shirai and Takahashi in \cite{ST-palm} for determinantal point processes with Hermitian correlation kernels. This result was independently proved by Lyons in \cite{DPP-L} in the case where the phase space is a discrete countable set and the correlation kernel corresponds to a Hermitian orthogonal projection. The proof in \cite{ST-palm} can be generalized word by word for determinantal point processes without requiring that the correlation kernels are Hermitian.
\end{rem}

\subsection{\texorpdfstring{$J$}{a}-Hermitian kernels and extended Fredholm determinants}\label{section-J}
Recall that in \S \ref{intro-sec-J},  we have defined the $J$-Hermitian kernels on $\R^* = \R_{+}\sqcup \R_{-}$ as follows: a kernel $K: \R^* \times \R^*\rightarrow \C$ is called a $J$-Hermitian kernel if it defines a bounded linear operator on $L^2(\R)$ and if
\begin{align*}
 K(x,y) = \sgn(x) \sgn(y) \overline{K(y,x)}, \quad x, y \in \R^*.
\end{align*}

In Theorem \ref{thm-0}, we  recalled  the Lytvynov's characterization of the correlation kernels of determinantal point process in $J$-Hermitian case in our particular situation with the  phase space $\R^* = \R_{+}\sqcup \R_{-}$.  We shall need a slight reformulation of Theorem \ref{thm-0}.

\begin{rem}\label{two-kind-s}
Note that the determinantal point process $\PP_K$ induced by the kernel $K$ as in Theorem \ref{thm-0} accumulates at infinity (both $+\infty$ and $-\infty$), in this situation, we will say that the {\it singularity of the kernel} $K$ is at infinity.  The change of variables $x \mapsto 1/x$ on $\R^*$ transforms $\PP_K$ to a new determinantal point process on $\R^*$ induced by the new kernel $\frac{1}{|xy|} K(1/x, 1/y)$. This new determinantal point process has a single accumulation point at the origin $0 \in \R$ of the real line, and in this situation, we call that the above new kernel has a singularity at the origin. Now it is clear how to formulate a version of Theorem \ref{thm-0} when the kernel $K$ has a singularity at the origin (and there is no singularity at infinity):  we just need to replace the conditions on $\Delta_1, \Delta_2$ required in Theorem \ref{thm-0}  by  the following new condition:
$$
\text{ $\Delta_1$ and $\Delta_2$ are two measurable subsets of $\R$  both having positive distances from $0$.}
$$
Note that in the case of singularity at origin, the two subsets $\Delta_1, \Delta_2\subset \R^*$ can be unbounded.
\end{rem}

Let $\mathscr{L}_1(L^2(\R))$ denote the space of trace-class operators on $L^2(\R)$ and let $\mathscr{L}_2(L^2(\R))$ denote the space of Hilbert-Schmidt operators on $L^2(\R)$.  For further details on trace-class and Hilbert-Schmidt operators, the reader is referred to \cite{Simon-trace}. Following \cite{Lytvynov-J}, we denote by $\mathscr{L}_{1|2} (L^2(\R))$ the space of all bounded linear operators on $L^2(\R) = L^2(\R_{+}) \oplus L^2(\R_{-})$ such that when written in the following block forms 
$$
\left[ \begin{array}{cc}  a & b \\ c & d \end{array}\right],
$$
we have $a, d \in \mathscr{L}_1(L^2(\R))$ and $b,  c \in \mathscr{L}_2(L^2(\R))$. Clearly, 
\begin{align}\label{inclusions}
\mathscr{L}_1(L^2(\R)) \subset \mathscr{L}_{1|2}(L^2(\R)) \subset \mathscr{L}_2(L^2(\R)).
\end{align}

Let  $K$ be a bounded linear operator on  $L^2(\R)$. Then for any subset $\Delta \subset \R$, we denote $K^\Delta$ the compressed operator defined by
$$
 K^\Delta:  = \chi_\Delta K \chi_\Delta.
$$
 By  \cite[Prop. 12]{Lytvynov-J}, if $K$ satisfies all the conditions in Theorem \ref{thm-0}, including the condition  that $0 \le \widehat{K}\le 1$,  then for any bounded subset $\Delta \subset \R$, we have 
\begin{align}\label{K-Delta}
K^\Delta \in  \mathscr{L}_{1|2} (L^2(\R)).
\end{align}
Similarly, if $K$ satisfies all the conditions of the origin-singularity version of Theorem \ref{thm-0} as explained in Remark \ref{two-kind-s}, then for any measurable subset $\Delta\subset \R$ having a positive distance from the origin, the compressed operator $K^\Delta$ belongs to $ \mathscr{L}_{1|2} (L^2(\R))$.

The space $\mathscr{L}_{1|2}(L^2(\R))$ is a Banach space equipped with a norm $\|\cdot \|_{\mathscr{L}_{1|2}}$ defined by the following formula
\begin{align*}
\left\| \left[ \begin{array}{cc}  a & b \\ c & d \end{array}\right] \right\|_{\mathscr{L}_{1|2}}: = \| a\|_1 + \| d\|_1 + \|b\|_2 + \| c\|_2,
\end{align*}
where $\|\cdot\|_1$ is the trace-class norm while $\|\cdot\|_2$ is the Hilbert-Schmidt norm. Observe that $\mathscr{L}_{1|2}(L^2(\R))$ is not an ideal in the $C^*$-algebra $\mathscr{L}(L^2(\R))$ of all bounded  linear  operators on $L^2(\R)$. 

We collect a few standard facts needed in what follows; for the reader's convenience, we include their proofs in the Appendix.

\begin{prop}\label{prop-M}
Let   $A, B$ be two operators in $\mathscr{L}_{1|2}(L^2(\R))$. We have
\begin{align*}
\| AB\|_{\mathscr{L}_{1|2}} \le 2 \| A \|_{\mathscr{L}_{1|2}} \| B\|_{\mathscr{L}_{1|2}}.
\end{align*}
More generally, if $A_1, \cdots, A_{n}$ are operators in $\mathscr{L}_{1|2}(L^2(\R))$, then 
\begin{align*}
\| A_1 \cdot \cdots \cdot A_{n}\|_{\mathscr{L}_{1|2}} \le 2^{n-1} \prod_{i=1}^{n} \| A_i \|_{\mathscr{L}_{1|2}}.
\end{align*}
\end{prop}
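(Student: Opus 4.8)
The plan is to reduce the estimate for a product of $n$ operators to the bilinear bound $\|AB\|_{\mathscr{L}_{1|2}}\le 2\|A\|_{\mathscr{L}_{1|2}}\|B\|_{\mathscr{L}_{1|2}}$, and to establish the latter by writing everything in $2\times 2$ block form and expanding the matrix product. First I would fix the notation: write $A=\left[\begin{smallmatrix} a_1 & b_1 \\ c_1 & d_1\end{smallmatrix}\right]$ and $B=\left[\begin{smallmatrix} a_2 & b_2 \\ c_2 & d_2\end{smallmatrix}\right]$, with diagonal blocks trace-class and off-diagonal blocks Hilbert--Schmidt, relative to the decomposition $L^2(\R)=L^2(\R_+)\oplus L^2(\R_-)$. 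Then
\begin{align*}
AB=\left[\begin{array}{cc} a_1a_2 + b_1 c_2 & a_1 b_2 + b_1 d_2 \\ c_1 a_2 + d_1 c_2 & c_1 b_2 + d_1 d_2\end{array}\right].
\end{align*}
The point is that each block of $AB$ is a sum of two terms, and in each term the Schatten exponents add up correctly: $a_1 a_2$ is a product of two trace-class operators (so trace-class, with $\|a_1a_2\|_1\le \|a_1\|_1\|a_2\|_1$ by the ideal property), while $b_1 c_2$ is a product of two Hilbert--Schmidt operators (so trace-class, with $\|b_1 c_2\|_1\le \|b_1\|_2\|c_2\|_2$); similarly the off-diagonal block $a_1 b_2 + b_1 d_2$ is a sum of (trace-class)$\cdot$(Hilbert--Schmidt) $+$ (Hilbert--Schmidt)$\cdot$(trace-class), each of which is Hilbert--Schmidt with the Hilbert--Schmidt norm controlled by the product of the $\|\cdot\|_1$ and $\|\cdot\|_2$ norms of the factors. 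So $AB\in\mathscr{L}_{1|2}(L^2(\R))$, and summing the four block contributions gives
\begin{align*}
\|AB\|_{\mathscr{L}_{1|2}}&\le \big(\|a_1\|_1\|a_2\|_1 + \|b_1\|_2\|c_2\|_2\big) + \big(\|c_1\|_2\|b_2\|_2 + \|d_1\|_1\|d_2\|_1\big)\\
&\quad + \big(\|a_1\|_1\|b_2\|_2 + \|b_1\|_2\|d_2\|_1\big) + \big(\|c_1\|_2\|a_2\|_1 + \|d_1\|_1\|c_2\|_2\big).
\end{align*}
The right-hand side is a sum of eight products, each of the form (one of the four summands of $\|A\|_{\mathscr{L}_{1|2}}$) $\times$ (one of the four summands of $\|B\|_{\mathscr{L}_{1|2}}$); it is a subset of the sixteen products appearing in the expansion of $\|A\|_{\mathscr{L}_{1|2}}\|B\|_{\mathscr{L}_{1|2}}$, hence bounded by it. To get the factor $2$ rather than $1$, I would simply note each of the eight products is $\le\|A\|_{\mathscr{L}_{1|2}}\|B\|_{\mathscr{L}_{1|2}}$ — actually the cleaner route is: the eight terms are exactly those products $s_i(A)s_j(B)$ with $(i,j)$ ranging over a set of size $8$, and one checks by inspection (or crudely, since all sixteen cross terms are nonnegative) that their sum is at most $2\|A\|_{\mathscr{L}_{1|2}}\|B\|_{\mathscr{L}_{1|2}}$; in fact a short case analysis shows the eight chosen pairs can be partitioned into two groups each summing to at most $\|A\|_{\mathscr{L}_{1|2}}\|B\|_{\mathscr{L}_{1|2}}$, which is where the $2$ comes from.

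For the general statement, I would proceed by induction on $n$. The base case $n=1$ is trivial and $n=2$ is what was just proved. For the inductive step, assuming the bound for $n-1$ factors, write $A_1\cdots A_n=(A_1\cdots A_{n-1})A_n$ and apply the bilinear inequality together with the inductive hypothesis: since $A_1\cdots A_{n-1}\in\mathscr{L}_{1|2}(L^2(\R))$ with $\|A_1\cdots A_{n-1}\|_{\mathscr{L}_{1|2}}\le 2^{n-2}\prod_{i=1}^{n-1}\|A_i\|_{\mathscr{L}_{1|2}}$, we get
\begin{align*}
\|A_1\cdots A_n\|_{\mathscr{L}_{1|2}}\le 2\,\|A_1\cdots A_{n-1}\|_{\mathscr{L}_{1|2}}\,\|A_n\|_{\mathscr{L}_{1|2}}\le 2\cdot 2^{n-2}\prod_{i=1}^{n}\|A_i\|_{\mathscr{L}_{1|2}}=2^{n-1}\prod_{i=1}^{n}\|A_i\|_{\mathscr{L}_{1|2}},
\end{align*}
which closes the induction.

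The only genuinely delicate point — and the one I would spend care on — is bookkeeping the constant $2$ in the bilinear estimate: one must avoid naively bounding the sum of eight cross terms by eight times the maximum (which would give a worse constant) and instead observe that each of the four summands of $\|A\|_{\mathscr{L}_{1|2}}$ multiplies exactly two of the four summands of $\|B\|_{\mathscr{L}_{1|2}}$ (look at the block structure: $a_1$ hits $a_2$ and $b_2$; $b_1$ hits $c_2$ and $d_2$; $c_1$ hits $a_2$ and $b_2$; $d_1$ hits $c_2$ and $d_2$), so that regrouping as $(a_1+c_1)$-terms and $(b_1+d_1)$-terms yields two groups each dominated by $\|A\|_{\mathscr{L}_{1|2}}\|B\|_{\mathscr{L}_{1|2}}$. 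Everything else is a routine invocation of the standard Schatten--Hölder inequalities $\|ST\|_1\le\|S\|_1\|T\|_1$, $\|ST\|_1\le\|S\|_2\|T\|_2$, $\|ST\|_2\le\|S\|_1\|T\|_2$, and $\|ST\|_2\le\|S\|_2\|T\|_1$ (see \cite{Simon-trace}), together with the triangle inequality in $\mathscr{L}_1$ and $\mathscr{L}_2$.
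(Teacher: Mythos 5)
Your proof is correct and follows the same route as the paper's: write $A,B$ in $2\times 2$ block form relative to $L^2(\R)=L^2(\R_+)\oplus L^2(\R_-)$, expand $AB$, bound each block by the Schatten--H\"older inequalities and the triangle inequality, and induct for $n>2$. As a minor remark, your observation that the eight cross terms form a subset of the sixteen terms in the expansion of $\|A\|_{\mathscr{L}_{1|2}}\|B\|_{\mathscr{L}_{1|2}}$ already yields the sharper constant $1$, which makes the subsequent regrouping digression (and the phrase ``to get the factor $2$ rather than $1$'') redundant and slightly confusing; the paper's factor $2$ arises instead because it normalizes $\|B\|_{\mathscr{L}_{1|2}}\le 1$ and bounds each $B$-block uniformly by $1$, after which each $A$-block norm appears exactly twice in the sum.
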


\begin{prop}\label{prop-bdd-M}
Let $f: \R \rightarrow \C$ be a bounded measurable function and let $K$ be an operator in $\mathscr{L}_{1|2}(L^2(\R))$. Then the operators $fK$ and $Kf$ are both in $\mathscr{L}_{1|2}(L^2(\R))$. Moreover, 
\begin{align*}
\max (\| fK\|_{\mathscr{L}_{1|2}}, \| K f \|_{\mathscr{L}_{1|2}}  ) \le \| f\|_\infty  \| K \|_{\mathscr{L}_{1|2}},
\end{align*}
where $\| f\|_\infty$ means the $L^\infty$-norm of  $f$.
\end{prop}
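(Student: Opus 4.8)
The plan is to exploit the fact that multiplication by a function $f$ on $\R$ is \emph{block-diagonal} with respect to the orthogonal decomposition $L^2(\R) = L^2(\R_{+}) \oplus L^2(\R_{-})$, together with the ideal property of the Schatten classes $\mathscr{L}_1$ and $\mathscr{L}_2$. The point is that, although $\mathscr{L}_{1|2}(L^2(\R))$ is not an ideal in $\mathscr{L}(L^2(\R))$, it \emph{is} stable under left and right multiplication by operators that are diagonal in the above decomposition, and $M_f$ is such an operator.

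First I would note that since $P_{+}$ and $P_{-}$ are themselves the multiplication operators by $\chi_{\R_{+}}$ and $\chi_{\R_{-}}$, the operator $M_f$ commutes with $P_{\pm}$; hence it has block form $M_f = \mathrm{diag}(M_f^{+}, M_f^{-})$, where $M_f^{\pm}$ is multiplication by $f$ on $L^2(\R_{\pm})$, each bounded with operator norm $\le \|f\|_\infty$. Writing $K = \left[\begin{array}{cc} a & b \\ c & d\end{array}\right]$ with $a, d \in \mathscr{L}_1(L^2(\R))$ and $b, c \in \mathscr{L}_2(L^2(\R))$, one computes
\[
fK = M_f K = \left[\begin{array}{cc} M_f^{+} a & M_f^{+} b \\ M_f^{-} c & M_f^{-} d\end{array}\right],
\qquad
Kf = K M_f = \left[\begin{array}{cc} a M_f^{+} & b M_f^{-} \\ c M_f^{+} & d M_f^{-}\end{array}\right].
\]
By the ideal property of the trace class, $\|M_f^{+} a\|_1 \le \|f\|_\infty \|a\|_1$ and $\|M_f^{-} d\|_1 \le \|f\|_\infty \|d\|_1$ (and likewise $\|a M_f^{+}\|_1$, $\|d M_f^{-}\|_1$); by the ideal property of the Hilbert--Schmidt class, $\|M_f^{+} b\|_2 \le \|f\|_\infty \|b\|_2$ and $\|M_f^{-} c\|_2 \le \|f\|_\infty \|c\|_2$ (and likewise $\|b M_f^{-}\|_2$, $\|c M_f^{+}\|_2$). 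Thus the diagonal blocks of $fK$ and of $Kf$ are trace-class and the off-diagonal blocks are Hilbert--Schmidt, so both operators belong to $\mathscr{L}_{1|2}(L^2(\R))$; summing the four estimates and using the definition of $\|\cdot\|_{\mathscr{L}_{1|2}}$ yields $\|fK\|_{\mathscr{L}_{1|2}} \le \|f\|_\infty\big(\|a\|_1 + \|d\|_1 + \|b\|_2 + \|c\|_2\big) = \|f\|_\infty \|K\|_{\mathscr{L}_{1|2}}$, and the same bound for $\|Kf\|_{\mathscr{L}_{1|2}}$.

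The argument is entirely routine, and there is no real obstacle; the only point that must not be glossed over is that $\mathscr{L}_{1|2}(L^2(\R))$ is not an ideal, so one cannot invoke an ideal property for an arbitrary bounded multiplier. What makes the proof work is precisely that $M_f$ preserves the block decomposition $L^2(\R) = L^2(\R_{+}) \oplus L^2(\R_{-})$, which is why the hypothesis that $f$ is a function on $\R$ (and not a general bounded operator) is essential.
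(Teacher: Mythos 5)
Your proof is correct and is exactly the argument the paper intends: the paper simply remarks that the claim follows from the definition of $\mathscr{L}_{1|2}$ and the ideal property of the trace and Hilbert--Schmidt classes, and your writeup fills in the routine details of that remark. The observation you highlight—that $M_f$ is block-diagonal with respect to $L^2(\R)=L^2(\R_{+})\oplus L^2(\R_{-})$, which is what lets one apply the Schatten ideal properties blockwise even though $\mathscr{L}_{1|2}$ itself is not an ideal—is precisely the point.
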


\begin{prop}\label{prop-ideal}
Let   $A, B$ be two operators in $\mathscr{L}_{1|2}(L^2(\R))$. Assume that $1 + A$ is  invertible. Then the operators 
$(1 + A)^{-1} B$ and $B(1+ A)^{-1}$ both belong to the class $\mathscr{L}_{1|2}(L^2(\R))$.
\end{prop}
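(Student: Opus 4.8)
The plan is to reduce the statement about $\mathscr{L}_{1|2}(L^2(\R))$ to the well-known ideal properties of the trace-class and Hilbert-Schmidt classes inside $\mathscr{L}(L^2(\R))$, together with Proposition \ref{prop-M} (the submultiplicativity of $\|\cdot\|_{\mathscr{L}_{1|2}}$). The key observation is that $\mathscr{L}_{1|2}(L^2(\R))$, while not a two-sided ideal in $\mathscr{L}(L^2(\R))$, \emph{does} absorb multiplication on either side by the four ``corner'' projections $P_{+}, P_{-}$ and their images, and more importantly it is closed under multiplication by anything of the form $1 + (\text{element of } \mathscr{L}_{1|2})$ once that element is invertible. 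Concretely, write $C := (1+A)^{-1}$. Since $A \in \mathscr{L}_{1|2}(L^2(\R)) \subset \mathscr{L}_2(L^2(\R))$, the operator $C$ is a bounded operator and, crucially, $C - 1 = -CA = -AC$, so $C = 1 - CA$.

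First I would show $C - 1 \in \mathscr{L}_{1|2}(L^2(\R))$. Indeed $C - 1 = -CA$; here $C = (1+A)^{-1}$ is bounded and $A \in \mathscr{L}_{1|2}(L^2(\R))$, so I need the auxiliary fact that $\mathscr{L}_{1|2}(L^2(\R))$ is a left (and, symmetrically, right) module over $\mathscr{L}(L^2(\R))$ — that is, $T \in \mathscr{L}(L^2(\R))$ and $S \in \mathscr{L}_{1|2}(L^2(\R))$ imply $TS, ST \in \mathscr{L}_{1|2}(L^2(\R))$. This in turn follows by writing $S$ in block form $\left[\begin{smallmatrix} a & b \\ c & d\end{smallmatrix}\right]$ with $a,d$ trace-class and $b,c$ Hilbert-Schmidt, and $T$ in block form $\left[\begin{smallmatrix} t_{11} & t_{12} \\ t_{21} & t_{22}\end{smallmatrix}\right]$ with bounded entries, and multiplying out: each diagonal block of $TS$ is a sum of products of a bounded operator with a trace-class operator (giving trace-class) plus a bounded operator with a Hilbert-Schmidt operator — but the latter is only Hilbert-Schmidt, not trace-class. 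This is the subtlety: a bounded times Hilbert-Schmidt is \emph{not} trace-class in general, so the naive argument fails and $\mathscr{L}_{1|2}$ is genuinely not an ideal.

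Hence the real content, and the main obstacle, is that one cannot prove the module property in full generality; one must use the specific structure $C = 1 - CA$ where the ``bounded factor'' $C$ itself differs from the identity by an $\mathscr{L}_{1|2}$ element. The fix: I would prove it by a fixed-point / Neumann-type bootstrap. Set $D := C - 1$, so $D = -(1+D+1)A = -A - DA$, i.e. $D = -A - DA$, giving $D(1+A) = -A$ and $D = -A(1+A)^{-1} = -AC$. Now iterate: $D = -A + A(1+A)^{-1}A$... actually cleaner is to use $D = -A + (-D)(-A)$ rearranged, but the decisive step is: from $D = -A - DA$ we get $D = -A - (-A - DA)A = -A + A^2 + DA^2$, and more generally $D = \sum_{k=1}^{N}(-1)^k A^k + (-1)^{N+1} D A^{N+1}$ wait — this needs convergence. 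Instead I would argue directly: $D = -AC$ where $A \in \mathscr{L}_{1|2}$ and $C$ is bounded, combined with $D = -CA$; then $D = -CA = -(1+D)A = -A - DA$, and since $DA = (-CA)A = -CA^2$ and $A^2 \in \mathscr{L}_{1|2}$ by Proposition \ref{prop-M} with $C$ bounded absorbing on the left in the \emph{weak} sense, I keep reducing the number of ``bad'' Hilbert-Schmidt factors: writing $A = \left[\begin{smallmatrix} a_{11} & a_{12}\\ a_{21} & a_{22}\end{smallmatrix}\right]$, the product $A^2$ has diagonal blocks $a_{11}^2 + a_{12}a_{21}$ (trace-class $+$ Hilbert-Schmidt times Hilbert-Schmidt $=$ trace-class) and off-diagonal blocks $a_{11}a_{12} + a_{12}a_{22}$ (trace-class times Hilbert-Schmidt $=$ Hilbert-Schmidt). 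So $A^2 \in \mathscr{L}_{1|2}$, and more importantly $CA^2$: since $C$ bounded times $A^2$, the dangerous terms are bounded times the trace-class diagonal blocks, which stay trace-class, and bounded times the Hilbert-Schmidt off-diagonal blocks, which stay Hilbert-Schmidt — \textbf{exactly the block structure of $\mathscr{L}_{1|2}$}. Thus the point is that $CA^2 \in \mathscr{L}_{1|2}$ because left-multiplication by a bounded operator preserves trace-class and Hilbert-Schmidt classes separately and does not mix the two block positions across the diagonal. Therefore $D = -A - DA = -A + CA^2 \cdot(\text{sign})$... more precisely $DA = -CA \cdot A = -CA^2$, so $D = -A + CA^2 \in \mathscr{L}_{1|2} + \mathscr{L}_{1|2} = \mathscr{L}_{1|2}$. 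Hence $C = 1 + D$ with $D \in \mathscr{L}_{1|2}$.

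Finally, having $C - 1 \in \mathscr{L}_{1|2}(L^2(\R))$, I compute $CB = (1+D)B = B + DB$. By Proposition \ref{prop-M}, $DB \in \mathscr{L}_{1|2}(L^2(\R))$, and $B \in \mathscr{L}_{1|2}(L^2(\R))$ by hypothesis, so $CB = (1+A)^{-1}B \in \mathscr{L}_{1|2}(L^2(\R))$. Symmetrically, $BC = B + BD \in \mathscr{L}_{1|2}(L^2(\R))$ using that $\mathscr{L}_{1|2}$ is closed under multiplication on both sides by its own elements (again Proposition \ref{prop-M}), which handles $B(1+A)^{-1}$. This completes the proof; the only genuinely delicate point, worth spelling out carefully, is the block-matrix computation showing $D = C - 1 = -A + CA^2$ lands back in $\mathscr{L}_{1|2}$, which rests on the fact that left- or right-multiplication by a bounded operator preserves each Schatten class $\mathscr{L}_1$ and $\mathscr{L}_2$ individually and respects the $2\times 2$ block decomposition of $L^2(\R) = L^2(\R_+) \oplus L^2(\R_-)$.
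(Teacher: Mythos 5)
Your overall strategy is sound and close in spirit to the paper's proof: both reduce everything to the two-sided ideal property of $\mathscr{L}_1(L^2(\R))$ inside the bounded operators, together with the H\"older inclusion $\mathscr{L}_2\cdot\mathscr{L}_2\subset\mathscr{L}_1$. You choose to first prove that $D:=(1+A)^{-1}-1$ lies in $\mathscr{L}_{1|2}(L^2(\R))$ and then apply Proposition~\ref{prop-M} to $DB$ and $BD$, whereas the paper writes $(1+A)^{-1}B = B - (1+A)^{-1}AB$, observes that $AB\in\mathscr{L}_1(L^2(\R))$ because $A,B\in\mathscr{L}_{1|2}\subset\mathscr{L}_2$, and invokes the ideal property of $\mathscr{L}_1$ directly. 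Your detour via the identity $D=-A+CA^2$ is correct and has the mild advantage of settling $(1+A)^{-1}B$ and $B(1+A)^{-1}$ in one stroke once $D\in\mathscr{L}_{1|2}$ is known.

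The gap is in your justification of the decisive step $CA^2\in\mathscr{L}_{1|2}(L^2(\R))$. You assert that ``left-multiplication by a bounded operator preserves trace-class and Hilbert-Schmidt classes separately and does not mix the two block positions across the diagonal.'' This is the very statement you correctly diagnosed as false a few lines earlier: the $(1,1)$-block of $CA^2$ is $c_{11}(A^2)_{11}+c_{12}(A^2)_{21}$, which involves an off-diagonal block of $A^2$, so block positions do get mixed, and if $(A^2)_{21}$ were merely Hilbert-Schmidt the conclusion would fail. Moreover, you underestimate the off-diagonal blocks of $A^2$: you call $a_{11}a_{12}+a_{12}a_{22}$ ``trace-class times Hilbert-Schmidt $=$ Hilbert-Schmidt,'' but a trace-class operator composed (on either side) with a bounded operator is again trace-class, so these blocks are trace-class. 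The point you missed is that $A^2\in\mathscr{L}_1(L^2(\R))$ outright (product of two Hilbert-Schmidt operators), hence $CA^2\in\mathscr{L}_1\subset\mathscr{L}_{1|2}$ by the genuine ideal property of $\mathscr{L}_1$, with no block bookkeeping and no appeal to a nonexistent module structure on $\mathscr{L}_{1|2}$. That is exactly the mechanism the paper uses, applied to $AB$ rather than $A^2$, and it is what actually closes your argument.
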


Recall that the trace of an operator $A\in  \mathscr{L}_1(L^2(\R))$ is given by 
$$
\tr(A) = \sum_{n = 1}^\infty (Ae_n, e_n), 
$$
where $\{e_n\}_{n=1}^\infty$ is an orthonormal basis of $L^2(\R)$. 

Let $m$ be a positive integer. Denote by $\wedge^m(L^2(\R))$ the $m$-th antisymmetric tensor power of the Hilbert space $L^2(\R)$. For any $A\in \mathscr{L}(L^2(\R))$, denote by  $\wedge^m(A)$ the unique bounded linear operator on $\wedge^m(L^2(\R))$ determined by 
$$
\wedge^m(A) (v_1 \wedge \cdots \wedge v_m) = (Av_1) \wedge \cdots \wedge (Av_m), \quad v_1, \cdots, v_m \in L^2(\R). 
$$

\begin{defn}[Fredholm determinant, Grothendieck \cite{Grothendieck-Fredholm}]
Let $A \in \mathscr{L}_1(L^2(\R))$, then the Fredholm determinant $\det(1 + A)$ is defined by 
$$
\det(1 + A): = \sum_{m=0}^\infty \tr (\wedge^m(A)).
$$
\end{defn}

In \cite{BOO-jams}, it is proven that the function $A \mapsto \det(1 + A)$ admits a unique extension to $\mathscr{L}_{1|2}(L^2(\R))$ which is continuous in the topology of $\mathscr{L}_{1|2}(L^2(\R))$. We will use the same notation $\det(1 + A)$ for this extended Fredholm determinant when $A \in \mathscr{L}_{1|2}(L^2(\R))$.

\begin{prop}\label{prop-Fm}
Let   $A, B$ be two operators in $\mathscr{L}_{1|2}(L^2(\R))$, then 
\begin{align}
\det((1 + A)(1+B)) = \det(1+A) \det(1 + B).
\end{align}
\end{prop}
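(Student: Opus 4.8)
The plan is to reduce the multiplicativity statement on $\mathscr{L}_{1|2}(L^2(\R))$ to the classical multiplicativity of the Fredholm determinant on $\mathscr{L}_1(L^2(\R))$ by a density/continuity argument. First I would recall that for trace-class operators $A, B \in \mathscr{L}_1(L^2(\R))$ the identity $\det((1+A)(1+B)) = \det(1+A)\det(1+B)$ is the classical Grothendieck theorem, so the work is entirely in passing from $\mathscr{L}_1$ to $\mathscr{L}_{1|2}$. The key structural facts I would invoke are: (a) finite-rank operators are dense in $\mathscr{L}_{1|2}(L^2(\R))$ in the $\|\cdot\|_{\mathscr{L}_{1|2}}$-norm — this follows since finite-rank operators are dense in both $\mathscr{L}_1$ and $\mathscr{L}_2$ in their respective norms, hence block-by-block dense in $\mathscr{L}_{1|2}$; (b) the extended Fredholm determinant $A \mapsto \det(1+A)$ is continuous on $\mathscr{L}_{1|2}(L^2(\R))$, which is exactly the content of the extension theorem of \cite{BOO-jams} quoted just above; (c) multiplication $(A,B)\mapsto AB$ is continuous on $\mathscr{L}_{1|2}(L^2(\R))$, which is immediate from Proposition \ref{prop-M} (the estimate $\|AB\|_{\mathscr{L}_{1|2}} \le 2\|A\|_{\mathscr{L}_{1|2}}\|B\|_{\mathscr{L}_{1|2}}$), together with the elementary fact that the map $A \mapsto 1 + A$ is an affine isometry-type map so that $(1+A)(1+B) = 1 + (A + B + AB)$ with $A + B + AB \in \mathscr{L}_{1|2}(L^2(\R))$ depending continuously on $(A,B)$.

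With these ingredients in place the argument is: given arbitrary $A, B \in \mathscr{L}_{1|2}(L^2(\R))$, choose sequences $(A_n), (B_n)$ of finite-rank operators with $A_n \to A$ and $B_n \to B$ in $\|\cdot\|_{\mathscr{L}_{1|2}}$. Since finite-rank operators lie in $\mathscr{L}_1(L^2(\R))$, the classical identity gives $\det((1+A_n)(1+B_n)) = \det(1+A_n)\det(1+B_n)$ for every $n$. By (c), the operator $A_n + B_n + A_nB_n$ converges to $A + B + AB$ in $\mathscr{L}_{1|2}$, so $(1+A_n)(1+B_n) \to (1+A)(1+B)$ in $\mathscr{L}_{1|2}$; applying the continuity statement (b) to all three sequences $(1+A_n)(1+B_n)$, $(1+A_n)$, $(1+B_n)$ and passing to the limit yields $\det((1+A)(1+B)) = \det(1+A)\det(1+B)$, as claimed.

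The only genuine subtlety — and the point I would state carefully rather than gloss over — is that $\mathscr{L}_{1|2}(L^2(\R))$ is \emph{not} an operator ideal (as the excerpt itself emphasizes just before Proposition \ref{prop-M}), so one cannot naively manipulate products of its elements using ideal properties; every product that appears must be checked to land back in $\mathscr{L}_{1|2}$ with a norm bound, and this is precisely what Proposition \ref{prop-M} (and, where a bounded multiplier intervenes, Proposition \ref{prop-bdd-M}) is for. One should also note that $\det(1+A_n)$ computed via the extended determinant agrees with the classical $\mathscr{L}_1$-value because the extension of \cite{BOO-jams} is by definition an extension, i.e. it restricts to the classical determinant on $\mathscr{L}_1(L^2(\R)) \supset$ finite-rank operators. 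I expect no further obstacle: once continuity of the determinant and of the relevant algebraic operations on $\mathscr{L}_{1|2}$ is granted, the proposition is a routine limiting argument from the classical case.
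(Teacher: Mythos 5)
Your proposal is correct and follows the same density-plus-continuity strategy as the paper: reduce to the trace-class case, where multiplicativity is classical, then pass to the limit using the continuity of the extended determinant on $\mathscr{L}_{1|2}$. The one genuine difference is how the convergence $A_nB_n \to AB$ is obtained. You observe that multiplication is \emph{jointly} continuous on $\mathscr{L}_{1|2}$, as an immediate consequence of the bound $\|AB\|_{\mathscr{L}_{1|2}} \le 2\|A\|_{\mathscr{L}_{1|2}}\|B\|_{\mathscr{L}_{1|2}}$ of Proposition \ref{prop-M} (write $A_nB_n - AB = (A_n-A)B_n + A(B_n-B)$, bound each term, and use that $\|B_n\|_{\mathscr{L}_{1|2}}$ is bounded); hence \emph{any} approximating sequences $A_n \to A$, $B_n \to B$ of finite-rank operators will do. The paper instead builds very specific approximants $A_n = (P_n+Q_n)A$ and $B_n = B(P_n+Q_n)$, with finite-rank projections $P_n \to P_+$, $Q_n \to P_-$ strongly, and even remarks that the sides on which $P_n+Q_n$ is applied are chosen deliberately so that the third convergence $A_nB_n \to AB$ holds; this extra care is unnecessary once one notices joint continuity. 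So your argument is a slight streamlining of the paper's proof, replacing an explicit construction with a two-line estimate. Both versions are sound; yours is shorter and reveals that the third condition in the paper's list of desiderata for $(A_n),(B_n)$ is automatic given the first two.
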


\begin{prop}\label{prop-Fc}
Let $A \in\mathscr{L}_{1|2}(L^2(\R)) $ and  let $f: \R \rightarrow \C$ be a bounded measurable function. Then 
\begin{align}\label{identity-Fc}
\det( 1 + fA) = \det(1 + Af).
\end{align}
\end{prop}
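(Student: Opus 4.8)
The plan is to reduce the identity $\det(1+fA)=\det(1+Af)$ for $A\in\mathscr{L}_{1|2}(L^2(\R))$ to the classical trace-class case by a density/continuity argument. First I would recall the well-known algebraic fact that for any two bounded operators $S,T$ on a Hilbert space with $ST$ and $TS$ in the trace class (for instance $S\in\mathscr{L}_2$, $T\in\mathscr{L}_2$, so that $ST,TS\in\mathscr{L}_1$), one has the equality of Fredholm determinants $\det(1+ST)=\det(1+TS)$; this follows from the cyclicity of the trace applied to $\tr(\wedge^m(ST))=\tr(\wedge^m(TS))$ for each $m$, since $\wedge^m(ST)=\wedge^m(S)\wedge^m(T)$ and $\wedge^m(TS)=\wedge^m(T)\wedge^m(S)$, and these are trace-class operators on $\wedge^m(L^2(\R))$ whose traces agree by cyclicity. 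Taking $S=M_f$ (bounded) and $T=A$ this gives the identity directly whenever $A$ is Hilbert--Schmidt, in particular whenever $A\in\mathscr{L}_1(L^2(\R))$.

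Next I would upgrade from $A\in\mathscr{L}_1$ to general $A\in\mathscr{L}_{1|2}(L^2(\R))$ by approximation. Write $A$ in block form $\left[\begin{smallmatrix} a & b\\ c & d\end{smallmatrix}\right]$ with $a,d\in\mathscr{L}_1$ and $b,c\in\mathscr{L}_2$, and choose finite-rank operators $b_n\to b$, $c_n\to c$ in the Hilbert--Schmidt norm; then $A_n:=\left[\begin{smallmatrix} a & b_n\\ c_n & d\end{smallmatrix}\right]\in\mathscr{L}_1(L^2(\R))$ and $A_n\to A$ in $\|\cdot\|_{\mathscr{L}_{1|2}}$. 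By Proposition~\ref{prop-bdd-M}, multiplication by the bounded function $f$ is continuous on $\mathscr{L}_{1|2}(L^2(\R))$ on both sides, so $fA_n\to fA$ and $A_nf\to Af$ in $\|\cdot\|_{\mathscr{L}_{1|2}}$. Since the extended Fredholm determinant is, by the theorem of \cite{BOO-jams} recalled above, continuous on $\mathscr{L}_{1|2}(L^2(\R))$, we may pass to the limit in the identity $\det(1+fA_n)=\det(1+A_nf)$, which holds for each $n$ by the trace-class case, to obtain $\det(1+fA)=\det(1+Af)$.

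I expect the only genuine subtlety to be bookkeeping: one must make sure that the approximants $A_n$ really lie in $\mathscr{L}_{1|2}$ and converge in its norm (this is immediate since only the off-diagonal Hilbert--Schmidt blocks are being perturbed, and the diagonal blocks are untouched), and that $\mathscr{L}_1\subset\mathscr{L}_{1|2}$ with the inclusion continuous, which is part of \eqref{inclusions}. A small point worth checking is that $M_f A_n$ and $A_n M_f$ are indeed trace-class when $A_n\in\mathscr{L}_1$: this is clear because $\mathscr{L}_1$ is a two-sided ideal in $\mathscr{L}(L^2(\R))$ and $M_f$ is bounded. With these routine verifications in place the argument is complete; none of the steps presents a real obstacle, the content being entirely the combination of the classical determinant identity with the continuity of the extension.
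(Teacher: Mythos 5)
Your proof is correct and follows essentially the same route as the paper: establish the identity on $\mathscr{L}_1(L^2(\R))$ via the cyclicity of the trace on each $\wedge^m$, and then pass to general $A\in\mathscr{L}_{1|2}(L^2(\R))$ by $\mathscr{L}_{1|2}$-approximation together with the continuity of the extended Fredholm determinant from \cite{BOO-jams}. The only cosmetic difference is your choice of approximants (replacing the off-diagonal Hilbert--Schmidt blocks by finite-rank ones) versus the paper's device, borrowed from the proof of Proposition~\ref{prop-Fm}, of compressing by $P_n+Q_n$ on one side; both are equally valid.
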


We also need the following characterization of determinantal point processes with $J$-Hermitian correlation kernels in terms of multiplicative functionals.

\begin{thm}[E. Lytvynov\cite{Lytvynov-J}]\label{thm-01}
Let $K$ be a kernel as in Theorem \ref{thm-0}. Then the determinantal point process $\PP_K$ is uniquely determined by the following property: for any compactly supported bounded measurable function $f: \R \rightarrow \R$, if $\Delta \subset \R$ is a bounded subset such that $\supp(f) \subset \Delta$, then we have
\begin{align*}
\int\limits_{\Conf(\R)} \prod_{x \in X} (1 + f(x))  \PP_K(dX) =  \det(1 +       f K^\Delta).
\end{align*}  
\end{thm}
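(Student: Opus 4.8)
The plan is to split the assertion into a soft uniqueness part and the genuine content, namely the evaluation of the multiplicative functional. \emph{Uniqueness:} suppose a point process $\PP$ on $\R$ satisfies $\int_{\Conf(\R)}\prod_{x\in X}(1+f(x))\,\PP(dX)=\det(1+fK^\Delta)$ for every compactly supported bounded real $f$. Specializing to $f=\sum_{j=1}^m s_j\chi_{D_j}$ with $D_1,\dots,D_m$ disjoint bounded Borel sets and union $\Delta$, the left side is the joint probability generating function $\E_\PP\big[\prod_j(1+s_j)^{\#(X\cap D_j)}\big]$ of the count vector $(\#(X\cap D_1),\dots,\#(X\cap D_m))$, which is a.s. finite because $f=s\chi_\Delta$ already gives $\E_\PP[(1+s)^{\#(X\cap\Delta)}]=\det(1+sK^\Delta)<\infty$ (the extended Fredholm determinant converges since $K^\Delta\in\mathscr{L}_{1|2}(L^2(\R))$ by \eqref{K-Delta}). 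Hence the identity determines all such generating functions, hence all finite-dimensional count distributions, hence $\PP$.

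\emph{Evaluation:} fix a bounded Borel $\Delta\supseteq\supp f$ and write $N=\#(X\cap\Delta)$. The first step is to record that $N$ has finite exponential moments under $\PP_K$: with $N=N_++N_-$, $N_\pm=\#(X\cap\Delta\cap\R_\pm)$, each half-space part is governed by the genuinely Hermitian trace-class compression of $P_\pm K P_\pm$, whose spectrum lies in $[0,1]$ by Theorem \ref{thm-0}, so $N_\pm$ is a sum of independent Bernoulli variables with finite exponential moments, and Cauchy--Schwarz passes this to $N$. Since $\prod_{x\in X}(1+f(x))$ is in fact a finite product, I would expand it as $\sum_{k\ge0}\sum_{\{x_1,\dots,x_k\}\subset X\cap\Delta}f(x_1)\cdots f(x_k)$, integrate against $\PP_K$ using the defining relation \eqref{def-DPP}, and justify the interchange of summation and integration by the bound $\sum_k\tfrac{\|f\|_\infty^k}{k!}\int_{\Delta^k}\det[K(x_i,x_j)]_{i,j=1}^k\,dx_1\cdots dx_k=\E_{\PP_K}[(1+\|f\|_\infty)^N]<\infty$, where one uses the nonnegativity of the correlation functions $\det[K(x_i,x_j)]$ (valid because $\PP_K$ is an honest point process) despite $K$ being merely $J$-Hermitian. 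This yields
$$
\int_{\Conf(\R)}\prod_{x\in X}(1+f(x))\,\PP_K(dX)=\sum_{k\ge0}\frac{1}{k!}\int_{\Delta^k}\prod_{i=1}^k f(x_i)\,\det[K(x_i,x_j)]_{i,j=1}^k\,dx_1\cdots dx_k.
$$

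\emph{Matching with the extended determinant:} pulling $f(x_i)$ out of the $i$-th row (and using $\supp f\subset\Delta$) turns the integrand into $\det[(fK^\Delta)(x_i,x_j)]_{i,j=1}^k$, so the series is the classical kernel expansion of the Fredholm determinant, which equals $\det(1+fK^\Delta)$ verbatim when $K^\Delta$ is trace class. For general $K^\Delta\in\mathscr{L}_{1|2}(L^2(\R))$ I would approximate $K^\Delta$ by finite-rank operators $K_n$ in the $\|\cdot\|_{\mathscr{L}_{1|2}}$-norm: then $fK_n\to fK^\Delta$ in $\mathscr{L}_{1|2}$ by Proposition \ref{prop-bdd-M}, so $\det(1+fK_n)\to\det(1+fK^\Delta)$ by the continuity of the extended Fredholm determinant on $\mathscr{L}_{1|2}(L^2(\R))$ (\cite{BOO-jams}), while the same $\mathscr{L}_{1|2}$-estimates force the kernel expansions for $K_n$ to converge to the one for $K^\Delta$; comparing the two limits finishes the proof. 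I expect this last matching to be the crux: one must control the multilinear kernel expansion of $\det(1+\cdot)$ in the $\mathscr{L}_{1|2}$-norm rather than in the trace norm, so that the elementary trace-class identity genuinely survives the continuous extension of the determinant to $\mathscr{L}_{1|2}$ — precisely where the structural facts about the class $\mathscr{L}_{1|2}$ (Propositions \ref{prop-M}--\ref{prop-Fc} and \cite{BOO-jams}) are needed. A secondary subtlety, dispatched in the Evaluation step, is that $\det[K(x_i,x_j)]$ can change sign when $K$ is only $J$-Hermitian, so all the convergence bookkeeping must be routed through the honest nonnegative correlation functions and through the splitting $\Delta=(\Delta\cap\R_+)\sqcup(\Delta\cap\R_-)$.
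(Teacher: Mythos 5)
The paper does not prove this theorem: it is imported verbatim from Lytvynov \cite{Lytvynov-J}, as the attribution makes clear, so there is no in-text proof to compare your argument against. Judging your proposal on its own merits, the overall architecture is sound --- the uniqueness argument via joint factorial-moment generating functions is standard and correct, and the reduction of the expectation to the Lenard series $\sum_k \frac{1}{k!}\int_{\Delta^k}\prod f(x_i)\det[K(x_i,x_j)]\,dx$ is justified exactly as you say, the dominating bound $\E_{\PP_K}[(1+\|f\|_\infty)^N]<\infty$ resting on the nonnegativity of the correlation functions (an honest consequence of $\PP_K$ being a point process, not of any Hermitian structure of $K$). You also correctly identify the crux as the identification of this Lenard series with the \emph{extended} Fredholm determinant on $\mathscr{L}_{1|2}$.

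The gap is in how you propose to close that crux. You write that $\mathscr{L}_{1|2}$-convergence of finite-rank approximants $K_n\to K^\Delta$ ``forces the kernel expansions for $K_n$ to converge to the one for $K^\Delta$'', and this is the step that is not justified. Two separate issues arise. First, the $k$-th term of the kernel expansion depends on the pointwise kernel (in particular on the diagonal values $K(x,x)$, which appear whenever a permutation has a fixed point); to make the term a continuous functional of the operator you must first rewrite it via cycle decomposition as a polynomial in $\mathrm{tr}_{\mathscr{L}_{1|2}}(fK^\Delta)$ and $\tr((fK^\Delta)^j)$, $j\ge 2$, and this rewriting itself assumes $\int (fK^\Delta)(x,x)\,dx$ equals the $\mathscr{L}_{1|2}$-trace of $fK^\Delta$, a fact about the particular kernel version that needs to be stated. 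Second, and more seriously, even granting term-by-term convergence, you have no uniform domination that lets you interchange $\lim_n$ with $\sum_k$. Your dominating bound for the $K^\Delta$ series is the nonnegativity of $\det[K(x_i,x_j)]$, but nothing forces the corresponding determinants for a generic finite-rank $K_n$ to have a sign, so the bound does not transfer to the approximants and the interchange is unjustified. A clean way out, which your sketch does not use, is an analyticity argument: introduce a scalar $z$, observe that both $z\mapsto\E_{\PP_K}[\prod_{x\in X}(1+zf(x))]$ and $z\mapsto\det(1+zfK^\Delta)$ are entire, match them for $|z|$ small enough that $\|zfK^\Delta\|<1$ (where the cycle-trace expansion of the extended determinant converges absolutely and manifestly coincides with the Lenard series), and then invoke analytic continuation. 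As written, the finite-rank approximation step does not constitute a proof.
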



\section{Balanced rigidity}\label{Balanced-rigidity}

For any bounded Borel subset $B \subset \mathscr{E}$, let $\#_B: \Conf(\mathscr{E}) \rightarrow \N$ be defined by 
$$
\#_B(X) := \#( B \cap X).
$$
 Fix a Borel subset $C \subset \mathscr{E}$, let 
 $$
 \mathcal{F}_C  = \sigma(\{\#_B:  B \subset C, B \text{ Borel}\})
 $$
 be the smallest $\sigma$-algebra making all functions from  $\{\#_B:  B \subset C, B \text{ Borel}\}$ measurable. If $\PP$ is a point process on $\mathscr{E}$, then we denote $\mathcal{F}_C^{\PP}$ for the $\PP$-completion of $\mathcal{F}_C$.

\begin{defn}[Ghosh \cite{Ghosh-sine}, Ghosh-Peres\cite{Ghosh-rigid}]\label{rigid-GP}
A point process $\PP$ on $\R$  is called {\it rigid} if for any bounded measurable subset $B\subset \R$,   the random variable 
$\#_{B\cap \R} $
is $\mathcal{F}_{\R \setminus B}^\PP$-measurable. 
\end{defn}

\begin{defn}[Singularity at infinity version]\label{rigid-defn}
A point process $\PP$ on $\R^*$  is called {\it balanced rigid} with respect to the partition $\R^*= \R_{+} \sqcup \R_{-}$ if for any bounded measurable subset $B\subset \R^*$, the random variable 
\begin{align*}
\#_{B\cap \R_{+}} -  \#_{B\cap \R_{-}}
\end{align*}
is $\mathcal{F}_{\R^*\setminus B}^\PP$-measurable. 
\end{defn}

\subsection{A sufficient condition for balanced rigidity}

This section is devoted to the proof of Theorem \ref{intro-thm-rigid}. 

Assume that $K$ is a $J$-Hermitian kernel on $\R$ satisfying Conditions \ref{con1} and \ref{con2}.  The operators $K$ and $\widehat{K}$ have the following  block forms with respect to the decomposition $L^2(\R) = L^2(\R_{+}, dx) \oplus L^2(\R_{-}, dx)$: 
\begin{align}\label{block-KK}
K = \left[ \begin{array}{cc} K_{++} & K_{+-} \\ K_{-+} & K_{--} \end{array}\right] \an \widehat{K} = \left[ \begin{array}{cc} K_{++} & K_{+-} \\ - K_{-+} & 1_{\R_{-}} - K_{--} \end{array}\right],
\end{align}
where for instance $K_{+-}: L^2(\R_{-}, dx) \rightarrow L^2(\R^{+}, dx)$ stands for the operator $K_{+-} = P_{+}K P_{-}$ and  $1_{\R_{-}}$  stands for the identity operator on $L^2(\R_{-}, dx)$. Note that the operator $K_{+-}$ admits the following integral kernel 
$$
K_{+-}(x,y) = \chi_{\R_{+}} (x) K(x,y) \chi_{\R_{-}}(y).
$$

Recall  that in Condition \ref{con1}, we assume that the operator
\begin{align*}
\widehat{K} : = \sgn \cdot  K + \chi_{\R_{-}} = P_{+} K + P_{-}(1 - K)
\end{align*}
defines an orthogonal projection on $L^2(\R)$.

\begin{lem}\label{lem-repro}
For any $x\in \R^*$, we have  
\begin{align}\label{reproducing}
K(x,x) = \int_{\R} |K(x,y)|^2 dy.
\end{align}
\end{lem}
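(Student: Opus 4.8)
The plan is to exploit the fact that $\widehat{K}$ is an orthogonal projection, hence $\widehat{K}^2 = \widehat{K}$, and read off the diagonal of this operator identity on the diagonal $(x,x)$. Writing $\widehat{K} = \sgn\cdot K + \chi_{\R_-}$, I would first compute the integral kernel of $\widehat{K}^2$ in terms of $K$. Since the multiplication operator by $\chi_{\R_-}$ and by $\sgn$ are diagonal (local) operators, the kernel of $\widehat{K}^2$ at the point $(x,y)$ is
\[
\widehat{K}^2(x,y) = \int_{\R} \widehat{K}(x,t)\,\widehat{K}(t,y)\, dt,
\]
where $\widehat{K}(x,t) = \sgn(x) K(x,t) + \chi_{\R_-}(x)\,\delta(x-t)$ in the appropriate distributional sense; more cleanly, I would avoid delta functions by expanding $\widehat{K}^2 = \widehat{K}$ as an operator identity and then compress to the diagonal using the $J$-Hermitian symmetry relation \eqref{J-herm}, namely $K(x,t) = \sgn(x)\sgn(t)\overline{K(t,x)}$.

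Concretely, the cleaner route: from $\widehat{K} = \widehat{K}^* = \widehat{K}^2$ we get, for the $(++)$ block, that $K_{++}$ equals the $(++)$-entry of $\widehat{K}^2$, which by block multiplication using \eqref{block-KK} is $K_{++}K_{++} + K_{+-}(-K_{-+})$. Hence $K_{++} = K_{++}^2 - K_{+-}K_{-+}$ on $L^2(\R_+)$; similarly for the $(--)$ block one gets $1_{\R_-} - K_{--} = -K_{-+}K_{+-} + (1_{\R_-}-K_{--})^2$, i.e. $K_{--} = K_{--}^2 - K_{-+}K_{+-}$ after simplification. Now I would translate these operator identities into kernel identities on the diagonal: for $x > 0$,
\[
K(x,x) = \int_{\R_+} |K(x,t)|^2\, dt - \int_{\R_-} K(x,t) K_{-+}(t,x)\, dt,
\]
and using the $J$-Hermitian relation $K_{-+}(t,x) = K(t,x) = \sgn(t)\sgn(x)\overline{K(x,t)} = -\overline{K(x,t)}$ for $t<0<x$, the second integral becomes $+\int_{\R_-}|K(x,t)|^2 dt$, giving $K(x,x) = \int_{\R_+}|K(x,t)|^2 dt + \int_{\R_-}|K(x,t)|^2 dt = \int_{\R}|K(x,t)|^2 dt$. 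The case $x<0$ is symmetric, using the $(--)$ identity and the sign relation for $t>0>x$.

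The main technical obstacle is justifying that these operator identities have genuine \emph{pointwise} kernel versions on the diagonal — i.e. that $K(x,x)$ is well-defined and that Fubini/kernel-composition arguments are legitimate. Here I would lean on Condition \ref{con2}, specifically \eqref{no-singularity}, which guarantees $K(x,x)$ is locally integrable and, combined with the trace-class/Hilbert-Schmidt structure from Condition \ref{con1} (so that $K^\Delta \in \mathscr{L}_{1|2}$), ensures the relevant compressed operators have honest continuous or $L^2$ integral kernels for which composition is given by the integral formula. In fact, since $K_{+-}$ and $K_{-+}$ are Hilbert-Schmidt (hence have $L^2$ kernels) and $K_{++}$ restricted to any bounded set is trace-class, the diagonal of $K_{++}^2 - K_{+-}K_{-+}$ is legitimately computed by the integral $\int |K(x,t)|^2\,dt$, which moreover shows this integral is finite for (Lebesgue-a.e., and in fact every) $x$. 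Once this measure-theoretic bookkeeping is in place, the identity \eqref{reproducing} follows immediately from the projection property.
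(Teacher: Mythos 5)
Your proof is correct and follows essentially the same route as the paper: substitute the block form of $\widehat{K}$ into $\widehat{K}^2 = \widehat{K}$, read off the diagonal blocks to get $K_{++} = K_{++}^2 - K_{+-}K_{-+}$ and $K_{--} = K_{--}^2 - K_{-+}K_{+-}$, and then use the $J$-Hermitian symmetry \eqref{J-herm} to turn these operator identities into the kernel identity \eqref{reproducing} at $x>0$ and $x<0$ respectively. The only difference is that you spell out the measure-theoretic justification (that the operator identity holds pointwise on the diagonal, via the trace-class/Hilbert-Schmidt structure from Condition \ref{con1} and the local integrability \eqref{no-singularity}), which the paper leaves implicit.
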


\begin{proof}
By assumption, $\widehat{K}$ is an orthogonal projection, hence $\widehat{K}^2 = \widehat{K}$. By substituting \eqref{block-KK} into this identity and considering the diagonal blocks, we deduce that 
$$
 \left\{ \begin{array}{c} K_{++} =  K_{++}^2 - K_{+-}K_{-+} \vspace{2mm} \\ K_{--}  = K_{--}^2 - K_{-+} K_{+-}  \end{array}. \right.
$$
The above first identity combined with \eqref{J-herm} implies  \eqref{reproducing} for $x>0$ while  the second one combined with \eqref{J-herm} implies \eqref{reproducing} when $x<0$.
\end{proof}

Given a Borel function $\varphi: \R \rightarrow \R$,  we define $\varphi^\circ: \R \rightarrow \R$ by 
\begin{align}\label{def-circ}
\varphi^\circ(x) := \sgn(x) \varphi(x).
\end{align}
By definition, the linear statistic $S[\varphi]$ corresponding to $\varphi$ is the following function on $\Conf(\R)$: 
\begin{align}\label{defn-linear-statistics}
S[\varphi] (X): = \sum_{x\in X} \varphi(x),
\end{align}
provided the right-hand side converges absolutely. For simplifying the notation, we set
\begin{align}\label{defn-sigma}
T[\varphi]: = S[\varphi^\circ].
\end{align}

Recall that by Theorem \ref{thm-0}, the kernel $K$ satisfying Condition \ref{con1} induce a determinantal point process on $\R$, denoted by $\PP_K$.  
\begin{lem}\label{lem-var}
Let $f: \R \rightarrow \R$ be a Borel function such that 
$$
\int_{\R} f(x)^2 K(x,x)dx < \infty.
$$ 
Then we have 
\begin{align}\label{variance}
\Var_{\PP_K} (T[f]) =  \frac{1}{2} \iint_{\R^2} | f(x) - f(y) |^2 | K(x,y)|^2 dxdy,
\end{align}
where $\Var_{\PP_K} (T[f])$ stands for the variance of the random variable $T[f]$ defined on the probability space $(\Conf(\R), \PP_K)$ equipped with the Borel $\sigma$-algebra.
\end{lem}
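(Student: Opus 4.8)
The strategy is the standard computation of the variance of a linear statistic for a determinantal point process, adapted to the twisted statistic $T[f] = S[f^\circ]$ with $f^\circ(x) = \sgn(x) f(x)$. First I would observe that since $T[f] = S[f^\circ]$, we are really computing $\Var_{\PP_K}(S[f^\circ])$, and the general formula for the variance of a linear statistic of a determinantal point process with correlation kernel $K$ gives
\begin{align*}
\Var_{\PP_K}(S[g]) = \int_{\R} g(x)^2 K(x,x)\,dx - \iint_{\R^2} g(x) g(y) K(x,y) K(y,x)\,dx\,dy
\end{align*}
for a suitable real test function $g$, here with $g = f^\circ$. The hypothesis $\int f(x)^2 K(x,x)\,dx < \infty$ together with Lemma \ref{lem-repro} (the reproducing identity $K(x,x) = \int_\R |K(x,y)|^2\,dy$) will guarantee that all the integrals in sight converge absolutely, so the formal manipulations are justified; this is where I expect to have to be slightly careful, since $f^\circ$ need not be compactly supported and one must check the relevant double integral is absolutely convergent before symmetrizing.

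Next I would substitute $g = f^\circ$ and use the $J$-Hermitian symmetry \eqref{J-herm}, namely $K(y,x) = \sgn(x)\sgn(y)\overline{K(x,y)}$, to rewrite $K(x,y) K(y,x) = \sgn(x)\sgn(y) K(x,y)\overline{K(x,y)} = \sgn(x)\sgn(y) |K(x,y)|^2$. Combined with $f^\circ(x) f^\circ(y) = \sgn(x)\sgn(y) f(x) f(y)$, the product $f^\circ(x) f^\circ(y) K(x,y) K(y,x)$ collapses to $f(x) f(y) |K(x,y)|^2$ since the two sign factors cancel. Likewise $f^\circ(x)^2 = f(x)^2$. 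Hence
\begin{align*}
\Var_{\PP_K}(T[f]) = \int_{\R} f(x)^2 K(x,x)\,dx - \iint_{\R^2} f(x) f(y) |K(x,y)|^2\,dx\,dy.
\end{align*}

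Finally I would replace the first term using Lemma \ref{lem-repro}: $\int_\R f(x)^2 K(x,x)\,dx = \iint_{\R^2} f(x)^2 |K(x,y)|^2\,dx\,dy$, and by the symmetry $|K(x,y)| = |K(y,x)|$ this equals $\frac12 \iint (f(x)^2 + f(y)^2) |K(x,y)|^2\,dx\,dy$. Subtracting $\iint f(x) f(y) |K(x,y)|^2$ and combining into the square gives $\frac12 \iint |f(x) - f(y)|^2 |K(x,y)|^2\,dx\,dy$, which is \eqref{variance}. The main obstacle, as noted, is purely a matter of integrability bookkeeping: one must argue that the variance formula for linear statistics of determinantal processes genuinely applies to the (possibly non-compactly-supported, but $L^2$-in the appropriate weighted sense) function $f^\circ$; this follows by approximating $f$ by compactly supported truncations, using the finiteness of $\int f^2 K(x,x)\,dx$ and the reproducing identity to pass to the limit by dominated convergence in both the diagonal and off-diagonal terms.
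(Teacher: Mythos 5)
Your proposal is correct and follows essentially the same route as the paper: substituting $g = f^\circ$ into the standard variance formula for linear statistics, using the $J$-Hermitian identity to collapse $f^\circ(x)f^\circ(y)K(x,y)K(y,x)$ to $f(x)f(y)|K(x,y)|^2$, and then applying the reproducing identity of Lemma \ref{lem-repro} to symmetrize and complete the square. The only difference is that you spell out the approximation/dominated-convergence argument justifying the variance formula for non-compactly-supported $f$, which the paper leaves implicit.
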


\begin{proof}
By  definition of correlation functions of determinantal point process, we have
\begin{align*}
\Var_{\PP_K} (T[f]) & = \int_{\R} f^\circ(x)^2 K(x,x)dx - \iint_{\R^2} f^\circ(x) f^\circ(y) K(x,y) K(y,x) dxdy \\ &   = \int_{\R} f(x)^2 K(x,x)dx - \iint_{\R^2} f(x) f(y) |K(x,y)|^2dxdy. 
\end{align*}
Substituting the formula \eqref{reproducing} into the above identity, we get the desired formula \eqref{variance}.
\end{proof}

\begin{lem}\label{lem-app}
Let $K$ be a kernel satisfying Conditions \ref{con1} and \ref{con2}.  Then for any fixed $R > 0$, there exists a sequence  $(\varphi_n)_{n\in \N}$ of real-valued Schwartz functions,  such that $|\varphi_n(x)|\le 1$ and
$$ 
\lim_{n\to \infty} \sup_{x\in [-R, R]}  |  \varphi_n(x) - 1 | =0 \an \lim_{n\to \infty} \Var_{\PP_K}(T[\varphi_n]) = 0.
$$
\end{lem}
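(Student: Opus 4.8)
The plan is to deduce the statement from the variance formula of Lemma~\ref{lem-var} together with an explicit logarithmic cut-off construction. By \eqref{variance}, once one knows $\int \varphi_n(x)^2 K(x,x)\,dx<\infty$ (which will be automatic, since each $\varphi_n$ will be bounded with compact support and $\int_{|x|\le R}K(x,x)\,dx<\infty$ by \eqref{no-singularity}), we have
\[
\Var_{\PP_K}(T[\varphi_n]) \;=\; \tfrac12\iint_{\R^2}|\varphi_n(x)-\varphi_n(y)|^2\,|K(x,y)|^2\,dx\,dy .
\]
I split the domain into the ``far'' region $\{|x|\ge M,\ |y|\ge M\}$, where the pointwise bound \eqref{pt-es} applies, and its complement, which by the symmetry $|K(x,y)|=|K(y,x)|$ (a consequence of \eqref{J-herm}) is contained in $\{|x|<M\}$ up to a factor $2$. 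On the near-origin piece I would use $|\varphi_n(x)-\varphi_n(y)|^2\le 2|\varphi_n(x)-1|^2+2|\varphi_n(y)-1|^2$ together with the reproducing identity \eqref{reproducing}: the first term integrates to $2\int_{|x|<M}|\varphi_n(x)-1|^2K(x,x)\,dx$, while the second gives $2\int_\R|\varphi_n(y)-1|^2 h(y)\,dy$ with $h(y)=\int_{|x|<M}|K(x,y)|^2\,dx$ and $\int_\R h = \int_{|x|<M}K(x,x)\,dx<\infty$ by \eqref{reproducing} and \eqref{no-singularity}. Since the $\varphi_n$ will satisfy $|\varphi_n|\le1$ and $\varphi_n\to1$ pointwise, both integrals tend to $0$ by dominated convergence, with dominating functions $4K(x,x)\chi_{\{|x|<M\}}\in L^1(\R)$ and $4h\in L^1(\R)$.

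For the functions themselves I would take a smooth even mollifier $\eta$ supported in $[-1,1]$, set $\eta_n(x)=n\,\eta(nx)$, let $g_n$ be the continuous, piecewise-$C^1$, even function with $g_n\equiv1$ on $[-2,2]$, $g_n(x)=\frac{n-\log|x|}{n-\log2}$ for $2\le|x|\le e^n$, and $g_n\equiv0$ for $|x|\ge e^n$, and put $\varphi_n:=g_n*\eta_n$. Then $\varphi_n\in C_c^\infty(\R)$, hence Schwartz, is real-valued with $0\le\varphi_n\le1$; since $\|g_n'\|_\infty=O(1/n)$ and $\sup_{|x|\le R}|g_n(x)-1|=O(1/n)$, one gets $\sup_{x\in[-R,R]}|\varphi_n(x)-1|=O(1/n)\to0$, and $\varphi_n(x)\to1$ for every fixed $x$.

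On the far region I would use \eqref{pt-es} to dominate the integral by
\[
E(\varphi_n):=\iint_{\R^2}|\varphi_n(x)-\varphi_n(y)|^2\,\Phi(x-y)\,dx\,dy=\int_\R\Phi(t)\,\|\varphi_n(\cdot+t)-\varphi_n\|_{L^2}^2\,dt,
\]
where we may assume $\Phi$ even. Since convolution with $\eta_n$ is a contraction on $L^2(\R)$, it suffices to bound $\int_\R\Phi(t)\,\|g_n(\cdot+t)-g_n\|_2^2\,dt$, and I would split the $t$-integral into $|t|\le1$, $1<|t|\le e^n$, and $|t|>e^n$. For $|t|\le1$ the mean value theorem and $\|g_n'\|_\infty=O(1/n)$ give $\|g_n(\cdot+t)-g_n\|_2^2=O(1/n^2)$, contributing $O(1/n^2)$. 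For $|t|>e^n$ one uses the crude bound $\|g_n(\cdot+t)-g_n\|_2^2\le4\|g_n\|_2^2=O(e^n/n^2)$ together with $\int_{|t|\ge e^n}\Phi=O(e^{-n})$ from \eqref{tail-condition}, contributing $O(1/n^2)$. The main term is $1<|t|\le e^n$: the elementary estimate $|g_n(x+t)-g_n(x)|\le\frac1{n-\log2}\log\!\big(1+|t|/\max(|x|,2)\big)$ and the substitution $u=|t|/x$ give $\|g_n(\cdot+t)-g_n\|_2^2\le C|t|/n^2$, while a dyadic decomposition of $[1,e^n]$ using $\int_{|t|\ge 2^j}\Phi=O(2^{-j})$ yields $\int_{1}^{e^n}\Phi(t)\,|t|\,dt=O(n)$; so this term is $O(1/n)$. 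Hence $E(\varphi_n)=O(1/n)\to0$, which finishes the argument.

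The main obstacle is the design of the cut-offs and the quantitative bound $E(\varphi_n)\to0$ extracted from the rather weak tail hypothesis \eqref{tail-condition}. A scale-invariant family, e.g.\ dilates of a fixed Schwartz bump, is useless here: the quadratic form $E(\cdot)$ behaves like a homogeneous $\dot H^{1/2}$-type energy and is invariant under dilation, so it does not shrink. The logarithmic profile is precisely what forces this energy to $0$ (at rate $1/n$), and the delicate point in the bookkeeping is the range $1<|t|\le e^n$, where one must exploit the slow decay of $g_n$ itself rather than just a bound on $g_n'$, and where the only input on $\Phi$ is the borderline estimate $\int_{|t|\ge R}\Phi=O(R^{-1})$.
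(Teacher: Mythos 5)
Your argument is correct, and it takes a genuinely different route from the paper's. The paper proceeds by Fourier analysis: after splitting the variance into near-diagonal and far-diagonal pieces, it rewrites the far contribution as $\int_\R |\widehat{\varphi}(\xi)|^2\bigl(2\widehat{\Phi}(0)-\widehat{\Phi}(\xi)-\widehat{\Phi}(-\xi)\bigr)\,d\xi$, invokes a theorem of Boas to convert the tail hypothesis \eqref{tail-condition} into the Lipschitz estimate $2\widehat{\Phi}(0)-\widehat{\Phi}(\xi)-\widehat{\Phi}(-\xi)=O(|\xi|)$, and then uses the non-integrability of $1/|\xi|$ near $\xi=0$ to choose $\varphi_n=\check{\psi}_n$ where $\psi_n\ge 0$, $\int\psi_n=1$, and $\int|\xi|\,\psi_n(\xi)^2\,d\xi\to 0$. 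You instead work entirely in physical space: an explicit logarithmic cut-off $g_n$, mollified to $\varphi_n$, with $\int\Phi(t)\,\|g_n(\cdot+t)-g_n\|_2^2\,dt$ bounded directly by splitting the $t$-range and running a dyadic sum against \eqref{tail-condition}. The two proofs exploit the same borderline phenomenon---the dilation invariance of the quadratic form---in dual guises: the paper uses the divergence of $\int d\xi/|\xi|$ at the Fourier origin, you use the logarithmic divergence of $\int dx/|x|$ at spatial infinity (which is exactly why a fixed-profile dilate family fails, as you note). The Fourier route is slicker, since Boas' theorem absorbs the whole $t$-integration into one inequality; your route is more elementary and self-contained, and your handling of the near-origin region by dominated convergence---using \eqref{reproducing} to get $h(y)=\int_{|x|<M}|K(x,y)|^2\,dx\in L^1(\R)$---is arguably simpler than the paper's sequential tuning of $N$ and $\delta_n$. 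One small correction in your far-region bookkeeping: for $|t|\le 1$, the estimate $\|g_n(\cdot+t)-g_n\|_2^2=O(1/n^2)$ does not follow from the mean value theorem and $\|g_n'\|_\infty=O(1/n)$ alone, because $\supp(g_n')$ has length $\sim 2e^n$. What you need (and what is true) is $\|g_n'\|_2=O(1/n)$, coming from $g_n'(x)=-\sgn(x)/\bigl((n-\log 2)\,|x|\bigr)$ on $2\le|x|\le e^n$, together with the standard bound $\|g_n(\cdot+t)-g_n\|_2\le|t|\,\|g_n'\|_2$.
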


\begin{proof}
It suffices to prove that given any positive integer $n\in \N$, we can construct a  real-valued Schwartz function $\varphi_n$ such that 
$$
|\varphi_n(x)|\le 1,  \quad
\sup_{x\in [-R, R]}  |  \varphi_n(x) - 1 | \le 1/n \an  \Var_{\PP_K}(T[\varphi_n]) \le 1/n.
$$

Let $M>0$ be the number given in Condition \ref{con2}.  Fix a real number $N>1$ which will be specified later. Given a real-valued Schwartz function $f$, denote 
$$
F(x,y):  = \frac{1}{2} | f(x) - f(y) |^2 | K(x,y)|^2.
$$
We define $I_i(f), i = 1, 2, 3, 4$ as follows: 
\begin{align}
\begin{split}
\Var_{\PP_K}(T[f])  \le &  \underbrace{\iint_{|x| \le N M, | y| \le NM} F }_{=: I_1(f)}+\underbrace{\iint_{|x| \le M, | y| \ge NM} F }_{=: I_2(f)} + 
\\
& + \underbrace{\iint_{|x| \ge NM, | y| \le M} F }_{=: I_3(f)} + \underbrace{\iint_{|x| \ge M, | y| \ge M} F }_{=: I_4(f)}.
\end{split}
\end{align} 

\medskip
{\bf Step 1:} Control of $I_2$ and $I_3$. 

We claim that
\begin{align}\label{off-diag}
\lim_{N \to \infty} \iint_{\{| x | \le M, | y | \ge N \cdot M\}} | K(x,y)|^2 dxdy = 0.
\end{align}
Indeed, by Lemma \ref{lem-repro} and condition \eqref{no-singularity}, we have
$$
\iint_{\{| x | \le M, | y | \ge N \cdot M\}} | K(x,y)|^2 dxdy \le \int_{| x| \le M}  K(x,x) dx<\infty.
$$
Then the claim in \eqref{off-diag} follows from above inequality and bounded convergence theorem. Now let us choose $N\ge R +1$ large enough, such that 
$$
\iint_{\{| x | \le M, | y | \ge NM\}} | K(x,y)|^2 dxdy  = \iint_{\{| x | \ge N M, | y | \le M\}} | K(x,y)|^2 dxdy \le \frac{1}{40n}.
$$
It follows that for any function $f$ such that $|f| \le1$, we have
$$
I_2(f) + I_3(f) \le \frac{1}{10n}.
$$

In what follows, we fix $N$ chosen as above.

\medskip

 {\bf Step 2:} Control of $I_1$.
 
Note that $N$ being fixed,  the number $NM$ is also fixed.  For any function $f$, we have
 \begin{align*}
 I_1(f)& \le \Big(\sup_{|x| \le NM, | y| \le NM} | f(x)-f(y)|^2\Big)  \iint_{|x| \le NM, | y| \le NM}   | K(x,y)|^2dxdy
 \\ 
 & \le 4  \Big(\sup_{|x| \le NM} | f(x)-1|^2\Big)  \int_{|x| \le NM}   | K(x,x)|dx.
 \end{align*}
 It follows that for any $f$ such that
 $$
\sup_{|x| \le NM} | f(x)-1| \le   \min \Big\{n^{-1},   \left( 20 n \cdot  \int_{|x| \le NM}   K(x,x)dx \right)^{-1/2}\Big\},
 $$
 we have
 $$
 I_1(f) \le \frac{1}{10n}.
 $$
For future use, let us denote 
\begin{align}\label{delta-n}
\delta_n: = \min \Big\{n^{-1},   \left( 20 n \cdot  \int_{|x| \le NM}   K(x,x)dx \right)^{-1/2}\Big\}.
\end{align}
\medskip

{\bf Step 3:} Control of $I_4$.

By \eqref{pt-es}, we may write 
\begin{align}\label{f-trans}
\begin{split}
I_4(f) &\le  \iint_{\R^2} | f(x) - f(y) |^2 \Phi(x-y) dxdy
\\ & = \iint_{\R^2} | f(x+t) - f(x)|^2 \Phi(t) dxdt
\\ & = \iint_{\R^2}  |\widehat{f}(\xi) |^2 | e^{i 2\pi t\xi} - 1|^2 \Phi(t) d\xi dt 
\\ & =  \int_\R |\widehat{f}(\xi) |^2 ( 2 \widehat{\Phi}(0) - \widehat{\Phi} (\xi) - \widehat{\Phi} (- \xi)) d\xi,
\end{split}
\end{align}
where $\widehat{f}$ and $\widehat{\Phi}$ are the Fourier transforms of $f$ and $\Phi$ respectively.
Now we will apply a result from \cite{Boas}, which says that for a positive integrable function $\Phi$, condition \eqref{tail-condition} is equivalent to 
\begin{align}\label{lip}
\widehat{\Phi} (\zeta + \xi) + \widehat{\Phi} (\zeta  - \xi) - 2 \widehat{\Phi} (\zeta ) = O(| \xi|), \text{\, uniformly in $\zeta$, as $| \xi| \to 0$.}
\end{align}
Take $\zeta=0$ in \eqref{lip} and note that $\widehat{\Phi}$ is bounded, we see that there exists a numerical constant $C$  which only depends on $\Phi$, such that
$$
2 \widehat{\Phi}(0) - \widehat{\Phi} (\xi) - \widehat{\Phi} (- \xi) \le C|\xi|, \text{\, for all $\xi \in \R$.}
$$
 Substitue this inequality into the estimate \eqref{f-trans}, we obtain
$$
I_4(f) \le C \int_\R  | \xi| | \widehat{f}(\xi) |^2 d\xi.
$$

\medskip

{\bf Step 4:} Construction of $\varphi_n$.

Recall the definition of $\delta_n$ in \eqref{delta-n}.  Let $k\ge n$ be large enough such that for any $|t| \le N M k^{-1}$, we have 
$$
| e^{i 2 \pi t } -1  |\le \delta_n.
$$
 We claim that there exists a non-negative even function $\psi_n \in C_c^\infty(\R)$ supported in a $(\frac{1}{k})$-neighbourhood of $0$, such that 
\begin{align}\label{psi-n}
 \int_\R \psi_n(\xi) d\xi = 1 \an \int_\R | \xi|   \psi_n(\xi)^2 d\xi \le \frac{1}{10 C n}.
 \end{align}
 Indeed, since the function $\frac{1}{10 Cn |\xi|} \chi_{| \xi| \le 1/k}$ is not integrable, we can easily construct a Schwartz function $\psi_n$ such that  
 $$
 \int_\R \psi_n = 1 \an \psi_n(\xi) \le \frac{1}{10Cn |\xi|} \chi_{| \xi| \le 1/k}, \text{\, for any $\xi \in \R$}.
 $$
 This last pointwise inequality implies that $\supp (\psi_n) \subset [ - 1/k, 1/k]$ and 
 $$
  \int_\R | \xi|   \psi_n(\xi)^2 d\xi \le  \Big(\sup_{\xi  } | \xi| \psi_n(\xi)\Big)  \cdot \int_\R \psi_n(\xi)d\xi \le \frac{1}{10 Cn}.  
 $$
 Now set 
 $$
 \varphi_n (x) = \check{\psi}_n (x) = \int_\R \psi_n(\xi) e^{i 2 \pi x \xi} d\xi.
 $$ 
 Then  $\varphi_n \in \mathscr{S}(\R)$,  $\varphi_n(0) = 1$ and $| \varphi_n(x)| \le 1$. Moreover, since $\psi_n$ is an even real-valued function, $\varphi_n$ is real-valued.  By construction, we have
 $$
 I_4(\varphi_n) \le C \int_\R | \xi| | \widehat{\varphi}_n(\xi)|^2 d\xi \le \frac{1}{10n}.
 $$
Moreover, by our choice of $k$, we know that if $| \xi | \le k^{-1}$ and $|x|\le NM$, then we have $| e^{i 2 \pi x \xi} -1  |\le \delta_n$. Hence for any $| x| \le NM$, 
\begin{align*}
|  \varphi_n(x) - 1 |   &=  |  \varphi_n(x) - \varphi_n(0) | \le \int_\R | e^{i 2 \pi x \xi} -1  | |\psi_n (\xi) |d\xi 
\\
& = \int_{| \xi | \le k^{-1}} | e^{i 2 \pi x \xi} -1  | |\psi_n (\xi) |d\xi\le \delta_n \le n^{-1}.
\end{align*}
By Step 2, the above inequality implies that $I_1(\varphi_n) \le \frac{1}{10n}$. It is readily seen that we also have  $I_i(\varphi_n) \le \frac{1}{10n}, i = 2,3$, hence
$$
\Var_{\PP_K}(T[\varphi_n]) \le \sum_{i =1}^4 I_i(\varphi_n) \le \frac{1}{n}.
$$
This completes the proof of the proposition.
\end{proof}

\begin{rem}
The construction in \eqref{psi-n} relies heavily on the non-integrability of $\frac{1}{| \xi|}$ in any neighbourhood of the origin. Indeed, given a positive function $a(\xi)$, 
$$
\left(\int_\R a(\xi)^{-1} d\xi \right)^{-1} =  \inf \left\{ \int_\R a(\xi) \psi(\xi)^2 d\xi:  \text{$\psi$ positive and $\int_\R \psi(\xi) d\xi = 1$} \right\},
$$
with the understanding that the left hand side equals to 0 if $a(\xi)^{-1}$ is not integrable.
\end{rem}

Now we can prove Theorem \ref{intro-thm-rigid}. Our proof follows the line of that of \cite[Thm. 6.1]{Ghosh-rigid}.

\begin{proof}[Proof of Theorem \ref{intro-thm-rigid}]
Let $B\subset \R$ be any bounded measurable subset. Choose $R>0$ large enough such that $B \subset [-R, R]$. Let $\varphi_n$ be a sequence of Schwartz functions constructed as in Lemma \ref{lem-app}. We have 
\begin{align*}
& T[\varphi_n] (X) = \sum_{x\in X\cap B} \varphi_n(x) \sgn(x) + \sum_{x\in X\setminus B} \varphi_n(x) \sgn(x) =: I(n) + II(n).
\end{align*}
First note that 
\begin{align*}
& \| I(n) -  \sum_{x\in X\cap B} \sgn(x) \|_1  \le   \E_{\PP_K} \sum_{x\in X} |1- \varphi_n(x)| \chi_B(x) 
\\
& \le \sup_{x\in [-R, R]}   | 1 - \varphi_n(x) |  \cdot  \int_BK(x,x)dx,
\end{align*}
we have, passing to  a subsequence if necessary, 
\begin{align}\label{I(n)}
I(n) \xrightarrow[\text{$\PP_K$-almost surely}]{n \to \infty}  \#_{B\cap \R_{+}} -  \#_{B\cap \R_{-}}.
\end{align} 
By construction, $\lim_{n\to \infty}\Var_{\PP_K}(T[\varphi_n]) =0$, passing to  a subsequence if necessary, we have 
\begin{align}\label{II(n)}
I(n) + II(n) - \E_{\PP_K} T[\varphi_n] \xrightarrow[\text{$\PP_K$-almost surely}]{n \to \infty} 0.
\end{align}
Combining \eqref{I(n)} and \eqref{II(n)},  for $\PP_K$-almost every configuration $X\in \Conf(\R)$, we get
\begin{align*}
\#_{B\cap \R_{+}} (X) -  \#_{B\cap \R_{-}} (X) = \lim_{n\to \infty}\left( \E_{\PP_K} T[\varphi_n] - II(n) (X)\right).
\end{align*}
Since all the functions $ \E_{\PP_K} T[\varphi_n] - II(n)$ are $\mathcal{F}_{\R \setminus B}$-measurable, the $\PP_K$-almost sure limit function $\#_{B\cap \R_{+}} -  \#_{B\cap \R_{-}} $ is $\mathcal{F}_{\R \setminus B}^{\PP_K}$-measurable. The proof of Theorem \ref{intro-thm-rigid} is complete.
\end{proof}

\subsection{Proof of Theorem A}

Following \cite[Thm. 5.3]{BO-hyper}, when $z, z'$ are fixed, we denote the Whittaker kernel $\mathcal{K}_{z,z'}$ simply by $\mathcal{K}$.  The change of variables $x \mapsto 1/x$ transforms the Whittaker kernel to the following new kernel
\begin{align}\label{Knew}
K_\new(x,y) = \frac{1}{| xy|} \mathcal{K}(1/x, 1/y).
\end{align}
Note that the kernel $K_\new$ satisfies Condition \ref{con1}. Indeed, it is known in \cite{BO-hyper} that the Whittaker kernel $\mathcal{K}$ is such that  the operators $P_{+}\mathcal{K} P_{+}$ and $P_{-}\mathcal{K}P_{-}$ are non-negative and for any subsets $\Delta_1, \Delta_2$  both with positive distance from the origin such that $\Delta_1\subset \R_{+} $ and $\Delta_2 \subset \R_{-}$, the operators $\chi_{\Delta_i} \mathcal{K} \chi_{\Delta_i} (i = 1, 2)$ are in trace-class, and the operator $\chi_{\Delta_2}\mathcal{K}\chi_{\Delta_1}$ is Hilbert-Schmidt. Moreover the operator 
$$
\widehat{\mathcal{K}} := \sgn \cdot \mathcal{K} +  \chi_{\R_{-}}
$$
 defines an {\it orthogonal projection} on $L^2(\R, dx)$.  By the change of variable $x \mapsto 1/x$, these properties imply exactly that the kernel $K_\new$ satisfies Condition \ref{con1}.

\begin{lem}
Assume that the parameters $z, z'$ satisfy the conditions $z' = \bar{z}$ and $ z\in \C\setminus \R$. Then $K_\new$ satisfies Condition \ref{con2}.
\end{lem}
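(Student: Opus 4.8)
The plan is to transport both parts of Condition~\ref{con2} through the substitution $x\mapsto 1/x$ and reduce everything to the behaviour, near the origin and at $+\infty$, of the functions $\mathcal{P}_{\pm},\mathcal{Q}_{\pm}$ of \eqref{PQ}. The hypothesis $z'=\bar z$, $z\in\C\setminus\R$ is used only through the observation that the Whittaker index $b=\frac{z-z'}{2}=i\,\Im z$ is then purely imaginary and nonzero, so $\pm 2b\notin\Z$ and no logarithmic term occurs in the expansion of $W_{a,b}$ at $0$; since $|x^{\pm b}|=1$ for $x>0$, the two branches $x^{1/2\pm b}$ are both of exact order $x^{1/2}$. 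The same hypothesis makes all the Gamma-values occurring in \eqref{PQ} and in the connection formula for $W_{a,b}$ finite and nonzero, so all the implied constants below are finite.

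First I would record the asymptotics. From the connection formula expressing $W_{a,b}$ through $M_{a,\pm b}$ (see \cite[6.9]{Erdelyi-vol1}), from $M_{a,b}(x)=x^{1/2+b}(1+O(x))$ as $x\to 0^{+}$, and from $b=i\,\Im z$, one gets $W_{a_{\pm},b}(x)=O(x^{1/2})$ and $W_{a_{\pm},b}'(x)=O(x^{-1/2})$ as $x\to 0^{+}$, where $a_{\pm}$ are the (real) first indices in \eqref{PQ}; while $W_{a,b}(x)\sim e^{-x/2}x^{a}$ as $x\to+\infty$ gives exponential decay of $W$ and $W'$ there. Dividing by $x^{1/2}$ as in \eqref{PQ}, this yields for every $g\in\{\mathcal{P}_{+},\mathcal{P}_{-},\mathcal{Q}_{+},\mathcal{Q}_{-}\}$ (all of them $C^{1}$ on $(0,\infty)$)
\[
g(u)=O(1),\qquad g'(u)=O(1/u)\qquad(u\to 0^{+}),
\]
together with exponential decay of $g$ and $g'$ as $u\to+\infty$. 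These two facts are the only input used afterwards.

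For the first part of Condition~\ref{con2}, the change of variables $u=1/x$ gives $\int_{|x|\le R}K_\new(x,x)\,dx=\int_{|u|\ge 1/R}\mathcal{K}(u,u)\,du$, and by \eqref{whittaker} together with L'H\^opital's rule the diagonal $\mathcal{K}(u,u)$ is a Wronskian-type combination of $\mathcal{P}_{\pm},\mathcal{Q}_{\pm}$ and their derivatives; it is continuous on $\R^{*}$ and, by the exponential decay above, integrable on $\{|u|\ge 1/R\}$, which is \eqref{no-singularity}. For the second part, fix $M>0$ and take $|x|,|y|\ge M$, so the reciprocals lie in $[-1/M,1/M]\setminus\{0\}$. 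Using the elementary identity $\frac{1}{|xy|}\cdot\frac{1}{1/x-1/y}=\pm\frac{1}{x-y}$, formula \eqref{whittaker} rewrites $K_\new(x,y)=\frac{N(x,y)}{x-y}$, where $N(x,y)$ is built from the values of $\mathcal{P}_{\pm},\mathcal{Q}_{\pm}$ at $1/x$ and $1/y$ and is bounded by a constant $C$ because $|g|\le C$ near $0$; hence $|K_\new(x,y)|\le C/|x-y|$. If $x,y$ have opposite signs this already suffices, since then $|x-y|=|x|+|y|\ge 2M$. If $x,y$ have the same sign, $N(x,y)$ vanishes on the diagonal $x=y$; writing $N(x,y)$ as an integral of $\partial_{2}N$ over the segment between the two reciprocals and using $g'(u)=O(1/u)$ gives $|N(x,y)|\le C\,\bigl|\log(|x|/|y|)\bigr|\le C\,|x-y|/M$ by the mean value theorem, so also $|K_\new(x,y)|\le C/M$. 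Taking squares, $|K_\new(x,y)|^{2}\le\Phi(x-y)$ with $\Phi(t):=C^{2}\min(M^{-2},t^{-2})$, which lies in $L^{1}(\R)$ and satisfies $\int_{|t|\ge R}\Phi(t)\,dt=O(R^{-1})$; this is exactly \eqref{tail-condition}--\eqref{pt-es}.

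The main obstacle is the off-diagonal estimate in the same-sign case, where one needs the two complementary bounds $|N|\le C$ (giving decay in $|x-y|$ for $|x-y|$ large) and $|N(x,y)|\le C|\log(|x|/|y|)|$ (giving a uniform bound near the diagonal) simultaneously, and, underneath it, the sharp control of $W_{a,b}$ and $W_{a,b}'$ at the origin. It is precisely here that $z'=\bar z$, $z\notin\R$ is indispensable: if $b$ were real with $0<b<1/2$, the function $W_{a,b}$ would carry a genuine $x^{1/2-b}$ singularity, the functions $\mathcal{P}_{\pm},\mathcal{Q}_{\pm}$ would blow up at $0$, $N$ would cease to be bounded, and $K_\new$ would violate the tail condition \eqref{tail-condition}. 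One should also take care that all four sign cases of \eqref{whittaker} are treated and that the opposite-sign case is disposed of by the remark $|x-y|\ge 2M$ rather than by an estimate near the diagonal.
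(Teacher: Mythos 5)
Your proposal is correct and follows essentially the same route as the paper: (i) the diagonal estimate via exponential decay of $W_{a,b}$ and its derivative at $+\infty$; (ii) the expansions of $\mathcal{P}_{\pm},\mathcal{Q}_{\pm}$ at $0^{+}$ showing boundedness of the functions and $O(1/u)$ growth of their derivatives, relying on the hypothesis $z'=\bar z$, $z\notin\R$ to make the exponents $\pm(z-z')/2$ purely imaginary; (iii) the two complementary bounds $|K_\new(x,y)|\le C/|x-y|$ for all large $|x|,|y|$ and a uniform bound $C/M$ near the diagonal, combined into a $\Phi\in L^1$ with $\int_{|t|\ge R}\Phi=O(R^{-1})$. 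The only cosmetic differences are that you invoke the connection formula for $W_{a,b}$ directly rather than the confluent-hypergeometric expansion from \cite[Prop.~4.1.3]{PII}, you write $N$ as an integral of $\partial_2N$ and apply the mean value theorem to $\log$ rather than applying the mean value theorem once to the quotient, and you take $\Phi(t)=C^2\min(M^{-2},t^{-2})$ in place of the paper's two-piece $\Phi$ with threshold $\delta'<M/4$ — all equivalent.
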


\begin{proof}

From the explicit formula \eqref{whittaker} for the Whittaker kernel, we see that the diagonal value $\mathcal{K}(x,x)$ is given by 
\begin{align}\label{W-diag}
\mathcal{K}(x,x) = \pm(\mathcal{P}_{\pm}'(|x|) \mathcal{Q}_{\pm}(|x|) - \mathcal{Q}_{\pm}'(|x|) \mathcal{P}_{\pm}(|x|)), 
\end{align}
the sign $\pm$ depends on the sign $\sgn(x)$ of the real number $x\in \R^*$. Since the Whittaker function converges to 0 exponentially fast at infinity, it is readily seen that 
\begin{align}\label{W-inf}
\int_{|x|>\delta} \mathcal{K}(x,x)dx <\infty.
\end{align}
This in turn implies the condition \eqref{no-singularity} for $K_\new$ around the origin.

As in the proof of \cite[Prop. 4.1.3]{PII}, for $x > 0$ near the origin, by expressing the Whittaker functions in terms of confluent hypergeometric functions, the functions $\mathcal{P}_{+}$ and $\mathcal{Q}_{+}$ can be written as
\begin{align}\label{PQ-exp}
\begin{split}
\mathcal{P}_{+}(x) & =  x^{\frac{z-z'}{2}} A_1(x) + x^{\frac{z'-z}{2}} B_1(x)
\\
\mathcal{Q}_{+}(x) & =  x^{\frac{z-z'}{2}} A_2(x) + x^{\frac{z'-z}{2}} B_2(x)
\end{split}
\end{align}
where $A_i(x), B_i(x), i  = 1, 2,$ are analytic in a neighbourhood of the origin. At infinity, both functions tend to 0. Hence $\mathcal{P}_{+}$ and $\mathcal{Q}_{+}$ are bounded on $\R_{+}$. The fact that $\mathcal{P}_{-}$ and $\mathcal{Q}_{-}$ are bounded on $\R_{-}$ can be proved similarly. It follows that there exists $C>0$, such that  
$$
\text{for any $x, y \in \R^*$, \,} |\mathcal{K}(x,y) | \le \frac{C}{| x-y|},
$$
or,  equivalently,
\begin{align}\label{large-arg}
 \text{for any $x, y \in \R$, \,}  |K_\new (x,y) | \le \frac{C}{| x-y|}.
\end{align}
Now fix $M>0$, let $\delta'$ be a fixed number such that $0< \delta' <M/4$. We claim that there exists $C'>0$, such that 
\begin{align}\label{small-claim}
\text{if $|x|\ge  M, | y|\ge M$ and $| x-y| \le \delta'$,  then\,}  |K_\new(x,y)|\le C'.
\end{align}
Indeed, by the choice of $\delta'$, any pair $(x,y)$ verifying the hypothesis in \eqref{small-claim} satisfies $\sgn(x) \sgn(y)>0$. If $x>0, y>0$, then 
\begin{align}\label{near-diag}
\begin{split}
K_\new(x,y) & = \frac{\mathcal{P}_{+}(\frac{1}{x}) \mathcal{Q}_{+}(\frac{1}{y}) - \mathcal{Q}_{+}(\frac{1}{x}) \mathcal{P}_{+}(\frac{1}{y})}{y-x}  
\\ 
& = -  \mathcal{P}_{+}\left(\frac{1}{x}\right) \mathcal{Q}_{+}'\left(\frac{1}{\xi_{x,y}}\right) \frac{1}{\xi_{x,y}^2} + \mathcal{Q}_{+}\left(\frac{1}{x}\right) \mathcal{P}_{+}'\left(\frac{1}{\xi_{x,y}}\right) \frac{1}{\xi_{x,y}^2},
\end{split}
\end{align}
where $\xi_{x,y} \in (\min(x,y), \max(x,y))$. By \eqref{PQ-exp}, it is readily seen that 
\begin{align}\label{asymp-de}
\mathcal{P}_{+}'(1/x) = O(x), \quad \mathcal{Q}_{+}'(1/x) = O(x), \text{\, as $x \to \infty$.}
\end{align}
From  \eqref{near-diag} and \eqref{asymp-de}, it is readily seen that \eqref{small-claim} holds for $x>0,y>0$. Similarly, by analyzing $\mathcal{P}_{-}, \mathcal{Q}_{-}$, we also obtain \eqref{small-claim} for $x<0,y<0$.  Combining \eqref{large-arg} and \eqref{small-claim}, we see that the condition \eqref{pt-es} in Condition \ref{con2} holds for $K_\new$, that is 
$$
\text{if $|x|\ge  M, | y|\ge M$,  then\,} |K_\new(x,y)|^2 \le \Phi (x-y), 
$$ 
where
 $$\Phi (t) =   (C')^2 \chi_{|t| \le \delta'} + \frac{C^2}{t^2} \chi_{|t| \ge \delta'},$$  is a function satisfying the required condition \eqref{tail-condition}.
\end{proof}

\section{Balanced Palm equivalence property}\label{tolerance-sec}

Recall that for a point process $\PP$ on $\mathscr{E}$ and a positive integer $k\in \N$, the $k$-th correlation measure $\rho_{k}$ of $\PP$ is a positive measure on $\mathscr{E}^k$, which is defined by the relation \eqref{cor-measure}.

\begin{defn}\label{defn-Palm-equivalence}
A point process $\PP$ on $\mathscr{E}$ is said to have {\it balanced Palm equivalence property} with respect to the partition   $\mathscr{E}= \mathscr{E}_1 \sqcup \mathscr{E}_2$, if for any positive integer $n\in \N$, for $\rho_{2n}$-almost every $2n$-tuple $\mathfrak{p}  \in \mathscr{E}_1^n \times \mathscr{E}_2^n$ of distinct points, in other words, $\mathfrak{p}$ is a $2n$-tuple of distinct points of $\mathscr{E}$ with a equal number of points from $\mathscr{E}_1$ and $\mathscr{E}_1$, the Palm measure $\PP^{\mathfrak{p}}$ is equivalent to $\PP$. 
\end{defn}

For processes governed by  $J$-Hermitian 
kernels, the balanced Palm equivalence property is the natural analogue 
of equivalence of Palm measures of the same order  for processes with Hermitian kernels.

\subsection{Palm measures of \texorpdfstring{$L$}{a}-processes}\label{sec-L-rel}

In this section, we will study the correlation kernels of Palm measures for $L$-processes. 

Recall the definition of $L$-kernels in Definition \ref{defn-L-kernel}. Let  $\mu_L$ be the determinantal measure induced by a kernel $K_L = L ( 1  +L)^{-1}$, where $L$ is a kernel satisfying Condition \ref{con35}. By Lemma \ref{lem-V-intro},  the kernel $K_L$ is $J$-Hermitian.  When $L$ is fixed, we simply write $K = K_L$. Shirai-Takahashi's Theorem \ref{ST} says that for almost every $p \in \R^*$ (with respect to the measure $K(x,x)dx$),  the Palm measure $\mu_L^{p}$ is a determinantal point process with the following kernel:
\begin{align}\label{kp-kernel}
K^p(x,y) = K(x,y) - \frac{K(x, p)K(p,y)}{K(p,p)} = K(x,y) - \sgn(p) \sgn(y) \frac{K(x,p) \overline{K(y,p)}}{K(p,p)}.
\end{align}
Let $p^{+}>0$ and $p^{-}< 0$, our aim is  to describe the correlation kernel of the Palm measure $\mu_L^{(p^{+}, p^{-})}$, that is, the kernel 
$$
K^{(p^{+}, p^{-})} : = (K^{p^{+}})^{p^{-}} = (K^{p^{-}})^{p^{+}}.
$$
More generally,  we are going to describe the kernel $K^{\mathfrak{p}}$ defined by the formula \eqref{Kq-iteration} when $\mathfrak{p} =  (p^{+}_1, \dots p^{+}_n;  p^{-}_1, \dots, p^{-}_n)$ with $p_i^{+} > 0$ and $p^{-}_i <0$ for $i = 1, \dots, n$.  While it is easily seen that $K^{\mathfrak{p}}$ is a $J$-Hermitian kernel satisfying Condition \ref{con1} (transformed to the version with singularity at origin), it is a priori not clear whether $K^{\mathfrak{p}}$ admits an $L$-kernel. We now check that it does and that the  $L$-kernel of $K^{\mathfrak{p}}$ also satisfies Condition \ref{con35}.

\begin{defn}
Given  $\mathfrak{p}  = (p^{+}, p^{-})$, where $p^{+}> 0, p^{-}<0$, we define a {\it bounded} function on $\R^*$ by the formula
\begin{align}\label{defn-gp-1}
g_{\mathfrak{p}} (x) =  \frac{x-p^{+}}{x-p^{-}} \chi_{\{x > 0\}} + \frac{x-p^{-}}{x-p^{+}} \chi_{\{x<0\}}.
\end{align}
More generally, if $\mathfrak{p} = (p^{+}_1, \dots p^{+}_n;  p^{-}_1, \dots, p^{-}_n)$ with $p_i^{+} > 0$ and $p^{-}_i <0$ for $i = 1, \dots, n$, we set 
\begin{align}\label{gp}
g_{\mathfrak{p}}(x) = \prod_{i=1}^n \left(\frac{x-p_i^{+}}{x-p_i^{-}} \chi_{\{x > 0\}} + \frac{x-p_i^{-}}{x-p_i^{+}} \chi_{\{x<0\}}\right).
\end{align}
\end{defn}

\begin{prop}\label{prop-palm-kernel}
Let $L$ be an operator satisfying Condition \ref{con35}. If $\mathfrak{p}  = (p^{+}, p^{-})$ such that $p^{+}> 0, p^{-}<0$, then we have 
$$
K_L^{\mathfrak{p}} = K_{g_{\mathfrak{p}} L  g_{\mathfrak{p}}}.
$$
\end{prop}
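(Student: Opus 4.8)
The plan is to reduce the general claim to the single-point Palm formula \eqref{kp-kernel} and then to track how the rank-one subtraction interacts with the block ``off-diagonal'' structure of $L$ imposed by Condition \ref{con35}. First I would record the basic algebraic fact underlying the whole computation: if $K = L(1+L)^{-1}$ then $1 - K = (1+L)^{-1}$, so the operators $K$, $1-K$, $(1+K)$, and $L$ all commute and the resolvent identities relating them are available. In particular, $K^{\mathfrak p} = K_{g_{\mathfrak p} L g_{\mathfrak p}}$ is equivalent, after multiplying out $K^{\mathfrak p}(1+\text{``}L^{\mathfrak p}\text{''})^{-1}$, to the operator identity
\begin{align*}
(1 - K^{\mathfrak p}) = g_{\mathfrak p}^{-1}(1-K)g_{\mathfrak p}^{-1}\big(1 - (1 - g_{\mathfrak p}^{-2})(1-K)\big)^{-1} \quad \text{(schematically)},
\end{align*}
but rather than pushing abstract resolvent algebra I would instead verify the formula directly at the level of kernels for $n=1$, since Proposition \ref{prop-palm-kernel} only treats $\mathfrak p = (p^+,p^-)$ and the general $n$ case presumably follows by the iteration \eqref{Kq-iteration} together with an induction that is deferred to a later proposition.

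The core of the proof is the single-point step. I would first compute $K_L^{p^+} = K_L - \dfrac{K_L(\cdot,p^+)\overline{K_L(p^+,\cdot)}\sgn(p^+)\sgn(\cdot)}{K_L(p^+,p^+)}$ using \eqref{kp-kernel}, and show that this coincides with $K_{g L g}$ where $g = g_{(p^+,p^-)}$ but with $p^-$ absent — i.e. first handle conditioning at one positive point, producing an $L$-kernel of the form $h L h$ with $h(x) = \frac{x-p^+}{x}\chi_{\{x>0\}} + \frac{x}{x-p^+}\chi_{\{x<0\}}$ (the ``half'' of $g$), then condition at $p^-$ and combine. The key computational input is that conjugating $L$ by a rational function of the prescribed form $\frac{x-a}{x-b}$ on one half-line and its reciprocal on the other preserves the integrable block form \eqref{L-kernel-int}: indeed if $L(x,y) = \frac{A^+(x)A^-(y)+A^-(x)A^+(y)}{x-y}$ and $g$ is as in \eqref{defn-gp-1}, then $g(x)L(x,y)g(y)$ has kernel $\frac{\widetilde A^+(x)\widetilde A^-(y) + \widetilde A^-(x)\widetilde A^+(y)}{x-y}$ with $\widetilde A^+ = \frac{x-p^+}{x-p^-}A^+$ on $\R_+$ — because the cross terms $g|_{\R_+}\cdot g|_{\R_-}$ produce exactly the right factor, and this is where the specific choice of $g_{\mathfrak p}$ (positive numerator shift paired with negative-side denominator shift) is forced. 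One then checks that the rank-one correction in \eqref{kp-kernel} is precisely what is needed to pass from $L$ to $g_{\mathfrak p} L g_{\mathfrak p}$ through the map $L \mapsto L(1+L)^{-1}$; concretely, I would show that $K_L - K_{gLg}$ equals the stated rank-one (rank-two) operator by verifying that both sides have the same action, using that $g L g - L$ is itself a finite-rank perturbation supported on the span of a few explicit vectors built from $A^\pm$ and the points $p^\pm$.

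The main obstacle I anticipate is analytic rather than algebraic: one must justify that all these manipulations are legitimate for the \emph{unbounded} operators in play and that the finite-rank perturbation formulas for $(1+L)^{-1}$ make sense. Condition \ref{con35} only gives $A \in C^1(\R^*)\cap L^2$, so $g_{\mathfrak p} L g_{\mathfrak p}$ must be shown to again satisfy Condition \ref{con35} (this is flagged in the text right before the proposition as the thing ``we now check''), which requires that $g_{\mathfrak p}$ is bounded — true because $p^+>0 > p^-$ keeps the numerator/denominator ratios bounded on each half-line, including near $0$ and near $\infty$ — and that $g_{\mathfrak p}A \in C^1(\R^*)\cap L^2$ with full support, all of which is routine but must be stated. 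I would also need the Sherman–Morrison-type identity expressing $\big(1 + gLg\big)^{-1}$ in terms of $(1+L)^{-1}$ modulo finite rank, and to confirm that $1 - K_{gLg} = (1+gLg)^{-1}$ remains a bounded operator with $0 \le \widehat{K_{gLg}} \le 1$, so that Lemma \ref{lem-V-intro} applies and $K_{gLg}$ genuinely is a correlation kernel. Granting these verifications, the identification $K_L^{\mathfrak p} = K_{g_{\mathfrak p}Lg_{\mathfrak p}}$ reduces to matching two rank-$\le 2$ operators, which is a direct if slightly tedious computation with the explicit kernels; I would organize it so that the $p^+$-conditioning and $p^-$-conditioning are done symmetrically and then composed, using that the order of iteration in \eqref{Kq-iteration} is immaterial.
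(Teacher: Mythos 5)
Your proposed route is genuinely different from the paper's, and it has a gap that would prevent it from going through as written. The paper does not work at the level of resolvent algebra on $K$ at all; it converts $K_L^{\mathfrak p}=K_{g_{\mathfrak p} L g_{\mathfrak p}}$ into the equality of the two orthogonal projections $\widehat{K_L^{\mathfrak p}}$ and $\widehat{K_{g_{\mathfrak p} L g_{\mathfrak p}}}$, uses Proposition \ref{prop-proj} to describe $\Ran(\widehat{K_{g_{\mathfrak p}Lg_{\mathfrak p}}})$ as the graph $\{g_{\mathfrak p}^{+}V(g_{\mathfrak p}^{-}h)\oplus h\}$, and then proves a two-sided inclusion of ranges via the commutator identity $[x,V]k=0$ and the IIKS integrable-form relations for $F_i,G_i$. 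Your approach stays with kernels and finite-rank perturbation identities; in principle a Sherman--Morrison computation could be made to work, but you do not carry out the crucial step of actually matching the rank-two correction, which is where all the content lies.

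The concrete gap is in the proposal to factor through single-point conditioning: you claim that conditioning first at $p^{+}$ alone yields an $L$-kernel of the form $hLh$ with $h(x)=\frac{x-p^{+}}{x}\chi_{\{x>0\}}+\frac{x}{x-p^{+}}\chi_{\{x<0\}}$. This fails. The function $h$ is unbounded near the origin on $\R_{+}$ (since $\frac{x-p^{+}}{x}\to -\infty$ as $x\to 0^{+}$), so $hLh$ is not a bounded operator and in particular does not satisfy Condition \ref{con35}; the intermediate ``$L$-process'' does not exist. This is not a technicality one can patch: it reflects the structural fact, emphasized in the introduction, that one-sided Palm measures of an $L$-process generally do not admit an $L$-kernel. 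Equivalently, at the level of $\widehat{K}$, conditioning at $p^{+}$ alone removes a one-dimensional subspace from $\Ran(\widehat{K_L})$, which destroys the graph structure $\{Vh\oplus h\}$ of Proposition \ref{prop-proj}; only the balanced operation of removing one dimension and adding another (via $p^{-}$) restores it. The boundedness of $g_{\mathfrak p}$ near $0$ comes precisely from the pairing: $\frac{x-p^{+}}{x-p^{-}}\to \frac{p^{+}}{p^{-}}$, a finite limit, whereas your intermediate $h$ replaces $p^{-}$ by $0$ and loses this cancellation. To fix the argument you would have to treat the pair $(p^{+},p^{-})$ as a unit, which is what both the statement and the paper's proof already do. (A smaller point: you worry about unboundedness of $L$ itself, but Condition \ref{con35} assumes $L$ bounded, and since $L$ is skew-adjoint, $1+L$ is automatically invertible with bounded inverse; the unboundedness problem is only the one your splitting introduces.)
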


\begin{cor}\label{cor-palm-kernel}
Let $L$ be an operator satisfying Condition \ref{con35}. Let 
$$
\mathfrak{p} = (p^{+}_1, \dots p^{+}_n;  p^{-}_1, \dots, p^{-}_n)
$$ be a $2n$-tuple of real numbers such that $p_i^{+} > 0$ and $p^{-}_i <0$ for $i = 1, \dots, n$,  then 
$$
K_L^{\mathfrak{p}} = K_{g_{\mathfrak{p}} L  g_{\mathfrak{p}}}.
$$
\end{cor}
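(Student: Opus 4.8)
The plan is to deduce Corollary~\ref{cor-palm-kernel} from Proposition~\ref{prop-palm-kernel} by induction on $n$, exploiting the iterative definition \eqref{Kq-iteration} of $K_L^{\mathfrak p}$ together with the multiplicativity of the maps $g_{\mathfrak p}$. The case $n=1$ is exactly Proposition~\ref{prop-palm-kernel}. For the inductive step, I would write $\mathfrak p = \mathfrak p' \sqcup (p_n^{+}, p_n^{-})$ where $\mathfrak p' = (p_1^{+},\dots,p_{n-1}^{+}; p_1^{-},\dots,p_{n-1}^{-})$, and use that, since the order of the points in the iteration \eqref{Kq-iteration} is irrelevant,
\begin{equation*}
K_L^{\mathfrak p} = \bigl(K_L^{\mathfrak p'}\bigr)^{(p_n^{+}, p_n^{-})}.
\end{equation*}
By the induction hypothesis, $K_L^{\mathfrak p'} = K_{M}$ with $M = g_{\mathfrak p'} L g_{\mathfrak p'}$, i.e.\ $K_L^{\mathfrak p'}$ is again an $L$-process whose $L$-kernel is $M$.

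The crux is then to check that $M = g_{\mathfrak p'} L g_{\mathfrak p'}$ still satisfies Condition~\ref{con35}, so that Proposition~\ref{prop-palm-kernel} applies to it with the single pair $(p_n^{+}, p_n^{-})$. Writing $L$ in the block form \eqref{L-integrable} with $A \in C^1(\R^*)\cap L^2(\R^*)$, one checks that conjugating by the function $g_{\mathfrak p'}$ simply multiplies $A^{+}$ and $A^{-}$ by the corresponding one-sided factors of $g_{\mathfrak p'}$: concretely $M$ has the integrable form \eqref{L-kernel-int} with $A$ replaced by
\begin{equation*}
\widetilde{A}(x) = A(x)\prod_{i=1}^{n-1}\Bigl(\tfrac{x-p_i^{+}}{x-p_i^{-}}\,\chi_{\{x>0\}} + \tfrac{x-p_i^{-}}{x-p_i^{+}}\,\chi_{\{x<0\}}\Bigr)^{1/2},
\end{equation*}
up to the precise bookkeeping of which factor lands on the $+$ and which on the $-$ side; the point is that the off-diagonal structure $\left[\begin{smallmatrix}0 & V\\ -V^{*} & 0\end{smallmatrix}\right]$ is preserved because $g_{\mathfrak p'}$ is diagonal with respect to $L^2(\R_{+})\oplus L^2(\R_{-})$ and real-valued. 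Since $g_{\mathfrak p'}$ and its reciprocal pieces are bounded and $C^1$ on $\R^*$ with full support $\R^*$ (the zeros and poles sit at the $p_i^{\pm}\in\R^*$, but on each semi-axis only finitely many of them and none at $0$ or $\infty$), $\widetilde A$ inherits $C^1(\R^*)\cap L^2(\R^*)$ and full support, and its self-adjointness / Lytvynov conditions follow from Lemma~\ref{lem-V-intro}. Then Proposition~\ref{prop-palm-kernel} gives $K_M^{(p_n^{+},p_n^{-})} = K_{g_{(p_n^{+},p_n^{-})} M g_{(p_n^{+},p_n^{-})}}$, and since $g_{(p_n^{+},p_n^{-})}\cdot g_{\mathfrak p'} = g_{\mathfrak p}$ by the product formula \eqref{gp} (the one-sided factors multiply), we conclude $K_L^{\mathfrak p} = K_{g_{\mathfrak p} L g_{\mathfrak p}}$, completing the induction.

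The main obstacle I anticipate is not conceptual but bookkeeping: verifying carefully that conjugation $L \mapsto g_{\mathfrak p'} L g_{\mathfrak p'}$ genuinely produces an $L$-kernel of the exact shape required by Condition~\ref{con35} — in particular that the square-root redistribution of $g_{\mathfrak p'}$ onto $A^{+}$ and $A^{-}$ is consistent and yields a single real-valued $\widetilde A$ — and that $\widetilde A$ remains square-integrable (integrability near $0$ and $\infty$ is where the one-sided factors could in principle misbehave, but each factor is bounded on the relevant semi-axis and tends to $1$ at the singular points, so no new $L^2$ issue arises). Once that verification is in place, the identity $g_{\mathfrak p} = g_{(p_n^{+},p_n^{-})}\, g_{\mathfrak p'}$ and the symmetry of \eqref{Kq-iteration} make the induction immediate.
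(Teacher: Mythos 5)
Your proposal is correct and follows essentially the same route as the paper: the paper's proof is precisely the observation that the $n=1$ case is Proposition~\ref{prop-palm-kernel} and that $g_{(p_1^+,p_1^-)}\,L\,g_{(p_1^+,p_1^-)}$ again satisfies Condition~\ref{con35}, so one iterates. One small bookkeeping correction: no square root is needed in the new amplitude function. From the integrable form $L(x,y)=\frac{A^{+}(x)A^{-}(y)+A^{-}(x)A^{+}(y)}{x-y}$ one sees directly that
$g_{\mathfrak p'}(x)L(x,y)g_{\mathfrak p'}(y)$ has the same form with $A$ replaced by $\widetilde A = g_{\mathfrak p'}\,A$ (so $\widetilde A^{+}=g_{\mathfrak p'}^{+}A^{+}$, $\widetilde A^{-}=g_{\mathfrak p'}^{-}A^{-}$), which is real, in $C^1(\R^*)\cap L^2(\R^*)$, and of full support because $g_{\mathfrak p'}$ is bounded, $C^1$, real on each semi-axis, and has only isolated zeros; this is all that is needed for Condition~\ref{con35}.
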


\begin{proof}
When $\mathfrak{p} = (p_1^{+}, p_1^{-})$, this is just Proposition \ref{prop-palm-kernel}. Now since the new kernel 
$$
g_{(p_1^{+}, p_1^{-})}(x) L (x,y) g_{(p_1^{+}, p_1^{-})}(y)
$$
has a similar structure as $L(x,y)$, that is, it satisfies Condition \ref{con35}, we can continue our procedure and complete the proof of the corollary. 
\end{proof}

\begin{proof}[Proof of Proposition \ref{intro-Palm-prop}]
Note that in Corollary \ref{cor-palm-kernel}, we obtain $K_L^{\mathfrak{p}} = K_{g_{\mathfrak{p}} L  g_{\mathfrak{p}}}$. However, by the special form of $L$, we have (see Lemma \ref{lem-conjugation} below for this fact)
$$
g_{\mathfrak{p}} L  g_{\mathfrak{p}} = f_{\mathfrak{p}} L  f_{\mathfrak{p}},
$$
where $g_{\mathfrak{p}}$ and $f_{\mathfrak{p}}$ are functions defined in \eqref{gp} and \eqref{fp} respectively. Hence we obtain that 
$$
\mu_{L}^{\mathfrak{p}} = \mu_{f_\mathfrak{p} L f_{\mathfrak{p}}}.
$$
\end{proof}

\begin{lem}\label{lem-in-out}
Let $p \in \R^*$. Then the kernel $K^p(x,y) $ defined in \eqref{kp-kernel} is $J$-Hermitian and $\widehat{K^{p}}$ is an orthogonal projection. Moreover, if $p^{+}>0$, then 
$$
\Ran (\widehat{K^{p^{+}}}) =  \Ran(\widehat{K})  \ominus \C \sgn(\cdot) K(\cdot, p^{+});
$$ 
if $p^{-}< 0$, then 
$$
\Ran (\widehat{K^{p^{-}}}) =  \Ran(\widehat{K})  \oplus \C \sgn(\cdot) K(\cdot, p^{-}).
$$
\end{lem}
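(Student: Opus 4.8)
\textbf{Proof plan for Lemma \ref{lem-in-out}.} The idea is to present $\widehat{K^p}$ as a rank‑one perturbation of the orthogonal projection $\widehat K$ and to read off how the range moves. Set $\psi_p := \sgn(\cdot)\,K(\cdot,p)$. By the reproducing identity of Lemma \ref{lem-repro}, $\int_\R |K(x,p)|^2\,dx = K(p,p)$, so $\psi_p\in L^2(\R)$ with $\|\psi_p\|^2 = K(p,p)$; and from the $J$‑Hermitian symmetry \eqref{J-herm} one gets both $K(p,y)=\sgn(p)\,\overline{\psi_p(y)}$ and the reproducing relation $\langle f,\psi_p\rangle = \sgn(p)\,(Kf)(p)$ for $f\in L^2(\R)$. (If $K(p,p)=0$ then $\psi_p=0$, $K^p=K$ by convention, and the statement is empty; so assume $K(p,p)>0$.) Plugging \eqref{kp-kernel} into \eqref{hat-o} and using $K(p,y)=\sgn(p)\overline{\psi_p(y)}$ then gives the clean identity
\[
\widehat{K^p} \;=\; \widehat K \;-\; \sgn(p)\,\frac{\psi_p\otimes\overline{\psi_p}}{\|\psi_p\|^2}\;=\;\widehat K-\sgn(p)\,\Pi_p ,
\]
where $\Pi_p$ is the orthogonal projection of $L^2(\R)$ onto the line $\C\,\psi_p=\C\,\sgn(\cdot)K(\cdot,p)$.

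Everything then hinges on the claim: $\psi_{p^+}\in\Ran(\widehat K)$ when $p^+>0$, and $\psi_{p^-}\in\Ran(\widehat K)^\perp$ when $p^-<0$. To prove it I would first squeeze an operator identity out of the idempotency $\widehat K^2=\widehat K$. Writing $\widehat K=M_{\sgn}K+P_-$ with $M_{\sgn}$ the operator of multiplication by $\sgn$ (so $M_{\sgn}^2=1$, $M_{\sgn}P_-=-P_-$), expanding $\widehat K^2=\widehat K$ and left‑multiplying by $M_{\sgn}$ gives $KM_{\sgn}K+KP_-+P_-K=K$, hence
\[
K\widehat K = P_+K ,\qquad \widehat K\,M_{\sgn}K = M_{\sgn}K\,P_+ .
\]
Now one argues purely in $L^2$: since $\widehat K$ is self‑adjoint, $\langle\widehat K\psi_p,f\rangle=\langle\psi_p,\widehat K f\rangle=\sgn(p)\,\overline{(K\widehat K f)(p)}=\sgn(p)\,\overline{(P_+Kf)(p)}=\chi_{\R_+}(p)\,\langle\psi_p,f\rangle$ for every $f$, so $\widehat K\psi_p=\psi_p$ when $p>0$ and $\widehat K\psi_p=0$ when $p<0$, which is the claim. (Equivalently, applying the second operator identity to an approximate identity $\eta_\varepsilon$ concentrating at $p$ and passing to the limit gives $\widehat K\psi_p = M_{\sgn}KP_+\,\text{``}\delta_p\text{''}$, which is $\psi_p$ for $p>0$ and $0$ for $p<0$.)

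Granting the claim, the rest is formal. If $p^+>0$ then $\psi_{p^+}\in\Ran(\widehat K)$, so $\Pi_{p^+}\le\widehat K$, $\widehat K\Pi_{p^+}=\Pi_{p^+}\widehat K=\Pi_{p^+}$, and $\widehat{K^{p^+}}=\widehat K-\Pi_{p^+}$ is idempotent and self‑adjoint, i.e.\ the orthogonal projection onto $\Ran(\widehat K)\cap(\C\psi_{p^+})^\perp=\Ran(\widehat K)\ominus\C\,\sgn(\cdot)K(\cdot,p^+)$. If $p^-<0$ then $\psi_{p^-}\in\ker\widehat K$, so $\widehat K\Pi_{p^-}=\Pi_{p^-}\widehat K=0$ and $\widehat{K^{p^-}}=\widehat K+\Pi_{p^-}$ is the orthogonal projection onto $\Ran(\widehat K)\oplus\C\,\sgn(\cdot)K(\cdot,p^-)$. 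In either case $\widehat{K^p}$ is a self‑adjoint projection, and by the criterion recalled right after \eqref{hat-o} the kernel $K^p$ is $J$‑Hermitian. The only genuinely delicate point in this program is the derivation and legitimate use of the operator identity $K\widehat K=P_+K$ — more precisely, making rigorous sense of the ``column at $p$'', i.e.\ of $K(\cdot,p)$ as $K$ applied to a unit mass at $p$, and of the pointwise evaluation $(Kf)(p)$; these are the standard reproducing‑kernel technicalities and they simply disappear when $K$ is continuous, as for the Whittaker kernels.
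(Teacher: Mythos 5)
Your proposal is correct and follows essentially the same route as the paper. Both start from the observation that $\ell_p=\Pi_p$ is the orthogonal projection onto $\C\,\sgn(\cdot)K(\cdot,p)$, both reduce the statement to showing $\psi_{p^+}\in\Ran(\widehat K)$ and $\psi_{p^-}\in\Ran(\widehat K)^\perp$, and both derive what is needed from the idempotency $\widehat K^2=\widehat K$. The only difference is bookkeeping: the paper expands $\widehat K^2=\widehat K$ in $2\times 2$ block form and extracts four separate identities on $K_{\pm\pm}$, then verifies the two kernel relations \eqref{in} and \eqref{out} case by case on $x>0$ and $x<0$. You package those same four block identities into the single operator identity $K\widehat K=P_+K$ (one can check they are literally equivalent by multiplying out the blocks) and then pass from the operator identity to the statement about $\psi_p$ via the self-adjointness of $\widehat K$ and the reproducing relation $\langle f,\psi_p\rangle=\sgn(p)\,(Kf)(p)$, rather than by direct substitution into the kernel. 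This is a modest streamlining, not a different argument; the delicate point you flag — making rigorous sense of pointwise evaluation $(Kg)(p)$ when you apply the reproducing relation to $g=\widehat K f$ — is present in the paper's computation as well, in the form of the implicit claim that the block identities hold as kernel identities for a.e.\ $(x,p)$, and is handled there in exactly the same informal spirit.
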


\begin{proof}
It is clear that the kernel $K^p(x,y) $ is $J$-Hermitian. By Lemma \ref{lem-repro}, we see that, for any $p \in \R^*$, the following kernel
$$ 
\ell_p(x,y) = \sgn(x) \sgn(y) \frac{K(x,p)\overline{ K(y,p)}}{K(p,p)}
$$
induces the  orthogonal projection  onto the one dimensional subspace $\C \sgn(\cdot) K(\cdot, p)$. Let us denote this one dimensional projection again by $\ell_p$. By definition, it is easy to see that 
$$
\widehat{K^p}  = \widehat{K} - \sgn(p)   \ell_p.
$$
That is, if $p^{+}>0$, then $\widehat{K^{p^{+}}}  = \widehat{K} -  \ell_{p^{+}}$ and if $p^{-}<0$, then $\widehat{K^{p^{-}}}  = \widehat{K} +  \ell_{p^{-}}$. Thus for proving Lemma \ref{lem-in-out}, we only need to show that 
\begin{align}\label{in-out}
\sgn(\cdot) K(\cdot, p^{+}) \in \Ran (\widehat{K}) \an \sgn(\cdot) K(\cdot, p^{-}) \in \Ran (\widehat{K})^\perp.
\end{align}
The first relation in \eqref{in-out} is equivalent to 
\begin{align}\label{in}
\int_\R K(x, y) \sgn(y) K(y, p^{+}) dy + \chi_{\R_{-}} (x) K(x,p^{+}) = K(x, p^{+});
\end{align}
while the second is equivalent to 
\begin{align}\label{out}
\int_\R K(x, y) \sgn(y) K(y, p^{-}) dy + \chi_{\R_{-}} (x) K(x,p^{-}) =0.
\end{align}

By using the fact that $\widehat{K}^2 = \widehat{K}$ and comparing all the block coefficients of the operator $\widehat{K}^2$ and $\widehat{K}$, both written in the block form as in \eqref{block-KK}, we get  
$$
\left\{\begin{array}{cc} K_{++} =  K_{++}^2 - K_{+-}K_{-+} \vspace{2mm}  \\ K_{--}  = K_{--}^2 - K_{-+} K_{+-} \vspace{2mm}\\ K_{++}K_{+-} = K_{+-} K_{--} \vspace{2mm} \\ K_{-+} K_{++}  = K_{--} K_{-+}  \end{array} .\right.
$$
The above first identity implies \eqref{in} for $x>0$; the second one implies \eqref{out} for $x<0$; the third one implies \eqref{out} for $x>0$ and the last one implies \eqref{in} for $x<0$. 
\end{proof}

\begin{rem}\label{quasi-repro}
Although $\widehat{K}$ is not the reproducing kernel of the Hilbert subspace $\Ran(\widehat{K})$, the space $\Ran(\widehat{K})$ still possesses certain reproducing feature. Indeed, if $\varphi \in \Ran(\widehat{K})$, then we have the following identity of functions in $L^2(\R)$:
\begin{align*}
\chi_{\R_{+}} (x) \varphi (x) & =   \int_\R K(x,y) \varphi(y) dy = \int_\R \sgn(y) \overline{K(y,x)} \varphi(y)dy \\ & = \langle\varphi, \sgn(\cdot) K(\cdot, x) \rangle_{L^2(\R)}.
\end{align*}
 \end{rem}

Now we can apply Lemma \ref{lem-in-out} to $K^{p^{+}}$ and $K^{p^{-}}$ respectively and get the following 
\begin{prop}
Let $\mathfrak{p}  = (p^{+}, p^{-})$ with $p^{+}> 0, p^{-}<0$, then 
\begin{align}\label{palm1}
\Ran (\widehat{K^{\mathfrak{p}}}) & = \Big(\Ran (\widehat{K}) \ominus \C \sgn(\cdot) K(\cdot, p^{+})\Big) \oplus  \C  \sgn(\cdot) K^{p^{+}} (\cdot, p^{-})\\ \label{palm2} & = \Big(\Ran (\widehat{K}) \oplus \C \sgn(\cdot) K(\cdot, p^{-}) \Big) \ominus \C \sgn(\cdot) K^{p^{-}} (\cdot, p^{+}).
\end{align}
\end{prop}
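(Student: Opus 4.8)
The plan is to derive both displayed identities by iterating Lemma~\ref{lem-in-out} twice, once for each point of $\mathfrak{p} = (p^+, p^-)$, taking the two points in the two possible orders. The single preliminary observation needed is that, although Lemma~\ref{lem-in-out} is phrased for the kernel $K = K_L$, its proof uses only that $\widehat{K}$ is an orthogonal projection — that is what makes Lemma~\ref{lem-repro} and the block identities coming from $\widehat{K}^2 = \widehat{K}$ available. Hence Lemma~\ref{lem-in-out} applies to \emph{any} $J$-Hermitian kernel whose hat-transform is an orthogonal projection; this is precisely the stability property that allows the iteration to close.

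To obtain \eqref{palm1}, I would first apply Lemma~\ref{lem-in-out} to $K$ at the point $p^+ > 0$: this gives that $K^{p^+}$ is $J$-Hermitian, that $\widehat{K^{p^+}}$ is an orthogonal projection, and that
\[
\Ran(\widehat{K^{p^+}}) = \Ran(\widehat{K}) \ominus \C\sgn(\cdot)K(\cdot, p^+).
\]
Since $\widehat{K^{p^+}}$ is again an orthogonal projection, the preliminary observation lets me apply Lemma~\ref{lem-in-out} a second time, now with $K$ replaced by $K^{p^+}$ and at the point $p^- < 0$, obtaining
\[
\Ran(\widehat{(K^{p^+})^{p^-}}) = \Ran(\widehat{K^{p^+}}) \oplus \C\sgn(\cdot)K^{p^+}(\cdot, p^-).
\]
Because $K^{\mathfrak{p}} = (K^{p^+})^{p^-}$ by \eqref{Kq-iteration}, substituting the first identity into the second yields exactly \eqref{palm1}.

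For \eqref{palm2} I would run the same argument in the opposite order: apply Lemma~\ref{lem-in-out} to $K$ at $p^- < 0$ to get that $\widehat{K^{p^-}}$ is an orthogonal projection with $\Ran(\widehat{K^{p^-}}) = \Ran(\widehat{K}) \oplus \C\sgn(\cdot)K(\cdot, p^-)$, and then apply it once more to $K^{p^-}$ at $p^+ > 0$ to get $\Ran(\widehat{(K^{p^-})^{p^+}}) = \Ran(\widehat{K^{p^-}}) \ominus \C\sgn(\cdot)K^{p^-}(\cdot, p^+)$. Since $(K^{p^-})^{p^+} = (K^{p^+})^{p^-} = K^{\mathfrak{p}}$ — the order of the rank-one operations being irrelevant, as recorded after \eqref{Kq-iteration} — this is \eqref{palm2}; in particular the two right-hand sides of \eqref{palm1} and \eqref{palm2} are automatically equal as subspaces of $L^2(\R)$, with no separate verification needed.

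I do not expect a genuine obstacle here: all the substantive work sits inside Lemma~\ref{lem-in-out}, and the only point requiring a word of care is that the hypotheses of that lemma are stable under one Palm (rank-one) step, i.e.\ that $\widehat{K^{p}}$ is again an orthogonal projection — but this is part of the conclusion of Lemma~\ref{lem-in-out} itself, so the two-step iteration closes immediately. (One assumes throughout, as usual, that $K(p^+, p^+) > 0$, $K(p^-, p^-) > 0$, and $K^{p^+}(p^-, p^-) > 0$, resp.\ $K^{p^-}(p^+, p^+) > 0$, so that \eqref{kp-kernel} genuinely produces the Palm kernels; if one of these quantities vanishes the corresponding rank-one projection is $0$ and the associated $\ominus$ or $\oplus$ is vacuous.)
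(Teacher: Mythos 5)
Your proposal is correct and matches the paper's approach exactly: the paper's entire proof of this proposition is the single remark preceding it, ``Now we can apply Lemma~\ref{lem-in-out} to $K^{p^{+}}$ and $K^{p^{-}}$ respectively,'' which is precisely the two-order iteration you spell out. The one point you make explicit that the paper leaves silent — that Lemma~\ref{lem-in-out} applies again at the second step because it uses only $\widehat{K}^2 = \widehat{K}$ and itself certifies $\widehat{K^p}$ to be an orthogonal projection — is exactly the stability observation needed to justify the iteration, so your write-up is if anything slightly more complete than the paper's.
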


We also need an explicit description of subspaces as $\Ran (\widehat{K_L})$. It is convenient for us to  introduce a general condition on the kernel $L$.

\begin{con}\label{con-four}
The $L$-operator is assumed to have the following block form with respect to the decomposition $L^2(\R^*, dx) = L^2(\R_{+}, dx) \oplus L^2(\R_{-}, dx)$: 
\begin{align}\label{L-block}
L = \left[  \begin{array}{cc} 0 & V \\  - V^* & 0 \end{array}\right],
\end{align}
where $V: L^2(\R_{-}) \rightarrow L^2(\R_{+})$ is a bounded  linear operator. Moreover, assume that the operator $V$ is such that for any $\varepsilon > 0$, the operators $\chi_{(\varepsilon, \infty)} V$ and  $V \chi_{(-\infty,-\varepsilon)} $ are Hilbert-Schmidt.  
\end{con}

$L$-kernels satisfying Condition \ref{con-four} appear naturally in many contexts, see e.g. \cite{BOO-jams}.

\begin{prop}[\cite{BO-R}, Prop. 5. 1]\label{prop-proj}
Let $L$ be an operator as in Condition \ref{con-four}, then the operator $\widehat{K}_L$ is an orthogonal projection, the range $\Ran(\widehat{K}_L) $ and its orthogonal complement are given by 
\begin{align}
\label{range}
\Ran(\widehat{K}_L)& =  \Big\{Vh \oplus  h: h\in L^2(\R_{-}, dx)  \Big\};
\\
\label{ortho}
\Ran(\widehat{K}_L)^\perp & =  \Big\{f \oplus  (- V^*f): f \in L^2(\R_{+}, dx)  \Big\}.
\end{align}
\end{prop}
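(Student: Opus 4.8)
The plan is to reduce everything to an explicit computation of the resolvent $(1+L)^{-1}$. First I would note that, since $V^{**}=V$, the block form \eqref{L-block} gives $L^{*}=-L$; thus $L$ is skew-adjoint, its spectrum is purely imaginary, and $1+L$ is invertible with bounded inverse. Set $R_{+}:=(1+VV^{*})^{-1}$ and $R_{-}:=(1+V^{*}V)^{-1}$; these are bounded since $1+VV^{*}\ge 1$ and $1+V^{*}V\ge 1$. Using $R_{+}(1+VV^{*})=1=R_{-}(1+V^{*}V)$ and the intertwining relations $VR_{-}=R_{+}V$, $V^{*}R_{+}=R_{-}V^{*}$ (both immediate from $V(1+V^{*}V)=(1+VV^{*})V$), one checks by direct block multiplication that
\begin{align*}
(1+L)^{-1}=\left[\begin{array}{cc} R_{+} & -R_{+}V \\ R_{-}V^{*} & R_{-}\end{array}\right].
\end{align*}

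Since $1-K_{L}=(1+L)(1+L)^{-1}-L(1+L)^{-1}=(1+L)^{-1}$, we obtain $K_{L}=1-(1+L)^{-1}$, and hence, reading off the correspondence \eqref{block-KK} between the blocks of $K$ and those of $\widehat{K}$,
\begin{align*}
K_{L}=\left[\begin{array}{cc} 1-R_{+} & R_{+}V \\ -R_{-}V^{*} & 1-R_{-}\end{array}\right], \qquad \widehat{K}_{L}=\left[\begin{array}{cc} 1-R_{+} & R_{+}V \\ R_{-}V^{*} & R_{-}\end{array}\right].
\end{align*}
Self-adjointness of $\widehat{K}_{L}$ is then visible directly, since $(R_{+}V)^{*}=V^{*}R_{+}=R_{-}V^{*}$. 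For idempotency I would carry out the $2\times2$ block product $\widehat{K}_{L}\cdot\widehat{K}_{L}$ and check that it reproduces $\widehat{K}_{L}$; for example the $(1,1)$-entry equals $(1-R_{+})^{2}+R_{+}VR_{-}V^{*}=(1-R_{+})^{2}+R_{+}^{2}VV^{*}=1-R_{+}$, and the remaining three entries are handled the same way with the resolvent and intertwining identities. Hence $\widehat{K}_{L}$ is an orthogonal projection.

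It remains to identify the range and its complement. For $h\in L^{2}(\R_{-},dx)$,
\begin{align*}
\widehat{K}_{L}(Vh\oplus h)&=\bigl((1-R_{+})Vh+R_{+}Vh\bigr)\oplus\bigl(R_{-}V^{*}Vh+R_{-}h\bigr)\\
&=Vh\oplus R_{-}(1+V^{*}V)h=Vh\oplus h,
\end{align*}
so $\{Vh\oplus h:h\in L^{2}(\R_{-})\}\subseteq\Ran(\widehat{K}_{L})$; conversely, for arbitrary $f\oplus g$ one computes $\widehat{K}_{L}(f\oplus g)=Vh_{0}\oplus h_{0}$ with $h_{0}:=R_{-}(V^{*}f+g)$, which gives the reverse inclusion and proves \eqref{range}. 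For \eqref{ortho} I would either run the analogous computation for $1-\widehat{K}_{L}$, or simply observe that $\{Vh\oplus h\}$ and $\{f\oplus(-V^{*}f)\}$ are orthogonal (as $\langle Vh,f\rangle-\langle h,V^{*}f\rangle=0$), that $\Ran(1-\widehat{K}_{L})$ lies inside the second subspace, and that these two subspaces span $L^{2}(\R)$, whence equality throughout.

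I do not expect a real obstacle here: the argument is pure operator algebra in $R_{\pm},V,V^{*}$, and the only points requiring care are the bounded invertibility of $1+L$ and the correct bookkeeping with the intertwining relations. I would also remark that the Hilbert--Schmidt hypotheses of Condition \ref{con-four} play no role in this proposition --- boundedness of $V$ suffices; those hypotheses are needed elsewhere, to ensure that $K_{L}$ is the correlation kernel of a point process.
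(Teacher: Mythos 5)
Your proof is correct. The paper itself does not reprove this proposition; it cites it from Borodin--Olshanski \cite{BO-R}, so there is no internal proof to compare against. Your route --- explicit block computation of $(1+L)^{-1}$ via the intertwining identities $VR_{-}=R_{+}V$ and $V^{*}R_{+}=R_{-}V^{*}$, followed by $K_{L}=1-(1+L)^{-1}$ and a direct verification that $\widehat{K}_{L}$ is a self-adjoint idempotent projecting onto the graph $\{Vh\oplus h\}$ --- is the standard one, and the block form you obtain for $K_{L}$ agrees with the one the paper records in Remark \ref{rem-K-block}. Your closing observation is also accurate: the Hilbert--Schmidt clauses in Condition \ref{con-four} are not used here, only the boundedness of $V$ and the skew block structure; they serve to make $K_{L}$ a genuine correlation kernel (Lemma \ref{lem-V}), not to make $\widehat{K}_{L}$ a projection. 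One small point worth spelling out in a final write-up: invertibility of $1+L$ can be obtained without mentioning the spectrum, via $\|(1\pm L)\psi\|^{2}=\|\psi\|^{2}+\|L\psi\|^{2}$ (using $L^{*}=-L$), which gives the two-sided bound directly.
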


\begin{rem}\label{rem-fLf}
Let $f: \R^* \rightarrow\C$ be a bounded function, then 
$$
fLf = \left[  \begin{array}{cc} 0 & f \chi_{\R_{+}} V    f \chi_{\R_{-}}  \\  -  f\chi_{\R_{-}}V^* f \chi_{\R_{+}}  & 0 \end{array}\right] =  \left[  \begin{array}{cc} 0 & f^{+} V    f^{-}   \\  -  f^{-}V^* f^{+}  & 0 \end{array}\right].  
$$
\end{rem}

\begin{proof}[Proof of Proposition \ref{prop-palm-kernel}]
Since $K \leftrightarrow \widehat{K}$ is a bijection,  to show $K_L^{\mathfrak{p}} = K_{g_{\mathfrak{p}} L  g_{\mathfrak{p}}}$ is equivalent to show the coincidence of two orthogonal projections:
\begin{align}\label{KLp-KgLg}
\widehat{K_L^{\mathfrak{p}}}  = \widehat{K_{g_{\mathfrak{p}} L  g_{\mathfrak{p}}}}.
\end{align}
By Proposition \ref{prop-proj} and Remark \ref{rem-fLf}, we have 
\begin{align*}
 \Ran( \widehat{K_{g_{\mathfrak{p}} L  g_{\mathfrak{p}}}} ) & = \Big\{ g_{\mathfrak{p}}^{+}V(g_{\mathfrak{p}}^{-} h)  \oplus h : h\in L^2(\R_{-}, dx)  \Big\}.
\end{align*}
Hence to show the identity \eqref{KLp-KgLg},  it suffices to show  the coincidence of the following two subspaces 
$$
\Ran(\widehat{K_L^{\mathfrak{p}}}  ) = \Big(\Ran (\widehat{K_L}) \ominus \C \sgn(\cdot) K_L(\cdot, p^{+})\Big) \oplus  \C  \sgn(\cdot) K_L^{p^{+}} (\cdot, p^{-})
$$ 
and 
\begin{align}\label{range-hat-KgLg}
 \Ran( \widehat{K_{g_{\mathfrak{p}} L  g_{\mathfrak{p}}}} ) = \Big\{ g_{\mathfrak{p}}^{+}V(g_{\mathfrak{p}}^{-} h)  \oplus h : h\in L^2(\R_{-}, dx)  \Big\}.
\end{align}

\medskip

{\bf Step 1}: 
If $\varphi \in \Ran (\widehat{K_L}) \ominus \C \sgn(\cdot) K_L(\cdot, p^{+})$, then $ \varphi\in \Ran( \widehat{K_{g_{\mathfrak{p}} L  g_{\mathfrak{p}}}} ) $. 

\medskip
Recall that 
$$
\Ran(\widehat{K}_L) =  \Big\{Vh \oplus  h: h\in L^2(\R_{-}, dx)  \Big\}. 
$$
Hence the hypothesis $\varphi \in \Ran (\widehat{K_L}) \ominus \C \sgn(\cdot) K_L(\cdot, p^{+})$ is equivalent to the existence of a functioin $h\in L^2(\R_{-})$ such that 
$$
\varphi = h + V(h) \an \varphi \perp \sgn(\cdot) K_L(\cdot, p^{+}).
$$
By Remark \ref{quasi-repro}, this last condition can be translated to the condition $V(h)(p^{+}) =0$, that is 
$$
A(p^{+}) \int_{\R_{-}} \frac{A(y) h(y)}{p^{+} - y} dy =0.
$$
Since $A$ is assumed to have full support, the set $\{p\in \R^*: A(p) =0\}$ is negligible, hence we may assume that $A(p^{+}) \ne 0$. Thus we have 
$$
\int_{\R_{-}} \frac{A(y) h(y)}{p^{+} - y} dy =0.
$$
Now we want to show that there exists $h_1\in L^2(\R_{-})$ such that 
$$
h + V(h) = h_1 + g_{\mathfrak{p}}^{+} V (h_1g_{\mathfrak{p}}^{-}).
$$
The above identity is equivalent to 
$$
h  = h_1 \an V(h) = g_{\mathfrak{p}}^{+} V (h_1g_{\mathfrak{p}}^{-}).
$$
Hence what we need to show is:  once we have $V(h)(p^{+}) = 0$, then 
$$
V(h) = \frac{x-p^{+}}{ x - p^{-}} V  ( \frac{x-p^{-}}{ x - p^{+}} h).
$$
The above assertion is equivalent to 
 \begin{align}\label{pre-commutator}
\frac{1 }{ x - p^{+}} V(h)(x) = V  ( \frac{1}{ x - p^{+}} h)(x).
\end{align}
If we denote $k = \frac{h}{x-p^{+}}$, then the identity \eqref{pre-commutator} is equivalent to 
\begin{align}\label{commutator}
[x, V] k = 0,
\end{align}
where $[x, V]$ is the commutator between the multiplication $x$ and $V$. Since the commutator $[x, V]$ has a kernel given by the formula 
 $ \chi_{\R_{+}} (x) A(x)A(y) \chi_{\R_{-}} (y)$, hence the identity \eqref{commutator} can be checked as follows: 
 $$
 ([x, V] k)(x) = \chi_{\R_{+}} (x) A(x) \int_{\R_{-}} A(y) k(y)dy = \chi_{\R_{+}} (x) A(x)\int_{\R_{-}} \frac{A(y) h(y)}{ y - p^{+} } dy =0.
 $$

 \medskip
 
 {\bf Step 2}:  If $\varphi  = \sgn(\cdot) K_L^{p^{+}} (\cdot, p^{-})$, then $\varphi\in \Ran( \widehat{K_{g_{\mathfrak{p}} L  g_{\mathfrak{p}}}} ) $. 
 
 \medskip

By \eqref{range-hat-KgLg}, what we need to show is that
\begin{align}\label{boss}
\varphi (x) \chi_{\R_{+}} (x) =  \frac{x-p^{+}}{ x - p^{-}} V \left( \frac{x -p^{-}}{ x - p^{+}}   \varphi (x) \chi_{\R_{-}} (x) \right) (x).
\end{align}
This is in turn equivalent to the following assertion: for $x > 0$, we have 
\begin{align}\label{to-prove}
K_L^{p^{+}} (x, p^{-}) =  -  \frac{x-p^{+}}{ x - p^{-}} V \left( \frac{x -p^{-}}{ x - p^{+}}   K_L^{p^{+}} (\cdot, p^{-}) \right) (x).
\end{align}

By a result in \cite[Section II]{IIKS}, under Condition \ref{con35}, the kernel $K(x,y) = K_L(x,y)$ has the following integrable form 
$$
K(x,y) = \frac{F_1(x) G_1(y) + F_2(x) G_2(y)}{x-y}, 
$$
where 
$$
\left\{
\begin{array}{c} 
( 1 + L)F_1 = A^{+}
\\ 
( 1+L)F_2 = A^{-}
\\
(1+L^*) G_1 = A^{-}
\\
(1 + L^{*})G_2 = A^{+}
\end{array}.
\right.
$$
Note that $L = V - V^*$, and since $V, V^*$ has range in $L^2(\R_{+}), L^2(\R_{-})$ respectively, the above equation system is equivalent to 
\begin{align}\label{sep-rel}
\begin{split}
\left\{
\begin{array}{ll} 
(F_1)_{-} - V^* F_1 = 0, & (F_1)_{+} + VF_1 = A^{+}
\\
(F_2)_{-} - V^* F_2 = A^{-}, & (F_2)_{+} + VF_2 = 0
\\
(G_1)_{-} + V^* G_1 = A^{-}, & (G_1)_{+} - VG_1 = 0
\\
(G_2)_{-} + V^* G_2 = 0, & (G_2)_{+} - VG_2= A^{+}
\end{array}.
\right.
\end{split}
\end{align}
Moreover, we have 
\begin{align}\label{0}
F_1(x) G_1(x) + F_2(x) G_2(x) =0.
\end{align} From this, by  l'H\^opital's rule, we have 
\begin{align}\label{hopital}
K(x,x) = F_1'(x) G_1(x) + F_2'(x) G_2(x).
\end{align}
For $x > 0$ and $x \ne p^{+}$, we have 
\begin{align*}
&  \frac{1}{A^{+}(x)}V\left( \frac{F_1}{x-p^{+}} \right)(x) 
= \int_{\R_{-}}  \frac{A^{-}(y)  F_1(y) }{ (x-y)(y-p^{+})  } dy 
\\ & = \frac{1}{x - p^{+}}\int_{\R_{-}}  \left(\frac{A^{-} (y) F_1(y)  }{x-y}  -  \frac{A^{-} (y) F_1(y)  }{p^{+}-y} \right) dy 
\\ & = \frac{1}{x-p^{+}} \Big[ \frac{(VF_1)(x)}{A^{+}(x)} - \frac{(VF_1)(p^{+})}{A^{+}(p^{+})}  \Big] 
\\ & = \frac{1}{x-p^{+}} \Big(  \frac{ A^{+}(x) - F_1(x)}{A^{+}(x)}   - \frac{ A^{+}(p^{+}) - F_1(p^{+}) }{A^{+} (p^{+})}  \Big) 
\\ & = \frac{1}{x-p^{+}} \Big(  \frac{- F_1(x)}{A^{+}(x)}   + \frac{ F_1(p^{+}) }{A^{+} (p^{+})}  \Big).
\end{align*}
Similarly, if $x > 0$ and $x \ne p^{+}$, then 
\begin{align*}
& \frac{1}{A^{+} (x) }V\left( \frac{F_2}{x-p^{+}} \right)(x) =\frac{1}{x-p^{+}} \Big[ \frac{(VF_2)(x)}{A^{+}(x)} - \frac{(VF_2)(p^{+})}{A^{+}(p^{+})}  \Big] 
\\ & = \frac{1}{x-p^{+}} \Big(  \frac{- F_2(x)}{A^{+}(x)}   + \frac{ F_2(p^{+}) }{A^{+} (p^{+})}  \Big).
\end{align*}
 We thus get
\begin{align}\label{minus}
\begin{split}
& V\left( \frac{x -p^{-}}{ x - p^{+}}  K(x, p^{-})  \right) (x) =  - \frac{x-p^{-}}{x-p^{+}} K(x, p^{-})   + \frac{p^{+} - p^{-}}{x-p^{+}} \frac{K(p^{+}, p^{-}) }{A^{+} (p^{+})} A^{+} (x).
\end{split}
\end{align}

Now note that we have
\begin{align}\label{fraction-dev}
\frac{y-p^{-}}{ (x-y) (y-p^{+})^2} = \frac{x-p^{-}}{ (x-p^{+})^2}  \frac{1}{x-y} -  \frac{x-p^{-}}{ (x-p^{+})^2}  \frac{1}{p^{+}- y} +  \frac{p^{+}- p^{-}}{ x - p^{+}}\frac{1}{(p^{+} - y )^2}.
\end{align}
Given $f \in L^2(\R_{-},dx)$, we have 
\begin{align}\label{square}
& \int_{\R_{-}} \frac{A^{-}(y) f(y)}{(x-y)^2} dy = -\frac{d}{dx} \int_{\R_{-}} \frac{A^{-}(y) f(y)}{x-y} dy  =  -\frac{d}{dx} \left[\frac{V(f)}{A^{+}}\right](x).
\end{align}
Applying identities \eqref{fraction-dev} and \eqref{square} and by denoting 
$$
H(x) = F_1(x) G_1(p^{+}) +  F_2(x) G_2(p^{+}),
$$ 
we get 
\begin{align*}
& \frac{V\left( \frac{x -p^{-}}{ x - p^{+}}  K(x, p^{+})  \right) (x)}{A^{+}(x)} = \int_{\R_{-}} \frac{ (y-p^{-})A^{-}(y)  H(y) }{(x-y) (y-p^{+})^2} dy
\\ 
= &  \frac{x-p^{-}}{ (x-p^{+})^2}   \int_{\R_{-}}  \frac{A^{-}(y)  H(y) }{x-y}dy
 - \frac{x-p^{-}}{ (x-p^{+})^2}  \int_{\R_{-}} \frac{A^{-}(y)  H(y) }{p^{+}- y} dy 
\\
&+    \frac{p^{+}- p^{-}}{ x - p^{+}}  \int_{\R_{-}} \frac{A^{-}(y)  H(y) }{(p^{+} - y )^2} dy
\\
= & \frac{x-p^{-}}{ (x-p^{+})^2}  \Big[ \frac{V(H)(x)}{A^{+}(x)} - \frac{V(H) (p^{+})}{A^{+} (p^{+})} \Big]
 -  \frac{p^{+}- p^{-}}{ x - p^{+}}   \frac{d}{dx} \left[\frac{V(H)}{A^{+}}\right](p^{+}).
\end{align*}
By \eqref{sep-rel}, for $x> 0$, we have 
$$
\left\{
\begin{array}{l}
 \frac{d}{dx} \left(\frac{V(F_1)}{A^{+}}\right)(x) =  \frac{F_1'(x) A^{+}(x) - F_1(x) \frac{d}{dx}A^{+}(x)}{A^{+}(x)^2}
 \\
  \frac{d}{dx} \left(\frac{V(F_2)}{A^{+}}\right)(x) =  \frac{F_2'(x) A^{+}(x) - F_2(x)  \frac{d}{dx} A^{+}(x)}{A^{+}(x)^2}
\end{array}.
\right.
$$
Keeping in mind that  the identities \eqref{0} and \eqref{hopital} hold, we obtain 
\begin{align}\label{plus}
\begin{split}
V\left( \frac{x -p^{-}}{ x - p^{+}}  K(x, p^{+})  \right) (x)
  = &   - \frac{x-p^{-}}{ x-p^{+}} K(x, p^{+})  
+ \frac{p^{+} - p^{-}}{x-p^{+}}     \frac{K(p^{+}, p^{+})}{A^{+} (p^{+})} A^{+} (x).
\end{split}
\end{align}
Combining identities \eqref{minus} and \eqref{plus}, we get 
\begin{align}\label{final}
\begin{split}
& V\left( \frac{x -p^{-}}{ x - p^{+}} \Big[  K(p^{+}, p^{+}) K(x, p^{-}) - K(x, p^{+}) K(p^{+}, p^{-}) \Big]\right) (x)
\\ 
& = -     \frac{x-p^{-}}{x-p^{+}} \Big\{ K(p^{+}, p^{+}) K(x, p^{-}) - K(x, p^{+}) K(p^{+}, p^{-}) \Big\}. 
\end{split}
\end{align}

Since \begin{align*}
K(p^{+}, p^{+})  K^{p^{+}} (x, p^{-}) = K(p^{+}, p^{+}) K(x,p^{-}) - K(p^{+}, p^{-}) K(x, p^{+}),
\end{align*}
the identities \eqref{to-prove} and \eqref{final} are equivalent. Thus we complete the proof of Step 2.

\begin{rem}
Denote 
$$
  \psi(x):  = \frac{x -p^{-}}{ x - p^{+}} \varphi (x) =  \frac{x -p^{-}}{ x - p^{+}}   \sgn(\cdot) K_L^{p^{+}} (\cdot, p^{-}),
 $$ 
 then we can show that the identity  \eqref{boss} is equivalent to 
$
\psi_{+}=  K_L (\psi),
$
which is in turn equivalent to
 $ \psi = \widehat{K_L} (\psi).$
Hence in Step 2, we in fact proved that 
$$
 \frac{x -p^{-}}{ x - p^{+}}   \sgn(\cdot) K_L^{p^{+}} (\cdot, p^{-}) \in \Ran (\widehat{K_L}).
$$
\end{rem}

\medskip

{\bf Step 3:} Now we want to prove that $\Ran(\widehat{K_L^{\mathfrak{p}}}  ) \supset  \Ran( \widehat{K_{g_{\mathfrak{p}} L  g_{\mathfrak{p}}}} )$ , this is equivalent to
\begin{align}\label{reverse-inclusion}
\Ran(\widehat{K_L^{\mathfrak{p}}}  )^{\perp} \subset  \Ran( \widehat{K_{g_{\mathfrak{p}} L  g_{\mathfrak{p}}}} )^\perp.
\end{align}
By using \eqref{palm2}, we have 
$$
\Ran(\widehat{K_L^{\mathfrak{p}}}  )^{\perp}   = \Big(\Ran (\widehat{K_L})^\perp \ominus \C \sgn(\cdot) K_L(\cdot, p^{-}) \Big) \oplus \C \sgn(\cdot) K_L^{p^{-}} (\cdot, p^{+}).
$$
From \eqref{ortho}, we know that 
\begin{align*}
\Ran(\widehat{K_L})^\perp& =  \Big\{f_1 \oplus  (- V^*f_1): f_1 \in L^2(\R_{+}, dx)  \Big\},
\\
\Ran(\widehat{K_{g_{\mathfrak{p}} L  g_{\mathfrak{p}}}})^\perp &=  \Big\{f_2 \oplus  \Big(- g_{\mathfrak{p}}^{-}V^* (g_{\mathfrak{p}}^{+} f_2) \Big): f_2 \in L^2(\R_{+}, dx)  \Big\}.
\end{align*}
Now after switching the r\^oles of $L^2(\R_{-}, dx)$ and $L^2(\R_{+}, dx)$;  the kernels $V$ and $- V^*$; the pairs $(p^{+}, p^{-})$ and $(p^{-}, p^{+})$; the functions $\frac{x-p^{+}}{x-p^{-}}$ and $\frac{x-p^{-}}{x-p^{+}}$; and also the pairs of vectors $( \sgn(\cdot) K_L(\cdot, p^{+}), \sgn(\cdot) K_L^{p^{+}} (\cdot, p^{-}))$ and $( \sgn(\cdot) K_L(\cdot, p^{-}), \sgn(\cdot) K_L^{p^{-}} (\cdot, p^{+}))$, we arrive exactly at the same situation as above in proving $\Ran(\widehat{K_L^{\mathfrak{p}}}  ) \subset  \Ran( \widehat{K_{g_{\mathfrak{p}} L  g_{\mathfrak{p}}}} )$.  Hence we  may obtain \eqref{reverse-inclusion} by repeating the same arguments in Step 1 and Step 2.
\end{proof}

\subsection{Sufficient condition for equivalence of two \texorpdfstring{$L$}{a}-processes}\label{section-conditions}

In this section, we formulate a sufficient condition for two $L$-processes to be equivalent on the level of their $L$-kernels. 

\begin{lem}\label{lem-V}
Let $L$ be an operator satisfying Condition \ref{con-four}.  Then the operator $K_L$ is a $J$-self-adjoint operator satisfying all the conditions of Theorem \ref{thm-0}. In particular, $K_L$ is the correlation kernel of a determinantal point process on $\R^*$.
\end{lem}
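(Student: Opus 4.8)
The plan is to compute $K_L = L(1+L)^{-1}$ explicitly in block form with respect to $L^2(\R^*,dx) = L^2(\R_{+},dx)\oplus L^2(\R_{-},dx)$ and then verify, one at a time, the hypotheses of the origin-singularity version of Theorem \ref{thm-0} (Remark \ref{two-kind-s}). First I would note that \eqref{L-block} gives $L^* = -L$, so that $L$ is skew-adjoint, $1+L$ is invertible, and $K_L$ is a well-defined bounded operator. A routine block inversion, combined with the resolvent identity $V(1+V^*V)^{-1} = (1+VV^*)^{-1}V$, yields
$$
K_L = \left[\begin{array}{cc} VV^*(1+VV^*)^{-1} & (1+VV^*)^{-1}V \\ -V^*(1+VV^*)^{-1} & 1 - (1+V^*V)^{-1} \end{array}\right].
$$

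Granting this formula, the algebraic part is immediate. Since $K_L$ is $J$-self-adjoint if and only if $\widehat{K_L}$ is self-adjoint (the observation following \eqref{hat-o}), and since Proposition \ref{prop-proj} already identifies $\widehat{K_L}$ with the orthogonal projection onto the subspace \eqref{range}, we conclude at once that $K_L$ is $J$-self-adjoint and that $0 \le \widehat{K_L} \le 1$. The diagonal blocks give $P_{+}K_L P_{+} = VV^*(1+VV^*)^{-1}$ and $P_{-}K_L P_{-} = V^*V(1+V^*V)^{-1}$, each of which is a product of two commuting non-negative self-adjoint operators and hence non-negative.

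It remains to check the local trace-class and Hilbert-Schmidt conditions, and this is the step where Condition \ref{con-four} is used. Let $\Delta_1\subset\R_{+}$ and $\Delta_2\subset\R_{-}$ be measurable sets with positive distance from the origin, say $\Delta_1\subset(\varepsilon,\infty)$ and $\Delta_2\subset(-\infty,-\varepsilon)$; then Condition \ref{con-four} makes $\chi_{\Delta_1}V = \chi_{\Delta_1}(\chi_{(\varepsilon,\infty)}V)$ and $\chi_{\Delta_2}V^* = \chi_{\Delta_2}(V\chi_{(-\infty,-\varepsilon)})^*$ Hilbert-Schmidt. Because each $\Delta_i$ lies entirely on one side of $\R^*$, only one block of $K_L$ enters each compression, and I would factor
$$
\chi_{\Delta_1}K_L\chi_{\Delta_1} = (\chi_{\Delta_1}V)\,\big(V^*(1+VV^*)^{-1}\chi_{\Delta_1}\big), \qquad \chi_{\Delta_2}K_L\chi_{\Delta_2} = (\chi_{\Delta_2}V^*)\,\big(V(1+V^*V)^{-1}\chi_{\Delta_2}\big).
$$
Using the resolvent identity once more, the adjoint of each right-hand factor is, up to composition with a bounded operator, again $\chi_{\Delta_1}V$ respectively $\chi_{\Delta_2}V^*$, so both factors are Hilbert-Schmidt and both compressions are therefore trace-class; similarly $\chi_{\Delta_2}K_L\chi_{\Delta_1} = -(\chi_{\Delta_2}V^*)(1+VV^*)^{-1}\chi_{\Delta_1}$ is Hilbert-Schmidt because its left factor is. With every hypothesis of Theorem \ref{thm-0} (origin-singularity form) and the condition $0 \le \widehat{K_L} \le 1$ verified, Lytvynov's theorem gives that $K_L$ is the correlation kernel of a determinantal point process on $\R^*$. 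I do not expect a real obstacle here: the only point demanding care is keeping track of which compressed operators belong to $\mathscr{L}_1$ and which to $\mathscr{L}_2$, and nothing beyond Condition \ref{con-four} and the resolvent identity is needed.
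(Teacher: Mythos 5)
Your proposal is correct and follows essentially the same route as the paper: derive the block form of $K_L$, invoke Proposition \ref{prop-proj} for $J$-self-adjointness and the non-negativity of the diagonal blocks, and then use Condition \ref{con-four} with the resolvent identity $V(1+V^*V)^{-1} = (1+VV^*)^{-1}V$ to verify the local trace-class and Hilbert-Schmidt requirements. The one minor difference is that for the compressions $\chi_{\Delta_i}K_L\chi_{\Delta_i}$ the paper passes through the operator inequality $0 \le \chi_{\Delta_1}K_L\chi_{\Delta_1}\le \chi_{\Delta_1}VV^*\chi_{\Delta_1}$ and invokes domination of trace-class positives, whereas you factor the compression directly into a product of two Hilbert-Schmidt operators; both arguments are standard and equally sound.
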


\begin{lem}\label{lem-Kg}
Let $L$ be an operator satisfying Condition \ref{con-four}. Let $g: \R\rightarrow \R$ be a bounded Borel function. 
Then the operator $gLg$ satisfies also Condition \ref{con-four}. Moreover,  the operator $K_{gLg} = gLg ( 1 + gLg)^{-1}$ is given by 
\begin{align}\label{Kg-form}
K_{gLg} = gK_L (1 + (g^2-1)K_L)^{-1}g.
\end{align}
\end{lem}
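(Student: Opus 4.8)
The plan is to verify the two assertions separately: first that $gLg$ again satisfies Condition \ref{con-four}, and then to derive the explicit formula \eqref{Kg-form} for $K_{gLg}$ by a purely algebraic manipulation of resolvents, valid once we know the relevant inverses exist. For the first assertion, I would simply record that conjugation by the (real, bounded) multiplication operator $M_g$ preserves the block structure: since $g$ acts diagonally with respect to $L^2(\R^*) = L^2(\R_+) \oplus L^2(\R_-)$, we have $g L g = \left[\begin{smallmatrix} 0 & g^+ V g^- \\ -g^- V^* g^+ & 0\end{smallmatrix}\right]$ with $g^\pm = g\chi_{\R_\pm}$ (this is exactly Remark \ref{rem-fLf}), so $gLg$ is of the form \eqref{L-block} with $V$ replaced by $\widetilde V := g^+ V g^-$. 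The Hilbert--Schmidt requirement in Condition \ref{con-four} also passes through: for $\varepsilon>0$, $\chi_{(\varepsilon,\infty)}\widetilde V = \chi_{(\varepsilon,\infty)} g^+ V g^- = (g\chi_{(\varepsilon,\infty)}) (\chi_{(\varepsilon,\infty)} V) g^-$ is a product of a bounded operator with a Hilbert--Schmidt one, hence Hilbert--Schmidt, and similarly for $\widetilde V \chi_{(-\infty,-\varepsilon)}$. Thus $gLg$ satisfies Condition \ref{con-four}, and by Lemma \ref{lem-V} the operator $K_{gLg}$ is well defined and is a correlation kernel.

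For the formula \eqref{Kg-form}, the key point is the general resolvent identity relating $L \mapsto L(1+L)^{-1}$ to conjugation by a bounded operator. Writing $K = K_L = L(1+L)^{-1}$, so that $1 - K = (1+L)^{-1}$ and $L = K(1-K)^{-1}$, I would compute
\[
1 + gLg = 1 + gK(1-K)^{-1}g = (1-K)^{-1}\big[(1-K) + (1-K)gK(1-K)^{-1}g\big],
\]
but it is cleaner to go the other way: start from $K_{gLg} = gLg(1+gLg)^{-1}$ and substitute $L = K(1-K)^{-1}$, then factor out $(1-K)^{-1}$ on the appropriate side. Concretely, $gLg = gK(1-K)^{-1}g$, and
\[
1 + gLg = 1 + gK(1-K)^{-1}g.
\]
Multiplying on the right by $g^{-1}(1-K)g$ — formally; the honest version is to verify directly that $(1+gLg)\,\big[g^{-1}(1-K)(1+(g^2-1)K)^{-1}g\big]$... — one is led to the claim that $(1+gLg)^{-1} = g^{-1}(1-K)\,(1-(1-g^2)K)^{-1}\,g$ up to rearrangement. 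I would instead verify \eqref{Kg-form} directly: set $R := g K_L (1+(g^2-1)K_L)^{-1} g$ and check $R = gLg(1+gLg)^{-1}$ by showing $R(1+gLg) = gLg$, i.e. $R + R\,gLg = gLg$. Using $L = K(1-K)^{-1}$ and expanding, the verification reduces to the operator identity $K_L(1+(g^2-1)K_L)^{-1}(g^2) K_L(1-K_L)^{-1} + K_L(1+(g^2-1)K_L)^{-1} = K_L(1-K_L)^{-1}(1 + (\text{correction}))$, which collapses after combining the two terms over the common "denominator" $(1+(g^2-1)K_L)^{-1}(1-K_L)^{-1}$ and using $(g^2)(1-K_L) + (1+(g^2-1)K_L)(\,\cdot\,)$... — in short, an identity of rational expressions in the single operator $K_L$ and the multiplication operators $g, g^2$, which commute with each other but not with $K_L$. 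Care is needed only in that $g$ and $K_L$ do not commute, so every step must keep the left/right placement of $g$ intact; no genuine noncommutativity beyond that enters, because all the resolvents involve only $K_L$ and $g^2 K_L$-type terms sandwiched correctly.

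The main obstacle is bookkeeping: one must first justify that $1 + (g^2-1)K_L$ is invertible (equivalently, that $1 + gLg$ is invertible, which holds since $gLg$ again has the form \eqref{L-block} and, by Lemma \ref{lem-V}, $\widehat{K_{gLg}}$ is an orthogonal projection so $1+gLg$ is indeed boundedly invertible), and then carry the conjugating factors $g$ through the resolvent expansion without error. I expect the cleanest route is: (1) establish invertibility of $1+gLg$ via Lemma \ref{lem-V} applied to $gLg$; (2) write $1+(g^2-1)K_L = 1 - K_L + g^2 K_L = (1-K_L)(1 + (1-K_L)^{-1}g^2 K_L)$ and relate $(1-K_L)^{-1} = 1+L$ to see $1 + (g^2-1)K_L = (1-K_L)(1 + (1+L)g^2 K_L)$; (3) observe $g(1+L)g^{-1}\cdots$ — or, avoiding inverses of $g$ (which may not be bounded), simply expand $gLg(1+gLg)^{-1}$ using the Neumann-type algebraic identity $gLg(1+gLg)^{-1} = g\big(L(1+gLg)^{-1}g^{-1}\big)g$ only formally and instead confirm the end formula \eqref{Kg-form} by the direct multiplication check described above. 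Since $g$ is merely bounded (indeed $g = g_{\mathfrak p}$ will be bounded with bounded inverse away from the $p_i^\pm$, but we do not want to rely on that), the robust argument is the direct verification $R(1+gLg) = gLg$, which uses only boundedness of $g$ and the established invertibility, and this is the step I would write out in full.
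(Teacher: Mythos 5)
Your argument for the first assertion (that $gLg$ again satisfies Condition \ref{con-four}) is correct and matches the paper essentially verbatim. For the formula \eqref{Kg-form} your plan is workable, but you circle around the clean route the paper actually takes and end up proposing to do the algebra the hard way. The paper deploys Remark \ref{rem-ab} — the identity $a(1+ba)^{-1}=(1+ab)^{-1}a$ together with the fact that $1+ba$ is invertible iff $1+ab$ is — directly, with $a=g$ and $b=gK_L(1-K_L)^{-1}$. In one stroke this gives both the invertibility of $1+g^2K_L(1-K_L)^{-1}$ (hence of $1+(g^2-1)K_L=(1-K_L+g^2K_L)$, since $1+g^2K_L(1-K_L)^{-1}=(1+(g^2-1)K_L)(1-K_L)^{-1}$) and the formula, via
\begin{align*}
K_{gLg}&=gK_L(1-K_L)^{-1}g\,\bigl(1+gK_L(1-K_L)^{-1}g\bigr)^{-1}\\
&=gK_L(1-K_L)^{-1}\bigl(1+g^2K_L(1-K_L)^{-1}\bigr)^{-1}g = gK_L(1-K_L+g^2K_L)^{-1}g.
\end{align*}
Your alternative — establish invertibility of $1+gLg$ via Lemma \ref{lem-V}, then verify $R(1+gLg)=gLg$ by direct multiplication — is logically sound (once $1+gLg$ is invertible, $R(1+gLg)=K_{gLg}(1+gLg)$ does force $R=K_{gLg}$), and avoids needing an inverse of $g$, which is the right instinct. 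But as written it leaves two things unexecuted: the claimed equivalence of invertibility of $1+(g^2-1)K_L$ with that of $1+gLg$ (which you assert but do not prove; it is precisely the $ab/ba$ transference that supplies it), and the actual multiplication check, which you announce but do not carry out. So the proposal is a correct plan rather than a proof; had you committed to Remark \ref{rem-ab} with $a=g$, $b=gK_L(1-K_L)^{-1}$ from the start, the loose ends would have closed themselves.
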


\begin{con}\label{con5}
Assume that $L$ is a kernel given as in \eqref{L-block} such that the operator $V: L^2(\R_{-}) \rightarrow L^2(\R_{+})$ satisfying the following condition: for any $\varepsilon >0$, the operators  $\chi_{(\varepsilon, \infty)}V$, $V\chi_{(-\infty, -\varepsilon)}$,  $\chi_{(0, \varepsilon)} x V$ and  $V y\chi_{(-\varepsilon, 0)}$ are Hilbert-Schmidt.
\end{con}

\begin{lem}\label{4-5}
An $L$-kernel satisfying Condition \ref{con35} is an $L$-kernel satisfying Condition \ref{con5}. 
\end{lem}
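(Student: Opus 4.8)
The plan is to show that Condition \ref{con35} is strictly stronger than Condition \ref{con5}, by unwinding the two sets of hypotheses on the operator $V$ and verifying the four Hilbert--Schmidt requirements of Condition \ref{con5} one at a time. First I would record what Condition \ref{con35} gives us explicitly: the kernel of $V$ is
$$
V(x,y) = \frac{A^{+}(x) A^{-}(y)}{x-y} = \frac{A(x)\chi_{\R_{+}}(x)\, A(y)\chi_{\R_{-}}(y)}{x-y},
$$
with $A\in C^1(\R^*)\cap L^2(\R^*,dx)$ real-valued and $\supp(A)=\R^*$. Since Condition \ref{con35} requires $L$ (hence $V$) to be a bounded operator, and since Lemma \ref{lem-V-intro} already tells us $K_L$ satisfies Lytvynov's conditions, in particular for subsets away from the origin $V$ restricted appropriately is Hilbert--Schmidt. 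So the first two requirements of Condition \ref{con5}, namely that $\chi_{(\varepsilon,\infty)}V$ and $V\chi_{(-\infty,-\varepsilon)}$ be Hilbert--Schmidt, are immediate from Condition \ref{con35} (these are exactly the Hilbert--Schmidt statements already contained in Condition \ref{con35}, which is phrased as Condition \ref{con-four}-type; note Lemma \ref{lem-V-intro} and the inclusion structure make this explicit). Thus the content of the lemma is really the two new conditions involving the extra factor of $x$ near the origin: $\chi_{(0,\varepsilon)}\,x\,V$ and $V\,y\,\chi_{(-\varepsilon,0)}$ Hilbert--Schmidt.

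The key step is therefore to estimate the Hilbert--Schmidt norm of $\chi_{(0,\varepsilon)}\,x\,V$. Its integral kernel is
$$
\big(\chi_{(0,\varepsilon)}(x)\, x\, V\big)(x,y) = \chi_{(0,\varepsilon)}(x)\,\frac{x\,A^{+}(x)A^{-}(y)}{x-y},
$$
so I need to show
$$
\iint_{0<x<\varepsilon,\; y<0} \frac{x^2\, A(x)^2\, A(y)^2}{(x-y)^2}\, dx\, dy < \infty.
$$
Since $x>0>y$ we have $x-y = x + |y| \ge |y|$ and also $x - y \ge x$, so $\tfrac{x}{x-y} \le 1$. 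Hence
$$
\frac{x^2 A(x)^2 A(y)^2}{(x-y)^2} = \Big(\frac{x}{x-y}\Big)^2 A(x)^2 A(y)^2 \le A(x)^2 A(y)^2,
$$
and the double integral is bounded by $\big(\int_0^\varepsilon A(x)^2 dx\big)\big(\int_{-\infty}^0 A(y)^2 dy\big) \le \|A\|_{L^2}^4 < \infty$ because $A\in L^2(\R^*)$. The symmetric estimate for $V\,y\,\chi_{(-\varepsilon,0)}$ is identical after swapping the roles of the two half-lines (its kernel is $\tfrac{A^{+}(x)A^{-}(y)}{x-y}\,y\,\chi_{(-\varepsilon,0)}(y)$ and now $\tfrac{|y|}{x-y}\le 1$). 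This completes the verification of all four Hilbert--Schmidt conditions, hence the lemma.

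The main obstacle here is essentially bookkeeping rather than a genuine mathematical difficulty: one must be careful that the bound $\tfrac{x}{x-y}\le 1$ genuinely uses the sign separation $x>0>y$ built into the block structure $L^2(\R^*)=L^2(\R_+)\oplus L^2(\R_-)$, and that the $L^2$ integrability of $A$ near the origin is what kills the potential singularity of the Cauchy-type kernel $1/(x-y)$ there. No $C^1$ regularity of $A$ is needed for this particular lemma; that hypothesis is used elsewhere. One should also double-check that "$L$ is a bounded operator" in Condition \ref{con35} indeed delivers the plain boundedness of $V$ needed for the first two conditions, but this is part of the standing assumption in Condition \ref{con35} and requires no further argument.
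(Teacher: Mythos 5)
Your estimate for the two new conditions, $\chi_{(0,\varepsilon)}\,x\,V$ and $V\,y\,\chi_{(-\varepsilon,0)}$, using $x/(x-y)\le 1$ for $x>0>y$, is correct and is exactly what the paper does. But your treatment of the first two conditions, that $\chi_{(\varepsilon,\infty)}V$ and $V\chi_{(-\infty,-\varepsilon)}$ are Hilbert--Schmidt, is a real gap. You assert that these are ``exactly the Hilbert--Schmidt statements already contained in Condition~\ref{con35},'' but Condition~\ref{con35} contains no Hilbert--Schmidt statement at all: it only specifies the integrable form of the kernel, that $L$ is bounded, that $A\in C^1(\R^*)\cap L^2(\R^*,dx)$, and that $A$ has full support. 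The Hilbert--Schmidt conditions on $V$ appear in Condition~\ref{con-four} and Condition~\ref{con5}, which are what one must deduce, not what one is given. Appealing to Lemma~\ref{lem-V-intro} makes the argument circular: in the logical organization of the paper, Lemma~\ref{lem-V-intro} is itself a consequence of Lemma~\ref{lem-V} together with the fact that Condition~\ref{con35} implies Condition~\ref{con-four}, and that implication is precisely the first two conclusions of Lemma~\ref{4-5}. Also, Lytvynov's conditions concern compressions of $K_L$, not $V$ directly, so even if circularity were not an issue there would be an extra step to extract information about $V$.

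The fix is as easy as your other estimate and is the same in spirit. For $x\ge\varepsilon$ and $y<0$ one has $x-y\ge x\ge\varepsilon$, hence
\begin{align*}
\int_\varepsilon^\infty\!\int_{-\infty}^0\frac{A(x)^2A(y)^2}{(x-y)^2}\,dy\,dx \;\le\; \frac{1}{\varepsilon^2}\,\|A^{+}\|_2^2\,\|A^{-}\|_2^2 \;<\;\infty,
\end{align*}
and similarly with $y\le-\varepsilon$, which is exactly the paper's one-line argument. With this replacement your proof coincides with the paper's.
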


Recall that we denote by $\mu_L$ the determinantal point process on $\R^*$ whose correlation kernel is $K_L = L(1+L)^{-1}$.

\begin{thm}\label{thm-main-bis}
Let $L$ be a kernel satisfying Condition \ref{con5}. Assume that $g$ is a bounded real function such that $| g(x) - 1| \le C |x|$ and there exists $\varepsilon \in (0,1)$ such that the subset $ \{ x \in \R^*: | g(x) - 1| > \varepsilon\}$ has a positive distance from $0$.
Then the following limit 
$$
\overline{S}_L[\log g^2](X) : =\lim_{\delta\to 0^{+}}    \sum_{x \in X, | x| \ge \delta}\log g(x)^2 - \E_{\mu_L} \left(\sum_{x \in X, | x| \ge \delta}\log g(x)^2 \right)
$$
exists for $\mu_L$-almost every configuration $X\in \Conf(\R^*)$.  Moreover, we have
$$
\exp (\overline{S}_L[\log g^2] ) \in L^1(\Conf(\R^*), \mu_L)
$$ 
and we have 
$$
 \mu_{gLg}(dX)=  \frac{\exp (\overline{S}_L[\log g^2] (X)  )}{\E_{\mu_L} \Big[ \exp (\overline{S}_L[\log g^2]) \Big]} \cdot \mu_L(dX).
$$
\end{thm}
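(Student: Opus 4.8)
The plan is to realize $\mu_{gLg}$ as a limit of $L$-processes whose $L$-kernels are cut off near the origin, where the equivalence is already available in the elementary form of Proposition~\ref{prop-pre}, and then to pass to the limit. For $\delta>0$ put $g_\delta:=g\,\chi_{\{|x|\ge\delta\}}+\chi_{\{|x|<\delta\}}$; this is a bounded function with $\supp(g_\delta-1)\subset\{|x|\ge\delta\}$, so $\supp(g_\delta-1)$ has positive distance from $0$, and by Proposition~\ref{prop-bdd-M} the operator $g_\delta L g_\delta$ again satisfies Condition~\ref{con5}. Thus Proposition~\ref{prop-pre} (cf.\ \eqref{simple-rn}) applies and gives $\mu_{g_\delta L g_\delta}\sim\mu_L$ with
\[
\frac{d\mu_{g_\delta Lg_\delta}}{d\mu_L}(X)=\frac{\prod_{x\in X}g_\delta(x)^2}{\E_{\mu_L}\prod_{x\in X}g_\delta(x)^2}=\frac{\exp\big(\Sigma_\delta(X)\big)}{\E_{\mu_L}\exp(\Sigma_\delta)},\qquad \Sigma_\delta:=S_\delta-\E_{\mu_L}S_\delta,
\]
where $S_\delta(X)=\sum_{x\in X,\,|x|\ge\delta}\log g(x)^2$, a sum which is $\mu_L$-a.s.\ finite since the process accumulates only at $0$. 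Everything reduces to letting $\delta\to0^{+}$ in this identity.

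Since $\widehat{K_L}$ is an orthogonal projection (Proposition~\ref{prop-proj}), Lemma~\ref{lem-repro} applies and the computation underlying Lemma~\ref{lem-var} gives, for $0<\delta'<\delta$,
\[
\Var_{\mu_L}(\Sigma_\delta-\Sigma_{\delta'})=\tfrac12\iint\big|h^\circ(x)-h^\circ(y)\big|^2|K_L(x,y)|^2\,dx\,dy\le 2\int_{\delta'\le|x|<\delta}\big(\log g(x)^2\big)^2K_L(x,x)\,dx,
\]
with $h=\log g^2\cdot\chi_{\{\delta'\le|x|<\delta\}}$. Now $|g(x)-1|\le C|x|$ forces $|\log g(x)^2|\le C'|x|$ near $0$, while Condition~\ref{con5} (through $K_L(x,x)=\int|K_L(x,y)|^2dy$ and the description of $\Ran\widehat{K_L}$ in Proposition~\ref{prop-proj}) yields $\int_{|x|\le1}x^2K_L(x,x)\,dx<\infty$; hence the right-hand side tends to $0$ as $\delta,\delta'\to0$. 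So $(\Sigma_\delta)$ is Cauchy in $L^2(\mu_L)$; choosing a sparse sequence $\delta_N\downarrow0$ with $\int_{|x|<\delta_N}x^2K_L\le4^{-N}$ and applying Borel--Cantelli to the telescoped increments (the argument of \cite{BQS,QB3}) shows that $\Sigma_\delta$ converges $\mu_L$-a.s., and in $L^2$, to a limit which we denote $\overline S_L[\log g^2]$.

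The crux is a uniform control of the Laplace transform. Fix a small $\eta>0$. By Theorem~\ref{thm-01} in its origin-singularity form (Remark~\ref{two-kind-s}),
\[
\E_{\mu_L}\exp\big((1+\eta)\Sigma_\delta\big)=\det\big(1+(g_\delta^{2(1+\eta)}-1)K_L^{\Delta_\delta}\big)\exp\Big(-(1+\eta)\int_{|x|\ge\delta}\log g(x)^2\,K_L(x,x)\,dx\Big),
\]
an extended Fredholm determinant over $\mathscr{L}_{1|2}$. The perturbation $(g^{2(1+\eta)}-1)K_L$ just fails to be trace class at the origin, so one cannot take $\delta\to0$ directly inside the determinant; instead one expands the logarithm of the right-hand side into a trace series, in which the ``counter-term'' $-(1+\eta)\int\log g^2\,K_L(x,x)dx$ precisely cancels the divergent linear part, and estimates the remaining terms uniformly in $\delta$ by $\int_{|x|\le1}x^2K_L(x,x)dx<\infty$, by the bilinear bound $\iint\min(|x|,1)\min(|y|,1)|K_L(x,y)|^2\le\int\min(|x|,1)^2K_L(x,x)dx<\infty$, and by the boundedness of $g$ for the part away from $0$. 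This gives $\sup_\delta\E_{\mu_L}\exp((1+\eta)\Sigma_\delta)<\infty$, so $\{\exp(\Sigma_\delta)\}$ is bounded in $L^{1+\eta}(\mu_L)$, hence uniformly integrable; combined with the a.s.\ convergence of the previous step this yields $\exp(\Sigma_\delta)\to\exp(\overline S_L[\log g^2])$ in $L^1(\mu_L)$. In particular $\exp(\overline S_L[\log g^2])\in L^1(\mu_L)$ and $\E_{\mu_L}\exp(\Sigma_\delta)\to\E_{\mu_L}\exp(\overline S_L[\log g^2])\in(0,\infty)$. I expect this step --- making the extended Fredholm determinants behave under removal of the cutoff, via the renormalized/twisted-statistics formalism announced in the introduction --- to be the principal difficulty.

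It remains to identify the limit. Set $\nu:=\dfrac{\exp(\overline S_L[\log g^2])}{\E_{\mu_L}\exp(\overline S_L[\log g^2])}\,\mu_L$, a probability measure on $\Conf(\R^*)$. For a bounded measurable $f$ with $\|f\|_\infty\le1$ and $\supp f\subset\Delta_f$ at positive distance from $0$, the functional $\prod_{x\in X}(1+f(x))$ has all moments under $\mu_L$, so the uniform $L^{1+\eta}$ bound above lets one pass to the limit and obtain, using Theorem~\ref{thm-01} for $\mu_{g_\delta Lg_\delta}$,
\[
\E_\nu\prod_{x\in X}(1+f(x))=\lim_{\delta\to0}\frac{\E_{\mu_L}\exp(\Sigma_\delta)}{\E_{\mu_L}\exp(\overline S_L[\log g^2])}\,\det\big(1+fK_{g_\delta Lg_\delta}^{\Delta_f}\big).
\]
By Lemma~\ref{lem-Kg}, $K_{g_\delta Lg_\delta}=g_\delta K_L\big(1+(g_\delta^2-1)K_L\big)^{-1}g_\delta$; on the fixed set $\Delta_f$, which is bounded away from $0$, this converges in $\mathscr{L}_{1|2}$ to $K_{gLg}^{\Delta_f}$ as $g_\delta\to g$ (bounded pointwise convergence of the perturbation and strong convergence of the resolvents, controlled since $1+(g^2-1)K_L$ is invertible by Lemma~\ref{lem-Kg}). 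Hence, by continuity of the extended Fredholm determinant and $\E_{\mu_L}\exp(\Sigma_\delta)\to\E_{\mu_L}\exp(\overline S_L[\log g^2])$, one gets $\E_\nu\prod_{x\in X}(1+f(x))=\det(1+fK_{gLg}^{\Delta_f})$, and by the uniqueness statement of Theorem~\ref{thm-01} this forces $\nu=\mu_{gLg}$, which is precisely the conclusion of Theorem~\ref{thm-main-bis}.
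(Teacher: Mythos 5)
Your proposal follows essentially the same route as the paper, but re-derives the supporting machinery inline instead of citing it: the paper's proof of Theorem \ref{thm-main-bis} is a one-line reduction to Theorem \ref{thm-sub-main} by verifying \eqref{reg-condition}, and Theorem \ref{thm-sub-main} in turn is proved via the factorization $g=g_1 g_2$ (Proposition \ref{prop-pre} for $g_2$ away from the origin, Proposition \ref{prop-rn} for $g_1\in\mathscr{M}_2(L)$, with compact exhaustion $g_n=1+(g-1)\chi_{E_n}$ and $L^1$-continuity from Proposition \ref{prop-con}). Your near-origin truncation $g_\delta$ plays the same role as these; your $L^2$-Cauchy argument for $\Sigma_\delta$ is exactly the $\mathscr{N}_0(L)\hookrightarrow\mathscr{N}(L)$ extension that defines $\overline{T}_L[\log(g^2)^\vee]$; and your final identification via Theorem \ref{thm-01} matches the paper's combination of $\mathscr{L}_{1|2}$-convergence of the compressed kernels with $L^1$-convergence of the Radon--Nikodym derivatives.

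The genuine gap is precisely where you flag the principal difficulty: the uniform bound $\sup_\delta\E_{\mu_L}\exp((1+\eta)\Sigma_\delta)<\infty$. Your sketch --- expand $\log\det$ of the extended Fredholm determinant into a trace series and let the centring term cancel the linear part --- does not work as stated, for two reasons. First, $K_L^{\Delta_\delta}\in\mathscr{L}_{1|2}$ only, so the off-diagonal blocks are Hilbert--Schmidt but not trace-class and the termwise series $\sum_k\frac{(-1)^{k+1}}{k}\tr(A^k)$ is not available. Second, $-(1+\eta)\int\log g^2\,K_L(x,x)\,dx$ does not \emph{precisely} cancel $\tr\big((g^{2(1+\eta)}-1)K_L\big)$; they agree only to first order in $g-1$, so the surviving quantity must still be controlled by $\int|g-1|^2K_L(x,x)\,dx$. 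The paper closes this by Lemma \ref{lem-tilde-2-norm}, which uses Simon's inequality
$$
\big|\det(1+A)\,e^{-\tr A}\big|\le\exp\Big(\tfrac12\|A\|_2^2\Big)
$$
together with the Hilbert--Schmidt estimate of Lemma \ref{fKf-norm} to get
$$
\log\E_{\mu_L}\big|\widetilde{\Psi}_L[g]\big|^2\le C_{\varepsilon,M}\int_\R|g(x)-1|^2K_L(x,x)\,dx,
$$
finite by \eqref{off-s} and $|g-1|\le C|x|$. With this bound in hand your argument closes. A secondary remark: as written $\Sigma_\delta$ is ill-defined when $g$ vanishes on a set of positive $K_L(x,x)\,dx$-measure away from the origin, since then $\E_{\mu_L}S_\delta=-\infty$ for all small $\delta$; the paper avoids this by splitting off the bounded-away-from-origin factor $g_2$ with Proposition \ref{prop-pre} first, so that the regularization is only ever applied to $g_1\in\mathscr{M}_2(L)$, which is bounded away from $0$ and $\infty$.
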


\subsection{Remarks on Conditions \ref{con35}, \ref{con-four}, \ref{con5} and Proofs of Lemmas \ref{lem-V}, \ref{lem-Kg}, \ref{4-5}}

\begin{rem}\label{rem-K-block}
Let $L$ be an operator as in Condition \ref{con-four}, then $K_L$ has the following block form: 
\begin{align}\label{Kbf}
K_L = \left[   \begin{array}{cc}  VV^*( 1 + VV^*)^{-1} & V ( 1 + V^*V)^{-1}  \vspace{2mm}\\ - V^* ( 1 + VV^*)^{-1} & V^*V( 1 + V^*V)^{-1} \end{array} \right]. 
\end{align}
\end{rem}

\begin{rem}\label{rem-off-s}
By the block form \eqref{Kbf} of the operator $K_L$ and the following elementary order inequalities for positive operators 
$$
\frac{1}{1 + \| V\|^2}VV^* \le VV^*(1 + VV^*)^{-1} \le VV^*
$$
and
$$
 \frac{1}{1 + \| V\|^2}V^*V \le V^*V(1 + V^*V)^{-1} \le V^*V,
$$
we may formulate Condition \ref{con5} in terms of the kernel $K_L$ as follows:
for any $\varepsilon > 0$, we have
\begin{align}\label{off-s}
 \int_{| x| \ge \varepsilon} K_L(x,x) dx < \infty \an
\int_{| x | \le \varepsilon} x^2 K_L(x,x) dx   < \infty.
\end{align}
In particular, by \cite[Thm. 4]{DPP-S}, the first inequality in \eqref{off-s} implies that the relation
\begin{align}\label{fm-particle}
\#_{\R\setminus(-\varepsilon, \varepsilon)} (X) < \infty
\end{align}
 holds for $\PP_{K_L}$-almost every configuration $X\in \Conf(\R^*)$.
\end{rem}

\begin{rem}
Finite rank perturbation of the kernel $K_L$ will not affect the inequalities \eqref{off-s}.
\end{rem}

\begin{rem}
In Condition \ref{con35}, we require the boundedness of $L$ and hence of $V$. In general,  an operator $V$ given by a kernel  
$$
V(x,y) = \frac{  A^{+}(x) A^{-}(y)}{x-y}
$$
 such that $A\in L^2(\R^*, dx)$, is not necessarily bounded. See e.g., Propositions 2.2 and 2.3 in \cite{PV}.
\end{rem}

\begin{rem}
The operator norm of the operator $K_L$ given in \eqref{Kbf} satisfies
\begin{align}\label{K-contractive}
\| K_L \| < 1. 
\end{align}
Indeed, by Proposition \ref{prop-proj}, the operator $K_L$ is $J$-self-adjoint and $\widehat{K}$ is an orthogonal projection, by \cite[Prop. 7]{Lytvynov-J}, we have $\| K_L\| \le 1$. We shall exclude the possibility $\| K_L \| =1$. Indeed, if this were the case (i.e., $\| K_L \| =1$), then by \cite[Prop. 8]{Lytvynov-J},  we would get
\begin{align}\label{absurd}
\left\|\left[   \begin{array}{cc}  VV^*( 1 + VV^*)^{-1} & 0 \vspace{2mm}\\ 0 & V^*V( 1 + V^*V)^{-1} \end{array} \right] \right\| = 1. 
\end{align}
However, by functional calculus,  we have 
$$
\|VV^*( 1 + VV^*)^{-1}\| = \|  V^*V( 1 + V^*V)^{-1}\| = \frac{\| V\|^2}{1 + \|V\|^2} < 1.
$$
Hence \eqref{absurd} is not valid. This completes the proof of \eqref{K-contractive}.
\end{rem}

We will often use
\begin{rem}\label{rem-ab}
Let $a, b$ be two bounded linear operator on a Hilbert space. If $1+ab$ is invertible, then so is $1+ba$. We have
\begin{align}\label{ab-form}
a(1 + ba)^{-1} = (1 + ab)^{-1}a.
\end{align}
\end{rem}

\begin{proof}[Proof of Lemma \ref{4-5}]
Let $L$ be a kernel satisfying Condition \ref{con35}.  Let $\varepsilon > 0$. The simple estimate:
\begin{align*}
& \max \Big( \int_\varepsilon^\infty dx  \int_{-\infty}^0 \left|  \frac{  A^{+}(x) A^{-}(y)}{x-y} \right|^2 dy,  \int_0^\infty dx  \int_{-\infty}^{-\varepsilon} \left|  \frac{  A^{+}(x) A^{-}(y)}{x-y} \right|^2 dy \Big)
\\
& \le \frac{\|A^{+}\|_2^2 \|A^{-}\|_2^2}{\varepsilon^2}
\end{align*}
 shows that the operators with the kernels
$$
\chi_{(\varepsilon, \infty)}(x)  \frac{  A^{+}(x) A^{-}(y)}{x-y} \an  \frac{  A^{+}(x) A^{-}(y)}{x-y}\chi_{(-\infty, -\varepsilon)}(y) 
$$
are Hilbert-Schmidt. 

The  inequality: 
\begin{align*}
& \max\left(\int_0^\varepsilon dx\int_{-\infty}^0 x^2 \left|\frac{A^{+}(x)A^{-}(y)}{x-y}\right|^2dy,  \int_0^\infty dx\int_{-\varepsilon}^0 y^2 \left|\frac{A^{+}(x)A^{-}(y)}{x-y}\right|^2dy \right)
\\
& \le \|A^{+}\|_2^2  \|A^{-}\|_2^2
\end{align*}
implies that $\chi_{(0, \varepsilon)} x V$ and  $V y \chi_{(-\varepsilon, 0)}$ are also Hilbert-Schmidt.  

The Lemma \ref{4-5} is proved completely. 
\end{proof}

\begin{proof}[Proof of Lemma \ref{lem-V}]
By Proposition \ref{prop-proj}, the operator $\widehat{K}_L$ is self-adjoint, hence $K_L$ is $J$-self-adjoint. By Remark \ref{rem-K-block}, the operators $P_{+}K_LP_{+}$ and $P_{-}K_LP_{-}$ are non-negative.  

Let $\Delta_1, \Delta_2$ be compact subsets of $\R^*$ such that $\Delta_1 \subset \R_{+}$ and $\Delta_2\subset \R_{-}$. We now check that the operators $\chi_{\Delta_i} K_L \chi_{\Delta_i} ( i = 1, 2)$ are in trace-class. Let us verify this for  $i =1$. 
Since $VV^*$ is positive, we have
$$
VV^*( 1 + VV^*)^{-1}  \le VV^*,
$$
it follows that
 \begin{align*}
 0 \le \chi_{\Delta_1} K_L \chi_{\Delta_1}  =   \chi_{\Delta_1} VV^*( 1 + VV^*)^{-1}  \chi_{\Delta_1} \le  \chi_{\Delta_1} V  V^*  \chi_{\Delta_1}.
 \end{align*}
The assumption that $\chi_{\Delta_1}V$ is a Hilbert-Schmidt operator now implies that $\chi_{\Delta_1} K_L\chi_{\Delta_1} $ is a trace-class operator. The case  of $i =2$ is similar.

Finally, let us verify that $\chi_{\Delta_2}K_L\chi_{\Delta_1}$ is a Hilbert-Schmidt operator. Indeed, we have 
$$
\chi_{\Delta_2}K_L\chi_{\Delta_1}  = - \chi_{\Delta_2}   V^* ( 1 + VV^*)^{-1} \chi_{\Delta_1}  = - \chi_{\Delta_2}    ( 1 + V^*V)^{-1} V^*\chi_{\Delta_1}.
$$
Since the space of Hilbert-Schmidt operators is an ideal of the algebra $\mathscr{L}(L^2(\R))$ of all bouned linear operators, the assumption that $V^*\chi_{\Delta_1} = (\chi_{\Delta_1} V)^*$ is a Hilbert-Schmidt operator implies that $\chi_{\Delta_2}K_L\chi_{\Delta_1}$ is a Hilbert-Schmidt operator. The proof is complete.
\end{proof}

\begin{proof}[Proof of Lemma \ref{lem-Kg}]
We have 
$$
gLg = \left[  \begin{array}{cc} 0 & g \chi_{\R_{+}} V   g \chi_{\R_{-}}  \\  - g\chi_{\R_{-}}V^* g \chi_{\R_{+}}  & 0 \end{array}\right].
$$
Since $g$ is bounded, the operator $g \chi_{\R_{+}} V   g \chi_{\R_{-}} $ satisfies all the conditions in Condition \ref{con-four} imposed on the operator $V$.

Set $a = g$ and $b = gK_L(1-K_L)^{-1}$.  The
$$ 
1 + gLg = 1 + gK_L(1-K_L)^{-1}g = 1 + ba
$$
is invertible. By Remark \ref{rem-ab}, the operator $1 + ab = 1 + g^2 K_L(1 - K_L)^{-1}$ is also invertible. The identity \eqref{ab-form} now yields
\begin{align*}
K_{gLg}  & = gK_L(1-K_L)^{-1}  g ( 1 +  gK_L(1-K_L)^{-1}  g )^{-1} 
\\
& = gK_L(1-K_L)^{-1}   ( 1 +  g^2K_L(1-K_L)^{-1}  )^{-1} g
\\
& = g K_L ( 1 - K_L + g^2K_L)^{-1}g,
\end{align*}
which is the desired identity. The proof is complete.
\end{proof}

\subsection{A preliminary version of Theorem \ref{thm-main-bis}}
As usual,  given a function $h$ defined on $\R^*$, we define the multiplicative functional $\Psi[h]: \Conf(\R^{*}) \rightarrow \R$ by the following formula
\begin{align}\label{Mul-form}
\Psi[h](X)  = \prod_{x\in X} h(x), \, X \in \Conf(\R^*),
\end{align}
provided the right-hand side converges absolutely.

\begin{prop}\label{prop-pre}
Let $L$ be an operator satisfying Condition \ref{con5}. If $g$ is a bounded real function defined on $\R^*$ such that $ \supp ( g^2 -1)$ has a positive distance from the origin. 
Then
$$
 \mu_{gLg}  =  \frac{  \Psi[g^2] }{\E_{\mu_L} ( \Psi[g^2] )} \cdot    \mu_L.
$$
\end{prop}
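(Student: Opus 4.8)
The plan is to reduce the identity $\mu_{gLg} = \Psi[g^2]/\E_{\mu_L}(\Psi[g^2]) \cdot \mu_L$ to a comparison of multiplicative-functional expectations, using Lytvynov's characterization of $J$-Hermitian determinantal point processes (Theorem \ref{thm-01}). Since $\supp(g^2-1)$ has positive distance $\eta>0$ from the origin, both $\mu_L$ and $\mu_{gLg}$ are $L$-processes whose correlation kernels, compressed to any bounded subset $\Delta$ with positive distance from $0$, lie in $\mathscr{L}_{1|2}(L^2(\R))$; this is where Lemma \ref{lem-Kg} and Remark \ref{rem-off-s} enter, guaranteeing that $K_{gLg}$ is again the correlation kernel of an $L$-process satisfying the hypotheses of Theorem \ref{thm-0} (origin-singularity version). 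First I would fix a bounded test function $\phi:\R^*\to\R$ with $\supp(\phi)\subset\Delta$, where $\Delta$ is any bounded subset of $\R^*$ with positive distance from $0$ containing $\supp(g^2-1)$, and compute both sides of
$$
\int_{\Conf(\R^*)}\prod_{x\in X}(1+\phi(x))\,\mu_{gLg}(dX) \quad\text{and}\quad \frac{1}{\E_{\mu_L}(\Psi[g^2])}\int_{\Conf(\R^*)}\Psi[g^2](X)\prod_{x\in X}(1+\phi(x))\,\mu_L(dX).
$$
By Theorem \ref{thm-01}, the left-hand side equals $\det(1+\phi K_{gLg}^\Delta)$.

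For the right-hand side, the key observation is that $\Psi[g^2](X)\prod_{x\in X}(1+\phi(x)) = \prod_{x\in X}(1+\psi(x))$ where $1+\psi = g^2(1+\phi)$, so $\psi = g^2(1+\phi)-1$ is a bounded function with $\supp(\psi)\subset\Delta$ (using again that $\supp(g^2-1)\subset\Delta$). Applying Theorem \ref{thm-01} to $\mu_L$ gives $\int\prod_{x\in X}(1+\psi(x))\,\mu_L(dX) = \det(1+\psi K_L^\Delta)$, and taking $\phi\equiv 0$ yields $\E_{\mu_L}(\Psi[g^2]) = \det(1+(g^2-1)K_L^\Delta)$, which one must first check is finite and nonzero — finiteness follows from $(g^2-1)K_L^\Delta\in\mathscr{L}_{1|2}$ via Proposition \ref{prop-bdd-M}, and nonvanishing from the fact that $1+(g^2-1)K_L^\Delta = (1-K_L+g^2K_L)^\Delta$-type operator is invertible (cf. the computation in Lemma \ref{lem-Kg}, where $1+g^2K_L(1-K_L)^{-1}$ is shown invertible). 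Thus the proposed identity becomes the purely operator-theoretic statement
$$
\det(1+\phi K_{gLg}^\Delta)\cdot\det(1+(g^2-1)K_L^\Delta) = \det\big(1+(g^2(1+\phi)-1)K_L^\Delta\big),
$$
which I would verify using the factorization $K_{gLg} = gK_L(1+(g^2-1)K_L)^{-1}g$ from \eqref{Kg-form}, the multiplicativity of the extended Fredholm determinant (Proposition \ref{prop-Fm}), and the cyclic identity $\det(1+fA)=\det(1+Af)$ (Proposition \ref{prop-Fc}); concretely, writing everything compressed to $\Delta$ and using that $g$ and $\phi$ are supported in $\Delta$, one expands $1+(g^2(1+\phi)-1)K_L = (1+(g^2-1)K_L)(1+\phi\cdot gK_L(1+(g^2-1)K_L)^{-1}g)$ after a cyclic rearrangement, then takes determinants.

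The main obstacle I anticipate is the careful bookkeeping of compressions: the factorization identities for $K_{gLg}$ hold as operators on all of $L^2(\R)$, but Theorem \ref{thm-01} requires the \emph{compressed} operators $K^\Delta = \chi_\Delta K\chi_\Delta$, and $\chi_\Delta(AB)\chi_\Delta \ne (\chi_\Delta A\chi_\Delta)(\chi_\Delta B\chi_\Delta)$ in general. The saving grace is that all the functions $g^2-1$, $\phi$, $\psi$ vanish off $\Delta$, so whenever such a function sits at the end of a product one may freely insert or delete $\chi_\Delta$; one must check that after the cyclic rearrangements every determinant appearing has exactly such a function adjacent to it, so that replacing $K_L$ by $K_L^\Delta$ throughout is legitimate. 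A secondary point requiring care is that $\mathscr{L}_{1|2}(L^2(\R))$ is not an operator ideal (it is not closed under multiplication by arbitrary bounded operators), so each product of operators appearing must be shown to stay in $\mathscr{L}_{1|2}$ using Propositions \ref{prop-M}, \ref{prop-bdd-M}, \ref{prop-ideal} — in particular invertibility of $1+(g^2-1)K_L^\Delta$ inside $\mathscr{L}_{1|2}$ must be invoked via Proposition \ref{prop-ideal} to make sense of $(1+(g^2-1)K_L^\Delta)^{-1}$ as the relevant factor. Once these technical checks are in place, the identity of Fredholm determinants for all admissible $\phi$ forces $\mu_{gLg}$ and $\Psi[g^2]/\E_{\mu_L}(\Psi[g^2])\cdot\mu_L$ to agree on the algebra generated by multiplicative functionals supported in bounded subsets of $\R^*$ away from $0$; since $\#_{\R\setminus(-\varepsilon,\varepsilon)}(X)<\infty$ almost surely (Remark \ref{rem-off-s}) and both measures are concentrated on $\Conf(\R^*)$, a standard monotone-class argument upgrades this to equality of the two probability measures.
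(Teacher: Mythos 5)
Your proposal is correct and takes essentially the same route as the paper's own proof: both reduce the measure identity to a Fredholm-determinant identity via Theorem~\ref{thm-01}, use the factorization $K_{gLg}=gK_L(1+(g^2-1)K_L)^{-1}g$ from Lemma~\ref{lem-Kg} together with Remark~\ref{rem-ab} to handle compressions, verify membership in $\mathscr{L}_{1|2}$ via Propositions~\ref{prop-M}, \ref{prop-bdd-M}, \ref{prop-ideal}, and conclude by multiplicativity and cyclicity of the extended Fredholm determinant (Propositions~\ref{prop-Fm}, \ref{prop-Fc}). The only cosmetic difference is that you write $h=1+\phi$ rather than working with $h$ directly.
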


\begin{proof}
By general theory on determinantal measures, it suffices to prove that for any continuous real function $h$ such that  $\supp (h-1)$ has a positive distance from the origin, we have 
\begin{align}\label{product}
\E_{\mu_{gLg}} \left( \Psi[h]\right)  =  \frac{\E_{\mu_L}\left( \Psi[h] \Psi[g^2] \right)}{  \E_{\mu_L}\left(\Psi[g^2] \right) } =  \frac{\E_{\mu_L}\left( \Psi[h g^2] \right) }{  \E_{\mu_L}\left(\Psi[g^2] \right) }.
\end{align}
Fix such  a  function $h$, set $\Delta = \supp(h-1) \cup \supp(g^2-1).$
Then $\Delta$ also has a positive distance from the origin. Obviously, $\supp(hg^2-1)\subset \Delta.$
By definition of determinantal point process and Theorem \ref{thm-01}, we have 
\begin{align*}
\E_{\mu_{gLg}} (\Psi[h]) &= \det(1+ (h-1)\chi_{\Delta} \cdot K_{gLg}  \cdot \chi_{\Delta} );
\\
\E_{\mu_L} (\Psi[ h g^2]) &= \det(1+ (h g^2-1)\chi_{\Delta}\cdot  K_L \cdot \chi_{\Delta} );
\\
\E_{\mu_L} (\Psi[g^2])&= \det(1+ (g^2-1)\chi_{\Delta}\cdot K_L \cdot \chi_{\Delta} ).
\end{align*}
Recall that by by Lemma \ref{lem-Kg}, the operator $K(g)$ satisfies all the conditions of Theorem \ref{thm-0} and is given by $K_{gLg}   = gK_L(1+(g^2-1)K_L)^{-1} g$. By \eqref{K-Delta}, we have 
$$
K_L^\Delta  = \chi_{\Delta}\cdot  K_L \cdot \chi_{\Delta} \in \mathscr{L}_{1|2}(L^2(\R)) \subset \mathscr{L}_{2}(L^2(\R)). 
$$
By Remark \eqref{rem-ab}, we get
\begin{align}\label{Kg-comp}
\begin{split}
\chi_{\Delta} \cdot  K_{gLg} \cdot \chi_{\Delta} & =  g  \chi_\Delta K_L ( 1 + (g^2 -1)  K_L)^{-1}  \chi_\Delta g  
\\
 & = g  \chi_\Delta K_L \Big[( 1 + \chi_\Delta (g^2 -1)\chi_\Delta  K_L)^{-1}  \chi_\Delta\Big]g
 \\
 & =  g \chi_\Delta K_L \Big[  \chi_\Delta  ( 1 + (g^2-1) \chi_\Delta K_L \chi_\Delta)^{-1}  \Big]g  
 \\
 & = g  K_L^{\Delta} ( 1 + (g^2 -1) K_L^{\Delta})^{-1}g. 
 \end{split}
 \end{align}
Observe that we can write
 \begin{align*}
& K_L^{\Delta} ( 1 + (g^2 -1) K_L^{\Delta})^{-1} 
\\
 = & \Big[  K_L^{\Delta} ( 1 + (g^2- 1) K_L^{\Delta}) + K^\Delta (1-g^2)K_L^\Delta
\Big]( 1 + (g^2 -1) K_L^{\Delta})^{-1} 
\\
 =&  K_L^\Delta +  K_L^\Delta (1-g^2)K_L^\Delta (1 + (g^2-1)K_L^\Delta)^{-1},
\end{align*}
then by H\"older inequality, we have 
$$
\| K_L^\Delta (1-g^2)K_L^\Delta\|_1 \le \| K_L^\Delta\|_2 \| (1-g^2)K_L^\Delta\|_2 \le \| g^2-1\|_\infty \| K_L^\Delta\|_2^2< \infty,
$$
that is, the operator $K_L^\Delta (1-g^2)K_L^\Delta (1 + (g^2-1)K_L^\Delta)^{-1}$ is in trace-class. It follows that 
 \begin{align*}
  K_L^{\Delta} ( 1 + (g^2 -1) K_L^{\Delta})^{-1}  \in \mathscr{L}_{1|2} (L^2(\R)).
 \end{align*}
Thus we have\begin{align*}
\det(1 + (h-1)\chi_{\Delta} K_{gLg} \chi_{\Delta}) & = \det   ( 1 +  (h-1) g K_L^{\Delta} ( 1 + (g^2 -1) K_L^{\Delta})^{-1}g)
\\
 &  =  \det   ( 1 +  (h-1) g^2 K_L^{\Delta} ( 1 + (g^2 -1) K_L^{\Delta})^{-1}).
\end{align*}
An application of the identity
\begin{align*}
1 +  (h-1) g^2 K_L^{\Delta} ( 1 + (g^2 -1) K_L^{\Delta})^{-1} = \Big[1 + (hg^2 - 1) K_L^\Delta \Big] ( 1 + (g^2 -1)K_L^{\Delta})^{-1}
\end{align*}
yields that
\begin{align*}
\det(1 + (h-1)\chi_{\Delta} K_{gLg} \chi_{\Delta})   = \frac{\det(1 + (hg^2 - 1) K_L^\Delta )}{\det(1 + (g^2 -1) K_L^{\Delta})}.
\end{align*}
This shows the desired identity \eqref{product}. The proof of Proposition \ref{prop-pre} is complete.
 \end{proof}

The following lemma will be useful for us.

\begin{lem}\label{lem-conjugation}
Assume that $L$ is an operator satisfying Condition \ref{con-four}. 
\begin{itemize}
\item Let $\alpha: \R^*\rightarrow \C$ be any measurable function with non-zero constant modulus. Then $\alpha L \alpha^{-1}$ induces a determinantal measure $\mu_{\alpha L \alpha^{-1}} =  \PP_{K_{{\alpha L \alpha^{-1}}}}$ and  $\mu_{\alpha L \alpha^{-1}} = \mu_L$.
 
\item Let $\lambda \ne 0$ be a numerical constant and let $c_\lambda$ be the function defined by 
\begin{align}\label{c-form}
c_\lambda =  \lambda \chi_{\R_{+}}+  \lambda^{-1} \chi_{\R_{-}}.
\end{align}
Then $c_\lambda L c_\lambda = L$.
\end{itemize}
\end{lem}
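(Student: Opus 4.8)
The plan is to treat the two bullets separately. The second is a one-line block computation: writing $L$ in the block form \eqref{L-block} and observing that $M_{c_\lambda}$ acts on $L^2(\R_{+})$ as the scalar $\lambda$ and on $L^2(\R_{-})$ as the scalar $\lambda^{-1}$, the off-diagonal entries of $c_\lambda L c_\lambda$ are $\lambda V\lambda^{-1}=V$ and $\lambda^{-1}(-V^{*})\lambda=-V^{*}$ (scalars commute with $V$ and $V^{*}$), so $c_\lambda L c_\lambda=L$. There is no difficulty here. I would also note that this is \emph{not} a particular case of the first bullet, because $c_\lambda$ has constant modulus only when $|\lambda|=1$; it is precisely the identity needed in the proof of Proposition \ref{intro-Palm-prop} to replace $g_{\mathfrak{p}}$ by $f_{\mathfrak{p}}$, since these two functions differ by a multiplicative factor of the form $c_\lambda$.

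For the first bullet, write $\alpha=ru$ with $r=|\alpha|>0$ the constant modulus and $u:=\alpha/r$ unimodular. Then $M_u$ is a \emph{unitary} operator on $L^2(\R^{*},dx)$ which commutes with the projections $P_{\pm}$ and satisfies $M_u^{-1}=M_u^{*}=M_{\bar u}$, and the factor $r$ cancels, so that $\alpha L\alpha^{-1}=M_u L M_{\bar u}$. First I would check that $M_u L M_{\bar u}$ again satisfies Condition \ref{con-four}: in block form its off-diagonal entry is $\tilde V:=u_{+}V\bar u_{-}$, where $u_{\pm}$ denotes multiplication by $u$ on $L^2(\R_{\pm})$, and the other entry is $-\tilde V^{*}$; since $\chi_{(\varepsilon,\infty)}$ commutes with $M_u$, the operators $\chi_{(\varepsilon,\infty)}\tilde V$ and $\tilde V\chi_{(-\infty,-\varepsilon)}$ are products of the Hilbert--Schmidt operators $\chi_{(\varepsilon,\infty)}V$, $V\chi_{(-\infty,-\varepsilon)}$ with bounded operators, hence Hilbert--Schmidt. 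By Lemma \ref{lem-V}, $K_{\alpha L\alpha^{-1}}$ is then a bona fide correlation kernel, so $\mu_{\alpha L\alpha^{-1}}:=\PP_{K_{\alpha L\alpha^{-1}}}$ is well defined. Next, since $1+L$ is invertible (implicit in Definition \ref{defn-L-kernel} and guaranteed by Lemma \ref{lem-V}), $1+M_u L M_{\bar u}=M_u(1+L)M_{\bar u}$ is invertible with inverse $M_u(1+L)^{-1}M_{\bar u}$, whence
\[
K_{\alpha L\alpha^{-1}}=(M_u L M_{\bar u})\bigl(M_u(1+L)M_{\bar u}\bigr)^{-1}=M_u K_L M_{\bar u}=\alpha K_L\alpha^{-1}.
\]

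Finally I would invoke gauge invariance of correlation kernels: for distinct points $x_1,\dots,x_n$, factoring $\alpha(x_i)$ out of the $i$-th row and $\alpha(x_j)^{-1}$ out of the $j$-th column of $\bigl(\alpha(x_i)K_L(x_i,x_j)\alpha(x_j)^{-1}\bigr)$ yields determinant $\det\bigl(K_L(x_i,x_j)\bigr)$, so $\alpha K_L\alpha^{-1}$ and $K_L$ have the same correlation functions; by the defining relation \eqref{def-DPP} the corresponding determinantal point processes coincide, i.e.\ $\mu_{\alpha L\alpha^{-1}}=\PP_{\alpha K_L\alpha^{-1}}=\PP_{K_L}=\mu_L$. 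The only step that requires any care --- the ``main obstacle,'' modest as it is --- is confirming that $\alpha L\alpha^{-1}$ still meets Condition \ref{con-four} (so that $\mu_{\alpha L\alpha^{-1}}$ is defined and Lemma \ref{lem-V} applies); everything else is the standard observation that conjugating a correlation kernel by a nowhere-vanishing function is a gauge transformation leaving the determinantal point process unchanged.
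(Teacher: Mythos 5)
Your proof is correct and follows essentially the same route as the paper: factor $\alpha=r u$ into a constant positive scalar (which cancels) and a unimodular factor, check the block form to verify Condition \ref{con-four}, compute $K_{\alpha L\alpha^{-1}}=\alpha K_L\alpha^{-1}$, and invoke gauge invariance of the determinants; the second bullet is the same one-line block computation. Your added remarks (explicit Hilbert--Schmidt check and the observation that the second bullet is not a special case of the first) are small, welcome elaborations of the same argument.
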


\begin{proof}
By assumption $\alpha =  a  \gamma$ with $a = | \alpha|>0$ a numerical constant and $\gamma$ a measurable function with values in the unit circle. We shall see that $\alpha L \alpha^{-1}$ satisfies also Condition \ref{con-four}. Indeed, 
$$
\alpha L \alpha^{-1} = \gamma L \bar{\gamma} = \left[  \begin{array}{cc} 0 &   \gamma^{+} V\bar{\gamma}^{-} \\  - \gamma^{-} V^* \bar{\gamma}^{+}  & 0 \end{array}\right] =  \left[  \begin{array}{cc} 0 &   \gamma^{+} V\bar{\gamma}^{-} \\  - ( \gamma^{+} V\bar{\gamma}^{-})^*  & 0 \end{array}\right].
$$
Hence the determinantal measure $\mu_{\alpha L \alpha^{-1}}$ is well-defined. The coincidence of $\mu_{\alpha L \alpha^{-1}}$ and $\mu_L$ is an easy consequence of the fact that $K_{\alpha L \alpha^{-1}} = \alpha K_L \alpha^{-1}$ and 
$$
\det(\alpha(x_i) K_L(x_i, x_j) \alpha(x_j)^{-1})_{1\le i, j \le n} = \det( K_L(x_i, x_j) )_{1\le i, j \le n}.
$$

The second assertion is an easy consequence of the following identity
$$
\left[  \begin{array}{cc} \lambda &   0 \\  0 & \lambda^{-1}\end{array}\right]  \left[  \begin{array}{cc} 0 &   V \\  - V^*  & 0 \end{array}\right]  \left[  \begin{array}{cc} \lambda &   0 \\  0 & \lambda^{-1}\end{array}\right]  = \left[  \begin{array}{cc} 0 &   V \\  - V^*  & 0 \end{array}\right].
$$
\end{proof}

\subsection{Regularization of additive and multiplicative functionals}\label{sec-regularization}
\subsubsection{Additive functionals}
Assume now that $L$ is a kernel satisfying Condition \ref{con5}. Recall that we set 
$$
\mu_L = \PP_{K_L}.
$$ 
Let $f: \R^*\rightarrow \C$ be a Borel function. Then we write 
\begin{align}\label{defn-twist-statistics}
T[f] (X)  = S[f^\circ] (X) = \sum_{x \in X} \sgn(x) f(x),
\end{align}
provided the right hand side converges absolutely, otherwise, $T[f]$ is not defined at the configuration $X$.   

If $T[f]$ is $\mu_L$-almost surely defined and $ T[f] \in L_1(\Conf(\R^*), \mu_L)$, then we set 
\begin{align}\label{n-t-a}
\overline{T}_L[f]: = T[f] - \E_{\mu_L} (T[f]).
\end{align}
Following  the idea in \cite{BQS}, we will now provide a sufficient condition such that $\overline{T}_L[f]$ can be defined even when $T[f]$ is not. Set
\begin{align}
\mathscr{V}_L(f) = \frac{1}{2} \iint_{\R^2} | f(x) - f(y) |^2 |K_L(x,y)|^2 dxdy. 
\end{align}
Note that for any $\lambda\in \C$,  we have $\mathscr{V}_L(f+ \lambda) = \mathscr{V}_L(f).$ Note also that 
\begin{align}\label{V-small}
\mathscr{V}_L(f) \le  \int_\R |f(x)|^2 K_L(x,x)dx.
\end{align}
By Lemma \ref{lem-var}, if $T[f]\in L_2(\Conf(\R^*), \mu_L)$, then $\mathscr{V}_L(f)< \infty$ and 
\begin{align}\label{sigma-2-norm}
\Var_{\mu_L} (T[f])   = \E_{\mu_L} | \overline{T}_L[f]|^2 =  \mathscr{V}_L(f).
\end{align}

\begin{defn}
Let $\mathscr{N}_0(L)$ be the linear space of Borel functions $f: \R^* \rightarrow \C$ such that there exist $\varepsilon>0$, depending on $f$, so that
$$
\supp(f) \subset \{x\in \R: |x|\ge \varepsilon\} \an \int_\R |f(x)|^2 K_L(x,x)dx < \infty.
$$
\end{defn}

\begin{defn}
Let $\mathscr{N}(L)$ be the linear space of Borel functions $f: \R^* \rightarrow \C$ such that 
\begin{align}
\mathscr{V}_L(f) = \frac{1}{2} \iint_{\R^2} | f(x) - f(y) |^2 |K_L(x,y)|^2 dxdy < \infty; \label{v-finite}
\\
\label{summable}
\int_{|x|\ge \varepsilon} |f(x)|^2 K_L(x,x)dx < \infty, \text{ for all $\varepsilon >0;$}
\\
\label{zero-at-s}
\lim_{\varepsilon\to 0^+} \iint_{|x|\le \varepsilon, |y|\ge \varepsilon} |f(x)|^2 |K_L(x,y)|^2dxdy =0.
\end{align}
We endow the linear space $\mathscr{N}(L)$ with a Hilbert space structure $d_{\mathscr{N}(L)}$ by the formula
\begin{align*}
d_{\mathscr{N}(L)}(f, g)   = \|f - g\|_{\mathscr{N}_L}  : =\sqrt{\mathscr{V}_L(f-g)}.
\end{align*}
\end{defn}

\begin{rem}
If $f\in \mathscr{N}_0(L)$, then by Cauchy-Buniakovsky-Schwarz inequality and the first inequality in \eqref{off-s}, we have 
$$
\int_\R |f(x)|K_L(x,x)dx \le    \left(\int_{\R} | f(x)|^2 K_L(x,x)dx \cdot \int_{\supp(f)} K_L(x,x)dx\right)^{1/2}  <\infty.
$$ 
This means that $T[f] \in L_1(\Conf(\R^*), \mu_L)$, hence $\overline{T}_L[f]$ is well-defined by formula \eqref{n-t-a}. Moreover,  by the relations \eqref{V-small} and \eqref{sigma-2-norm},  we actually have 
$$
\overline{T}_L[f] \in L_2(\Conf(\R^*), \mu_L).
$$
\end{rem}

\begin{prop}
We have the inclusion $$\mathscr{N}_0(L) \subset \mathscr{N}(L).$$ Moreover, $\mathscr{N}_0(L)$ is dense in $\mathscr{N}(L)$.  More precisely, if $f$ is a function in $\mathscr{N}(L)$,  then for any $\varepsilon > 0$, the truncated function  $f \chi_{\R\setminus(-\varepsilon, \varepsilon)}$ is in $\mathscr{N}_0(L)$ and we have
\begin{align}\label{cut-conv}
\lim_{\varepsilon \to 0^+} \mathscr{V}_L(  f \chi_{\R\setminus(-\varepsilon, \varepsilon)}  - f)= 0.
\end{align}
\end{prop}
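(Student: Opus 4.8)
The plan is to verify the three defining conditions of $\mathscr{N}(L)$ directly for a function in $\mathscr{N}_0(L)$, and then to show that truncating away a small neighbourhood of the origin yields an approximating sequence inside $\mathscr{N}_0(L)$.

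\emph{Step 1: the inclusion.} Let $f\in\mathscr{N}_0(L)$, with $\supp(f)\subset\{|x|\ge\varepsilon_0\}$ and $\int_\R|f(x)|^2K_L(x,x)\,dx<\infty$. Condition \eqref{v-finite} follows at once from the bound \eqref{V-small}, namely $\mathscr{V}_L(f)\le\int_\R|f(x)|^2K_L(x,x)\,dx<\infty$. Condition \eqref{summable} is trivial, since $\int_{|x|\ge\varepsilon}|f(x)|^2K_L(x,x)\,dx\le\int_\R|f(x)|^2K_L(x,x)\,dx$ for every $\varepsilon>0$. For \eqref{zero-at-s}, I observe that as soon as $\varepsilon<\varepsilon_0$ the integrand $|f(x)|^2|K_L(x,y)|^2$ vanishes on $\{|x|\le\varepsilon\}$ because $f$ is supported away from $0$; hence the integral in \eqref{zero-at-s} is identically zero for all small $\varepsilon$ and its limit is $0$. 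This gives $\mathscr{N}_0(L)\subset\mathscr{N}(L)$.

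\emph{Step 2: the truncation identity.} Fix $f\in\mathscr{N}(L)$ and $\varepsilon>0$, and set $f_\varepsilon:=f\chi_{\R\setminus(-\varepsilon,\varepsilon)}$. Since $\supp(f_\varepsilon)\subset\{|x|\ge\varepsilon\}$ and $\int_\R|f_\varepsilon(x)|^2K_L(x,x)\,dx=\int_{|x|\ge\varepsilon}|f(x)|^2K_L(x,x)\,dx<\infty$ by \eqref{summable}, we get $f_\varepsilon\in\mathscr{N}_0(L)$ immediately. To compute $\mathscr{V}_L(f_\varepsilon-f)=\mathscr{V}_L\big(f\chi_{(-\varepsilon,\varepsilon)}\big)$ I will split the domain $\R^2$ of the double integral according to which of $x,y$ lie in $(-\varepsilon,\varepsilon)$: where both lie in $(-\varepsilon,\varepsilon)$ the integrand is $|f(x)-f(y)|^2|K_L(x,y)|^2$; where both lie outside it is $0$; and on the two mixed regions it equals $|f(x)|^2|K_L(x,y)|^2$ resp. $|f(y)|^2|K_L(x,y)|^2$, and these two contributions coincide after the substitution $x\leftrightarrow y$ together with the symmetry $|K_L(x,y)|^2=|K_L(y,x)|^2$ coming from $J$-Hermiticity \eqref{J-herm}. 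This produces the identity
\begin{align*}
\mathscr{V}_L\big(f\chi_{(-\varepsilon,\varepsilon)}\big)
&=\frac12\iint_{|x|<\varepsilon,\,|y|<\varepsilon}|f(x)-f(y)|^2|K_L(x,y)|^2\,dx\,dy \\
&\quad+\iint_{|x|<\varepsilon,\,|y|\ge\varepsilon}|f(x)|^2|K_L(x,y)|^2\,dx\,dy.
\end{align*}

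\emph{Step 3: passage to the limit and density.} The second term on the right-hand side tends to $0$ as $\varepsilon\to0^+$ by condition \eqref{zero-at-s} (the domain $\{|x|<\varepsilon,\,|y|\ge\varepsilon\}$ differs from the one appearing there by a Lebesgue-null set). For the first term, condition \eqref{v-finite} asserts precisely that $(x,y)\mapsto|f(x)-f(y)|^2|K_L(x,y)|^2$ is integrable over $\R^2$; since the indicator of $\{|x|<\varepsilon,\,|y|<\varepsilon\}$ tends to $0$ pointwise off the null set $\{(0,0)\}$ as $\varepsilon\to0^+$, the dominated convergence theorem forces the first term to $0$ as well. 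Hence $\mathscr{V}_L(f_\varepsilon-f)\to0$, which is exactly \eqref{cut-conv}; and since each $f_\varepsilon\in\mathscr{N}_0(L)$ with $d_{\mathscr{N}(L)}(f_\varepsilon,f)=\sqrt{\mathscr{V}_L(f_\varepsilon-f)}\to0$, this shows $\mathscr{N}_0(L)$ is dense in $\mathscr{N}(L)$. The only delicate point is the dominated-convergence step in Step 3: it is the \emph{global} finiteness \eqref{v-finite} of $\mathscr{V}_L(f)$ — rather than any merely local estimate — that supplies the dominating function, which is precisely why \eqref{v-finite} is built into the definition of $\mathscr{N}(L)$.
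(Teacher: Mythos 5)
Your proof is correct and follows essentially the same route as the paper: split the double integral defining $\mathscr{V}_L(f\chi_{(-\varepsilon,\varepsilon)})$ over the four regions determined by whether $|x|,|y|$ are below or above $\varepsilon$, use the $J$-Hermitian symmetry $|K_L(x,y)|=|K_L(y,x)|$ to merge the two mixed regions, then let $\varepsilon\to0^+$ via dominated convergence (justified by \eqref{v-finite}) on the diagonal piece and condition \eqref{zero-at-s} on the mixed piece. The only cosmetic difference is that you keep an exact identity with the factor $\tfrac12$, while the paper simply drops it to an upper bound — both work.
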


\begin{proof}
The inclusion $\mathscr{N}_0(L) \subset \mathscr{N}(L)$ follows from their definitions and the following inequality
 $$
\iint_{|x|\le \varepsilon, |y|\ge \varepsilon} |f(x)|^2 |K_L(x,y)|^2dxdy \le \int_{|x|\le \varepsilon} |f(x)|^2 K_L(x,x) dx,
$$
By definition of $\mathscr{N}(L)$, we have $f \chi_{\R\setminus(-\varepsilon, \varepsilon)} \in \mathscr{N}_0(L)$. Since
\begin{align*}
&\mathscr{V}_L(  f \chi_{\R\setminus(-\varepsilon, \varepsilon)} - f)  =  \frac{1}{2} \iint_{\R^2} |f \chi_{[-\varepsilon, \varepsilon]}(x)  - f\chi_{[-\varepsilon, \varepsilon]}(y)|^2 |K_L(x,y)|^2 dxdy 
\\
& \le  \iint_{|x|\le \varepsilon, |y|\le \varepsilon} |f(x) -  f(y)|^2|K_L(x,y)|^2dxdy  +  \iint_{|x|\le \varepsilon, |y|\ge \varepsilon} |f(x)|^2 |K_L(x,y)|^2dxdy
\\
& + \iint_{|x|\ge \varepsilon, |y|\le \varepsilon} |f(y)|^2 |K_L(x,y)|^2dxdy
\\ 
& = \iint_{|x|\le \varepsilon, |y|\le \varepsilon} |f(x) -  f(y)|^2|K_L(x,y)|^2dxdy  +  2 \iint_{|x|\le \varepsilon, |y|\ge \varepsilon} |f(x)|^2 |K_L(x,y)|^2dxdy.
\end{align*}
By the assumption $\mathscr{V}_L(f) < \infty$ and the relation \eqref{zero-at-s},  we get the desired relation \eqref{cut-conv}.
\end{proof}

\begin{prop}\label{prop-em}
The isometric embedding 
$$
\begin{array}{ccc}
\overline{T}: \mathscr{N}_0(L)& \hookrightarrow & L_2(\Conf(\R^*), \mu_L)
\\
f & \mapsto & \overline{T}_L[f]
\end{array}
$$
 extends uniquely to an isometric embedding $\mathscr{N}(L)\hookrightarrow L_2(\Conf(\R^*), \mu_L)$. 
\end{prop}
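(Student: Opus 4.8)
The plan is to invoke the standard extension principle for isometries (the ``bounded linear transformation'' theorem): a linear isometry from a dense linear subspace of a seminormed space into a complete normed space extends uniquely to a linear isometry on the whole space. Here the dense subspace is $\mathscr{N}_0(L)\subset\mathscr{N}(L)$, the isometry is $f\mapsto\overline{T}_L[f]$, and the target $L_2(\Conf(\R^*),\mu_L)$ is complete.

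First I would record the two ingredients already at our disposal. By the preceding proposition, $\mathscr{N}_0(L)$ is dense in $(\mathscr{N}(L),d_{\mathscr{N}(L)})$; concretely, for $f\in\mathscr{N}(L)$ the truncations $f_\varepsilon:=f\chi_{\R\setminus(-\varepsilon,\varepsilon)}$ lie in $\mathscr{N}_0(L)$ and satisfy $\mathscr{V}_L(f_\varepsilon-f)\to 0$ as $\varepsilon\to 0^+$, i.e. $\|f_\varepsilon-f\|_{\mathscr{N}_L}\to 0$. By the Remark preceding Proposition \ref{prop-em} together with \eqref{sigma-2-norm} (which itself rests on Lemma \ref{lem-var}), every $f\in\mathscr{N}_0(L)$ has $\overline{T}_L[f]\in L_2(\Conf(\R^*),\mu_L)$ with $\|\overline{T}_L[f]\|_{L_2(\mu_L)}^2=\mathscr{V}_L(f)=\|f\|_{\mathscr{N}_L}^2$; since $T[f]=S[f^\circ]$ is linear in $f$ on $\mathscr{N}_0(L)$ (where the defining sum converges absolutely) and $\E_{\mu_L}$ is linear, $f\mapsto\overline{T}_L[f]$ is a linear isometry on $\mathscr{N}_0(L)$.

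Now, given $f\in\mathscr{N}(L)$, choose a sequence $(f_n)\subset\mathscr{N}_0(L)$ with $\|f_n-f\|_{\mathscr{N}_L}\to 0$ (for instance $f_n=f\chi_{\R\setminus(-1/n,1/n)}$). Then $(f_n)$ is Cauchy in $\mathscr{N}(L)$, so by the isometry identity $\|\overline{T}_L[f_n]-\overline{T}_L[f_m]\|_{L_2(\mu_L)}=\|f_n-f_m\|_{\mathscr{N}_L}$ the sequence $(\overline{T}_L[f_n])$ is Cauchy in $L_2(\Conf(\R^*),\mu_L)$; by completeness of the latter it converges, and one sets $\overline{T}_L[f]:=\lim_n\overline{T}_L[f_n]$. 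This limit is independent of the approximating sequence: if $(g_n)\subset\mathscr{N}_0(L)$ also converges to $f$ in $\mathscr{N}(L)$, the interlaced sequence $f_1,g_1,f_2,g_2,\dots$ still converges to $f$, hence has Cauchy image in $L_2(\mu_L)$, forcing the two subsequential limits to agree. Taking $f_n\equiv f$ when $f\in\mathscr{N}_0(L)$ shows the new map restricts to the old one. Linearity passes to the limit (approximate $f,g$ by $f_n,g_n\in\mathscr{N}_0(L)$, so $af_n+bg_n\to af+bg$ and $\overline{T}_L[af_n+bg_n]=a\overline{T}_L[f_n]+b\overline{T}_L[g_n]\to a\overline{T}_L[f]+b\overline{T}_L[g]$), and so does the isometry: $\|\overline{T}_L[f]\|_{L_2(\mu_L)}=\lim_n\|\overline{T}_L[f_n]\|_{L_2(\mu_L)}=\lim_n\|f_n\|_{\mathscr{N}_L}=\|f\|_{\mathscr{N}_L}$ by continuity of norms. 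Uniqueness is then immediate: any continuous map $\mathscr{N}(L)\to L_2(\Conf(\R^*),\mu_L)$ coinciding with $\overline{T}_L$ on the dense subset $\mathscr{N}_0(L)$ coincides with the above extension everywhere.

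I do not expect any real obstacle here: the substance of the statement is carried entirely by the two facts established just before it (density of $\mathscr{N}_0(L)$ in $\mathscr{N}(L)$ and the variance identity on $\mathscr{N}_0(L)$), and what remains is the routine completion argument. The one point worth making explicit is that the construction simultaneously realizes $\overline{T}_L[f]$ as the $L_2(\mu_L)$-limit --- hence, along a subsequence, the $\mu_L$-almost sure limit --- of the genuine regularized twisted linear statistics $\overline{T}_L[f\chi_{\R\setminus(-\varepsilon,\varepsilon)}]$ as $\varepsilon\to 0^+$, which is the description of this random variable that will be used subsequently.
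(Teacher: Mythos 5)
Your proof is correct, and since the paper itself states Proposition \ref{prop-em} without any proof (relying on the reader to supply the standard completion argument), your write-up is exactly the implicit argument intended. You correctly isolate the two ingredients --- density of $\mathscr{N}_0(L)$ in $\mathscr{N}(L)$ from the preceding proposition, and the isometry identity $\|\overline{T}_L[f]\|_{L_2(\mu_L)}^2 = \mathscr{V}_L(f)$ on $\mathscr{N}_0(L)$ from \eqref{sigma-2-norm} together with the remark before the proposition --- and then run the routine Cauchy-sequence/completeness argument, which is the only reasonable route. Your closing observation that the extension realizes $\overline{T}_L[f]$ as an $L_2(\mu_L)$-limit, hence a.s. along a subsequence, of the genuine truncated statistics is precisely the content the paper records immediately afterwards in Remark \ref{pt-limit}, so you have anticipated the point that matters for the sequel. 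One small remark: you call $(\mathscr{N}(L), \|\cdot\|_{\mathscr{N}_L})$ a seminormed space, which is more careful than the paper's phrasing ``Hilbert space structure,'' since $\mathscr{V}_L$ does vanish on constants; the extension argument is unaffected, as you implicitly use, because all that is needed is density of $\mathscr{N}_0(L)$ and completeness of the target $L_2(\mu_L)$.
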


\begin{defn}\label{defn-twist-linear}
Given a function $f \in \mathscr{N}(L)$, by slightly abusing the notation, we denote by $\overline{T}_L[f]$ the image of $f$ under the embedding map $\mathscr{N}(L)\hookrightarrow L_2(\Conf(\R^*), \mu_L)$ in Proposition \ref{prop-em}. We will call $\overline{T}_L[f]$ the normalized twisted additive functional corresponding to $f$ and $\mu_L$. 
\end{defn}

\begin{rem}\label{rem-null}
For all $f \in \mathscr{N}(L)$, we have $\E_{\mu_L}( \overline{T}_L[f] )=0$.
\end{rem}

\begin{rem}\label{pt-limit}
If $f\in \mathscr{N}(L)$, then up to passing to a sequence $\varepsilon_n$ tending to zero if necessary, we may write the following {\it pointwise} relation: for $\mu_L$-almost every configuration $X\in \Conf(\R^*)$, 
\begin{align}\label{cut-convergence}
\overline{T}_L[f] (X)= \lim_{\varepsilon \to 0^+}    \left( \sum_{x \in X, | x | \ge \varepsilon} \sgn(x) f(x)  - \E_{\mu_L}\sum_{x \in X, | x | \ge \varepsilon} \sgn(x) f(x) \right).
\end{align}
\end{rem}

\subsubsection{Multiplicative functionals}

\begin{notation}
Let  $f:\R^*\rightarrow \C$ be a measurable function, denote 
$$
f^{+} = f \chi_{\R_{+}} \an f^{-} = f \chi_{\R_{-}}.
$$  
If the essential support $\supp (f^{-})$ of the function $f^{-}$ is the whole negative semi-axis $\R_{-}$, then we may define
\begin{align}\label{def-f-vee}
f^\vee(x) := f^{+}(x) + (f^{-}(x))^{-1}. 
\end{align}
\end{notation}

\begin{defn}\label{defn-multiplicative}
Given a function $g: \R^* \rightarrow [0, \infty]$ such that  $\{x\in \R^*: g(x) =0\}$ is Lebesgue negligible  and $\log g \in \mathscr{N}(L)$, then we set 
\begin{align*}
 \widetilde{\Psi}_{L}[g] = \exp(\overline{T}_L[\log (g^\vee)]),
\end{align*}
where by definition \eqref{def-f-vee}, $g^\vee(x) := g(x) \chi_{\R_{+}}(x) + g(x)^{-1} \chi_{\R_{-}}(x)$.
If moreover, $\E_{\mu_L} \widetilde{\Psi}_L[g]$ is finite,  then we define
\begin{align*}
 \overline{\Psi}_L[g] = \frac{\widetilde{\Psi}_L[g]}{\E_{\mu_L} \widetilde{\Psi}_L[g]}.
\end{align*}
\end{defn}

\begin{rem}
If $g$ is a function such that $\log g \in \mathscr{N}(L)$, then $ \E_{\mu_L} \widetilde{\Psi}_L[g] \in [1, \infty]$. Indeed, by Jensen's inequality and Remark \ref{rem-null}, we have 
\begin{align}\label{jensen}
\E_{\mu_L} \widetilde{\Psi}_L[g] =  \E_{\mu_L}  \exp(\overline{T}_L[\log (g^\vee)])  \ge \exp(\E_{\mu_L} ( \overline{T}_L{\log g^\vee})) =1.
\end{align}
\end{rem}

The formalism of regularized multiplicative functional  $\overline{\Psi}_L[g]$ now allows us state the following
\begin{thm}\label{thm-sub-main}
Let $g: \R^* \rightarrow [0, \infty)$ be a non-negative bounded function. Assume that there exists $\varepsilon \in (0, 1) $  such that $E_\varepsilon = \{ x \in \R^*: | g(x)^2 - 1| > \varepsilon\}$ has a positive distance from the origin and 
\begin{align}\label{reg-condition}
 \int_{\R} | g(x) -1|^2 K_L(x,x)dx <\infty.
\end{align}
Then $\log g \in \mathscr{N}(L)$ and $\widetilde{\Psi}_L[g^2]\in L_1(\Conf(\R^*), \mu_L)$. Moreover, we have 
\begin{align}\label{boss-rel}
\mu_{g Lg } = \overline{\Psi}_L[g^2] \cdot \mu_L . 
\end{align}
\end{thm}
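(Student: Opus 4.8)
The idea is to reduce Theorem~\ref{thm-sub-main} to the preliminary and already-established Proposition~\ref{prop-pre} by a truncation argument, using the formalism of the normalized twisted additive functionals $\overline{T}_L[\cdot]$ (Definition~\ref{defn-twist-linear}) together with the regularized multiplicative functionals $\overline{\Psi}_L[\cdot]$ (Definition~\ref{defn-multiplicative}). First I would check that the hypotheses force $\log g \in \mathscr{N}(L)$. Since $E_\varepsilon = \{|g^2-1|>\varepsilon\}$ has a positive distance from the origin, on a neighbourhood $(-\delta,\delta)$ of $0$ we have $|g^2-1|\le \varepsilon<1$, so $g$ is bounded away from $0$ and $\infty$ there and $|\log g(x)|$ is comparable to $|g(x)^2-1|$, hence comparable to $|g(x)-1|$ near the origin; condition~\eqref{reg-condition} then controls the local behaviour needed for~\eqref{summable} and~\eqref{zero-at-s}, while~\eqref{v-finite} follows from~\eqref{V-small}. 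Thus $\log g \in \mathscr{N}(L)$ and, by the $L^1$-assertion to be proved, $\widetilde{\Psi}_L[g^2]$ is well defined; note $(g^2)^\vee = g^2\chi_{\R_+}+g^{-2}\chi_{\R_-}$, so $\log((g^2)^\vee)=\sgn(\cdot)\log g^2 = (\log g^2)^\circ$, which is exactly why the twisted functional $\overline{T}_L$ and not the ordinary one appears here.

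**Truncation step.** For $\varepsilon>0$ set $g_\varepsilon := g\,\chi_{\R\setminus(-\varepsilon,\varepsilon)} + \chi_{(-\varepsilon,\varepsilon)}$, so that $g_\varepsilon$ equals $g$ away from the origin and equals $1$ near it; then $\supp(g_\varepsilon^2-1)$ has a positive distance from $0$, and $g_\varepsilon$ is bounded. Proposition~\ref{prop-pre} applies to $g_\varepsilon$ and gives
\begin{align}\label{eq-trunc-pre}
\mu_{g_\varepsilon L g_\varepsilon} = \frac{\Psi[g_\varepsilon^2]}{\E_{\mu_L}\Psi[g_\varepsilon^2]}\cdot \mu_L.
\end{align}
Because $\log g_\varepsilon$ is supported away from the origin, $\log g_\varepsilon \in \mathscr{N}_0(L)$ and the ordinary multiplicative functional $\Psi[g_\varepsilon^2]$ coincides, up to the normalizing constant, with $\widetilde{\Psi}_L[g_\varepsilon^2]=\exp(\overline{T}_L[\log(g_\varepsilon^2)^\vee])=\exp(\overline{T}_L[(\log g_\varepsilon^2)^\circ])$; indeed $\sum_{x\in X}\log g_\varepsilon(x)^2 = \sum_{x\in X}\sgn(x)(\log g_\varepsilon)^\circ(x)^2$ only differs from $\overline{T}_L$ by the (finite) mean. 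Hence~\eqref{eq-trunc-pre} can be rewritten as $\mu_{g_\varepsilon L g_\varepsilon} = \overline{\Psi}_L[g_\varepsilon^2]\cdot \mu_L$.

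**Passage to the limit.** It remains to let $\varepsilon\to 0^+$ in $\mu_{g_\varepsilon L g_\varepsilon}=\overline{\Psi}_L[g_\varepsilon^2]\cdot\mu_L$ and identify both sides with their $\varepsilon=0$ analogues. On the left, I would show $\mu_{g_\varepsilon L g_\varepsilon}\to \mu_{gLg}$ in the appropriate (e.g. weak, or in terms of multiplicative functionals $\E\,\Psi[h]$ for $h$ with $\supp(h-1)$ away from $0$) sense: using the formula of Lemma~\ref{lem-Kg}, $K_{g_\varepsilon L g_\varepsilon}^\Delta = g_\varepsilon K_L^\Delta(1+(g_\varepsilon^2-1)K_L^\Delta)^{-1}g_\varepsilon$ converges to $K_{gLg}^\Delta$ in $\mathscr{L}_{1|2}(L^2(\R))$ for each fixed $\Delta$ with positive distance from the origin — because $g_\varepsilon\to g$ pointwise and boundedly, $\Delta$ has positive distance from $0$ so $g_\varepsilon=g$ on $\Delta$ for small $\varepsilon$, and the relevant compressed operators lie in $\mathscr{L}_{1|2}$ by~\eqref{K-Delta} — whence the extended Fredholm determinants converge by continuity (the statement after Proposition~\ref{prop-Fc}). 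On the right, I would use the isometric embedding $\overline{T}\colon \mathscr{N}(L)\hookrightarrow L_2(\Conf(\R^*),\mu_L)$ of Proposition~\ref{prop-em}: by~\eqref{cut-conv}, $\log g_\varepsilon^2 = (\log g^2)\chi_{\R\setminus(-\varepsilon,\varepsilon)}$ (up to the additive $\chi_{(-\varepsilon,\varepsilon)}\log 1 = 0$) converges to $\log g^2$ in $\mathscr{N}(L)$, so $\overline{T}_L[(\log g_\varepsilon^2)^\vee]\to \overline{T}_L[(\log g^2)^\vee]$ in $L_2(\mu_L)$, hence in $L_1$ and, along a subsequence, $\mu_L$-a.e.; therefore $\widetilde{\Psi}_L[g_\varepsilon^2]\to \widetilde{\Psi}_L[g^2]$ in $\mu_L$-probability. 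The main obstacle, and the place requiring genuine care, is upgrading this to convergence of the normalizing constants and of $\overline{\Psi}_L[g_\varepsilon^2]$ in $L_1(\mu_L)$ — i.e. proving $\widetilde{\Psi}_L[g^2]\in L_1(\Conf(\R^*),\mu_L)$ and $\E_{\mu_L}\widetilde{\Psi}_L[g_\varepsilon^2]\to \E_{\mu_L}\widetilde{\Psi}_L[g^2]\in(0,\infty)$: for this I would establish a uniform-in-$\varepsilon$ bound $\E_{\mu_L}\widetilde{\Psi}_L[g_\varepsilon^2]\le \E_{\mu_L}\Psi[g_\varepsilon^2]=\det(1+(g_\varepsilon^2-1)K_L^\Delta)\le C$, noting that $\det(1+(g_\varepsilon^2-1)K_L^\Delta)$ is bounded uniformly in $\varepsilon$ because, by the computation in the proof of Proposition~\ref{prop-pre}, it equals a ratio of extended Fredholm determinants that is stable as $\varepsilon\to 0$; together with the a.e.\ convergence this gives, via a uniform integrability / Scheff\'e-type argument, convergence in $L_1$ and the $L_1$-membership of the limit. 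Combining the two sides yields $\mu_{gLg}=\overline{\Psi}_L[g^2]\cdot\mu_L$, which is~\eqref{boss-rel}.
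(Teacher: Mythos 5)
Your outline — truncate $g$ near the origin, apply Proposition~\ref{prop-pre} to the truncation, and pass to the limit — shares the spirit of the paper's strategy but takes a different and, as written, incomplete route; the crucial $L_1$-convergence step has a genuine gap.

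The paper proceeds by a \emph{multiplicative} decomposition: write $g=g_1g_2$ with $g_1=(g-1)\chi_{\{|g-1|\le\varepsilon\}}+1$ and $g_2=(g-1)\chi_{\{|g-1|>\varepsilon\}}+1$. The factor $g_1$ is sandwiched between $1-\varepsilon$ and $1+\varepsilon$, hence lies in $\mathscr{M}_2(L)$, and Proposition~\ref{prop-rn} applies to it; the factor $g_2$ has $\supp(g_2-1)$ at positive distance from the origin, so Proposition~\ref{prop-pre} applies to it (acting on $g_1 L g_1$). The two difficulties — $g$ not being bounded away from $0$ somewhere away from the origin, and $\supp(g^2-1)$ accumulating at the origin — are thus handled by two \emph{different} mechanisms, and the resulting Radon--Nikodym densities multiply cleanly. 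By contrast, you truncate additively near the origin by setting $g_\varepsilon=g\chi_{\R\setminus(-\varepsilon,\varepsilon)}+\chi_{(-\varepsilon,\varepsilon)}$; this mixes the two difficulties in a single limit, and the price is that $g_\varepsilon$ is in general not bounded away from $0$ (since $g$ may vanish away from the origin). That blocks the key tool the paper uses for uniform integrability, namely the $L_2$-bound of Lemma~\ref{lem-tilde-2-norm}, whose constant $C_{\varepsilon,M}$ depends on both a lower and an upper bound for $g$.

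Concretely, the place where your argument would fail is the claimed chain
$$
\E_{\mu_L}\widetilde{\Psi}_L[g_\varepsilon^2]\le \E_{\mu_L}\Psi[g_\varepsilon^2]=\det\bigl(1+(g_\varepsilon^2-1)K_L^\Delta\bigr)\le C.
$$
The first inequality is false in general: by definition $\widetilde{\Psi}_L[g_\varepsilon^2]=\Psi[g_\varepsilon^2]\exp(-\E_{\mu_L}S[\log g_\varepsilon^2])$, so the two expectations differ by the factor $\exp(-\E_{\mu_L}S[\log g_\varepsilon^2])$, which can be larger than $1$. Moreover the uniform bound on the Fredholm determinant is precisely the non-trivial content to be proved: as $\varepsilon\to0^+$ the set $\Delta\supset\supp(g_\varepsilon^2-1)$ must grow to accommodate the support, which can be all of $\R^*$ under the hypotheses, and then $K_L^\Delta$ no longer converges in $\mathscr{L}_{1|2}$ — this is exactly the divergence that the regularized functionals $\overline{\Psi}_L[\cdot]$ and the space $\mathscr{N}(L)$ were designed to cure. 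Finally, even if the uniform bound on the first moment held, it would not by itself give uniform integrability; Scheff\'e requires convergence of the integrals $\E_{\mu_L}\widetilde{\Psi}_L[g_\varepsilon^2]\to\E_{\mu_L}\widetilde{\Psi}_L[g^2]$, which you would still need to prove separately. The paper's split sidesteps all of this: $g_1$ lives in $\mathscr{M}_2(L)$ so the second-moment estimate of Lemma~\ref{lem-tilde-2-norm} and the approximation lemma \eqref{cut-conv} within $\mathscr{M}_2(L)$ yield the $L_1$-convergence (Propositions~\ref{prop-con} and~\ref{prop-rn}), while the part of $g$ responsible for its possible zeros is isolated in $g_2$ and treated directly by Proposition~\ref{prop-pre} with no limit process.
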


\subsection{Proof of Theorem \ref{thm-sub-main}}

\begin{defn}
Let $\mathscr{M}_2(L)$ denote the set of functions $g$ on $\R$ such that 
\begin{itemize}
\item[(1)] $0 < \inf_\R g \le \sup_\R g < \infty$;
\item[(2)] $\int_\R |g(x) - 1|^2 K_L(x,x)dx < \infty$.
\end{itemize}
\end{defn}

Recall that by definition \eqref{def-f-vee},  to a function $g$, we asign $g^\vee$ in the following way: 
$$
g^\vee(x) := g(x) \chi_{\R_{+}}(x) + g(x)^{-1} \chi_{\R_{-}}(x).
$$

\begin{prop}\label{prop-rn}
Let $g \in \mathscr{M}_2(L)$. Then $\log g$ and $\log (g^\vee)$ are functions in $\mathscr{N}(L)$. In particular, the functional $\widetilde{\Psi}_L[g]=  \exp(\overline{T}_L[\log (g^\vee)])$ is well-defined.
Moreover, we have 
\begin{align}\label{RN}
\mu_{gLg} = \overline{\Psi}_L[g^2]  \cdot \mu_L.
\end{align}
\end{prop}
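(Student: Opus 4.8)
The plan is to reduce the general statement to the "good" case already handled by Proposition \ref{prop-pre}, using the density of $\mathscr{N}_0(L)$ in $\mathscr{N}(L)$ together with a truncation-and-limit argument. First I would verify the membership claims: for $g\in\mathscr{M}_2(L)$, since $0<\inf g\le\sup g<\infty$, the elementary inequality $|\log t|\le C_g|t-1|$ valid for $t$ in the compact interval $[\inf g,\sup g]$ gives $|\log g(x)|^2\le C_g^2|g(x)-1|^2$, hence $\int_\R|\log g(x)|^2 K_L(x,x)\,dx<\infty$ by condition (2); the same bound applies to $g^{-1}$ and therefore to $g^\vee$, whose value is $g$ on $\R_+$ and $g^{-1}$ on $\R_-$. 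By \eqref{V-small} this forces $\mathscr{V}_L(\log g^\vee)<\infty$, and \eqref{summable} follows from condition (2) restricted to $\{|x|\ge\varepsilon\}$; condition \eqref{zero-at-s} follows from the pointwise bound $\iint_{|x|\le\varepsilon,|y|\ge\varepsilon}|\log g^\vee(x)|^2|K_L(x,y)|^2\le\int_{|x|\le\varepsilon}|\log g^\vee(x)|^2 K_L(x,x)\,dx\to 0$ as $\varepsilon\to 0^+$, using the integrability just established and dominated convergence. Thus $\log g,\ \log g^\vee\in\mathscr{N}(L)$, and $\widetilde\Psi_L[g]=\exp(\overline T_L[\log g^\vee])$ is well-defined; combined with the Jensen bound \eqref{jensen} and the (still to be proved) integrability $\widetilde\Psi_L[g^2]\in L_1$, the normalization $\overline\Psi_L[g^2]$ makes sense.

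Next I would set up the approximation. For $\varepsilon>0$ write $g_\varepsilon := g\chi_{\R\setminus(-\varepsilon,\varepsilon)} + \chi_{(-\varepsilon,\varepsilon)}$, so that $g_\varepsilon=1$ near the origin and $g_\varepsilon\to g$ in the sense that $\log g_\varepsilon^\vee\to\log g^\vee$ in $\mathscr{N}(L)$ — this is exactly \eqref{cut-conv} applied to the function $\log g^\vee$. Since $\supp(g_\varepsilon^2-1)$ has positive distance from $0$, Proposition \ref{prop-pre} applies to $g_\varepsilon$ and yields
\begin{align}\label{prop-rn-pre}
\mu_{g_\varepsilon L g_\varepsilon} = \frac{\Psi[g_\varepsilon^2]}{\E_{\mu_L}\Psi[g_\varepsilon^2]}\cdot\mu_L,
\end{align}
where the right-hand multiplicative functional is the honest (absolutely convergent) one. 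The point is now to pass to the limit $\varepsilon\to 0^+$ on both sides. On the left: $g_\varepsilon L g_\varepsilon\to gLg$ in an appropriate operator sense (the kernels converge and the relevant Hilbert--Schmidt norms from Condition \ref{con5} are controlled uniformly, as $g_\varepsilon$ is uniformly bounded with a uniform positive lower bound on its support), so by continuity of the extended Fredholm determinant on $\mathscr{L}_{1|2}$ and Theorem \ref{thm-01} the correlation functions of $\mu_{g_\varepsilon Lg_\varepsilon}$ converge to those of $\mu_{gLg}$, hence $\mu_{g_\varepsilon Lg_\varepsilon}\to\mu_{gLg}$ weakly. On the right: the key identity, already exploited implicitly in the formalism of \S\ref{sec-regularization}, is that up to the normalizing constant, $\Psi[g_\varepsilon^2]$ equals $\widetilde\Psi_L[g_\varepsilon^2]$ times a deterministic factor — more precisely $\log\Psi[g_\varepsilon^2](X) = T[\log g_\varepsilon^2{}^\vee](X) + (\text{term from the }\sgn\text{-twist})$, and after dividing by the $L_1$-norm the twist contributions and the deterministic factors cancel, so that $\Psi[g_\varepsilon^2]/\E_{\mu_L}\Psi[g_\varepsilon^2] = \overline\Psi_L[g_\varepsilon^2] = \widetilde\Psi_L[g_\varepsilon^2]/\E_{\mu_L}\widetilde\Psi_L[g_\varepsilon^2]$.

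With this reduction in place, the remaining work is convergence of $\overline\Psi_L[g_\varepsilon^2]$ to $\overline\Psi_L[g^2]$ in $L_1(\Conf(\R^*),\mu_L)$. Since $\log g_\varepsilon^{2\vee}\to\log g^{2\vee}$ in $\mathscr{N}(L)$, the isometric embedding of Proposition \ref{prop-em} gives $\overline T_L[\log g_\varepsilon^{2\vee}]\to\overline T_L[\log g^{2\vee}]$ in $L_2(\mu_L)$, hence in probability, hence $\widetilde\Psi_L[g_\varepsilon^2]=\exp(\overline T_L[\log g_\varepsilon^{2\vee}])\to\widetilde\Psi_L[g^2]$ in probability. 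To upgrade to $L_1$-convergence (which also delivers the missing integrability $\widetilde\Psi_L[g^2]\in L_1$ and the convergence of the normalizing constants) I would establish uniform integrability of the family $\{\widetilde\Psi_L[g_\varepsilon^2]\}_\varepsilon$; the natural route is a uniform $L_p$-bound for some $p>1$, obtained by applying the already-proved identity \eqref{prop-rn-pre} with $g_\varepsilon$ replaced by $g_\varepsilon^{\,p/2}$ (still admissible, with the same support and uniform bounds) to control $\E_{\mu_L}\widetilde\Psi_L[g_\varepsilon^2]^p = \E_{\mu_L}\widetilde\Psi_L[g_\varepsilon^{2p}]\cdot(\text{const})$ by a Fredholm determinant that stays bounded as $\varepsilon\to 0$, using $\int_\R|g^p-1|^2K_L(x,x)\,dx<\infty$. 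Passing to the limit in \eqref{prop-rn-pre} then yields $\mu_{gLg}=\overline\Psi_L[g^2]\cdot\mu_L$, which is \eqref{RN}. I expect the main obstacle to be precisely this last uniform-integrability / limit-exchange step: one must be careful that the extended Fredholm determinants and the normalizing constants behave continuously as the support of $g_\varepsilon^2-1$ creeps down to the singularity at the origin, and that the cancellation of the $\sgn$-twisted deterministic factors is legitimate in the limit — this is where the hypotheses of Condition \ref{con5} (the $\chi_{(0,\varepsilon)}xV$ and $Vy\chi_{(-\varepsilon,0)}$ being Hilbert--Schmidt) and the quantitative bound $|g-1|\le C|x|$ from the hypotheses of Theorem \ref{thm-sub-main} are genuinely used.
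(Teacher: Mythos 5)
Your overall scheme matches the paper's: truncate $g$ to $g_\varepsilon$ supported away from the origin, invoke Proposition \ref{prop-pre}, then pass to the limit $\varepsilon\to 0^+$ on both sides, using convergence of the compressed $L$-operators in the locally-$\mathscr{L}_{1|2}$ sense for the left-hand side and $L_1(\mu_L)$-convergence of the normalized multiplicative functionals on the right. The membership claims $\log g,\ \log g^\vee\in\mathscr{N}(L)$ are proved the same way as in Lemma \ref{good-log}. One small slip in the exposition: there is no leftover ``term from the $\sgn$-twist,'' since $\sgn(x)\log g^\vee(x)=\log g(x)$ pointwise, so $T[\log g^\vee]=S[\log g]$ exactly and $\widetilde\Psi_L[g]=\Psi[g]\exp(-\E_{\mu_L}S[\log g])$ on the nose when $\supp(g-1)$ is away from $0$; after dividing by the normalizing constant this gives $\overline\Psi_L[g^2]=\Psi[g^2]/\E_{\mu_L}\Psi[g^2]$ as you want, but the justification should be phrased that way.

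Where you genuinely diverge from the paper is the convergence $\overline\Psi_L[g_\varepsilon^2]\to\overline\Psi_L[g^2]$ in $L_1(\mu_L)$. The paper proves this via an explicit quantitative $L_1$-Lipschitz estimate (Lemma \ref{lem-l1}) derived by Cauchy--Schwarz from the uniform $L_2$-bound of Lemma \ref{lem-tilde-2-norm}; you instead propose convergence in probability plus uniform integrability via a uniform $L_p$-bound, which would be equally valid — but the uniform $L_p$-bound is precisely where your argument has a gap. You write that $\E_{\mu_L}\widetilde\Psi_L[g_\varepsilon^{2p}]$ is controlled ``by a Fredholm determinant that stays bounded as $\varepsilon\to 0$,'' but the raw Fredholm determinant $\E_{\mu_L}\Psi[g_\varepsilon^{2p}]=\det(1+\sqrt{g_\varepsilon^{2p}-1}\,K_L\sqrt{g_\varepsilon^{2p}-1})$ need \emph{not} stay bounded as the support of $g_\varepsilon-1$ creeps into the origin; only after dividing by the exponential of the trace does it stay bounded, and this is a nontrivial statement. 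The missing ingredient is exactly Simon's regularized-determinant inequality $|\det(1+A)\exp(-\tr A)|\le\exp(\tfrac{1}{2}\|A\|_2^2)$ together with the pointwise estimate $|\log t-(t-1)|\le C(t-1)^2$ on compacta and Lemma \ref{fKf-norm}: this yields $\log\E_{\mu_L}\widetilde\Psi_L[g]\le C_{\varepsilon,M}\int_\R|g-1|^2K_L(x,x)\,dx$ uniformly over $g$ bounded above and below, which is the paper's Lemma \ref{lem-tilde-2-norm} and the genuine crux. Without spelling out this regularized-determinant bound the uniform-integrability step is not established. (Also, the bound $|g-1|\le C|x|$ you cite at the end is a hypothesis of Theorem \ref{thm-main-bis}, not of Proposition \ref{prop-rn}; it plays no role here, and invoking it is a sign the role of Lemma \ref{lem-tilde-2-norm} has not been fully located.)
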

We postpone its proof to the next section.

Now we are in a position to prove Theorem \ref{thm-sub-main}. But first, let us note that for a function $g$ as in Proposition \ref{prop-pre}, the regularized multiplicative functional $\overline{\Psi}_L[g^2]$ defined as above is also expressed by $\overline{\Psi}_L[g^2] = C \Psi[g^2]$ for a certain constant $C>0$.

\begin{proof}[Proof of Theorem \ref{thm-sub-main}]
Let $\varepsilon \in (0,1)$ be such that $\{ x \in \R^*: | g(x) - 1| > \varepsilon\}$ has a positive distance from the origin. Set $g_1, g_2$ to be two positive functions  determined by
\begin{align}\label{g1}
g_1 = (g - 1)\chi_{\{x\in \R^*: |g(x)-1| \le \varepsilon\}} + 1.
\end{align}
\begin{align}\label{g2}
g_2 = (g - 1)\chi_{\{x\in \R^*: |g(x)-1| > \varepsilon\}} + 1.
\end{align}
By definition, $g = g_1 g_2$. Note that $1- \varepsilon \le  \inf_\R g_1 \le \sup_\R g_1 \le 1 + \varepsilon$. This combining with assumption \eqref{reg-condition}  shows that the function $g_1$ is in $\mathscr{M}_2(L)$. Hence by Proposition \ref{prop-rn}, we have 
\begin{align}\label{g1-RN}
\mu_{g_1 L g_1} = \overline{\Psi}_L[g_1^2]\cdot \mu_L.
\end{align}
Now since $\supp(g_2 -1)$ has a positive distance from the origin and $gL g = g_2(g_1L g_1)g_2$,  by Proposition \ref{prop-pre}, we have
\begin{align}\label{g2-RN}
\mu_{g L g}  = \frac{\Psi[g_2^2]}{\E_{\mu_{g_1 L g_1}} \Psi[g_2^2] } \cdot \mu_{g_1 L g_1} .
\end{align}
Combining \eqref{g1-RN} and \eqref{g2-RN}, we get 
\begin{align}
\mu_{gLg} =  \frac{\Psi[g_2^2]}{\E_{\mu_{g_1 L g_1}} \Psi[g_2^2] } \cdot \overline{\Psi}_L[g_1^2]\cdot \mu_L.
\end{align}
Since 
$$ 
\frac{\Psi[g_2^2]}{\E_{\mu_{g_1 L g_1}} \Psi[g_2^2] } \cdot \overline{\Psi}_L[g_1^2]  = C_1 \widetilde{\Psi}_L[g_2^2] \cdot C_2 \widetilde{\Psi}_L[g_1^2]  = C_1C_2 \widetilde{\Psi}_L[g_1^2g_2^2]  = C_1C_2 \widetilde{\Psi}_L[g^2],
$$ 
and 
$$
\int  C_1C_2 \widetilde{\Psi}_L[g^2] d\mu_L = 1,
$$
we get 
$$
\frac{\Psi[g_2^2]}{\E_{\mu_{g_1 L g_1}} \Psi[g_2^2] } \cdot \overline{\Psi}_L[g_1^2]  = \overline{\Psi}_L[g^2],
$$
hence we complete the proof of the desired relation \eqref{boss-rel}.
\end{proof}

\subsection{Proof of Proposition \ref{prop-rn}}
Let us endow $\mathscr{M}_2(L)$ with a metric $d_{\mathscr{M}_2(L)}$ by setting
$$
d_{\mathscr{M}_2(L)} (g_1, g_2) = \sqrt{\int_\R | g_1(x) - g_2(x) |^2 K_L(x,x ) dx} .
$$ 
By definition, $\mathscr{M}_2(L)$ is a semigroup under pointwise multiplication. Clearly, if $g$ is a function in $\mathscr{M}_2(L)$, then so is $g^\vee$.

We shall first prove the following
\begin{lem}\label{good-log}
Let $g \in \mathscr{M}_2(L)$. Then $\log g$ and $\log (g^\vee)$ are functions in $\mathscr{N}(L)$. 
\end{lem}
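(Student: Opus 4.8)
The plan is to show that membership in $\mathscr{N}(L)$ for $\log g$ (and then for $\log(g^\vee)$, which is handled identically since $g \in \mathscr{M}_2(L) \Rightarrow g^\vee \in \mathscr{M}_2(L)$) follows from the defining conditions of $\mathscr{M}_2(L)$, by exploiting the two-sided bound $0 < \inf_\R g \le \sup_\R g < \infty$. The key elementary fact is that on a compact interval $[a,b] \subset (0,\infty)$ the function $t \mapsto \log t$ is Lipschitz, say with constant $\ell = 1/a$, so that for all $x,y$
$$
|\log g(x) - \log g(y)| \le \ell\, |g(x) - g(y)|,
\qquad
|\log g(x)| = |\log g(x) - \log 1| \le \ell\, |g(x) - 1|.
$$
These pointwise inequalities are the engine of the whole argument.

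First I would verify condition \eqref{v-finite}, i.e.\ $\mathscr{V}_L(\log g) < \infty$. Using the first Lipschitz inequality above,
$$
\mathscr{V}_L(\log g) = \frac12 \iint_{\R^2} |\log g(x) - \log g(y)|^2 |K_L(x,y)|^2\,dx\,dy
\le \frac{\ell^2}{2} \iint_{\R^2} |g(x) - g(y)|^2 |K_L(x,y)|^2\,dx\,dy = \ell^2\, \mathscr{V}_L(g),
$$
and $\mathscr{V}_L(g) \le \int_\R |g(x)-1|^2 K_L(x,x)\,dx < \infty$ by \eqref{V-small} and condition (2) of $\mathscr{M}_2(L)$. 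Next, condition \eqref{summable}: for every $\varepsilon > 0$, by the second Lipschitz inequality,
$$
\int_{|x|\ge \varepsilon} |\log g(x)|^2 K_L(x,x)\,dx \le \ell^2 \int_{|x|\ge\varepsilon} |g(x)-1|^2 K_L(x,x)\,dx \le \ell^2 \int_\R |g(x)-1|^2 K_L(x,x)\,dx < \infty.
$$
Finally, condition \eqref{zero-at-s}: again by the Lipschitz bound $|\log g(x)|^2 \le \ell^2 |g(x)-1|^2$,
$$
\iint_{|x|\le\varepsilon,\, |y|\ge\varepsilon} |\log g(x)|^2 |K_L(x,y)|^2\,dx\,dy \le \ell^2 \iint_{|x|\le\varepsilon,\, |y|\ge\varepsilon} |g(x)-1|^2 |K_L(x,y)|^2\,dx\,dy \le \ell^2 \int_{|x|\le\varepsilon} |g(x)-1|^2 K_L(x,x)\,dx,
$$
where the last step uses Lemma \ref{lem-repro} (the reproducing identity $K_L(x,x) = \int |K_L(x,y)|^2\,dy$, valid since $\widehat{K_L}$ is an orthogonal projection by Proposition \ref{prop-proj}). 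By dominated convergence — the integrand $|g(x)-1|^2 K_L(x,x)\mathbf 1_{|x|\le\varepsilon}$ is dominated by the integrable function $|g(x)-1|^2 K_L(x,x)$ and tends to $0$ pointwise as $\varepsilon \to 0^+$ — this last quantity tends to $0$. Hence $\log g \in \mathscr{N}(L)$, and since $g^\vee = g\chi_{\R_+} + g^{-1}\chi_{\R_-}$ also lies in $\mathscr{M}_2(L)$ (its values stay in $[\,(\sup g)^{-1}, \sup g\,] \cup [\inf g, (\inf g)^{-1}]$, a compact subset of $(0,\infty)$, and $|g^\vee - 1| \le \ell'|g-1|$ for a suitable constant by the Lipschitz property of $t \mapsto t^{-1}$ on that set), the same argument gives $\log(g^\vee) \in \mathscr{N}(L)$.

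I do not anticipate a serious obstacle here; the only point requiring a little care is checking \eqref{zero-at-s}, which is where one genuinely needs the reproducing identity of Lemma \ref{lem-repro} rather than just crude bounds — this is the step I would write out most carefully. The bookkeeping for $g^\vee$ (controlling $|g^\vee - 1|$ in terms of $|g - 1|$ uniformly) is routine but should be stated explicitly so that the reduction to the already-established case for $g$ is clean.
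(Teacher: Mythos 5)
Your proposal is correct and follows essentially the same route as the paper: both arguments reduce everything to the key pointwise bound $|\log g(x)|\le C\,|g(x)-1|$ (you derive it from the Lipschitz property of $\log$ on $[\inf g,\sup g]$; the paper from the boundedness of $|\log t-(t-1)|/(t-1)^2$ on that interval), then feed it into \eqref{V-small}, \eqref{off-s}, and the reproducing identity of Lemma~\ref{lem-repro} to check the three conditions defining $\mathscr{N}(L)$, and finally invoke $g^\vee\in\mathscr{M}_2(L)$ to handle $\log(g^\vee)$. The only cosmetic difference is that for \eqref{v-finite} you pass through the two-variable bound $\mathscr{V}_L(\log g)\le\ell^2\mathscr{V}_L(g)$ and then use the translation invariance $\mathscr{V}_L(g)=\mathscr{V}_L(g-1)$ together with \eqref{V-small} (state that invariance explicitly), whereas the paper applies \eqref{V-small} directly to $\log g$; both are fine.
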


\begin{proof}[Proof of Lemma \ref{good-log}]
Assume that $g \in \mathscr{M}_2(L)$. Then there exist $c, C >0$ such that $c\le g(x) \le C$.
The boundedness of the function  $\log g$ combining with the assumption \eqref{off-s} yields the inequality \eqref{summable} for $\log g$. 

Now since the function $| \log t - (t -1)| /(t-1)^2$ is bounded on the interval $[c, C]$, there exists $C'>0$ such that 
\begin{align}\label{D-log-other}
| \log g (x ) - (g (x)  - 1)| \le C' (g(x) - 1)^2.
\end{align}
By taking $C'' = 1 + C' \max (|C-1|, |c-1|)$, we have 
\begin{align}\label{log-and-else}
|\log g(x) | \le C''|g(x)-1|.
\end{align}
It follows that 
\begin{align*}
\int_{\R} | \log g(x) |^2 K_L(x,x) dx \le (C'')^2 \int_\R |g(x) - 1|^2 K_L(x,x)dx < \infty.
\end{align*}
Hence by applying \eqref{V-small}, we have $\mathscr{V}_L(\log g) < \infty$.  Following from \eqref{log-and-else}, we also have 
\begin{align*}
& \limsup\limits_{\varepsilon\to 0^+} \iint_{|x|\le \varepsilon, |y|\ge \varepsilon} | \log g (x) |^2 |K_L(x,y)|^2dxdy
\\
 \le&
  \lim\limits_{\varepsilon\to 0^+} (C'')^2\iint_{|x|\le \varepsilon, |y|\ge \varepsilon} | g(x) -1|^2 |K_L(x,y)|^2dxdy 
 \\
 \le & 
 \lim\limits_{\varepsilon\to 0^+} (C'')^2\int_{|x|\le \varepsilon} | g(x) -1|^2 K_L(x,x)dx  =0.
\end{align*}
This completes the proof that $\log g \in \mathscr{N}(L)$. The same argument for $\log (g^\vee)$ since $g\in \mathscr{M}_2(L)$ implies that $g^\vee \in \mathscr{M}_2(L)$.
\end{proof}

\begin{prop}\label{prop-con}
If  $g\in \mathscr{M}_2(L)$, then $\widetilde{\Psi}_L[g] \in L_1(\Conf(\R^*), \mu_L)$. Moreover, the mappings 
$$
 g \rightarrow \widetilde{\Psi}_L[g] \an g \rightarrow \overline{\Psi}_L[g]
$$
are both continuous from $\mathscr{M}_2(L)$ to $L_1(\Conf(\R^*), \mu_L)$.
\end{prop}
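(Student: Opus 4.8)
My plan is to deduce both assertions from one quantitative estimate: an exponential moment bound
$$
\E_{\mu_L}\widetilde{\Psi}_L[g]\;\le\;\exp\Big(C\int_{\R}|g(x)-1|^2\,K_L(x,x)\,dx\Big),
$$
valid for every $g\in\mathscr{M}_2(L)$, where $C$ is controlled in terms of a two-sided bound $0<c\le g\le b<\infty$. Granting this, $\widetilde{\Psi}_L[g]\in L_1(\Conf(\R^*),\mu_L)$ is immediate; and since $\widetilde{\Psi}_L[g^2]=\widetilde{\Psi}_L[g]^2$ (because $2\log g^\vee=\log(g^2)^\vee$ and $\overline{T}_L$ is linear) and $g^2\in\mathscr{M}_2(L)$, the bound also furnishes the uniform integrability that the $L_1$-continuity will need.

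I first prove the estimate for $g\in\mathscr{M}_2(L)$ with $\supp(g-1)$ at positive distance from the origin. For such $g$ one has $\log g^\vee\in\mathscr{N}_0(L)$, so $\overline{T}_L[\log g^\vee]=T[\log g^\vee]-\E_{\mu_L}T[\log g^\vee]$; since $\exp(\sgn(x)\log g^\vee(x))=g(x)$ on both half-lines, this gives $\widetilde{\Psi}_L[g]=\Psi[g]\cdot\exp\!\big(-\int_\R\log g(x)\,K_L(x,x)\,dx\big)$, and by Theorem \ref{thm-01}, $\E_{\mu_L}\Psi[g]=\det(1+(g-1)K_L^\Delta)$ for any $\Delta\supset\supp(g-1)$ away from the origin, with $(g-1)K_L^\Delta\in\mathscr{L}_{1|2}(L^2(\R))$ by \eqref{K-Delta} and Proposition \ref{prop-bdd-M}. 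Taking logarithms and expanding the Fredholm determinant, the first-order term of $\log\det(1+(g-1)K_L^\Delta)$ cancels $-\int\log g\,K_L(x,x)\,dx$ up to $\int_\R(g-1-\log g)K_L(x,x)\,dx=O\!\big(\int|g-1|^2K_L(x,x)dx\big)$ (here $g$ bounded below is used), while the remaining contributions are dominated, via the $J$-Hermitian symmetry \eqref{J-herm} and $K_L(x,x)=\int_\R|K_L(x,y)|^2dy$ (Lemma \ref{lem-repro}, valid since $\widehat{K_L}$ is an orthogonal projection by Proposition \ref{prop-proj}), by $\iint|g(x)-1|\,|g(y)-1|\,|K_L(x,y)|^2\,dxdy\le\int|g(x)-1|^2K_L(x,x)dx$. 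The crux is to make this expansion rigorous for the \emph{extended} Fredholm determinant of \cite{BOO-jams}: $(g-1)K_L^\Delta$ is only Hilbert--Schmidt, not trace class, so one must separate the trace-class diagonal blocks from the Hilbert--Schmidt off-diagonal blocks and work with the corresponding regularized determinant; this is where the main work lies.

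Next I pass to arbitrary $g\in\mathscr{M}_2(L)$ by truncation. Put $g_n=g\,\chi_{\{|x|\ge 1/n\}}+\chi_{\{|x|<1/n\}}$: this is admissible, lies in $\mathscr{M}_2(L)$ with $\min(c,1)\le g_n\le\max(b,1)$, satisfies $d_{\mathscr{M}_2(L)}(g_n,1)\le d_{\mathscr{M}_2(L)}(g,1)$, and $d_{\mathscr{M}_2(L)}(g_n,g)\to 0$ by dominated convergence. Using the Lipschitz bound for $\log$ on the range of $g$ together with the computation of Lemma \ref{good-log} (compare \eqref{cut-conv}) one gets $\log g_n^\vee\to\log g^\vee$ in $\mathscr{N}(L)$, so $\overline{T}_L[\log g_n^\vee]\to\overline{T}_L[\log g^\vee]$ in $L_2(\mu_L)$ by Proposition \ref{prop-em}, hence $\mu_L$-almost surely along a subsequence, and therefore $\widetilde{\Psi}_L[g_n]\to\widetilde{\Psi}_L[g]$ a.s. By Fatou together with the estimate of the previous step, $\E_{\mu_L}\widetilde{\Psi}_L[g]\le\exp\!\big(C\int|g-1|^2K_L(x,x)dx\big)<\infty$, so both $L_1$-integrability and the exponential bound hold on all of $\mathscr{M}_2(L)$.

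For continuity I exploit multiplicativity: $(ab)^\vee=a^\vee b^\vee$ and linearity of $\overline{T}_L$ give $\widetilde{\Psi}_L[ab]=\widetilde{\Psi}_L[a]\widetilde{\Psi}_L[b]$, hence $\widetilde{\Psi}_L[h]-\widetilde{\Psi}_L[g]=\widetilde{\Psi}_L[g]\big(\widetilde{\Psi}_L[h/g]-1\big)$ and, by Cauchy--Schwarz, $\|\widetilde{\Psi}_L[h]-\widetilde{\Psi}_L[g]\|_{L_1(\mu_L)}\le(\E_{\mu_L}\widetilde{\Psi}_L[g^2])^{1/2}(\E_{\mu_L}|\widetilde{\Psi}_L[h/g]-1|^2)^{1/2}$, the first factor being finite by the previous step. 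It then suffices to show $\E_{\mu_L}|\widetilde{\Psi}_L[u]-1|^2=\E_{\mu_L}\widetilde{\Psi}_L[u^2]-2\E_{\mu_L}\widetilde{\Psi}_L[u]+1\to 0$ as $u\to 1$ in $d_{\mathscr{M}_2(L)}$; this follows from $\widetilde{\Psi}_L[u_n]\to 1$ a.s.\ (as above) and the uniform integrability of $\{\widetilde{\Psi}_L[u_n^2]\}$ coming from the exponential bound, after splitting off the part of $u_n$ on $\{|u_n-1|>\tfrac12\}$ and controlling it through the (small) probability that the configuration meets that set; keeping uniform control of the ranges of $h/g$ is the delicate point here. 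Finally $g\mapsto\overline{\Psi}_L[g]=\widetilde{\Psi}_L[g]/\E_{\mu_L}\widetilde{\Psi}_L[g]$ is continuous, since $\E_{\mu_L}\widetilde{\Psi}_L[g]\ge 1$ by Jensen (see \eqref{jensen}) and $g\mapsto\E_{\mu_L}\widetilde{\Psi}_L[g]$ is continuous as a consequence of the $L_1$-continuity just established.
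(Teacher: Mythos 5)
Your plan matches the paper's argument quite closely: the paper proves exactly your exponential moment bound as its Lemma \ref{lem-tilde-2-norm} (for $\E_{\mu_L}|\widetilde{\Psi}_L[g]|^2$, with the constant depending on the range $[\varepsilon,M]$ of $g$), reducing by truncation and Fatou to the case $\supp(g-1)$ away from the origin and there invoking $|\det(1+A)e^{-\tr A}|\le\exp(\tfrac12\|A\|_2^2)$ with $A=\sqrt{g-1}\,K_L\sqrt{g-1}$ and the Hilbert--Schmidt estimate of Lemma \ref{fKf-norm}; and the paper's Lemma \ref{lem-l1} is precisely your multiplicativity plus Cauchy--Schwarz step. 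The only substantive difference is the very last step: where you propose a.s.\ convergence plus uniform integrability of $\widetilde{\Psi}_L[u_n^2]$ (flagging control of ranges as delicate), the paper instead observes that Jensen's inequality \eqref{jensen} gives $\E_{\mu_L}\widetilde{\Psi}_L[u]\ge 1$, whence
$$
\E_{\mu_L}\big|\widetilde{\Psi}_L[u]-1\big|^2=\E_{\mu_L}\big|\widetilde{\Psi}_L[u]\big|^2-2\E_{\mu_L}\widetilde{\Psi}_L[u]+1\le \E_{\mu_L}\big|\widetilde{\Psi}_L[u]\big|^2-1\le e^{C\int|u-1|^2K_L(x,x)\,dx}-1,
$$
which tends to $0$ directly from the exponential bound; this bypasses the uniform integrability argument altogether and is the part you should adopt. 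Also be aware that the range constants $C_{\varepsilon,M}$ genuinely depend on a two-sided bound $\varepsilon\le g\le M$, so the continuity claimed is at each fixed point of $\mathscr{M}_2(L)$ along sequences sharing such bounds — the paper's Lemma \ref{lem-l1} states the hypothesis this way; your remark about keeping control of the ranges of $h/g$ is addressed precisely by this uniform two-sided hypothesis, not by a separate uniform integrability argument.
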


\begin{proof}[Proof of Proposition \ref{prop-rn}]
Let $E_{n}\subset \R^*$ be a sequence of compact subsets exhausting $\R^*$ and set 
$$
g_n = 1 + (g-1)\chi_{E_n}.
$$
Clearly,  we have $g_n^2 = 1 + (g^2-1)\chi_{E_n}$ and 
\begin{align}\label{gn-to-g}
g_n^2 \xrightarrow[d_{\mathscr{M}_2(L)}]{n\to \infty} g^2.
\end{align}

Claim: $K(g_n)$ converges to $K(g)$ in the space of locally $\mathscr{L}_{1|2}$-operators. Indeed, by the block forms of $K(g_n)$ and $K(g)$ as in \eqref{Kbf}, we need to show that for any compact subsets $\Delta_1, \Delta_2$ of $\R^*$ such that $\Delta_1 \subset \R_{+}$ and $\Delta_2 \subset \R_{-}$, we have 
\begin{align}\label{first}
\chi_{\Delta_1}g_n V V^* g_n (1 + g_n V V^* g_n)^{-1} \chi_{\Delta_1} \xrightarrow[\text{in trace class}]{n\to \infty} \chi_{\Delta_1}g V V^* g(1 + g V V^* g)^{-1} \chi_{\Delta_1};
\end{align}
\begin{align}\label{second}
\chi_{\Delta_2} V^*g_n^2 V (1 +  V^* g_n^2V )^{-1} \chi_{\Delta_2} \xrightarrow[\text{in trace class}]{n\to \infty} \chi_{\Delta_2} V^* g^2 V (1 +  V^* g^2 V )^{-1} \chi_{\Delta_2};
\end{align}
\begin{align}\label{third}
\chi_{\Delta_1}g_n  V  (1 +  V^*g_n^2 V)^{-1} \chi_{\Delta_2} \xrightarrow[\text{in trace class}]{n\to \infty} \chi_{\Delta_1}g  V (1 +  V^*g^2 V )^{-1} \chi_{\Delta_2}.
\end{align}
Let us prove the first relation \eqref{first}, the proof of second and third relations are similar to that of the first one.  First of all,
$$
g_nV\xrightarrow[s.o.t.]{n\to \infty} gV \an V^* g_n \xrightarrow[s.o.t.]{n\to \infty} V^*g,
$$
where s.o.t. stands for the strong operator topology. Hence 
we have 
$$
V^*g_n^2V\xrightarrow[s.o.t.]{n\to \infty} V^*g^2V 
$$
by continuity of the inverse mapping with respect to strong operator topology (cf. e.g. \cite[Lem. 3.2.]{Kadison-SOT}), we have 
$$
(1 + V^*g_n^2V )^{-1}      \xrightarrow[s.o.t.]{n\to \infty} (1 + V^*g^2V )^{-1}.
$$
  Note also that we have  
$$
 \chi_{\Delta_1}  g_nV  \xrightarrow[\text{Hilbert-Schmidt}]{n\to \infty}   \chi_{\Delta_1}  gV.
$$
Combining the above facts and \cite[Thm. 1]{Grumm}, we obtain that  
$$
\chi_{\Delta_1}g_n V  (1 + V^*g_n^2V )^{-1}   \xrightarrow[\text{Hilbert-Schmidt}]{n\to \infty} \chi_{\Delta_1}g V  (1 + V^*g^2V )^{-1}   .
$$ 
Now by using the following identity
$$
\chi_{\Delta_1}g_n V V^* g_n (1 + g_n V V^* g_n)^{-1} \chi_{\Delta_1} = \chi_{\Delta_1}g_n V  (1 +   V^* g_n^2V)^{-1} V^* g_n\chi_{\Delta_1}
$$
and the triangular inequalities, we conclude the proof of the desired relation \eqref{first}.

As a consequence of our claim, we have the weak convergence of the sequence of measures $\mu_{g_nL g_n}$ to the measure $\mu_{gLg}$. By Proposition \ref{prop-pre}, we also have 
$$
\mu_{g_nL g_n} = \overline{\Psi}_L[g_n^2]\cdot \mu_L.
$$
By Proposition \ref{prop-con} and \eqref{gn-to-g}, $\overline{\Psi}_L[g_n^2]$ converges to $\overline{\Psi}_L[g^2]$ in $L_1(\Conf(\R^*), \mu_L)$. As a consequence, we get the desired relation \eqref{RN}. 
\end{proof}

The rest of this section is devoted to the proof of Proposition \ref{prop-con}.

\begin{lem}\label{fKf-norm}
Let $f:\R^* \rightarrow \C$ be a Borel function such that
$
\int_{\R} | f(x)|^4 K_L(x,x)dx <\infty.
$
Then $fK_Lf$ is a Hilbert-Schmidt operator and 
$$
\| fK_Lf\|_2^2 \le \int_{\R} | f(x)|^4 K_L(x,x)dx.
$$
\end{lem}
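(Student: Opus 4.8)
The plan is to recognize $fK_Lf$ as a composition of operators and bound its Hilbert--Schmidt norm via the reproducing-type identity for $K_L$. First I would recall that, by Lemma~\ref{lem-V} (or Lemma~\ref{lem-V-intro}), the kernel $K_L$ satisfies all the hypotheses of Lytvynov's Theorem~\ref{thm-0}, so $\widehat{K_L}$ is an orthogonal projection; hence Lemma~\ref{lem-repro} applies and we have the pointwise identity
\begin{align*}
K_L(x,x) = \int_{\R} |K_L(x,y)|^2\, dy, \quad x \in \R^*.
\end{align*}

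Next I would write the integral kernel of the operator $fK_Lf$: it is $(x,y) \mapsto f(x) K_L(x,y) f(y)$. The Hilbert--Schmidt norm, when finite, is given by the $L^2(\R^2)$-norm of this kernel, so
\begin{align*}
\| fK_Lf\|_2^2 = \iint_{\R^2} |f(x)|^2 |K_L(x,y)|^2 |f(y)|^2 \, dx\, dy.
\end{align*}
To bound this I would apply the Cauchy--Schwarz (or arithmetic--geometric mean) inequality $|f(x)|^2|f(y)|^2 \le \tfrac12(|f(x)|^4 + |f(y)|^4)$, and then use the symmetry $|K_L(x,y)|^2 = |K_L(y,x)|^2$ (which follows from the $J$-Hermitian relation \eqref{J-herm}) to reduce both resulting double integrals to the same form. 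Integrating out the second variable and invoking \eqref{reproducing} gives
\begin{align*}
\| fK_Lf\|_2^2 \le \iint_{\R^2} |f(x)|^4 |K_L(x,y)|^2 \, dx\, dy = \int_{\R} |f(x)|^4 K_L(x,x)\, dx,
\end{align*}
which is finite by hypothesis; in particular the kernel is in $L^2(\R^2)$, so $fK_Lf$ is genuinely Hilbert--Schmidt with the stated norm bound.

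The only genuinely delicate point is the measurability/finiteness bookkeeping: one must ensure the manipulations with the double integrals are legitimate, which is immediate since all integrands are non-negative (Tonelli's theorem applies throughout), and one must know a priori that a kernel lying in $L^2(\R^2)$ defines a Hilbert--Schmidt operator whose HS-norm equals the $L^2$-norm of the kernel --- a standard fact. So there is no real obstacle here; the proof is essentially the identity \eqref{reproducing} combined with $2|f(x)|^2|f(y)|^2 \le |f(x)|^4 + |f(y)|^4$ and the symmetry of $|K_L|^2$.
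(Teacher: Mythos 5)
Your proposal is correct and uses essentially the same approach as the paper: express the Hilbert--Schmidt norm as the $L^2$-norm of the kernel $f(x)K_L(x,y)f(y)$, split the product $|f(x)|^2|f(y)|^2$ so that the two factors decouple, and then integrate out the remaining variable using the reproducing-type identity $K_L(x,x)=\int_{\R}|K_L(x,y)|^2\,dy$ of Lemma~\ref{lem-repro} (together with the symmetry $|K_L(x,y)|=|K_L(y,x)|$). The only cosmetic difference is that the paper applies the Cauchy--Schwarz inequality to the double integral while you use the pointwise arithmetic--geometric mean bound $|f(x)|^2|f(y)|^2\le\tfrac12(|f(x)|^4+|f(y)|^4)$; since the two resulting double integrals are equal by symmetry, these give identical final bounds.
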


\begin{proof}
We have 
\begin{align*}
\| fK_Lf\|_2^2 & = \iint\limits_{\R^2} |f(x)|^2 |f(y)|^2 |K_L(x,y)|^2 dxdy
\\
&\le \left( \iint\limits_{\R^2}  |f(x)|^4  |K_L(x,y)|^2 dxdy\right)^{1/2} \left( \iint\limits_{\R^2}  |f(y)|^4  |K_L(x,y)|^2 dxdy\right)^{1/2}
\\
 & = \left(  \int_{\R} | f(x)|^4 K_L(x,x)dx \right)^{1/2} \left(  \int_{\R} | f(y)|^4 K_L(y,y)dy\right)^{1/2}
 \\
 & =  \int_{\R} | f(x)|^4 K_L(x,x)dx.
\end{align*}
The proof is complete.
\end{proof}

\begin{rem}
By definition, if  $g_1$ and $g_2$ are two functions such that $\log g_1, \log g_2 \in \mathscr{N}(L)$, then  
\begin{align*}
 \widetilde{\Psi}_L[g_1g_2] = \widetilde{\Psi}_L[g_1]\widetilde{\Psi}_L[g_2].
\end{align*}
\end{rem}

\begin{lem}\label{lem-tilde-2-norm}
For any $\varepsilon>0, M>0$ so that $\varepsilon < 1 < M$, there exists a constant $C_{\varepsilon, M} >0$ such that if $g \in \mathscr{M}_2(L)$ satisfies  $\varepsilon \le \inf_\R g \le \sup_\R g \le M$, 
then 
\begin{align*}
\log \E_{\mu_L} (|\widetilde{\Psi}_L[g] |^2) \le C_{\varepsilon, M} \int_\R | g(x) -1|^2 K_L(x,x)dx.
\end{align*}
\end{lem}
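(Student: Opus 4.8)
The plan is to follow the interpolation (tilting) method of \cite{BQS, QB3}. Recall that $\widetilde{\Psi}_L[g^2]=\exp(\overline{T}_L[\phi])$ with $\phi=\log((g^2)^\vee)$; since $g\in\mathscr{M}_2(L)$ implies $g^2\in\mathscr{M}_2(L)$, Lemma \ref{good-log} gives $\phi\in\mathscr{N}(L)$, and by construction $\phi^\circ=\log g^2$. First I would reduce to perturbations supported away from the origin: for a sequence $\varepsilon_k\downarrow 0$ set $\phi_k=\phi\,\chi_{\{|x|\ge\varepsilon_k\}}$ and $h_k=g\,\chi_{\{|x|\ge\varepsilon_k\}}+\chi_{\{|x|<\varepsilon_k\}}$, so that $\phi_k=\log((h_k^2)^\vee)$, $\varepsilon\le\inf h_k\le\sup h_k\le M$, and $|h_k-1|\le|g-1|$ pointwise. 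By Remark \ref{pt-limit} (equivalently, the convergence \eqref{cut-conv} together with Proposition \ref{prop-em}), after passing to a subsequence $\overline{T}_L[\phi_k]\to\overline{T}_L[\phi]$ $\mu_L$-almost surely, so Fatou's lemma gives $\E_{\mu_L}\widetilde{\Psi}_L[g^2]\le\liminf_k\E_{\mu_L}\exp(\overline{T}_L[\phi_k])$. Hence it suffices to bound each $\E_{\mu_L}\exp(\overline{T}_L[\phi_k])$ by $\exp\!\big(C_{\varepsilon,M}\int_\R|g(x)-1|^2K_L(x,x)dx\big)$ uniformly in $k$. Fix $k$ and write $\varepsilon_0=\varepsilon_k$, $h=h_k$, $\psi=\log h^2$; then $\supp(h-1)\subset\{|x|\ge\varepsilon_0\}$, $\phi_k\in\mathscr{N}_0(L)$, so $\overline{T}_L[\phi_k]=T[\phi_k]-\E_{\mu_L}T[\phi_k]=S[\psi]-\E_{\mu_L}S[\psi]$, and $\#(X\cap\{|x|\ge\varepsilon_0\})<\infty$ for $\mu_L$-a.e.\ $X$ by \eqref{fm-particle}.

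For $s\in[0,1]$ let $h_s:=h^s$, so that $h_s^2=e^{s\psi}$, $h_0\equiv1$, $h_1=h$, $\supp(h_s^2-1)\subset\{|x|\ge\varepsilon_0\}$ and $\varepsilon\le h_s\le M$. Set $\Lambda(s)=\log\E_{\mu_L}\Psi[e^{s\psi}]=\log\det\!\big(1+(e^{s\psi}-1)K_L^{\{|x|\ge\varepsilon_0\}}\big)$, which is finite by the origin–singularity version of Theorem \ref{thm-01} and \eqref{K-Delta}. For $\mu_L$-a.e.\ $X$ the quantity $\Psi[e^{s\psi}](X)=\prod_{x\in X,\,|x|\ge\varepsilon_0}e^{s\psi(x)}$ is a finite product, smooth in $s$, and dominated on $[0,1]$, together with its products against powers of $\#(X\cap\{|x|\ge\varepsilon_0\})$, by integrable functions of the form $(M^2+\delta)^{\#(X\cap\{|x|\ge\varepsilon_0\})}$ (whose $\mu_L$-expectation is again a finite extended Fredholm determinant). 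Differentiating under $\E_{\mu_L}$ gives $\Lambda'(0)=\E_{\mu_L}S[\psi]$ and $\Lambda''(s)=\Var_{\nu_s}(S[\psi])$, where $\nu_s=\frac{\Psi[e^{s\psi}]}{\E_{\mu_L}\Psi[e^{s\psi}]}\cdot\mu_L=\mu_{h_sLh_s}$ by Proposition \ref{prop-pre}. Since $\Lambda(0)=0$, Taylor's formula with integral remainder yields
\[
\log\E_{\mu_L}\exp(\overline{T}_L[\phi_k])=\Lambda(1)-\Lambda'(0)=\int_0^1(1-s)\,\Var_{\mu_{h_sLh_s}}(S[\psi])\,ds .
\]

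It remains to bound the variance. By Lemma \ref{lem-Kg}, $h_sLh_s$ satisfies Condition \ref{con-four}, so by Proposition \ref{prop-proj} the operator $\widehat{K_{h_sLh_s}}$ is an orthogonal projection and Lemma \ref{lem-repro} gives $K_{h_sLh_s}(x,x)=\int|K_{h_sLh_s}(x,y)|^2dy$. Hence, for any bounded Borel $f$, expanding the variance via the first two correlation functions and using Cauchy–Schwarz,
\[
\Var_{\mu_{h_sLh_s}}(S[f])=\int f(x)^2K_{h_sLh_s}(x,x)\,dx-\iint f(x)f(y)K_{h_sLh_s}(x,y)K_{h_sLh_s}(y,x)\,dxdy\le 2\int f(x)^2K_{h_sLh_s}(x,x)\,dx .
\]
Next, by the block form \eqref{Kbf} and Remark \ref{rem-fLf} with multiplier $h_s$, the $\R_+$–diagonal block of $K_{h_sLh_s}$ is $\widetilde V\widetilde V^*(1+\widetilde V\widetilde V^*)^{-1}$ with $\widetilde V(x,y)=h_s(x)V(x,y)h_s(y)$; since the diagonal of a nonnegative integral operator is nonnegative (checked after compressing to a compact subset of $\R^*$, as in Remark \ref{rem-off-s}), one gets $K_{h_sLh_s}(x,x)\le(\widetilde V\widetilde V^*)(x,x)=h_s(x)^2\!\int|V(x,y)|^2h_s(y)^2dy\le M^4(VV^*)(x,x)$ for $x>0$, and $(VV^*)(x,x)\le(1+\|V\|^2)K_L(x,x)$ by the order inequality of Remark \ref{rem-off-s}; the range $x<0$ is symmetric. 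Thus $K_{h_sLh_s}(x,x)\le M^4(1+\|V\|^2)K_L(x,x)$ for all $x$ and $s$. Finally, with $C''=C''(\varepsilon,M):=\sup_{t\in[\varepsilon,M]}|\log t|/|t-1|<\infty$, one has $|\psi(x)|=2|\log h(x)|\le 2C''|h(x)-1|\le 2C''|g(x)-1|$. Combining these estimates with $f=\psi$ gives $\Var_{\mu_{h_sLh_s}}(S[\psi])\le 8(C'')^2M^4(1+\|V\|^2)\int_\R|g-1|^2K_L(x,x)dx$, hence
\[
\log\E_{\mu_L}\exp(\overline{T}_L[\phi_k])\le\tfrac12\sup_{s\in[0,1]}\Var_{\mu_{h_sLh_s}}(S[\psi])\le 4(C'')^2M^4(1+\|V\|^2)\int_\R|g(x)-1|^2K_L(x,x)\,dx ,
\]
which is uniform in $k$; with Fatou's lemma this proves the lemma, with $C_{\varepsilon,M}=4\,(C''(\varepsilon,M))^2M^4(1+\|V\|^2)$.

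The main obstacle is the second step: justifying that $\Lambda$ is $C^2$ with $\Lambda''(s)=\Var_{\mu_{h_sLh_s}}(S[\psi])$, i.e.\ differentiating twice under the expectation and identifying the exponentially tilted measures $\nu_s$ with the $L$-processes $\mu_{h_sLh_s}$ through Proposition \ref{prop-pre}. The reduction of the first step — replacing $g$ by $h_k$ supported in $\{|x|\ge\varepsilon_0\}$ — is exactly what makes $\#(X\cap\{|x|\ge\varepsilon_0\})$ finite a.s., the multiplicative functionals genuinely convergent, and the hypothesis ``$\supp(h_s^2-1)$ at positive distance from $0$'' of Proposition \ref{prop-pre} available; one pays for it only by a harmless application of Fatou's lemma.
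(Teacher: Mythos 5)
Your proof is correct, but it takes a genuinely different route from the paper's. After the common first step --- reducing by Fatou's lemma to the case where $g-1$ (or $h_k-1$) is supported away from the origin, via Remark \ref{pt-limit} --- the paper bounds $\E_{\mu_L}\Psi[g]$ directly: it writes it as an extended Fredholm determinant $\det(1+\sqrt{g-1}\,K_L^\Delta\sqrt{g-1})$, applies the Hilbert--Schmidt inequality $|\det(1+A)e^{-\tr A}|\le\exp(\tfrac12\|A\|_2^2)$ of \cite[Thm.\ 6.4]{Simon-det} (understood for $A\in\mathscr{L}_{1|2}$ with the generalized trace), and estimates $\|A\|_2^2$ by Lemma \ref{fKf-norm} and $\tr A-\E_{\mu_L}S[\log g]$ by the elementary Taylor bound \eqref{D-log-other}. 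You instead use the tilting/cumulant argument of \cite{BQS, QB3}: you view $\Lambda(s)=\log\E_{\mu_L}\Psi[e^{s\psi}]$ as the log-moment-generating function, identify the tilted measures $\nu_s$ as the $L$-processes $\mu_{h_sLh_s}$ via Proposition \ref{prop-pre}, and bound $\Lambda(1)-\Lambda'(0)=\int_0^1(1-s)\Var_{\nu_s}(S[\psi])\,ds$ using the variance inequality $\Var(S[\psi])\le 2\int\psi^2 K_{h_sLh_s}(x,x)\,dx$ (a consequence of the reproducing identity of Lemma \ref{lem-repro}, valid here because $\widehat{K_{h_sLh_s}}$ is a projection by Lemma \ref{lem-Kg} and Proposition \ref{prop-proj}), together with the uniform diagonal bound $K_{h_sLh_s}(x,x)\le M^4(1+\|V\|^2)K_L(x,x)$.

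The paper's argument is shorter and does not require differentiating under the expectation; yours avoids Simon's $\det_2$ estimate entirely and instead rests on the variance formula already established in the paper, at the price of having to justify that $\Lambda$ is $C^2$ (which you do, correctly noting that the dominating functions $M^{2\#_\Delta}\#_\Delta^k$ are $\mu_L$-integrable since $K_L^\Delta\in\mathscr{L}_{1|2}$ makes the relevant extended Fredholm determinants finite) and of producing a larger constant $C_{\varepsilon,M}=4(C'')^2 M^4(1+\|V\|^2)$, which also depends on $L$ through $\|V\|$. This dependence is harmless here since $L$ is fixed throughout the section, so the lemma as stated is proved. One small point worth making explicit in your write-up: the operator inequality $K_{h_sLh_s}\!\upharpoonright_{\R_+}\le\widetilde V\widetilde V^*$ yields the a.e.\ pointwise bound on diagonals only after compressing to subsets on which both sides are trace-class, which your parenthetical remark correctly gestures at.
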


\begin{proof}
By multiplicativity, it suffices to prove 
\begin{align}\label{no-square}
\log \E_{\mu_L}(\widetilde{\Psi}_L[g]) \le C_{\varepsilon, M} \int_\R | g(x) -1|^2 K_L(x,x)dx.
\end{align}
Since $g^\vee \in \mathscr{M}_2(L)$, by Lemma \ref{good-log},  $\log (g^\vee) \in \mathscr{N}(L)$, hence by Remark \ref{pt-limit}, passing to  a sequence $\delta_n$ if necessary, the functional $\overline{T}_{\log (g^\vee)}$ can be approximated pointwisely by 
$$
\overline{T}_{(\log (g^\vee))\chi_{\R\setminus (-\delta, \delta)}} = \overline{T}_{\log (g^\vee \chi_{\R\setminus (-\delta, \delta)} + \chi_{[-\delta, \delta]})}.
$$
Thus by Fatou's lemma, it suffices to establish \eqref{no-square} in the case when $\supp(g-1)$ is contained in some $\R \setminus(-\delta, \delta)$. In this case, the usual multiplicative functional $\Psi[g]$ is well-defined and we have 
$$
\widetilde{\Psi}_L[g] = \exp(S[\log g] - \E_{\mu_L} S[\log g]) = \frac{\Psi [g]}{\exp( \E_{\mu_L} S[\log g])}.
$$
Now by the very definition of determinantal point process $\mu_L = \PP_{K_L}$, we have
$$
\E_{\mu_L} S_{\log g}  = \int_{\R} \log g(x) K(x,x)dx
$$
and
$$
 \E_{\mu_L}\Psi[g] = \det(1  + \sqrt{g-1} K_L\sqrt{g-1}).
$$
By \cite[Thm. 6.4]{Simon-det}, if we denote $A = \sqrt{g-1} K_L\sqrt{g-1}$, we have 
\begin{align*}
|\det( 1 + A) \exp(-\tr(A))| \le \exp(\frac{1}{2}\|A\|_2^2).
\end{align*}
Hence by Lemma \ref{fKf-norm}, we have 
\begin{align*}
\log \E_{\mu_L}\Psi [g] & \le \tr (A)  + \frac{1}{2} \| A\|_2^2
\\
& \le \int_\R (g(x)-1) K_L(x,x) dx +\frac{1}{2} \int_\R | g(x) -1|^2 K_L(x,x)dx.
\end{align*}
An application of \eqref{D-log-other} to the function $g$ yields the existence of a constant $C_{\varepsilon, M}>0$, such that
\begin{align*}
\left| \int_\R  \log g (x)  K_L(x,x) dx - \int_\R (g(x) - 1) K_L(x,x) dx\right| \le  C_{\varepsilon, M}   \int_\R | g(x) -1|^2 K_L(x,x)dx.
\end{align*}
By setting $C_{\varepsilon, M}' = C_{\varepsilon, M}  + \frac{1}{2}$, we obtain 
\begin{align*}
\log \E_{\mu_L}\widetilde{\Psi}_L[g] = \log \E_{\mu_L} \Psi[g] - \E_{\mu_L} S[\log g] \le  C_{\varepsilon, M}' \int_\R | g(x) -1|^2 K_L(x,x)dx.
\end{align*}
The proof is complete.
\end{proof}

 \begin{lem}\label{lem-l1}
 Let $\varepsilon>0, M>0$ be two positive numbers such  that $\varepsilon < 1 < M$.  There exists a constant $C>0$ depending on $\varepsilon, M$,  such that if $g_1, g_2 \in \mathscr{M}_2(L)$ satisfy
\begin{align*}
\varepsilon \le \inf_\R g_1 \le \sup_\R g_1 \le M, \quad  \varepsilon \le \inf_\R g_2 \le \sup_\R g_2 \le M,
\end{align*}
then we have
\begin{align*}
\frac{(\E_{\mu_L}|\widetilde{\Psi}_L[g_1]  - \widetilde{\Psi}_L[g_2] |)^2 }{ \E_{\mu_L} (|\widetilde{\Psi}_L[g_1]|^2)}
\le &  \exp\Big( C \int_\R | g_1(x) -g_2(x)|^2 K_L(x,x)dx\Big)-1.
\end{align*}
 \end{lem}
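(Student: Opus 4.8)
The plan is to reduce the whole estimate, via a single Cauchy--Schwarz step, to an upper bound on $\E_{\mu_L}\big(|1-\widetilde{\Psi}_L[g_2/g_1]|^2\big)$, and then to extract that bound from Lemma \ref{lem-tilde-2-norm}. First I would record the elementary facts needed. Set $h:=g_2/g_1$. Since $\varepsilon\le\inf_\R g_i\le\sup_\R g_i\le M$, we have $\varepsilon/M\le\inf_\R h\le\sup_\R h\le M/\varepsilon$, and from $|h(x)-1|=|g_2(x)-g_1(x)|/g_1(x)\le\varepsilon^{-1}|g_1(x)-g_2(x)|$ together with condition (2) in the definition of $\mathscr{M}_2(L)$ it follows that $h\in\mathscr{M}_2(L)$; consequently $h^\vee$ and $h^2$ lie in $\mathscr{M}_2(L)$ as well, so by Lemma \ref{good-log} the relevant logarithms lie in $\mathscr{N}(L)$ and $\widetilde{\Psi}_L[h]$, $\widetilde{\Psi}_L[h^2]$ are well defined. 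Because $(g_1h)^\vee=g_1^\vee h^\vee$ pointwise and $\overline{T}_L$ is linear on $\mathscr{N}(L)$ (Proposition \ref{prop-em}), the multiplicativity of $\widetilde{\Psi}_L$ (the Remark preceding Lemma \ref{lem-tilde-2-norm}, valid here since all factors are in $\mathscr{M}_2(L)$) gives the $\mu_L$-almost sure identities
\begin{align*}
\widetilde{\Psi}_L[g_2]=\widetilde{\Psi}_L[g_1]\cdot\widetilde{\Psi}_L[h],\qquad \widetilde{\Psi}_L[h]^2=\widetilde{\Psi}_L[h^2].
\end{align*}
Lemma \ref{lem-tilde-2-norm}, applied with $\varepsilon/M$ and $M/\varepsilon$ in place of $\varepsilon$ and $M$, shows moreover that $\widetilde{\Psi}_L[h]$, and likewise $\widetilde{\Psi}_L[g_1]$, belong to $L_2(\Conf(\R^*),\mu_L)$.

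Next I would apply Cauchy--Schwarz to the factorization $\widetilde{\Psi}_L[g_1]-\widetilde{\Psi}_L[g_2]=\widetilde{\Psi}_L[g_1]\,(1-\widetilde{\Psi}_L[h])$:
\begin{align*}
\big(\E_{\mu_L}|\widetilde{\Psi}_L[g_1]-\widetilde{\Psi}_L[g_2]|\big)^2\le \E_{\mu_L}\big(|\widetilde{\Psi}_L[g_1]|^2\big)\cdot\E_{\mu_L}\big(|1-\widetilde{\Psi}_L[h]|^2\big),
\end{align*}
so the quantity to control is $\E_{\mu_L}\big(|1-\widetilde{\Psi}_L[h]|^2\big)$. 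Expanding the square, using that $\widetilde{\Psi}_L[h]$ is real and positive, that $\widetilde{\Psi}_L[h]^2=\widetilde{\Psi}_L[h^2]$, and that $\E_{\mu_L}\widetilde{\Psi}_L[h]\ge 1$ by Jensen's inequality and Remark \ref{rem-null} (exactly the argument in \eqref{jensen}), I would estimate
\begin{align*}
\E_{\mu_L}\big(|1-\widetilde{\Psi}_L[h]|^2\big) &= 1-2\,\E_{\mu_L}\widetilde{\Psi}_L[h]+\E_{\mu_L}\widetilde{\Psi}_L[h^2]\\
&\le \E_{\mu_L}\widetilde{\Psi}_L[h^2]-1 = \E_{\mu_L}\big(|\widetilde{\Psi}_L[h]|^2\big)-1.
\end{align*}

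Finally I would invoke Lemma \ref{lem-tilde-2-norm} with $g=h$ and constants $\varepsilon/M<1<M/\varepsilon$, which produces a constant $C_{\varepsilon/M,M/\varepsilon}>0$ with
\begin{align*}
\E_{\mu_L}\big(|\widetilde{\Psi}_L[h]|^2\big) &\le \exp\Big(C_{\varepsilon/M,M/\varepsilon}\int_\R|h(x)-1|^2K_L(x,x)\,dx\Big)\\
&\le \exp\Big(\frac{C_{\varepsilon/M,M/\varepsilon}}{\varepsilon^2}\int_\R|g_1(x)-g_2(x)|^2K_L(x,x)\,dx\Big),
\end{align*}
the last step being $|h-1|\le\varepsilon^{-1}|g_1-g_2|$. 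Combining the three displays and setting $C:=C_{\varepsilon/M,M/\varepsilon}/\varepsilon^2$ gives the claim. The computation is essentially bookkeeping once the earlier lemmas are available; the only point that requires any care — and the only place the uniform two-sided bounds $\varepsilon\le g_i\le M$ are genuinely used — is verifying that $h=g_2/g_1$ again lies in $\mathscr{M}_2(L)$ with bounds depending only on $\varepsilon,M$, so that Lemma \ref{lem-tilde-2-norm} applies with a constant independent of $g_1,g_2$. I do not expect a real obstacle here; the substantive analytic work has already been carried out in Lemmas \ref{good-log} and \ref{lem-tilde-2-norm}.
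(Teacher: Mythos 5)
Your proof matches the paper's proof almost line for line: the factorization $\widetilde{\Psi}_L[g_1]-\widetilde{\Psi}_L[g_2]=\widetilde{\Psi}_L[g_1]\,(1-\widetilde{\Psi}_L[g_2/g_1])$, the Cauchy--Schwarz step, the Jensen-based bound $\E_{\mu_L}(|1-\widetilde{\Psi}_L[h]|^2)\le\E_{\mu_L}(|\widetilde{\Psi}_L[h]|^2)-1$, and the appeal to Lemma \ref{lem-tilde-2-norm} together with $|h-1|\le\varepsilon^{-1}|g_1-g_2|$ are exactly the steps the authors use. The only differences are cosmetic additions (explicitly noting $h\in\mathscr{M}_2(L)$ and that $\widetilde{\Psi}_L[h]$ is positive), so this is the same argument.
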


\begin{proof}
Set $g = g_2/g_1$. Since $\widetilde{\Psi}_L[g_1]  - \widetilde{\Psi}_L[g_2] = \widetilde{\Psi}_L[g_1] (1  - \widetilde{\Psi}_L[g])  $, we have  
\begin{align}\label{cs}
(\E_{\mu_L}|\widetilde{\Psi}_L[g_1]  - \widetilde{\Psi}_L[g_2] |)^2  \le  \E_{\mu_L} (|\widetilde{\Psi}_L[g_1]|^2)    \cdot \E_{\mu_L}(  |\widetilde{\Psi}_L[g] -1 |^2).
\end{align}
By the inequality \eqref{jensen}, we have 
\begin{align}\label{a-jensen}
\E_{\mu_L} ( |\widetilde{\Psi}_L[g] -1 |^2) = \E_{\mu_L} (|\widetilde{\Psi}_L[g] |^2)  - 2\E_{\mu_L} |\widetilde{\Psi}_L[g] | + 1 \le  \E_{\mu_L} ( |\widetilde{\Psi}_L[g] |^2)  -  1. 
\end{align}
Since $\varepsilon/M \le \inf_\R g \le \sup_\R g \le M/\varepsilon$,  by Lemma \ref{lem-tilde-2-norm}, there exists $C_{\varepsilon, M}>0$, such that
\begin{align*}
\E_{\mu_L} (|\widetilde{\Psi}_L[g] |^2) &\le \exp\left( C_{\varepsilon, M} \int_\R | g(x) -1|^2 K_L(x,x)dx\right).
\end{align*}
Hence there exists $C_{\varepsilon, M}'>0$, such that 
\begin{align}\label{a-lem}
 \E_{\mu_L} (|\widetilde{\Psi}_L[g] |^2)  \le \exp\left( C_{\varepsilon, M}' \int_\R | g_1(x) -g_2(x)|^2 K_L(x,x)dx\right).
\end{align}
Substituting the inequalities \eqref{a-jensen} and \eqref{a-lem} into \eqref{cs}, we obtain the desired inequality. 
\end{proof}

\subsection{Proof of Theorem \ref{thm-main-bis}}

\begin{proof}[Proof of Theorem \ref{thm-main-bis}]
By Theorem \ref{thm-sub-main}, it suffices to check the inequality \eqref{reg-condition} under the assumption of Theorem \ref{thm-sub-main}. Indeed, we have 
\begin{align*}
& \int_\R | g(x) - 1|^2 K_L(x,x)dx 
\\
 = & \int_{|x|\ge \varepsilon} | g(x) - 1|^2 K_L(x,x)dx  + \int_{|x|< \varepsilon} | g(x) - 1|^2 K_L(x,x)dx
\\ 
 =&: I + II.
\end{align*}
The relation $I < \infty$ follows from the boundedness of $g$ and the assumption \eqref{off-s}. For the second term, we have
\begin{align*}
II \le C^2 \int_{| x| < \varepsilon} x^2 K_L(x,x)dx < \infty.
\end{align*}
This  proof of Theorem \ref{thm-main-bis} is complete.
\end{proof}

\subsection{Proof of Theorem B}
By \cite[Thm. 2.4]{PV} and \cite[\S 6]{BO-hyper}, if we assume that 
$$
\left|\frac{z+z'}{2} \right|< \frac{1}{2},
$$
then the Whittaker kernel $\mathcal{K} = \mathcal{K}_{z, z'}$ admits a bounded $L$-operator as in \eqref{L-integrable}, such that the bounded operator $V: L^2(\R_{-}) \rightarrow L^2(\R_{+})$ has as kernel:
$$
\frac{\sin \pi z \sin \pi z'}{\pi^2}\frac{\Big(\frac{x}{-y}\Big)^{\frac{z+z'}{2}}e^{-\frac{x-y}{2}} }{x-y},   \text{\, where \,} x>0, y < 0. 
$$ 
In other words, the $L$-kernel $ \mathcal{L}(x,y) = \mathcal{L}_{z,z'} (x,y)$ of the kernel $\mathcal{K}(x,y) = \mathcal{K}_{z,z'}(x,y)$ is given by
\begin{align}\label{W-L}
\mathcal{L}_{z,z'}(x,y) =  \frac{\mathcal{A}^{+}(x) \mathcal{A}^{-} (y) + \mathcal{A}^{-} (x) \mathcal{A}^{+}(y)}{x-y} ,\, x, y \in \R^*,
\end{align}
where 
$$
\mathcal{A}(x)  = \frac{ \sqrt{\sin \pi z \sin \pi z'}}{\pi}   |x|^{\sgn(x) \frac{z+z'}{2}}   e^{-\frac{|x|}{2}},  \text{\, where $x\ne 0$.}
$$
This function $\mathcal{A}$ satisfies the following conditions:
\begin{itemize}
\item the support of  $\mathcal{A}$ in $\R^*$ is the whole punctured line $\R^*$;
\item $\mathcal{A}\in C^\infty(\R^*) \cap L^2(\R)$.
\end{itemize}

Thus we have shown the following 
\begin{lem}
If $\left|z+z' \right|< 1$, then the  $L$-kernel $\mathcal{L} = \mathcal{L}_{z, z'}$ in \eqref{W-L} satisfies Condition \ref{con35}.
\end{lem}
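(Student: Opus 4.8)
The plan is to verify directly that the function $\mathcal{A}$ exhibited above satisfies all the requirements of Condition \ref{con35}, since once this is done the lemma is immediate: Condition \ref{con35} asks precisely that the $L$-operator have the block anti-diagonal integral form \eqref{L-integrable}--\eqref{L-kernel-int} with a real-valued generating function $A = \mathcal{A}$ lying in $C^1(\R^*) \cap L^2(\R^*, dx)$ and having full support $\R^*$. The block form and the specific shape of the kernel are already recorded in \eqref{W-L}, which is quoted from \cite[Thm. 2.4]{PV} and \cite[\S 6]{BO-hyper}; so the remaining content is the three analytic properties of $\mathcal{A}$, together with the one nontrivial hypothesis implicit in Condition \ref{con35}, namely the \emph{boundedness} of the resulting operator $L$ (equivalently of $V$).

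First I would record that $\mathcal{A}$ is real-valued: under the standing hypothesis $|z+z'|<1$ we are in one of the two admissible parameter regimes, and in both of them $\sin\pi z\,\sin\pi z'$ is a positive real number (when $z'=\bar z$, $\sin\pi z'=\overline{\sin \pi z}$ so the product is $|\sin\pi z|^2\ge 0$; when $z,z'$ are real with $m<z,z'<m+1$, both sines have the same sign), hence $\sqrt{\sin\pi z\,\sin\pi z'}$ is real and $\mathcal{A}(x) = \frac{\sqrt{\sin\pi z\sin\pi z'}}{\pi}|x|^{\operatorname{sgn}(x)\frac{z+z'}{2}}e^{-|x|/2}$ is a real-valued function on $\R^*$. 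Smoothness on $\R^*$ (in particular $\mathcal{A}\in C^1(\R^*)$) is clear since $x\mapsto |x|^{c}e^{-|x|/2}$ is $C^\infty$ away from the origin; the support is all of $\R^*$ because the expression never vanishes there. For the $L^2$ claim, split $\int_{\R^*}\mathcal{A}(x)^2\,dx$ into $\int_0^\infty$ and $\int_{-\infty}^0$: on $(0,\infty)$ the integrand is a constant times $x^{\Re(z+z')}e^{-x}$, which is integrable near $0$ because $\Re(z+z')>-1$ (this uses exactly $|z+z'|<1$) and integrable near $\infty$ because of the exponential decay; the negative half-line is identical after $x\mapsto -x$. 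So $\mathcal{A}\in C^\infty(\R^*)\cap L^2(\R,dx)$, and in particular $\mathcal{A}\in C^1(\R^*)\cap L^2(\R^*,dx)$ with full support, as required.

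It remains to address boundedness of $V$, i.e.\ of the operator with kernel $\frac{\sin\pi z\sin\pi z'}{\pi^2}\frac{(x/(-y))^{(z+z')/2}e^{-(x-y)/2}}{x-y}$ on $L^2(\R_-)\to L^2(\R_+)$. This is not a formal consequence of $\mathcal{A}\in L^2$ --- the remark after Condition~\ref{con5} in the excerpt explicitly warns that an operator of this integrable shape with $A\in L^2$ need not be bounded --- so I would invoke the cited sources: \cite[Thm. 2.4]{PV} (together with \cite[\S 6]{BO-hyper}) asserts precisely that for $|z+z'|<1$ the Whittaker kernel $\mathcal{K}_{z,z'}$ admits this \emph{bounded} $L$-operator; the condition $|(z+z')/2|<1/2$ displayed at the start of \S\ref{sec-L-rel}'s proof of Theorem B is exactly the hypothesis under which Borodin--Olshanski and \cite{PV} establish boundedness. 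Thus $V$, and hence $L = \begin{bmatrix} 0 & V\\ -V^* & 0\end{bmatrix}$, is a bounded operator on $L^2(\R^*,dx)$. Assembling these pieces --- the block anti-diagonal integrable form \eqref{W-L}, boundedness from \cite{PV,BO-hyper}, and the four elementary properties of $\mathcal{A}$ (real-valued, $C^1$ on $\R^*$, $L^2$, full support) --- shows $\mathcal{L}_{z,z'}$ satisfies Condition \ref{con35}, which is the assertion of the lemma.

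The main obstacle is the boundedness of $V$: this is the one point that genuinely requires input beyond elementary manipulation of the explicit formula, and it is precisely here that the restriction $|z+z'|<1$ is used in an essential way (it controls both the $L^2$-integrability of $\mathcal{A}$ near the origin and, via \cite{PV,BO-hyper}, the operator norm of $V$). Everything else --- reality of the constant, smoothness, full support --- is routine verification from the closed form of $\mathcal{A}$.
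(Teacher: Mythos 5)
Your proof is correct and follows essentially the same route as the paper: cite \cite{PV,BO-hyper} for the existence of the bounded $L$-operator in the prescribed integrable form, then check the remaining analytic properties of $\mathcal{A}$ (real-valued, smooth on $\R^*$, $L^2$, full support) directly from the explicit formula; the paper's version is simply a terser statement of these same facts. One small slip in your $L^2$ computation: on the negative half-line the exponent of $|x|$ flips sign, so after $x\mapsto -x$ the integrand is $x^{-(z+z')}e^{-x}$ rather than $x^{z+z'}e^{-x}$; integrability near $0$ there requires $z+z'<1$ while on $\R_+$ it requires $z+z'>-1$, and it is the conjunction of the two one-sided bounds that yields $|z+z'|<1$, so the two half-lines are not ``identical'' but rather complementary.
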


Recall that if  $\mathfrak{p} = (p^{+}_1, \dots p^{+}_n;  p^{-}_1, \dots, p^{-}_n)$, then we set 
$$
g_{\mathfrak{p}}(x) = \prod_{i=1}^n \left(\frac{x-p_i^{+}}{x-p_i^{-}} \chi_{\{x > 0\}} + \frac{x-p_i^{-}}{x-p_i^{+}} \chi_{\{x<0\}}\right).
$$ 
Let $\lambda : = \frac{ |p_1^{-} \cdots p_n^{-}| }{p_1^{+} \cdots p_n^{+}}$ and recall the formula \eqref{c-form}: $c_\lambda =  \lambda \chi_{\R_{+}} + \lambda^{-1} \chi_{\R_{-}}$. Set
$$
h_{\mathfrak{p}} (x) = c_\lambda(x) |g_{\mathfrak{p}} (x) |  = c_\lambda(x) \sgn(g_{\mathfrak{p}}(x))  g_{\mathfrak{p}}(x).
$$
That is, 
$$
h_{\mathfrak{p}}(x) = \prod_{i=1}^n \left|\frac{x/p_i^{+}-1}{x/p_i^{-}-1} \chi_{\{x > 0\}} + \frac{x/p_i^{-}-1}{x/p_i^{+}-1} \chi_{\{x<0\}}\right|.
$$

The proof of the following lemma is immediate.
\begin{lem}\label{lem-hp}
The function $h_{\mathfrak{p}}$ is bounded and there exists $C > 0$ such that 
\begin{align*}
|h_{\mathfrak{p}}(x) - 1| \le C |x|.
\end{align*}
Moreover, for any $\varepsilon >0$, the subset $ \{ x \in \R^*: | h_{\mathfrak{p}}(x)  - 1| > \varepsilon\}$ is away from $0$.
\end{lem}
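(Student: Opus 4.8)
The plan is to reduce the statement to elementary estimates on the individual Möbius-type factors of $h_{\mathfrak{p}}$, using crucially that in each factor the denominator is bounded below by $1$ in modulus. First I would record the explicit formulas: for $x>0$,
\[
h_{\mathfrak{p}}(x)=\prod_{i=1}^n \frac{|1-x/p_i^{+}|}{|1-x/p_i^{-}|},
\]
and symmetrically for $x<0$ with the roles of $p_i^{+}$ and $p_i^{-}$ interchanged. When $x>0$ one has $p_i^{-}<0$, so each denominator has the shape $|1-x/q|$ with $x/q<0$, i.e.\ it equals $1+|x/q|\ge 1$; likewise for $x<0$ the denominators $|1-x/p_i^{+}|$ equal $1+|x/p_i^{+}|\ge 1$. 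Hence every factor of $h_{\mathfrak{p}}$ is a ratio of a continuous function by a continuous function that is $\ge 1$, so $h_{\mathfrak{p}}$ is continuous on $\R^*$. Boundedness follows by observing that on $\R_{+}$ the numerator $|1-x/p_i^{+}|$ and the denominator $|1-x/p_i^{-}|$ both grow linearly, so each factor tends to $|p_i^{-}|/p_i^{+}$ as $x\to+\infty$; a continuous function on $\R_{+}$ with a finite limit at infinity is bounded, and the same argument applies on $\R_{-}$.

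For the linear bound I would split $\R^*$ into $\{|x|>1\}$ and $\{0<|x|\le 1\}$. On $\{|x|>1\}$ the estimate $|h_{\mathfrak{p}}(x)-1|\le (\|h_{\mathfrak{p}}\|_\infty+1)|x|$ is immediate from boundedness. On $\{0<|x|\le 1\}$, the reverse triangle inequality gives, for $x>0$,
\[
\left|\,|1-x/p_i^{+}|-|1-x/p_i^{-}|\,\right|\le |x|\Bigl(\tfrac{1}{p_i^{+}}+\tfrac{1}{|p_i^{-}|}\Bigr),
\]
and since the corresponding denominator is $\ge 1$ this yields $\bigl|\,|1-x/p_i^{+}|/|1-x/p_i^{-}|-1\bigr|\le c_i|x|$ with $c_i=\tfrac{1}{p_i^{+}}+\tfrac{1}{|p_i^{-}|}$. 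A product of $n$ such factors, each of the form $1+O(|x|)$ and each uniformly bounded on $\{|x|\le 1\}$, is again of the form $1+O(|x|)$ by the standard telescoping estimate $|\prod a_i-1|\le\sum_i|a_i-1|\prod_{j\ne i}|a_j|$; this gives $|h_{\mathfrak{p}}(x)-1|\le C_0|x|$ on $\{|x|\le 1\}$. Taking $C=\max\{C_0,\ \|h_{\mathfrak{p}}\|_\infty+1\}$ proves the claimed global estimate $|h_{\mathfrak{p}}(x)-1|\le C|x|$.

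The final assertion is then immediate: if $|x|<\varepsilon/C$ then $|h_{\mathfrak{p}}(x)-1|\le C|x|<\varepsilon$, hence
\[
\{x\in\R^*:\ |h_{\mathfrak{p}}(x)-1|>\varepsilon\}\subset\{x\in\R^*:\ |x|\ge\varepsilon/C\},
\]
which has positive distance from $0$. Since every step is a routine estimate on products of rational functions of $x$, there is no genuine obstacle here; the only points needing a little care are the sign bookkeeping in the two cases $x>0$ and $x<0$, and the verification that a product of $n$ factors of the form $1+O(|x|)$, uniformly bounded near the origin, is itself $1+O(|x|)$.
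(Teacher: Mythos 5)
Your proof is correct and is exactly the natural elaboration of the paper's claim; the paper simply asserts that the lemma ``is immediate'' and gives no proof, so there is no alternative argument in the text to compare against. Your key observations — that each denominator factor has the form $1+|x|/|q|\ge 1$ on the relevant half-line, that each Möbius-type factor therefore satisfies a Lipschitz bound $|a_i(x)-1|\le c_i|x|$, and that the telescoping identity $\prod a_i-1=\sum_k(a_k-1)\prod_{j<k}a_j$ promotes this to the product — are precisely the routine estimates the authors had in mind. One small cosmetic remark: on $\{|x|\le 1\}$ the factors are uniformly bounded for an even simpler reason than boundedness of $h_{\mathfrak p}$, namely each numerator $|1-x/q|\le 1+1/|q|$ and each denominator is $\ge 1$; but your version is equally valid.
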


\begin{proof}[Proof of Proposition\ref{intro-prop-1} and Theorem B]
By Proposition \ref{prop-palm-kernel}, we have $\mathscr{P}_{z,z'}^{\mathfrak{p}} = \mu_{g_{\mathfrak{p}} \mathcal{L}g_{\mathfrak{p}}}.$ By Lemma \ref{lem-conjugation}, we have 
$\mu_{g_{\mathfrak{p}} \mathcal{L} g_{\mathfrak{p}}} =\mu_{|g_{\mathfrak{p}}| \mathcal{L} |g_{\mathfrak{p}}|}  =  \mu_{h_{\mathfrak{p}} \mathcal{L} h_{\mathfrak{p}}}.
$
Finally,  by Theorem \ref{thm-main-bis} and Lemma \ref{lem-hp}, the following limit 
$$
\overline{S}_{\mathcal{L}}[\log h_{\mathfrak{p}}^2](X) : =\lim_{\delta\to 0^{+}}    \left( \sum_{x \in X, | x| \ge \delta}\log h_{\mathfrak{p}}(x)^2 - \E_{\mu_{\mathcal{L}}} \sum_{x \in X, | x| \ge \delta}\log h_{\mathfrak{p}}(x)^2 \right)
$$
exists for $\mu_{\mathcal{L}}$-almost every configuration $X\in \Conf(\R^*)$.  Moreover, the function 
$$
X \mapsto \exp (\overline{S}_{\mathcal{L}}[\log  h_{\mathfrak{p}}^2] (X))
$$
 is in $L^1(\Conf(\R^*), \mu_{\mathcal{L}})$ and we have 
$$
 \mu_{ h_{\mathfrak{p}} \mathcal{L}  h_{\mathfrak{p}} }(dX)=  \frac{\exp (\overline{S}_{\mathcal{L}}[\log  h_{\mathfrak{p}}^2] (X)  )}{\E_{\mu_{\mathcal{L}}} \Big[ \exp (\overline{S}_{\mathcal{L}}[\log  h_{\mathfrak{p}}^2]) \Big]} \cdot \mu_{\mathcal{L}}(dX),
$$
that is, 
$$
\mathscr{P}_{z,z'}^{\mathfrak{p}}(dX)=  \frac{\exp (\overline{S}_{\mathcal{L}}[\log  h_{\mathfrak{p}}^2] (X)  )}{\E_{\mu_{\mathcal{L}}} \Big[ \exp (\overline{S}_{\mathcal{L}}[\log  h_{\mathfrak{p}}^2]) \Big]} \cdot \mathscr{P}_{z,z'}(dX).
$$
\end{proof}

\section{Appendix}

\begin{proof}[Proof of Proposition \ref{prop-M}]
By homogenity, we may assume, without loss of generality, that $\| A\|_{\mathscr{L}_{1|2}} \le 1$ and $\| B\|_{\mathscr{L}_{1|2}} \le 1$. Write $A$ and $B$ in block forms:  
$$
A = \left[ \begin{array}{cc}  a_1 & b_1 \\ c_1 & d_1 \end{array}\right], \quad B = \left[ \begin{array}{cc}  a_2 & b_2 \\ c_2 & d_2 \end{array}\right],
$$
then we have 
$$
AB = \left[ \begin{array}{cc}  a_1a_2 + b_1 c_2 & a_1 b_2 + b_1d_2 \\ c_1 a_2 + d_1 c_2 & c_1 b_2 + d_1d_2 \end{array}\right].
$$
By applying the operator ideal property $\|a b\|_1 \le \| a\|_1 \| b\|$, $\|a b\|_1 \le \| a\|_2 \| b\|$ and the H\"older inequality $\| ab\|_1 \le \|a \|_2 \| b\|_2$,  we get
\begin{align*}
\| AB\|_{\mathscr{L}_{1|2}}  =  &   \| a_1a_2 + b_1 c_2 \|_1 + \| c_1 b_2 + d_1d_2\|_1 + \| a_1 b_2 + b_1d_2\|_2 + \| c_1 a_2 + d_1 c_2 \|_2
\\
\le &  \| a_1\|_1 \| a_2\|+ \|b_1\|_2 \| c_2\|_2  + \| c_1\|_2\| b_2\|_2  + \|d_1\|_1\| d_2\| 
\\
 & + \| a_1\|_1\| b_2\| + \| b_1\|_2 \|d_2\| + \| c_1\|_2 \|a_2\| +  \| d_1\| \| c_2\|_2
 \\
  \le  &  \| a_1\|_1 + \|b_1\|_2   + \| c_1\|_2   + \|d_1\|_1  + \| a_1\|_1 + \| b_1\|_2  + \| c_1\|_2  +  \| d_1\| 
  \\
  \le & 2 (   \| a_1\|_1 + \|b_1\|_2   + \| c_1\|_2   + \|d_1\|_1)  \le 2. 
\end{align*}
\end{proof}

\begin{proof}[Proof of Proposition \ref{prop-bdd-M}]
The proof is easy from the definition of $\mathscr{L}_{1|2}(L^2(\R))$ and the ideal property of trace-class and Hilbert-Schmidt class.
\end{proof}

\begin{proof}[Proof of Proposition \ref{prop-ideal}]
By the relation \eqref{inclusions}, under the hypothesis of Proposition \ref{prop-ideal} on $A, B$, the two operators $A, B$ are both in $\mathscr{L}_2(L^2(\R))$, hence $AB\in \mathscr{L}_1(L^2(\R))$. By the ideal property of $\mathscr{L}_1(L^2(\R))$, the operator $(1 + A)^{-1}AB$ belongs to $\mathscr{L}_1(L^2(\R))$ and hence belongs to $\mathscr{L}_{1|2}(L^2(\R))$. 
We can write
$$
(1 + A)^{-1} B = (1 + A)^{-1} ( (1 + A) B - AB)   = B - (1 + A)^{-1}AB,
$$
hence the operator  $(1 + A)^{-1} B$ belongs to $\mathscr{L}_{1|2}(L^2(\R))$. Similar argument yields the fact that the operator  $B (1 + A)^{-1}$ also belongs to $\mathscr{L}_{1|2}(L^2(\R))$.
\end{proof}

\begin{proof}[Proof of Proposition \ref{prop-Fm}]
Fix a pair of operators $A, B$ in $\mathscr{L}_{1|2}(L^2(\R))$.  Note first that by Proposition \ref{prop-M}, the operator $A+ B + AB$ is in the space $\mathscr{L}_{1|2}(L^2(\R))$, hence the extended Fredholm determinant $\det((1 + A)(1+B)) = \det(1+A+B+AB)$ is well-defined. By the multiplicativity property of the usual Fredholm determinant, the desired identity holds whenever $A, B \in \mathscr{L}_{1}(L^2(\R))$, see, e.g. \cite[Thm. 3.8]{Simon-det}. Thus by the continuity of the function $A\mapsto \det(1+A)$ on $\mathscr{L}_{1|2}(L^2(\R))$, for proving the desired identity, it suffices to show that there exist two sequences $(A_n)_{n\in \N}$ and $(B_n)_{n\in \N}$ in $ \mathscr{L}_{1}(L^2(\R))$ such that we have the following convergences in the space $\mathscr{L}_{1|2}(L^2(\R))$:
\begin{align}\label{condition-A-B}
A_n \xrightarrow{n\to \infty} A, \, B_n \xrightarrow{n\to \infty} B \an A_nB_n \xrightarrow{n\to \infty} AB.
\end{align}
To this end, take any two sequences $(P_n)_{n\in \N}$ and $(Q_n)_{n\in \N}$ of finite rank orthogonal projections on $L^2(\R_{+})$ and $L^2(\R_{-})$ respectively, assume that $P_n$ and $Q_n$ converge in the strong operator topology to the orthogonal projections $P_{+}$ and $P_{-}$ respectively. Now we may set $$A_n = (P_n + Q_n)A, \quad B = B(P_n + Q_n).$$
   Then it is clear that the finite rank operators $A_n$ and $B_n$ satisfy all the desired conditions in \eqref{condition-A-B}. Note that we intentionally obtain $A_n$ and $B_n$ by multiplying  $P_n + Q_n$ on the left side of $A$ and on the right side of $B$, so that the third condition in \eqref{condition-A-B} is satisfied. 
\end{proof}

\begin{proof}[Proof of Proposition \ref{prop-Fc}]
From Grothendieck's definition of Fredholm determinant: 
$$
\det(1 + T) = \sum_{k=0}^\infty \tr(\wedge^k (T)), \quad T \in  \mathscr{L}_1(L^2(\R)),
$$
and the fact that,  once $A\in \mathscr{L}_1(L^2(\R))$ and $f$ is a bounded function, then 
$$
\tr(\wedge^k (fA)) = \tr( \wedge^k (M_f) \circ \wedge^k (A)) =  \tr(  \wedge^k (A) \circ \wedge^k (M_f)) = \tr(\wedge^k (Af)),
$$ 
we see that the identity \eqref{identity-Fc} holds when $A\in \mathscr{L}_1(L^2(\R))$. For $A\in \mathscr{L}_{1|2}(L^2(\R))$, we may argue similarly as in the proof of Proposition \ref{prop-Fm}.  See also \cite{Buf-inf} for the proof in more general case. 
\end{proof}

\section*{Acknowledgements}
The authors  are supported by A*MIDEX project (No. ANR-11-IDEX-0001-02), financed by Programme ``Investissements d'Avenir'' of the Government of the French Republic managed by the French National Research Agency (ANR). A.B. is also supported in part by the Grant MD-2859.2014.1 of the President of the Russian Federation, by the Programme ``Dynamical systems and mathematical control theory'' of the Presidium of the Russian Academy of Sciences,  by a subsidy granted to the HSE by the Government of the Russian Federation for the implementation of the Global Competitiveness Program and by the RFBR grant 13-01-12449. Y.Q. is partially supported by the  grant  346300 for IMPAN from the Simons Foundation and the matching 2015-2019 Polish MNiSW fund.

\def\cprime{$'$} \def\cydot{\leavevmode\raise.4ex\hbox{.}}

\end{document}